\newcommand{\oG}{\overline{G}}
\newcommand{\oGL}{\overline{\mathrm{GL}}}
\newcommand{\oH}{\overline{H}}
\newcommand{\oM}{\overline{M}}
\newcommand{\oP}{\overline{P}}
\newcommand{\oT}{\overline{T}}
\newcommand{\bB}{\mathbf{B}}
\newcommand{\bG}{\mathbf{G}}
\newcommand{\bH}{\mathbf{H}}
\newcommand{\bK}{\mathbf{K}}
\newcommand{\bM}{\mathbf{M}}
\newcommand{\bN}{\mathbf{N}}
\newcommand{\bP}{\mathbf{P}}
\newcommand{\bT}{\mathbf{T}}
\newcommand{\bU}{\mathbf{U}}
\newcommand{\bZ}{\mathbb{Z}}
\newcommand{\bGL}{\mathbf{GL}}
\newcommand{\bSL}{\mathbf{SL}}
\newcommand{\bSO}{\mathbf{SO}}
\newcommand{\bSp}{\mathbf{Sp}}
\newcommand{\bSpin}{\mathbf{Spin}}
\DeclareMathOperator{\SO}{SO}
\newcommand{\obG}{\overline{\mathbf{G}}}
\newcommand{\obGsk}{\overline{\mathbf{G}^{\square,k}}}
\newcommand{\obH}{\overline{\mathbf{H}}}
\newcommand{\obT}{\overline{\mathbf{T}}}
\newcommand{\BD}{\mathbf{BD}}
\newcommand{\CExt}{\mathbf{CExt}}
\newcommand{\Irr}{\mathrm{Irr}}
\newcommand{\Int}{\mathrm{Int}}
\newcommand{\pr}{\mathrm{pr}}
\newcommand{\Res}{\mathrm{Res}}
\newcommand{\St}{\mathrm{St}}
\newcommand{\Wh}{\mathrm{Wh}}
\newcommand{\Spec}{\mathrm{Spec}}
\newcommand{\op}{\mathrm{op}}
\newcommand{\sca}{\mathcal{A}}
\newcommand{\sce}{\mathcal{E}}
\newcommand{\sco}{\mathcal{O}}
\newcommand{\scp}{\mathcal{P}}
\newcommand{\scw}{\mathcal{W}}
\newcommand{\scy}{\mathcal{Y}}
\newcommand{\bD}{\mathbf{D}}
\newcommand{\bx}{\boldsymbol{x}}
\newcommand{\by}{\boldsymbol{y}}
\newcommand{\bc}{\mathbb C}
\newcommand{\bq}{\mathbb Q}
\newcommand{\ba}{\mathbb A}
\newcommand{\la}{\langle}
\newcommand{\ra}{\rangle}
\newcommand{\bs}{\backslash}
\newcommand{\al}{\alpha}
\newcommand{\lam}{\lambda}
\DeclareMathOperator{\Gal}{Gal}
\DeclareMathOperator{\End}{End}
\DeclareMathOperator{\GL}{GL}
\DeclareMathOperator{\Hom}{Hom}
\DeclareMathOperator{\Ind}{Ind}
\DeclareMathOperator{\M}{M}
\newtheorem{Thm}{Theorem}[section]
\newtheorem{Prop}[Thm]{Proposition}
\newtheorem{Lem}[Thm]{Lemma}
\newtheorem{Cor}[Thm]{Corollary}
\theoremstyle{definition}
\newtheorem{Def}[Thm]{Definition}
\theoremstyle{remark}
\newtheorem{Rem}[Thm]{Remark}
\theoremstyle{definition}
\begin{document}

\title{Twisted doubling integrals for Brylinski-Deligne extensions of classical groups}
\author{Yuanqing Cai}

\begin{abstract}
We explain how to develop the twisted doubling integrals for Brylinski-Deligne extensions of connected classical groups. This gives a family of global integrals which represent Euler products for this class of non-linear extensions.
\end{abstract}

\date\today
\address{Faculty of Mathematics and Physics, Institute of Science and Engineering, Kanazawa University, Kakumamachi, Kanazawa, Ishikawa, 920-1192, Japan}
\email{cai@se.kanazawa-u.ac.jp}
\subjclass[2010]{Primary 11F70; Secondary 11F66}
\keywords{Brylinski-Deligne extensions, twisted doubling integrals, tensor product $L$-functions}
\maketitle
\tableofcontents

\section{Introduction}

One of the goals in the Langlands program is to study analytic properties of automorphic $L$-functions. To this problem, a basic approach is to find a global integral that represents the automorphic $L$-function one would like to study. There are some questions to be addressed in this approach. First, one needs to show that the global integral is Eulerian. For this, one usually uses certain multiplicity one results, such as uniqueness of Whittaker models. To establish the desired properties of the $L$-functions, one uses the corresponding properties of an Eisenstein series used in the global integral, or some spectral theory results such as the Poisson summation formula. For reductive groups, the approach of global integrals is successful in several important cases. The purpose of this paper is to describe a family of global integrals for a class of non-linear covers of reductive groups.

\subsection{Brylinski-Deligne covering groups}

Let $\bG$ be a connected reductive group over a number field $F$. In \cite{BD01}, Brylinski and Deligne considered the category of multiplicative $\bK_2$-torsors on $\bG$; these are extensions of $\bG$ by the sheaf $\bK_2$ of Quillen's $K_2$ group in the category of sheaves of groups on the big Zariski site of $\Spec(F)$:
\[
1\to \bK_2 \to \obG \to \bG \to 1.
\]
Brylinski and Deligne gave an elegant and functorial classification of this category in terms of enhanced root-theoretic data, similar to the classification of split connected reductive groups by their root data.

We now assume that the base field $F$ contains a full set of $n$-th roots of unity. Then at every local place $v$, there is a functor from the category of multiplicative $\bK_2$-torsors $\obG$ on $\bG$ to the category of topological central extensions:
\[
1\to \mu_n \to \oG_v \to G_v=\bG(F_v) \to 1,
\]
which glues to a central extension of the adelic group
\[
1\to \mu_n \to \oG(\ba) \to G(\ba) \to 1.
\]
The global extension is equipped with a natural splitting $G(F)\to \oG(\ba)$. This naturally leads to the notion of automorphic forms on this class of groups. These topological central extensions may be considered of ``algebraic origin'' and can be constructed using cocycles which are essentially algebraic in nature. This construction does not exhaust all topological central extensions, but it captures a sufficiently large class of such extensions, including all interesting examples which have been investigated so far.

\subsection{A Langlands program for covering groups}

There has been serious progress in extending the Langlands program to the Brylinski-Deligne extensions. We refer to \cite{GGW18} for a comprehensive discussion of the history of covering groups. From our point of view, there are several reasons to study automorphic forms on covering groups. The first is that a Langlands program for covering groups indeed provides supporting evidences to the Langlands program for linear groups. The second is to hope that we can relate automorphic forms on covering groups and linear groups, and use this to gain new information regarding linear groups. Indeed, the development of the Langlands program already uses covering groups. A notable example is the Weil representations, defined on double covers of the symplectic groups. Another example is the Rankin-Selberg integral for the symmetric square $L$-function for $\GL_n$, which uses the theta representations on double covers of general linear groups \cite{BG92}. The theory of Weyl group multiple Dirichlet series, which is closely tied to Whittaker coefficients of Eisenstein series, has important consequences in analytic number theory.

A natural question is to test whether the myriad of global integrals for various $L$-functions for linear groups have counterparts in the covering case. In the linear case, such a theory relies heavily on the uniqueness of certain models, in particular the Whittaker model. The failure of such multiplicity one results in the covering group case causes serious obstructions to some development for the theory, and it is fundamentally difficult to find integral representations for $L$-functions for covering groups.

Nevertheless, there are two recent constructions that set up the first step towards this area. The first is the calculation of constant terms of Eisenstein series on Brylinski-Deligne covering groups \cite{Gao18}. A consequence of this calculation is the meromorphic continuation of many interesting $L$-functions. Another example is a generalization of the doubling method (\cite{PSR87,CFGK19}) to covers of symplectic groups \cite{Kaplan} (see also \cite{CFGK2016} for a brief description of the method). (Also note that the case of the double covers of symplectic groups was already considered in the literature; for example, see \cite{Gan12}.) In \cite{Kaplan}, the unfolding argument and unramified calculation are carried out. As a consequence, the global integral represents the tensor product $L$-function for a cover of a symplectic group and a cover of a general linear group under certain assumptions. It also has the potential to establish a functional equation for the $L$-functions obtained.

\subsection{The doubling integrals}

The doubling integrals grew out of Rallis' work on the inner products of theta lifts -- the Rallis inner product formula. This gives a global integral for the standard $L$-function of a classical group. As a global integral, the doubling integral \cite{PSR87} and its generalizations \cite{CFGK19,Kaplan} enjoy the following nice properties:

\begin{enumerate}
\item It uses the matrix coefficient of a representation $\pi$ of a classical group $G(\ba)$. As a consequence, this construction works for all cuspidal representations of $G(\ba)$. This is a special feature of the construction since most of other constructions only work for some but not all cuspidal representations.

\item The global integral uses certain unique models for some degenerate representations. These representations can be viewed as the generalized Speh representations for covering groups. As we noted above, uniqueness of Whittaker models fails in general for covering groups. However, it is possible that for some `degenerate' representations on certain covering groups, uniqueness of Whittaker models still holds. A typical example of this type is the theta representation \cite{KP84}. Establishing the existence and verifying the expected properties are in fact the technical heart of the doubling method. 

\item One can write down a local version of these global integrals and use it to define local factors. Note that in the Langlands-Shahidi method for covering groups, one can define the local coefficients matrix instead of local coefficient, but it is not straightforward to have a definition of local factors. We refer the reader to \cite{GSS} for recent progress.
\end{enumerate}

\subsection{Main results}

The purpose of this article is to explain how to develop the twisted doubling integrals (see \cite{CFGK19,Kaplan}) for Brylinski-Deligne extensions of \textit{connected classical groups}. Here, we use the conceptual description in \cite{Cai21}, which works for all classical groups uniformly. A major part of this paper is to explain the necessary (non-trivial) modifications in order to develop the twisted doubling integrals for covering groups.

 We include almost all BD extensions for connected classical groups, but have to exclude some cases of unitary groups for some technical reasons (see Remark \ref{rem:commutative assumption}).  We can invoke some neat structures of BD extensions to minimize the use of cocycles as the construction is functorial in nature. Moreover, we only require the condition $\mu_n\subset F^\times$ (instead of $\mu_{2n}\subset F^\times$) in our construction. Once we have the correct setup, working with BD extensions allows us to study some basic properties of twisted doubling integrals just in the linear case. In particular, we carry out the unfolding part following the argument in \cite{Cai21}. 

The most important part in the setup is to find a suitable BD extension of a large classical group for the construction to work. This part is proved case-by-case in Section \ref{sec:BD data explicit}. This is relatively easy when the category of BD extensions is ``rigid''. There is additional complication in the non-simply-connected and non-split case, with the case of $\bSO_{2m}$ being the most sophisticated one. 

As we noted above, another ingredient in the twisted doubling integrals is the construction of some ``Speh'' type representations as the inducing data of some Siegel Eisenstein series. We believe that this is a very difficult question for covering groups and we only discuss this briefly in the Section \ref{sec:construction of Speh}. Indeed, following certain conjectures in \cite{Suzuki98} and \cite{Ginzburg}, we suggest that one should construct such representations from representations of $\GL_k(\ba)$, instead of a cover of $\GL_k(\ba)$. As a consequence, we expect that the twisted doubling integrals represent the tensor product $L$-functions for $\obG\times \bGL_k$. In the second factor we only taking a linear group instead of a cover of $\bGL_k$. This is different from the $L$-functions obtained in \cite{Gao18} and \cite{Kaplan}. It is an interesting question to relate these $L$-functions.

We also note that in \cite{BD01}, BD covers are only studied for connected reductive groups. So in this paper we exclude the orthogonal group $\mathbf{O}_m$ and consider $\bSO_m$ instead. (Here we use the term connected classical groups for these groups. Note that we do not assume any condition on the splitness of the classical groups.) We leave other investigations such as unramified calculations, local and global theory as well as the construction of the generalized Speh representations to future work.

\subsection{Structure of this paper}

The rest of this paper is organized as follows. We first recall some preliminaries on Brylinski-Deligne covering groups in Section \ref{sec:BD extensions} -- \ref{sec:pullback pushout baer sum}. We recall some construction related to the classical groups in Section \ref{sec:doubling variables}. Section \ref{sec:degenerate reps} highlights a family of degenerate representations on covers of general linear groups. Even if we do not how to construct the generalized Speh representations for covering groups, we still know the properties that will be used in the global integrals. We then discuss the basic setup of the twisted doubling integrals in the linear case in Section \ref{sec:Basic setup in the linear case} and discuss necessary assumptions for covering groups in Section \ref{sec:covering assumptions}. The long proofs in Section \ref{sec:covering assumptions} are deferred to Section \ref{sec:BD data explicit}. We then introduce the global integral and prove a global identity in Section \ref{sec:global integrals}. We briefly discuss the construction of the generalized Speh representations and the $L$-functions obtained in Section \ref{sec:construction of Speh}. 

\subsection*{Acknowledgement}
The author would like to thank the referee for very detailed and helpful comments for an earlier version of the paper. Part of this work was carried out when the author was a postdoctoral fellow at the Weizmann Institute of Science and at Kyoto University. The author would like to thank both institutes for providing an excellent working environment.
This research was supported by the ERC, StG grant number 637912, JSPS KAKENHI grant number 19F19019, and MEXT Leading Initiative for Excellent Young Researchers Grant Number JPMXS0320200394.

\subsection{Notation}

We now give a list of notations that are commonly used in this paper.
\begin{itemize}
\item For an algebraic group $\bG$ over $F$, we sometimes write $G(F)=\bG(F)$; if $F$ is a local field, we sometimes write $G=G(F)$. If $F$ is a global field, we usually write $[G]$ for $G(F)\bs G(\ba)$ where $\ba$ is the ring of adeles of $F$, and $G_v=\bG(F_v)$ for a local place $v$.
\item For an algebraic group $\bG$ over $F$ and a field extension $L/F$, we write $\bG_L:=\bG \otimes_F L$ to be the base change of $\bG$ to $L$.
\item $J^{\diamondsuit}$: for a subgroup $J$ of $G$, define $J^{\diamondsuit}=\{(g,g)\in G\times G \mid g\in J\}$.
\item $\Int(g)$: for $g\in G$ and a subgroup $H\subset G$, the conjugation map by induced by $g$ is denoted $\Int(g):H\mapsto gHg^{-1}$ .
\item For an abelian group $A$ with multiplication, we write $[m]$ for the homomorphism $A\to A, \ x\mapsto x^m$.
\end{itemize}

\section{Brylinski-Deligne extensions}\label{sec:BD extensions}

Let $F$ be a local or global field of characteristic $0$. Let $\bG$ be a connected reductive group over $F$. In this section, we discuss the Brylinski-Deligne extensions of $\bG$. The main references for this section are \cite{BD01} and \cite{GG18}.

\subsection{Some structural facts}
Let $F_s$ denote a separable closure of $F$ and $\Gamma=\Gal(F_s/F)$. Let $\bT$ be a maximal $F$-torus of $\bG$. Let
\[
\{
X, \ \Phi, ; \  Y, \ \Phi^\vee
\}
\]
be the resulting (absolute) root system. Here $X$ (respectively, $Y$) is the character lattice (respectively, cocharacter lattice) for $(\bG,\bT)$ over $F_s$. Both $X$ and $Y$ are naturally $\bZ[\Gamma]$-modules. The group $\Gamma$ acts on the root system $\Phi$ as well. Choose a set $\Delta\subset \Phi$ of simple roots from the set of roots $\Phi$, and $\Delta^\vee$ the corresponding simple coroots from $\Phi^\vee$. Write $Y^{sc}\subset Y$ for the sublattice generated by $\Phi^\vee$. Let $\bB\subset \bG_{F_s}$ be the Borel ($F_s$-)subgroup determined by $\Delta$.

For each $\al\in \Phi$, one has the associated root subgroup $\bU_\al\subset \bG_{F_s}$ which is normalized by $\bT_{F_s}$. We shall fix a Chevalley system of \'epinglage for $(\bG_{F_s},\bT_{F_s})$, so that for each $\al\in \Phi$, one has an $F_s$-isomorphism $x_\al:\bG_a\simeq \bU_\al$.

Denote by $W=W(\bG):=N(\bT_{F_s})/\bT_{F_s}$ the (absolute) Weyl group of $(\bG,\bT)$, which we identify with the Weyl group of the coroot system.

\subsection{Multiplicative $\bK_2$-torsors}

The reductive group $\bG$ defines a sheaf of groups on the big Zariski site on $\Spec(F)$. Let $\bK_2$ denote the sheaf of groups on $\Spec(F)$ associated to the $\bK_2$-group in Quillen's K-theory. Then a multiplicative $\bK_2$-torsor is an extension
\[
1\to \bK_2 \to \obG \to \bG \to 1
\]
of sheaves of groups on $\Spec(F)$. We consider the category $\CExt(\bG,\bK_2)$ of such extensions where the morphisms between objects are given by morphisms of extensions. Given two such central extensions, one can form the Baer sum: this equips $\CExt(\bG,\bK_2)$ with the structure of a commutative Picard category.

In \cite{BD01}, Brylinski and Deligne made a deep study of $\CExt(\bG,\bK_2)$ and obtained an elegant classification of this category when $\bG$ is a connected reductive group. We first recall their results.

\subsection{Tori}

Suppose that $\bT$ is an $F$-torus, with cocharacter lattice $Y=\Hom(\bG_m,\bT_{F_s})$ and character lattice $X=\Hom(\bT_{F_s},\bG_m)$.

\begin{Prop}
Let $\bT$ be a $F$-torus. The category $\CExt(\bT,\bK_2)$ is equivalent as a commutative Picard category (by an explicit functor) to the category whose objects are pairs $(Q,\sce)$, where
\begin{itemize}
\item $Q$ is a $\Gamma$-invariant $\bZ$-valued quadratic form on $Y$, with associated symmetric bilinear form
\[
B_Q(y_1,y_2)=Q(y_1+y_2)-Q(y_1)-Q(y_2);
\]
\item $\sce$ is a $\Gamma$-equivariant central extension of groups
\[
1\to F_s^\times \to \sce \to Y \to 1
\]
whose associated commutator map $[-.-]:Y\times Y\to F_s^\times$ is given by
\[
[y_1,y_2]=(-1)^{B_Q(y_1,y_2)}.
\]
The set of morphisms between $(Q,\sce)$ and $(Q',\sce')$ is empty unless $Q=Q'$, in which case it is given by the set of isomorphism of $\Gamma$-equivariant extensions from $\sce$ to $\sce'$. The Picard structure is defined by
\[
(Q,\sce)+(Q', \sce') = (Q+Q',\text{ Baer sum of } \sce \text{ and } \sce').
\]
\end{itemize}
\end{Prop}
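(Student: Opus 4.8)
The plan is to establish the equivalence by constructing an explicit \emph{extraction functor} $\obT\mapsto(Q,\sce)$, verifying that the pair it produces satisfies the stated constraints, and then checking that this functor is an equivalence of commutative Picard categories. The genuine content is concentrated in the base case $\bT=\bG_m$ and in Galois descent; everything else is bi-multiplicativity bookkeeping and naturality.

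To define the functor on objects, fix a multiplicative $\bK_2$-torsor $1\to\bK_2\to\obT\to\bT\to 1$. For a cocharacter $y\in Y$, regarded as a homomorphism $y\colon\bG_m\to\bT_{F_s}$, pull $\obT$ back along $y$ to a multiplicative $\bK_2$-torsor on $\bG_m$ over $F_s$. The first key point is the computation $\CExt(\bG_m,\bK_2)\simeq\bZ$ over a field, which records the isomorphism class of $y^*\obT$ by an integer $Q(y)$; the same computation shows that the automorphism group of any such object is $F_s^\times$, realised by $u\mapsto(x\mapsto\{u,x\})$, which is exactly what produces the $F_s^\times$ appearing in $\sce$. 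To build $\sce$ I would restrict $\obT$ to the subgroup $\{y(t):y\in Y\}\cong Y$ of $\bT_{F_s}(F_s((t)))$, obtaining an extension of $Y$ by $\bK_2(F_s((t)))$, and then push out along the tame symbol $\partial_t\colon\bK_2(F_s((t)))\to\bK_1(F_s)=F_s^\times$. Since all these operations are natural, $\Gamma$ acts compatibly throughout, so $Q$ is $\Gamma$-invariant and $\sce$ is $\Gamma$-equivariant.

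Next I would verify the constraints. Bi-multiplicativity of the pullback construction under the multiplication and inversion maps of $\bT$, compared with the parallel computation of $\CExt(\bG_m\times\bG_m,\bK_2)$, shows that $y\mapsto Q(y)$ is a quadratic form with associated bilinear form $B_Q$, the value $B_Q(y_1,y_2)$ being read off from the commutator of lifts of $y_1$ and $y_2$ in $\obT$. As $\bT$ is abelian this commutator lies in $\bK_2$, and over $F_s((t))$ the quadratic structure forces it to equal $\{t,t\}^{B_Q(y_1,y_2)}=\{t,-1\}^{B_Q(y_1,y_2)}$ (using $\{t,-t\}=1$ in $K_2$ of a field); applying $\partial_t$, which sends $\{t,-1\}$ to $-1$, shows that the commutator pairing of $\sce$ is $(-1)^{B_Q(y_1,y_2)}$, as required. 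That Baer sum of torsors corresponds to addition of $Q$ together with Baer sum of the extensions $\sce$ matches the Picard structures, and morphisms are transported the same way.

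The heart of the argument, and the step I expect to be the main obstacle, is the identification over split tori: first $\CExt(\bG_m,\bK_2)\simeq\bZ$, then the passage to $\bT=\bG_m^r$. Here I would bring in the $K$-theoretic input — Quillen's computation of the $K$-theory of $\bG_m$ via the Gersten resolution of the sheaf $\bK_2$, together with Matsumoto's presentation of $K_2$ of a field — which controls the admissible multiplicative structures and shows the group of isomorphism classes is infinite cyclic, generated by the extension built from the Steinberg symbol. For $\bT=\bG_m^r$ one analyses an extension through its restrictions to the coordinate subgroups (producing the integers $Q(e_i)$) and the commutator bi-extensions between distinct factors (each again classified by $\bZ$, producing $B_Q(e_i,e_j)$), then checks that these data reconstruct the extension uniquely and that every admissible pair is realised — the extension $\sce$ being automatically split since $Y$ is free. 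With the split case in hand, the general case follows by Galois descent: the source and target are both stacks of Picard groupoids for the étale topology on $\Spec F$ that agree over $F_s$, and since only $H^0$ and $H^1$ of $\Gamma$ can intervene, descent returns precisely a $\Gamma$-invariant $Q$ and a $\Gamma$-equivariant $\sce$. Checking that the extraction functor is then full, faithful, and essentially surjective onto this $\Gamma$-equivariant data is routine given the split case and the naturality of the construction.
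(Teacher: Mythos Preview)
The paper does not prove this proposition: it is stated as a result of Brylinski--Deligne \cite{BD01} (see the references to \cite{BD01} and \cite{GG18} at the start of the section), and the only thing the paper supplies is a description of how the extension $\sce$ is extracted from $\obT$, namely by taking $F_s(\!(\tau)\!)$-points, pulling back along $Y\to\bT(F_s(\!(\tau)\!)),\ y\mapsto y(\tau)$, and pushing out along the residue map $\bK_2(F_s(\!(\tau)\!))\to F_s^\times$. Your construction of the extraction functor is exactly this (with $t$ in place of $\tau$), so on the level of what the paper actually records, you agree.

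What you have added beyond the paper is an outline of why the functor is an equivalence: the base case $\CExt(\bG_m,\bK_2)\simeq\bZ$ via Quillen/Matsumoto input, the passage to split tori by analysing restrictions to coordinate $\bG_m$'s and commutator bi-extensions, and then Galois descent. This is indeed the shape of the argument in \cite{BD01}, and your commutator computation $\{t,t\}=\{t,-1\}\mapsto -1$ under the tame symbol is the correct mechanism producing the $(-1)^{B_Q}$ pairing. So your sketch is sound and matches the original source; the paper itself simply does not attempt a proof and defers to the literature.
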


The isomorphism class of the extension $\sce$ is completely determined by the commutator map and hence by the quadratic form $Q$. The extension $\sce$ is obtained from $\obT$ as follows. Let $F_s(\!(\tau)\!)$ denote the field the Laurent seires in the variable $\tau$ over $F_s$. Then one has
\[
1\to \bK_2(F_s(\!(\tau)\!)) \to \obT(F_s(\!(\tau)\!)) \to \bT(F_s(\!(\tau)\!)) = Y\otimes_{\bZ} F_s(\!(\tau)\!)^{\times}\to 1.
\]
The map $y\mapsto y(\tau)$ defines a group homomorphism $Y\to \bT(F_s(\!(\tau)\!))$. Pulling back by this morphism and pushing out by the residue map
\[
\mathrm{Res}:\bK_2(F_s(\!(\tau)\!)) \to \bK_1(F_s) = F_s^\times
\]
defined by
\[
\mathrm{Res}(f,g)=(-1)^{v(f)\cdot v(g)} \cdot \left(\dfrac{f^{v(g)}}{g^{v(f)}}(0)\right),
\]
one obtains the desired extension $\sce$.

\subsection{Simply-connected groups}

Suppose now that $\bG$ is a simply-connected semisimple gorup over $F$. Since $\bG$ is simply-connected, we have $Y^{sc}=Y$ and $\Delta^\vee$ is a basis for $Y$.

\begin{Prop}
The category $\CExt(\bG,\bK_2)$ is equivalent to (as commutative Picard categories) to the category whose objects are $\Gamma$-invariant $W$-invariant $\bZ$-valued quadratic form $Q$ on $Y$, and whose only morphisms are the identity morphisms on each object.
\end{Prop}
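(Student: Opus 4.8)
The plan is to reduce the classification of $\CExt(\bG,\bK_2)$ for simply-connected semisimple $\bG$ to the torus case already established in the previous proposition, by restricting an extension of $\bG$ to the maximal torus $\bT$ and extracting the quadratic form. The key geometric input is that, for $\bG$ simply-connected, the category $\CExt(\bG,\bK_2)$ is rigid: there are no nontrivial automorphisms of any object, and an object is determined by combinatorial data. Concretely, I would proceed as follows.

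\medskip

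\textbf{Step 1: From an extension to a quadratic form.} Given $\obG\in\CExt(\bG,\bK_2)$, pull back along $\bT\hookrightarrow\bG$ to obtain $\obT\in\CExt(\bT,\bK_2)$, which by the previous proposition is classified by a pair $(Q,\sce)$ with $Q$ a $\Gamma$-invariant $\bZ$-valued quadratic form on $Y$. The form $Q$ is automatically $W$-invariant: conjugation in $\obG$ by a representative of any $w\in W$ carries $\obT$ to itself, inducing an isomorphism of $\bK_2$-extensions of $\bT$ which forces $Q\circ w = Q$. Thus the restriction functor lands in the category of $\Gamma$- and $W$-invariant quadratic forms. Note that when $\bG$ is simply connected, $\Delta^\vee$ is a basis of $Y$, and a $W$-invariant quadratic form on $Y$ is determined by its values $Q(\alpha^\vee)$ on simple coroots, with the $W$-invariance and $\Gamma$-invariance imposing that $Q(\alpha^\vee)$ depends only on the length (equivalently, the $\Gamma$-orbit times $W$-orbit) of $\alpha$.

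\medskip

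\textbf{Step 2: Faithfulness and fullness --- the extension has no automorphisms.} One shows that the restriction functor is fully faithful. The content here is that a morphism $\obT\to\obT'$ of $\bK_2$-extensions of $\bT$ (when $Q=Q'$, so such morphisms exist) extends uniquely to a morphism $\obG\to\obG'$. The key point, and the main obstacle, is the rigidity: for $\bG$ simply connected, $\Hom(\bG,\bK_2)=0$ (there are no nonconstant $K_2$-valued characters of a simply-connected group, since $\bG$ has no nontrivial abelian quotient and $K_2$ of a field need not vanish but the relevant $\Hom$ sheaf is trivial), so any two morphisms $\obG\rightrightarrows\obG'$ lifting the identity of $\bG$ differ by an element of $\Hom(\bG,\bK_2)=0$; hence a morphism is unique if it exists, and in particular $\Aut(\obG)$ is trivial. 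This reduces fullness to the existence of a lift, which is handled in Step 3.

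\medskip

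\textbf{Step 3: Essential surjectivity and lifting of morphisms --- Brylinski--Deligne's construction via root subgroups.} Here I would invoke the structural results of \cite{BD01}: the extension $\obG$ is reconstructed from the torus data $(Q,\sce)$ together with the commutation relations of the lifted root subgroups $\obU_\alpha$, which for a simply-connected group are rigidly determined by $Q$ via the Steinberg-type relations (the integers $Q(\alpha^\vee)$ govern how $\bT$ acts on $\obU_\alpha$). Conversely, given any $\Gamma$-invariant $W$-invariant $Q$, one builds the extension either by descent from the split case (where one uses the presentation of $\bG(F_s)$ by Steinberg generators and relations, incorporating the $K_2$-cocycle determined by $Q$ on each rank-one $\SL_2$ or $\Sp_2$) or, following \cite{BD01}, via a gluing/functoriality argument over the building. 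The uniqueness in Step 2 guarantees the descent datum is effective and that the resulting $\obG$ is independent of choices. The main obstacle throughout is Step 3: verifying that the combinatorial datum $Q$ really does glue to a genuine $\bK_2$-torsor on all of $\bG$ (not just on a big cell) and that distinct $Q$'s give non-isomorphic extensions --- but both are exactly the content of the simply-connected case of the Brylinski--Deligne classification, which we may cite, so the proof amounts to assembling Steps 1--2 and appealing to \cite{BD01} for Step 3.
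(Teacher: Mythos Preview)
The paper does not prove this proposition; it is stated as a recall of the Brylinski--Deligne classification \cite{BD01} for simply-connected groups, with no argument given. So there is no proof in the paper to compare your proposal against.

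That said, your sketch is a faithful outline of how the result is actually established in \cite{BD01}. Your Step~1 (restriction to the torus and $W$-invariance from conjugation) and Step~2 (rigidity from the vanishing of multiplicative maps $\bG\to\bK_2$ when $\bG$ is simply-connected semisimple) are the right structural ingredients, and your Step~3 correctly identifies that the hard content---gluing the Steinberg-type data into a global $\bK_2$-torsor---is exactly what must be cited from \cite{BD01}. One small tightening: in Step~2, the vanishing $\Hom(\bG,\bK_2)=0$ is cleanest to justify by noting that $\bG$ is perfect (it equals its derived subgroup), so any multiplicative map to an abelian sheaf is trivial; your phrasing ``no nontrivial abelian quotient'' is the right idea but could be made more precise at the level of Zariski sheaves. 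Otherwise your plan is sound and matches the source.
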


We have the following (\cite{Weissman11} Proposition 3.15).

\begin{Prop}
There is a unique $W$-invariant quadratic form $Q$ on $Y$, such that $Q(\al^\vee)=1$ for every short coroot $\al^\vee \in Y$, and every integer multiple of this quadratic form is $\Gamma$-invariant.
\end{Prop}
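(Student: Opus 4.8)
The plan is to decompose the root system into irreducible components, establish existence and uniqueness on each component using the (absolute) irreducibility of the reflection representation, reassemble, and then deduce $\Gamma$-invariance by a uniqueness argument. Since $\bG$ is simply-connected, $Y=Y^{sc}$, and this lattice decomposes $\Gamma$-equivariantly as $Y=\bigoplus_i Y_i$, where $Y_i$ is the coroot lattice of an irreducible component $\Phi_i$ of $\Phi$ and $W=\prod_i W(\Phi_i)$ acts componentwise (with $\Gamma$ permuting the $\Phi_i$). A $W$-invariant symmetric bilinear form on $Y\otimes\bq$ must vanish on $Y_i\otimes Y_j$ for $i\neq j$: fixing the first argument in $Y_i$, the restriction to $Y_j$ is a $W(\Phi_j)$-invariant linear functional on the reflection representation $Y_j\otimes\bq$, hence zero. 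So it suffices to produce, for each irreducible $\Phi_i$, a unique $W(\Phi_i)$-invariant $\bZ$-valued quadratic form $Q_i$ on $Y_i$ with $Q_i(\al^\vee)=1$ for short coroots $\al^\vee$, and set $Q=\sum_i Q_i$; from now on assume $\Phi$ is irreducible.

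For uniqueness: $W$-invariant quadratic forms on $Y\otimes\bq$ correspond to $W$-invariant symmetric bilinear forms, and the space of the latter is one-dimensional because the reflection representation $Y\otimes\bq$ is absolutely irreducible and self-dual and carries a nonzero positive-definite $W$-invariant form. Since all short coroots form a single $W$-orbit, the normalization $Q(\al^\vee)=1$ is consistent and determines $Q$ uniquely. For existence, let $\langle\,,\,\rangle$ be the positive-definite $W$-invariant form on $Y\otimes\br$, scaled so that $\langle\al^\vee,\al^\vee\rangle=2$ for short coroots, and set $Q(y)=\tfrac12\langle y,y\rangle$.

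It remains to check that $Q$ is $\bZ$-valued on $Y=Y^{sc}$, the $\bZ$-span of $\Delta^\vee$. Expanding the $W$-invariance $Q(s_\al y)=Q(y)$, i.e.\ $Q\big(y-\langle\al,y\rangle\al^\vee\big)=Q(y)$ (treating the cases $\langle\al,y\rangle\neq 0$ and $\langle\al,y\rangle=0$ separately, the latter by comparing $Q(y\pm\al^\vee)$), yields
\[
B_Q(y,\al^\vee)=\langle\al,y\rangle\,Q(\al^\vee)\qquad\text{for all }y\in Y,\ \al\in\Phi .
\]
Here $Q(\al^\vee)=1$ when $\al^\vee$ is short, and for $\al^\vee$ long, $Q(\al^\vee)$ is the ratio of the squared length of a long coroot to that of a short coroot, which equals $1$, $2$, or $3$ by the classification of irreducible root systems; in particular $Q(\al_i^\vee)\in\bZ$ for every simple coroot. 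Since the Cartan integers $\langle\al_j,\al_i^\vee\rangle$ are integers, so is $B_Q(\al_i^\vee,\al_j^\vee)=\langle\al_j,\al_i^\vee\rangle\,Q(\al_j^\vee)$, and for $y=\sum_i c_i\al_i^\vee$ with $c_i\in\bZ$,
\[
Q(y)=\sum_i c_i^2\,Q(\al_i^\vee)+\sum_{i<j}c_ic_j\,B_Q(\al_i^\vee,\al_j^\vee)\in\bZ .
\]
This gives the required $Q$ on each component, hence on $Y$.

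For $\Gamma$-invariance: given $\ga\in\Gamma$, the form $\ga^*Q$ defined by $(\ga^*Q)(y)=Q(\ga^{-1}y)$ is again $W$-invariant, since $\Gamma$ normalizes $W=N(\bT_{F_s})/\bT_{F_s}$; and $\ga$ acts through an automorphism of the root datum, so $\ga^{-1}$ carries short coroots to short coroots and $(\ga^*Q)(\al^\vee)=1$ there (and if $\ga$ permutes the irreducible components this all holds componentwise). By the uniqueness just proved, $\ga^*Q=Q$, so $Q$ — and hence every integer multiple $mQ$ — is $\Gamma$-invariant. The only genuinely non-formal step is the integrality check: the conceptual inputs (irreducibility of the reflection representation, the identity for $B_Q$, and $\Gamma$ normalizing $W$) are standard, whereas verifying that $Q$ takes integer values invokes the classification of irreducible root systems and is cleanest organized type by type — in the simply-laced types it is immediate, $Q$ then being the standard even form on the root lattice $Y$.
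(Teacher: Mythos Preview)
Your proof is correct. The paper does not actually give its own proof of this proposition: it simply cites \cite{Weissman11} Proposition~3.15 and moves on. So there is no ``paper's proof'' to compare against in detail; you have supplied a complete self-contained argument where the paper invokes a reference.

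Your approach is the standard one and essentially what one finds in Weissman's paper: reduce to irreducible components, use absolute irreducibility of the reflection representation to get a one-dimensional space of $W$-invariant forms on each component, normalize, check integrality via the identity $B_Q(y,\al^\vee)=\langle\al,y\rangle Q(\al^\vee)$ together with the fact that the squared-length ratios in an irreducible root system lie in $\{1,2,3\}$, and deduce $\Gamma$-invariance from uniqueness since $\Gamma$ normalizes $W$ and preserves the set of short coroots. One cosmetic remark: your case split for deriving $B_Q(y,\al^\vee)=\langle\al,y\rangle Q(\al^\vee)$ is unnecessary, since both sides are linear in $y$ over $\bq$ and agree off the hyperplane $\langle\al,-\rangle=0$; but your argument via $Q(y\pm\al^\vee)$ is of course also valid.
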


As a result, whenever we are given such a quadratic form $Q$ on $Y$, $Q$ gives rise to a multiplicative $\bK_2$-torsor $\obG_Q$ on $\bG$, unique up to unique isomorphism, which may be pulled back to a multiplicative $\bK_2$-torsor $\obT_Q$ on $\bT$ and hence gives rise to an extension $\sce_Q$ of $Y$ by $F_s^\times$. The automorphism group of the extension $\sce_Q$ is $\Hom_{\Gamma}(Y,F_s^\times)$. Following \cite{BD01} Section 11 and \cite{GG18} Section 2.4, one can rigidify $\sce_Q$ by giving it an extra structure.

\subsection{The Brylinski-Deligne liftings}

We assume that $\bG$ is simply-connected. We also have a fixed Chevalley system of \'epinglage for $(\bG_{F_s},\bT_{F_s})$. In particular, for each $\al\in \Phi$ with associated root subgroup $\bU_\al$, there is a fixed isomorphism over $F_s$:
\[
x_\al: \bG_a \to \bU_\al \subset \bG.
\]
Indeed, one has an embedding over $F_s$
\[
i_\al: \bSL_2 \hookrightarrow \bG
\]
which restricts to $x_{\pm\al}$ on the upper and lower triangular subgroup of unipotent matrices. By \cite{BD01}, one has a canonical lifting
\[
\tilde x_\al : \bG_a \to \overline{\bU}_\al \subset \obG.
\]
For $t\in \bG_m$, we set
\[
n_\al(t)=x_\al(t)\cdot x_{-\al}(-t^{-1}) \cdot x_\al(t)
=i_\al
\begin{pmatrix} 0 & t \\ -t^{-1} & 0 \end{pmatrix} \in N(\bT_Q),
\]
and
\[
\tilde n_\al(t)=\tilde x_\al(t)\cdot \tilde x_{-\al}(-t^{-1}) \cdot \tilde x_\al(t)\in \obG_Q.
\]
Then one has a map
\[
s_\al: \bT_\al := \al^\vee(\bG_m) \to \obT_Q
\]
given by
\[
\al^\vee(t) \mapsto \tilde n_\al(t) \cdot \tilde n_\al(-1).
\]
This is a section of $\obG_Q$ over $\obT_\al$, which is trivial at the identity element. This section is useful in describing the natural conjugation action of $N(\bT_Q)$ on $\obT_Q$. By \cite{BD01} Proposition 11.3, one has the nice formula:
\begin{equation}\label{eq:conjugation nice formula}
\tilde n_\al(1) \cdot \tilde t \cdot \tilde n_\al(1)^{-1}=\tilde t\cdot s_\al(\al^\vee(\al(t)^{-1})).
\end{equation}
We also use the following formula frequently (\cite{Gao18} (2)): for $t_1,t_2 \in \bG_m$,
\begin{equation}\label{eq:gao first formula}
s_\al(\al^\vee(t_1)) \cdot s_\al(\al^\vee(t_2)) = s_\al(\al^\vee(t_1 t_2))  \cdot \{t_1,t_2\}^{Q(\al^\vee)}.
\end{equation}
The collection of sections $\{s_\al: \al\in \Delta\}$ provides a collection of elements $s_\al(\al^\vee(a))\in \bT_Q$ with $a\in \bG_m$, and $\obT_Q$ is generated by $\bK_2$ and the collection of $s_\al(\al^\vee(a))$.

Taking points in $F(\!(\tau)\!)$, we have the element
\[
s_\al (\al^\vee(\tau)) \in \obT_Q(F(\!(\tau)\!)),
\]
which gives rise (via the construction of $\sce_Q$) to an element
\[
s_Q(\al^\vee) \in \sce_Q.
\]
Then we rigidify $\sce_Q$ by equipping it with the set $\{s_Q(\al^\vee)\mid \al^\vee \in \Delta^\vee\}$: there is a unique automorphism of $\sce_Q$ which fixes all these elements.

In the following, we shall fix a choice of the data $(\obG_Q,\obT_Q,\sce_Q)$ for each $\Gamma$-invariant $W$-invariant quadratic form $Q$ on $Y$ when $\bG$ is simply-connected. 

\subsection{Weyl group action on Brylinski-Deligne liftings}

Observe that $s_Q(\al^\vee)$ can be defined for every coroot (not necessarily simple coroot). We would like know how $s_Q(\al^\vee)$ behaves under the action of the Weyl group. Recall that for every $\al\in \Phi$, one can choose $w_\al:=n_\al(1) \in \bG(F_s)$ as a representative of the simple reflection $\mathbf{w}_\al$ corresponding to $\al$. Let $\mathbf{w}=\mathbf{w}_{\al_1} \cdots \mathbf{w}_{\al_{\ell}}$ be a minimum decomposition of $\mathbf{w}$, we choose the following representative of $\mathbf{w}$:
\[
w=w_{\al_1}\cdots w_{\al_{\ell}} \in \bG(F_s).
\]
This is independent of the minimum decomposition of $\mathbf{w}$. We choose $\tilde w_\al = \tilde n_\al(1)$ as a lift of $w_\al$ in $\bG(F_s)$. In any case, the conjugation action of $\obG(F_s)$ on $\obG(F_s)$ descents to an action of $\bG(F_s)$.

Fix a pair of roots $\al$ and $\beta$. Then we have a homomorphism
\[
\Int(w_\al)\circ x_{\beta}=w_\al \cdot x_\beta \cdot w_\al^{-1}: \bG_a \to \bU_{\mathbf{w}_\al(\beta)}.
\]
From \cite{BT84} Section 3.2.2, there is a sign $\epsilon_{\al,\beta}\in \{\pm 1\}$ associated to the Chevalley system of \'epinglage such that
\[
w_\al \cdot  x_\beta(t) \cdot w_\al^{-1}=x_{\mathbf{w}_\al(\beta)}(\epsilon_{\al,\beta}t),\qquad t \in \bG_m.
\]
This implies that for $t\in \bG_m$,
\begin{equation}\label{eq:conjugation of unipotent by weyl}
\tilde w_\al \cdot  \tilde x_{\beta}(t) \cdot \tilde w_\al^{-1}= \tilde x_{\mathbf{w}_\al(\beta)}(\epsilon_{\al,\beta} t).
\end{equation}

\begin{Lem}\label{lem:weyl action on E I}
For $t\in \bG_m$,
\[
\tilde w_\al \cdot  s_{\beta}(\beta^{\vee}(t)) \cdot \tilde w_\al^{-1}= \{\epsilon_{\al,\beta},t\}^{Q(\beta^\vee)} \cdot s_{\mathbf{w}_\al(\beta)}(\mathbf{w}_\al(\beta)^\vee (t)).
\]
\end{Lem}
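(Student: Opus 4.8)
The plan is to reduce the statement to the definition of $s_\beta(\beta^\vee(t))$ in terms of the lifts $\tilde n_\beta$ and then track how conjugation by $\tilde w_\al$ interacts with the Chevalley signs $\epsilon_{\al,\beta}$. Recall that by definition $s_\beta(\beta^\vee(t)) = \tilde n_\beta(t)\cdot \tilde n_\beta(-1)$, where $\tilde n_\beta(t) = \tilde x_\beta(t)\cdot \tilde x_{-\beta}(-t^{-1})\cdot \tilde x_\beta(t)$. So the left-hand side of the lemma is
\[
\tilde w_\al \cdot \tilde n_\beta(t)\cdot \tilde n_\beta(-1) \cdot \tilde w_\al^{-1}
= \big(\tilde w_\al \tilde n_\beta(t) \tilde w_\al^{-1}\big)\cdot \big(\tilde w_\al \tilde n_\beta(-1) \tilde w_\al^{-1}\big),
\]
and the problem becomes: conjugate each $\tilde n_\beta(s)$ by $\tilde w_\al$ using the formula \eqref{eq:conjugation of unipotent by weyl}.

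First I would compute $\tilde w_\al \tilde n_\beta(s)\tilde w_\al^{-1}$. Writing out $\tilde n_\beta(s)$ as the product of three $\tilde x_{\pm\beta}$ terms and applying \eqref{eq:conjugation of unipotent by weyl} to each factor, we get
\[
\tilde w_\al \tilde n_\beta(s)\tilde w_\al^{-1}
= \tilde x_{\mathbf{w}_\al(\beta)}(\epsilon_{\al,\beta}s)\cdot \tilde x_{-\mathbf{w}_\al(\beta)}(-\epsilon_{\al,-\beta}s^{-1})\cdot \tilde x_{\mathbf{w}_\al(\beta)}(\epsilon_{\al,\beta}s).
\]
Here I would need the compatibility $\epsilon_{\al,-\beta} = \epsilon_{\al,\beta}$ (or, if the convention produces an extra sign, track it carefully from \cite{BT84}); this is the kind of sign bookkeeping that requires care but is standard for a Chevalley system. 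Granting this, the right-hand side is exactly $\tilde n_{\mathbf{w}_\al(\beta)}(\epsilon_{\al,\beta}s)$, because $-\epsilon_{\al,\beta}(\epsilon_{\al,\beta}s)^{-1} = -\epsilon_{\al,\beta}s^{-1}$ when $\epsilon_{\al,\beta}^2 = 1$. Thus
\[
\tilde w_\al \cdot s_\beta(\beta^\vee(t)) \cdot \tilde w_\al^{-1}
= \tilde n_{\mathbf{w}_\al(\beta)}(\epsilon_{\al,\beta}t)\cdot \tilde n_{\mathbf{w}_\al(\beta)}(-\epsilon_{\al,\beta}).
\]

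Now set $\gamma = \mathbf{w}_\al(\beta)$, a root with $\gamma^\vee = \mathbf{w}_\al(\beta)^\vee$ and $Q(\gamma^\vee) = Q(\beta^\vee)$ by $W$-invariance of $Q$. The expression $\tilde n_\gamma(\epsilon t)\cdot \tilde n_\gamma(-\epsilon)$ with $\epsilon = \epsilon_{\al,\beta}$ should be compared with $s_\gamma(\gamma^\vee(t)) = \tilde n_\gamma(t)\cdot \tilde n_\gamma(-1)$. The relation $\tilde n_\gamma(ab) = \tilde n_\gamma(a)\cdot s_\gamma(\gamma^\vee(b))\cdot(\text{correction})$, combined with the cocycle formula \eqref{eq:gao first formula} for $s_\gamma(\gamma^\vee(\cdot))$, lets me factor out the scalar. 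Concretely, using $s_\gamma(\gamma^\vee(t)) = \tilde n_\gamma(t)\tilde n_\gamma(-1)$ and rewriting $\tilde n_\gamma(\epsilon t)\tilde n_\gamma(-\epsilon)$ by inserting $\tilde n_\gamma(-1)\tilde n_\gamma(-1)^{-1}$ appropriately, one expresses the result as $s_\gamma(\gamma^\vee(t))$ times $s_\gamma$ evaluated on scalars built from $\epsilon$, which by \eqref{eq:gao first formula} collapse to $\{\epsilon,t\}^{Q(\gamma^\vee)}$ (with the other contributions, being of the form $\{\epsilon,\pm 1\}^{Q(\gamma^\vee)}$ or $\{-1,\epsilon\}^{\cdot}$, vanishing since $\epsilon = \pm1$ and tame symbols at $\pm 1$ of such arguments are trivial after pushing to $\sce_Q$). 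This yields exactly $\{\epsilon_{\al,\beta},t\}^{Q(\beta^\vee)}\cdot s_{\mathbf{w}_\al(\beta)}(\mathbf{w}_\al(\beta)^\vee(t))$.

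The main obstacle is the precise sign and symbol bookkeeping in the last two steps: getting the relation between $\epsilon_{\al,-\beta}$ and $\epsilon_{\al,\beta}$ right, and making sure that when we pull the scalar $\epsilon_{\al,\beta}$ out of $\tilde n_{\mathbf{w}_\al(\beta)}(\epsilon_{\al,\beta}t)$ the only surviving Steinberg symbol is $\{\epsilon_{\al,\beta},t\}^{Q(\beta^\vee)}$ and not some extra $\{-1,\epsilon_{\al,\beta}\}$-type term. This is where one must be careful about whether a factor of $\{-1,-1\}$ or $\{\epsilon,\epsilon\}$ appears, and argue it away using $\epsilon^2 = 1$ and the bimultiplicativity and Steinberg relations for the tame symbol in $\bK_2$. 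Everything else is a direct unwinding of the definitions together with \eqref{eq:conjugation of unipotent by weyl} and \eqref{eq:gao first formula}.
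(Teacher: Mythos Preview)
Your proposal is correct and follows essentially the same route as the paper: expand $s_\beta(\beta^\vee(t))=\tilde n_\beta(t)\tilde n_\beta(-1)$, conjugate each $\tilde n_\beta(s)$ using \eqref{eq:conjugation of unipotent by weyl} to obtain $\tilde n_{\mathbf{w}_\al(\beta)}(\epsilon_{\al,\beta}t)\cdot \tilde n_{\mathbf{w}_\al(\beta)}(-\epsilon_{\al,\beta})$, and then extract the symbol. The only difference is in the final step: where you try to reduce $\tilde n_\gamma(\epsilon t)\tilde n_\gamma(-\epsilon)$ to $\{\epsilon,t\}^{Q(\gamma^\vee)}s_\gamma(\gamma^\vee(t))$ by inserting $\tilde n_\gamma(-1)\tilde n_\gamma(-1)^{-1}$ and invoking \eqref{eq:gao first formula}, the paper simply cites \cite{Gao18}~(3), which is exactly this identity and disposes of the sign bookkeeping you were worried about in one stroke. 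Your concern about $\epsilon_{\al,-\beta}=\epsilon_{\al,\beta}$ is handled implicitly in both arguments (it is a standard property of a Chevalley system).
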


\begin{proof}
The calculation here is identical to \cite{Gao18} Corollary 7.4.
For $t\in \bG_m$, we have
\[
\tilde w_\al\cdot \tilde n_\beta(t)\cdot \tilde w_\al^{-1}=(\tilde w_\al\cdot \tilde x_\beta(t)\cdot  \tilde w_\al^{-1}) (\tilde w_\al\cdot \tilde x_{-\beta}(-t^{-1})\tilde w_\al^{-1}) (\tilde w_\al \cdot  \tilde x_\beta(t)\cdot \tilde w_\al^{-1}) .
\]
By \eqref{eq:conjugation of unipotent by weyl}, this is
\[
\tilde x_{\mathbf{w}_{\al}(\beta)}(\epsilon_{\al,\beta}t) \cdot \tilde x_{-\mathbf{w}_{\al}(\beta)}(-\epsilon_{\al,\beta}t^{-1}) \cdot \tilde x_{\mathbf{w}_{\al}(\beta)}(\epsilon_{\al,\beta}t) = \tilde n_{\mathbf{w}_\al(\beta)}(\epsilon_{\al,\beta}t).
\]
From this we deduce that for any $t\in \bG_m$,
\[
\tilde w_\al \cdot  \tilde n_\beta(t) \cdot \tilde n_\beta(-1) \cdot \tilde w_{\al}^{-1}=  \tilde n_{\mathbf{w}_\al(\beta)}(\epsilon_{\al,\beta} t) \cdot \tilde n_{\mathbf{w}_\al(\beta)}(-\epsilon_{\al,\beta})= \{\epsilon_{\al,\beta}, t\}^{Q(\beta^\vee)} s_{\mathbf{w}_\al(\beta)}(\mathbf{w}_\al(\beta)^\vee(t)).
\]
The last equality follows from \cite{Gao18} (3).
\end{proof}

\begin{Lem}\label{lem:formula on BD lift}
We have the following results.
\begin{enumerate}
\item If $\la \al,\beta^\vee\ra = -1$, we have
\[
s_Q(\al^\vee+\beta^\vee)=s_Q(\beta^\vee) \cdot s_Q(\al^\vee) \cdot \epsilon_{\al,\beta}^{Q(\beta^\vee)}.
\]
\item We have $s_Q(-\al^\vee) \cdot s_Q(\al^\vee)= 1$.
\end{enumerate}
\end{Lem}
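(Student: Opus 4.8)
The plan is to derive both identities from Lemma \ref{lem:weyl action on E I} together with the two displayed formulas \eqref{eq:gao first formula} and \eqref{eq:conjugation nice formula}, working throughout in $\sce_Q$ (equivalently, in $\obT_Q(F(\!(\tau)\!))$ via the element $s_\al(\al^\vee(\tau))$, so that the symbol $\{t_1,t_2\}$ becomes the tame symbol / commutator pairing). For part (2), I would compute $s_Q(-\al^\vee)$ by applying the Weyl element $\tilde w_\al = \tilde n_\al(1)$: since $\mathbf{w}_\al(\al^\vee) = -\al^\vee$, Lemma \ref{lem:weyl action on E I} with $\beta=\al$ gives $\tilde w_\al \cdot s_\al(\al^\vee(t)) \cdot \tilde w_\al^{-1} = \{\epsilon_{\al,\al},t\}^{Q(\al^\vee)} \cdot s_{-\al}((-\al^\vee)(t))$. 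One has $\epsilon_{\al,\al}=-1$ from the standard $\mathbf{SL}_2$ computation (or it can be absorbed), and combining with the analogous computation applied a second time — or simply using that $\tilde w_\al$ has a controlled square — forces $s_Q(-\al^\vee)\cdot s_Q(\al^\vee)$ to lie in the kernel rigidification and hence be trivial. Concretely, I expect the cleanest route is: pass to $F(\!(\tau)\!)$-points, use that $n_\al(\tau) n_\al(1)^{-1}$ and its inverse multiply to give the coroot relation $\al^\vee(\tau)\cdot(-\al^\vee)(\tau) = \al^\vee(\tau)\al^\vee(\tau^{-1})\cdot(\text{central})$, apply \eqref{eq:gao first formula}, and note $\{\tau,\tau^{-1}\}=\{\tau,\tau\}^{-1}$ together with $\{\tau,\tau\}=\{\tau,-1\}$ in $\bK_2$; the Steinberg relation $\{\tau,-\tau\}=1$ then collapses everything.

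For part (1), under the hypothesis $\la\al,\beta^\vee\ra = -1$ the coroots $\al^\vee,\beta^\vee$ span an $A_2$-subsystem, and $\al^\vee+\beta^\vee = \mathbf{w}_\al(\beta^\vee)$ (equivalently $\mathbf{w}_\beta(\al^\vee)$). Thus the natural strategy is to apply Lemma \ref{lem:weyl action on E I} with the roles $\al \leftrightarrow \beta$: we get $\tilde w_\al \cdot s_\beta(\beta^\vee(t)) \cdot \tilde w_\al^{-1} = \{\epsilon_{\al,\beta},t\}^{Q(\beta^\vee)}\cdot s_{\mathbf{w}_\al(\beta)}(\mathbf{w}_\al(\beta)^\vee(t))$, i.e.\ this already produces $s_Q(\al^\vee+\beta^\vee)$ up to the symbol $\{\epsilon_{\al,\beta},\cdot\}^{Q(\beta^\vee)}$. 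The remaining task is to rewrite the left-hand side $\tilde w_\al \cdot \tilde n_\beta(\tau) \tilde n_\beta(-1) \cdot \tilde w_\al^{-1}$ by expanding $\tilde w_\al = \tilde n_\al(1)$ and using the commutation relations among the $\tilde x_\gamma$ inside the rank-two group; in the $A_2$ case the relevant Chevalley commutator $[\tilde x_\al(u),\tilde x_\beta(v)] = \tilde x_{\al+\beta}(\pm uv)$ has no higher terms, so the bookkeeping is finite. Carrying this out and collecting the images in $\sce_Q$ should yield $s_Q(\beta^\vee)\cdot s_Q(\al^\vee)$ on the nose, with the leftover sign $\epsilon_{\al,\beta}^{Q(\beta^\vee)}$ appearing exactly as stated once one records $Q(\mathbf{w}_\al(\beta)^\vee) = Q(\beta^\vee)$ by $W$-invariance of $Q$.

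The main obstacle I anticipate is \emph{sign bookkeeping}: tracking the Chevalley structure constants $\epsilon_{\al,\beta}$ and the signs produced by passing $\tilde w_\al$ past the various $\tilde x_\gamma$ via \eqref{eq:conjugation of unipotent by weyl}, and making sure the symbols $\{\epsilon_{\al,\beta},t\}^{Q(\beta^\vee)}$ specialize correctly (at $t=\tau$) and cancel or survive as claimed. A secondary subtlety is the ordering of factors: $\sce_Q$ is noncommutative with commutator $(-1)^{B_Q(\cdot,\cdot)}$, so when I swap $s_Q(\al^\vee)$ past $s_Q(\beta^\vee)$ I must account for $(-1)^{B_Q(\al^\vee,\beta^\vee)}$; fortunately $B_Q(\al^\vee,\beta^\vee) = Q(\al^\vee+\beta^\vee)-Q(\al^\vee)-Q(\beta^\vee)$ is determined by $\la\al,\beta^\vee\ra=-1$ and $W$-invariance, so this is a fixed parity and can be checked once. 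Once these signs are pinned down the identities follow by direct manipulation, so I would present the argument as a short reduction to Lemma \ref{lem:weyl action on E I} rather than redoing the rank-two computation from scratch, citing \cite{BD01} and \cite{Gao18} for the underlying $\tilde n_\al$, $\tilde x_\al$ relations.
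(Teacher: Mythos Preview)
Your overall strategy—compute $\tilde w_\al \cdot s_\beta(\beta^\vee(\tau)) \cdot \tilde w_\al^{-1}$ and extract the identity in $\sce_Q$—matches the paper's, and you correctly identify Lemma~\ref{lem:weyl action on E I} as giving one expression for this conjugate. But you miss the key simplification for the \emph{other} expression. You propose, for part~(1), to expand $\tilde w_\al = \tilde n_\al(1)$ and push it through the $\tilde x_\gamma$'s via rank-two Chevalley commutator relations; for part~(2), you suggest iterating the Weyl action or controlling $\tilde w_\al^2$. Both routes would involve exactly the sign bookkeeping you flag as the main obstacle.

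The paper avoids all of this by applying \eqref{eq:conjugation nice formula} directly: since $s_\beta(\beta^\vee(\tau))\in\obT_Q$, that formula gives in one line
\[
\tilde w_\al \cdot s_\beta(\beta^\vee(\tau)) \cdot \tilde w_\al^{-1}
= s_\beta(\beta^\vee(\tau)) \cdot s_\al\bigl(\al^\vee(\tau^{-\la\al,\beta^\vee\ra})\bigr)
= s_\beta(\beta^\vee(\tau)) \cdot s_\al(\al^\vee(\tau)),
\]
and equating with the output of Lemma~\ref{lem:weyl action on E I} (plus $\mathrm{Res}\{\epsilon_{\al,\beta},\tau\}=\epsilon_{\al,\beta}$) finishes part~(1) immediately. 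Part~(2) is the same two-way computation with $\beta=\al$, using \eqref{eq:gao first formula} to rewrite $s_\al(\al^\vee(\tau))\cdot s_\al(\al^\vee(\tau^{-2}))$ and $s_\al(\al^\vee(\tau))\cdot s_\al(\al^\vee(\tau^{-1}))$; no commutator expansion or Steinberg-relation juggling is needed. You listed \eqref{eq:conjugation nice formula} among your ingredients but never actually deployed it—doing so eliminates the bookkeeping you were worried about.
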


\begin{proof}
On the one hand, $\tilde w_{\al} \cdot s_{\beta}(\beta^{\vee}(\tau)) \cdot \tilde w_{\al}^{-1} =s_{\mathbf{w}_\al(\beta)}(\mathbf{w}_\al(\beta)^\vee(\tau)) \cdot \{\epsilon_{\al,\beta},\tau\}^{Q(\beta^\vee)}$. On the other hand, from \eqref{eq:conjugation nice formula}, we obtain
\[
\begin{aligned}
& \tilde w_{\al} \cdot s_{\beta}(\beta^{\vee}(\tau)) \cdot \tilde w_{\al}^{-1} =s_{\beta}(\beta^{\vee}(\tau)) \cdot s_{\al}(\al^{\vee}(\al(\beta^\vee(\tau))^{-1})) \\
=&s_{\beta}(\beta^{\vee}(\tau))\cdot s_{\al}(\al^{\vee}(\tau^{-\la\al,\beta^\vee\ra}))=s_{\beta}(\beta^{\vee}(\tau))\cdot s_{\al}(\al^{\vee}(\tau)).
\end{aligned}
\]
Observe that under the map $\bK_2(F) \to \bK_1(F)=F^\times$, $\mathrm{Res}\{\epsilon_{\al,\beta},\tau\}=\epsilon_{\al,\beta}$. Now the first statement follows.

The argument for the second statement is similar. We also need the following consequences of \eqref{eq:gao first formula}:
\[
\begin{aligned}
s_\al(\al^{\vee}(\tau)) \cdot s_\al(\al^{\vee}(\tau^{-1}))  = & \{\tau,\tau^{-1}\}^{Q(\al^\vee)}. \\
s_\al(\al^\vee(\tau))\cdot s_\al(\al^\vee(\tau^{-2})) = & s_\al(\al^\vee(\tau^{-1})) \{\tau, \tau^{-2}\}^{Q(\al^\vee)}.
\end{aligned}
\]
Note that
\[
\tilde w_{\al} \cdot s_{\al}(\al^{\vee}(\tau)) \cdot \tilde w_{\al}^{-1} =s_{-\al}(-\al^\vee(\tau)) \cdot \epsilon_{\al,-\al}^{Q(\al^\vee)}
=s_{-\al}(-\al^\vee(\tau)) \cdot (-1)^{Q(\al^\vee)}.
\]
Again, by \eqref{eq:conjugation nice formula},
\[
\tilde w_{\al} \cdot s_{\al}(\al^{\vee}(\tau)) \cdot \tilde w_{\al}^{-1} = s_{\al}(\al^{\vee}(\tau)) \cdot s_\al (\al^\vee(\tau^{-2})) = s_\al (\al^\vee(\tau^{-1}))\cdot \{\tau, \tau^{-2}\}^{Q(\al^\vee)}.
\]
From these two equations, we deduce that
\[
s_{-\al}(-\al^\vee(\tau)) \cdot (-1)^{Q(\al^\vee)}=s_\al (\al^\vee(\tau^{-1}))\cdot \{\tau, \tau^{-2}\}^{Q(\al^\vee)}.
\]
We now have
\[
s_{-\al}(-\al^\vee(\tau))\cdot s_\al(\al^\vee(\tau))= \{\tau,\tau^{-1}\}^{Q(\al^\vee)} \cdot (-1)^{Q(\al^\vee)} \cdot \{\tau, \tau^{-2}\}^{Q(\al^\vee)}.
\]
From this we deduce that
\[
s_Q(\al^\vee) \cdot s_Q(-\al^\vee) = 1.
\]
\end{proof}

We write $\al^\vee$ as a sum of simple coroots:
\[
\al^\vee = \al_{i_1}^\vee + \cdots  + \al_{i_n}^\vee.
\]
We say that this expression has property $(\ast)$ if the following holds:
\begin{equation}\label{eq:condition star}
(\ast): \al_{i_1}^\vee, \ \al_{i_1}^\vee+\al_2^\vee, \  \cdots, \ \al_{i_1}^\vee + \cdots  + \al_{i_n}^\vee \in \Phi^\vee.
\end{equation}

\begin{Lem}\label{lem:additivity of sQ}
Assume that $\epsilon_{\al,\beta}^{Q(\beta^\vee)}=1$ and $\la \al,\beta^\vee\ra = -1$ for all $\al,\beta$. If we write $\al^\vee$ as a sum of simple coroots such that property ($\ast$) holds, then
\[
s_{Q}(\al^\vee)=s_{Q}(\al_{i_n}^\vee)\cdots s_{Q}(\al_{i_2}^\vee) \cdot s_{Q}(\al_{i_1}^\vee).
\]
\end{Lem}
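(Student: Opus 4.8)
The statement is an inductive claim about decomposing $s_Q(\al^\vee)$ into a product of $s_Q$ of simple coroots. The plan is to induct on $n$, the number of simple coroots in the expression $\al^\vee = \al_{i_1}^\vee + \cdots + \al_{i_n}^\vee$. The case $n=1$ is trivial. For the inductive step, set $\beta^\vee := \al_{i_1}^\vee + \cdots + \al_{i_{n-1}}^\vee$, which lies in $\Phi^\vee$ by property $(\ast)$, and set $\gamma := \al_{i_n}$. Property $(\ast)$ also guarantees $\beta^\vee + \gamma^\vee = \al^\vee \in \Phi^\vee$. I would like to apply Lemma \ref{lem:formula on BD lift}(1) with the roles of $\al,\beta$ there played by $\gamma$ (i.e.\ $\al_{i_n}$) and $\beta$ here; this requires $\la \gamma, \beta^\vee \ra = -1$, which is exactly one of the running hypotheses of this lemma. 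Lemma \ref{lem:formula on BD lift}(1) then gives
\[
s_Q(\al^\vee) = s_Q(\beta^\vee + \gamma^\vee) = s_Q(\beta^\vee)\cdot s_Q(\gamma^\vee)\cdot \epsilon_{\gamma,\beta}^{Q(\beta^\vee)},
\]
and the hypothesis $\epsilon_{\al,\beta}^{Q(\beta^\vee)}=1$ for all $\al,\beta$ kills the sign term, leaving $s_Q(\al^\vee) = s_Q(\beta^\vee)\cdot s_Q(\al_{i_n}^\vee)$.

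It then remains to apply the inductive hypothesis to $\beta^\vee = \al_{i_1}^\vee + \cdots + \al_{i_{n-1}}^\vee$. For this I must check that the truncated expression still has property $(\ast)$: but the chain of partial sums for $\beta^\vee$ is precisely the initial segment $\al_{i_1}^\vee,\ \al_{i_1}^\vee + \al_{i_2}^\vee,\ \ldots,\ \al_{i_1}^\vee + \cdots + \al_{i_{n-1}}^\vee$ of the chain for $\al^\vee$, so all these lie in $\Phi^\vee$ by $(\ast)$ for $\al^\vee$. Hence the inductive hypothesis yields
\[
s_Q(\beta^\vee) = s_Q(\al_{i_{n-1}}^\vee)\cdots s_Q(\al_{i_2}^\vee)\cdot s_Q(\al_{i_1}^\vee),
\]
and substituting gives the claimed identity for $s_Q(\al^\vee)$.

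There is one bookkeeping subtlety I would be careful about: Lemma \ref{lem:formula on BD lift}(1) is stated for two roots $\al,\beta$ with $\la\al,\beta^\vee\ra=-1$, and to invoke it I am treating $\beta^\vee$ (a possibly non-simple coroot) as one of the two coroots. This is legitimate because, as the paper notes just before Lemma \ref{lem:weyl action on E I}, $s_Q(\al^\vee)$ is defined for \emph{every} coroot, and the proof of Lemma \ref{lem:formula on BD lift} only uses the conjugation formulas \eqref{eq:conjugation nice formula}, \eqref{eq:gao first formula} and Lemma \ref{lem:weyl action on E I}, all of which are valid for arbitrary roots; so Lemma \ref{lem:formula on BD lift}(1) holds with $\al$ an arbitrary root and $\beta$ an arbitrary root provided $\la\al,\beta^\vee\ra=-1$. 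If one wanted to avoid relying on that extension, an alternative is to induct in the other direction — peel off $\al_{i_1}^\vee$ first — but then the partial-sum chain for the remaining coroot is $\al_{i_2}^\vee,\ \al_{i_2}^\vee+\al_{i_3}^\vee,\ \ldots$, which need not satisfy $(\ast)$, so peeling off the \emph{last} simple coroot as above is the natural choice.

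The main obstacle is not any single hard computation — it is making sure the hypotheses $\la\al,\beta^\vee\ra=-1$ and $\epsilon_{\al,\beta}^{Q(\beta^\vee)}=1$ are genuinely available at each step with $\beta^\vee$ the relevant partial sum (in particular that $\la\al_{i_n},\beta^\vee\ra=-1$, which is implicitly part of what property $(\ast)$ is designed to force, since $\beta^\vee$ and $\beta^\vee+\al_{i_n}^\vee$ both being coroots while $\al_{i_n}^\vee$ is a simple coroot pins down the pairing), and tracking the order of the factors correctly through the induction so that the simple coroots come out in the reversed order $s_Q(\al_{i_n}^\vee)\cdots s_Q(\al_{i_1}^\vee)$ as stated.
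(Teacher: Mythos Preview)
Your approach---induction on $n$ using Lemma~\ref{lem:formula on BD lift}(1) together with property~$(\ast)$---is exactly what the paper does (its proof is the one-liner ``This follows from Lemma~\ref{lem:formula on BD lift} and property~$(\ast)$ by induction'').

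There is, however, a bookkeeping slip in your application of Lemma~\ref{lem:formula on BD lift}(1). With your role assignment (your $\gamma=\al_{i_n}$ as the lemma's $\al$, your partial sum $\beta$ as the lemma's $\beta$), the lemma gives $s_Q(\al^\vee)=s_Q(\beta^\vee)\cdot s_Q(\al_{i_n}^\vee)$, and substituting the inductive hypothesis yields
\[
s_Q(\al_{i_{n-1}}^\vee)\cdots s_Q(\al_{i_1}^\vee)\cdot s_Q(\al_{i_n}^\vee),
\]
which is not the stated order $s_Q(\al_{i_n}^\vee)\cdots s_Q(\al_{i_1}^\vee)$. The fix is to swap the roles: take the lemma's $\al$ to be the partial-sum root and the lemma's $\beta$ to be $\al_{i_n}$. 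Since the standing hypotheses $\la\al,\beta^\vee\ra=-1$ and $\epsilon_{\al,\beta}^{Q(\beta^\vee)}=1$ are assumed for all pairs, this is equally valid and gives $s_Q(\al^\vee)=s_Q(\al_{i_n}^\vee)\cdot s_Q(\beta^\vee)$, from which the induction closes in the stated order. (Alternatively, applying Lemma~\ref{lem:formula on BD lift}(1) both ways under the blanket hypotheses shows $s_Q(\beta^\vee)$ and $s_Q(\al_{i_n}^\vee)$ commute, so your expression is in fact equal to the claimed one---but this deserves to be said explicitly, as you yourself flag in your final paragraph.)
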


\begin{proof}
This follows from Lemma \ref{lem:formula on BD lift} and property $(\ast)$ by induction.
\end{proof}

\subsection{General reductive groups}

Let $\bG$ be a connected reductive group over $F$, with a fixed Chevalley system of \'epinglage for $(\bG,\bT)$. Let $X^{sc}\subset X\otimes_{\bZ} \bq$ be the dual lattice of $Y^{sc}$. Then the quadruple $(X^{sc}, \Delta, Y^{sc}, Y)$ is the root system of the simply-connected cover $\bG^{sc}$ of the derived group $\bG^{der}$ of $\bG$, and one has a natural map
\[
q:\bG^{sc}\to \bG^{der} \to \bG.
\]
Let $\bT^{sc}$ be the preimage of $\bT$ in $\bG^{sc}$. It is a maximal $F$-torus of $\bG^{sc}$ with cocharacter group $Y^{sc}\subset Y$.  so that one has a commutative diagram
\[
\begin{tikzpicture}[scale=1.5]
\node (A1) at (0,1) {$\bT^{sc}$};
\node (B1) at (1.5,1) {$\bG^{sc}$};
\node (A2) at (0,0) {$\bT$};
\node (B2) at (1.5,0) {$\bG$};
\path[->,font=\scriptsize,>=angle 90]
(A1) edge  (B1)
(A2) edge  (B2)
(A1) edge (A2)
(B1) edge (B2);
\end{tikzpicture}
\]
The restriction $Q^{sc}:=Q|_{Y^{sc}}$ gives an element $\obG^{sc}\in \CExt(\bG^{sc},\bK_2)$. It also gives the extension $\sce_{Q|_{Y^{sc}}}^{sc}$. For simplicity, we just write $\sce_{Q^{sc}}$ with no confusion caused.

\begin{Thm}
The category $\CExt(\bG,\bK_2)$ is equivalent to the category $\BD(\bG,\bT)$ whose objects are triples $(Q,\sce,f)$, where
\begin{itemize}
\item $Q:Y\to \bZ$ is a $\Gamma$-invariant $W$-invariant quadratic form;
\item $\sce$ is a $\Gamma$-equivariant extension of $Y$ by $F_s^\times$ with commutator map $[y_1,y_2]=(-1)^{B_Q(y_1,y_2)}$;
\item $f$ is a $\Gamma$-equivariant morphism from $\sce_{Q^{sc}}$ to $\sce$ such that the following diagram commute:
    \begin{equation}\label{eq:third invariant}
\begin{tikzpicture}[scale=1.5]
\node (A1) at (0,1) {$1$};
\node (B1) at (1.5,1) {$F_s^\times$};
\node (C1) at (3.5,1) {$\sce_{Q^{sc}}$};
\node (D1) at (5.5,1) {$Y^{sc}$};
\node (E1) at (7,1) {$1$};
\node (A2) at (0,0) {$1$};
\node (B2) at (1.5,0) {$F_s^\times$};
\node (C2) at (3.5,0) {$\sce$};
\node (D2) at (5.5,0) {$Y$};
\node (E2) at (7,0) {$1$};
\path[->,font=\scriptsize,>=angle 90]
(A1) edge  (B1)
(B1) edge  (C1)
(C1) edge  (D1)
(D1) edge  (E1)
(A2) edge  (B2)
(B2) edge  (C2)
(C2) edge  (D2)
(D2) edge  (E2)
(C1) edge (C2)
(D1) edge (D2)
(B1) edge (B2);
\end{tikzpicture}
\end{equation}
\end{itemize}
The set of morphisms from $(Q,\sce,f)$ to $(Q',\sce',f')$ is empty unless $Q=Q'$, in which case it consists of $\Gamma$-equivariant isomorphisms of extensions $\phi:\sce\to \sce'$ such that $f=f'\circ \phi$. 
\end{Thm}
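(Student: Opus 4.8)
The plan is to exhibit an explicit functor $\Phi\colon \CExt(\bG,\bK_2)\to\BD(\bG,\bT)$, prove it is fully faithful by a generation argument (by $\bT$ and the root subgroups), and finally establish essential surjectivity by a gluing construction; the last step will be the main obstacle.

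\emph{Constructing $\Phi$.} Given $\obG$, I would first restrict the extension along $\bT\hookrightarrow\bG$, obtaining $\obT$ and hence, by the classification of $\CExt(\bT,\bK_2)$ recalled above, a pair $(Q,\sce)$ with $Q$ a $\Gamma$-invariant quadratic form on $Y$ and the prescribed commutator $(-1)^{B_Q}$. To see that $Q$ is $W$-invariant, I would lift a representative $n_\al\in N(\bT)(F_s)$ of each simple reflection to $\obG(F_s)$; conjugation by that lift is an isomorphism of $\obT_{F_s}$ onto its pullback along $\mathbf{w}_\al$, so $Q\circ\mathbf{w}_\al=Q$. Next I would pull $\obG$ back along $q\colon\bG^{sc}\to\bG$ to get $\obG^{sc}\in\CExt(\bG^{sc},\bK_2)$; since $q$ restricts to the map $\bT^{sc}\to\bT$ realizing $Y^{sc}\hookrightarrow Y$, the quadratic form of $\obG^{sc}$ is $Q^{sc}=Q|_{Y^{sc}}$, so the simply-connected classification identifies $\obG^{sc}$, up to unique isomorphism, with the extension attached to $Q^{sc}$. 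Because the construction of $\sce$ from $\obT$ (pull back along $Y\to\bT(F_s(\!(\tau)\!))$, push out along $\mathrm{Res}$) is functorial, the canonical morphism $\obT^{sc}\to\obT$ induced by $q$ yields a $\Gamma$-equivariant $f\colon\sce_{Q^{sc}}\to\sce$ fitting into \eqref{eq:third invariant}. On morphisms $\Phi$ is restriction to $\bT$; additivity of restriction and pullback makes $\Phi$ a morphism of commutative Picard categories, which is routine.

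\emph{Full faithfulness.} Since $\bG$ is generated by $\bT$ and the root subgroups $\bU_\al$, and $\bK_2\subset\obT$ (it is the kernel of $\obT\to\bT$), the extension $\obG$ is generated by $\obT$ together with the canonical liftings $\tilde x_\al(\bU_\al)$. These liftings are intrinsic to $\obG$ (they come from the canonical splitting of the torsor over each $\bU_\al\simeq\bG_a$ and the fixed Chevalley \'epinglage), so a morphism $\obG\to\obG'$ of extensions automatically carries $\tilde x_\al$ to $\tilde x_\al'$, and is therefore determined by its restriction to $\obT$, namely a $\Gamma$-equivariant isomorphism $\phi\colon\sce\to\sce'$ over $\mathrm{id}_Y$ and $\mathrm{id}_{F_s^\times}$; in particular $\Phi$ is faithful. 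Because the cover $\obG^{sc}$ admits only the identity endomorphism (simply-connected case), the induced morphism on pullbacks to $\bG^{sc}$ is forced, which --- after unwinding the rigidification by the elements $s_Q(\al^\vee)$ and the relations of Lemmas \ref{lem:weyl action on E I} and \ref{lem:formula on BD lift} --- is exactly the compatibility of $\phi$ with $f$ and $f'$ appearing in $\BD(\bG,\bT)$; conversely any such $\phi$ extends, being defined on generators with consistency guaranteed by those same relations. Hence $\Phi$ induces bijections on $\Hom$-sets.

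\emph{Essential surjectivity (the main obstacle).} Given $(Q,\sce,f)$, the simply-connected classification supplies a cover $\obG^{sc}$ of $\bG^{sc}$ and the torus classification supplies an extension $\obT$ of $\bT$ realizing $(Q,\sce)$; both restrict to $\bT^{sc}$, and $f$ identifies $\sce_{Q^{sc}}$ with its image inside $\sce$. The task is to glue $\obG^{sc}$ and $\obT$, along this identification, into a $\bK_2$-torsor $\obG$ on $\bG$. I would construct $\obG$ first over the big Bruhat cell (an open subscheme that is a product of root subgroups and $\bT$) from the canonical liftings $\tilde x_\al$ and the rigidified $\obT$, and then propagate it over $\bG$; the verification that this is well defined reduces to a list of Steinberg-type relations --- the root-group commutator formulas, the $\tilde n_\al$-conjugation formula \eqref{eq:conjugation nice formula}, the Weyl action formula of Lemma \ref{lem:weyl action on E I}, and additivity of $s_Q$ as in Lemma \ref{lem:additivity of sQ} --- holding in a compatible way. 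On the derived part these hold because $\obG^{sc}$ already exists, and $f$ transports the compatibility to the torus side, so the relevant obstruction vanishes; by construction $\Phi(\obG)\cong(Q,\sce,f)$. The delicate point I expect to spend the most effort on is precisely this global consistency of the gluing, including the uniform bookkeeping of the \'epinglage signs $\epsilon_{\al,\beta}$; once it is settled, full faithfulness and additivity give that $\Phi$ is an equivalence of commutative Picard categories.
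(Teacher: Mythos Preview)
The paper does not prove this theorem at all: it is stated in Section~\ref{sec:BD extensions} purely as background, recalled from \cite{BD01} (see the sentence ``In \cite{BD01}, Brylinski and Deligne made a deep study of $\CExt(\bG,\bK_2)$ and obtained an elegant classification \ldots\ We first recall their results''), and no proof or proof sketch is supplied. So there is no ``paper's own proof'' to compare your proposal against.

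Your outline is a plausible high-level sketch of the Brylinski--Deligne argument, and the pieces you invoke (functoriality of the $(Q,\sce)$ construction in the torus, rigidity in the simply-connected case, generation by $\obT$ and the canonical root-group liftings) are indeed the ingredients. That said, your essential surjectivity step is where the real work in \cite{BD01} lies, and your description (``construct $\obG$ first over the big Bruhat cell \ldots\ and then propagate it over $\bG$'') understates the difficulty: Brylinski--Deligne do not build $\obG$ by hand from Steinberg relations but instead use descent arguments and a careful analysis of the category of extensions via the exact sequence relating $\bG$, $\bG^{sc}$, and the torus of the center/cocenter. If you intend to actually carry this out, you should consult \cite{BD01} directly rather than attempt an ad hoc gluing, since the sign bookkeeping you flag is genuinely delicate and is handled there by structural rather than computational means.
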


\section{Topological covering groups}\label{sec:topological covering}

We now pass from the algebro-geometric world of multiplicative $\bK_2$-torsors to the world of topological central extensions. We first assume that $F$ is a local field. If $F$ is non-Archimedean, let $\sco$ denote its ring of integers and let $p$ be the residue characteristic.

\subsection{BD covering groups}

Start with a multiplicative $\bK_2$-torsor $\obG$ on $\bG$, with associated BD data $(Q,\sce,f)$. Since $H^1(F,\bK_2)=0$, by taking $F$-points, we obtain a short exact sequence of abstract groups
\[
1\to \bK_2(F) \to \obG(F) \to G=\bG(F) \to 1.
\]
Let $\mu(F)$ denotes the set of roots of unity contained in the local field $F\neq \bc$; when $F=\bc$, we let $\mu(F)$ to be the trivial group. Then the Hilbert symbol gives a map
\[
(-,-)_F: \bK_2(F) \to \mu(F).
\]
For any $n$ dividing $\# \mu(F)$, one has the $n$-th Hilbert symbol
\[
(-,-)_n : (-,-)_F^{\#\mu(F)/n}: \bK_2(F) \to \mu_n(F).
\]
By pushing out the exact sequence via the Hilbert symbol $\bK_2(F)\to \mu_n(F)$, we obtain an exact sequence of locally compact topological groups
\[
1\to \mu_n(F) \to \oG \to G \to 1.
\]
We call this the BD covering group associated to the BD data $(Q,\sce,f,n)$.

\subsection{Unipotent subgroups}

Let $\mathcal{N}_G$ be the set of all unipotent elements of $G$. Because a BD extension is uniquely split over any unipotent subgroup, one has unique splittings:
\[
\tilde x_\al: F\to \overline{U}_\al \text{ for each }\al \in \Phi.
\]
Indeed, as shown in \cite{MW95} Appendix I and \cite{Li14} Proposition 2.2.1, there is a unique section
\[
i:\mathcal{N}_G \to \oG
\]
satisfying:
\begin{itemize}
\item for each unipotent subgroup $\bU\subset \bG$, the restriction of $i$ to $U=\bU(F)$ is a group homomorphism;
\item the map $i$ is $G$-equivariant.
\end{itemize}

\subsection{Tori}

The following result is a consequence of \cite{BD01} Proposition 3.13. 

\begin{Prop}\label{prop:commutator of tori}
Let $L$ be any field containing $F$ over which $\bT$ splits. Let $\overline{\bT}(L)$ be the resulting central extension
\[
1\to \bK_2(L)\to \overline{\bT}(L)\to \bT(L)\to 1.
\]
Then the commutator of this extension satisfies
\[
\mathrm{Comm} (y_1(u_1), y_2(u_2))=\{u_1,u_2\}^{B_Q(y_1,y_2)},
\]
for all $y_1,y_2\in \Hom(\bG_m,\bT_L)$ and all $u_1,u_2\in L^{\times}$.
\end{Prop}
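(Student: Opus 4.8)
The plan is to reduce the statement to the explicit description of $\CExt(\bT,\bK_2)$ over the splitting field, using the construction of the extension $\sce$ recalled right after the Proposition on tori. Concretely, I would first base change to $L$: the BD data $(Q,\sce,f)$ for $\bG$ restricts along $\bT\hookrightarrow \bG$ to an object of $\CExt(\bT,\bK_2)$, and then along $\Spec L\to \Spec F$ to an object $(Q,\sce_L)$ of the category classifying $\CExt(\bT_L,\bK_2)$. Since $\bT$ splits over $L$, we have $\bT(L)=Y\otimes_\bZ L^\times$, and the claim is a statement purely about the commutator pairing of the central extension $\overline{\bT}(L)$ of $Y\otimes_\bZ L^\times$ by $\bK_2(L)$.

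Next I would compute the commutator on the generators $y(u)$ with $y\in\Hom(\bG_m,\bT_L)=Y$ and $u\in L^\times$. The key point is bilinearity: the commutator map $\mathrm{Comm}\colon \bT(L)\times\bT(L)\to\bK_2(L)$ of a central extension is bi-multiplicative and alternating in each variable, and $\bT(L)$ is topologically (indeed algebraically, after tensoring) generated by the elements $y(u)$; moreover $\bK_2(L)$ is generated by Steinberg symbols $\{u_1,u_2\}$, which are themselves bi-multiplicative and alternating. So it suffices to pin down $\mathrm{Comm}(y_1(u_1),y_2(u_2))$ for $y_1,y_2$ ranging over a $\bZ$-basis of $Y$, and to check that the answer $\{u_1,u_2\}^{B_Q(y_1,y_2)}$ is consistent with bi-additivity in $y_1,y_2$ — which it is, since $B_Q$ is bilinear. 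For the actual value, I would invoke \cite{BD01} Proposition 3.13 as cited: the extension $\sce$ attached to $(Q,\sce)$ is obtained from $\overline{\bT}$ by pulling back along $y\mapsto y(\tau)$ over $F_s(\!(\tau)\!)$ and pushing out by the residue (tame symbol) map, and under that construction the commutator pairing of $\overline{\bT}(F_s(\!(\tau)\!))$ applied to $y_1(\tau)$ and $y_2(u)$ transports, via $\mathrm{Res}$, to exactly the commutator relation $[y_1,y_2]=(-1)^{B_Q(y_1,y_2)}$ recorded for $\sce$ — more precisely, the full bilinear refinement $\mathrm{Comm}(y_1(u_1),y_2(u_2))=\{u_1,u_2\}^{B_Q(y_1,y_2)}$ is the content of loc. cit. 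Thus the Proposition is essentially a citation, with the work being the reduction to the split case and the bilinearity bookkeeping.

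The main obstacle I anticipate is bookkeeping rather than conceptual: making sure the commutator pairing descends correctly through the pull-back/push-out defining $\sce$, and that the sign convention in the Hilbert/tame symbol $\{u_1,u_2\}$ matches the $(-1)^{B_Q}$ normalization in the classification Proposition (the two are reconciled by $\{u,-u\}=1$ and $\{u_1,u_2\}\{u_2,u_1\}=1$, which forces $\{u,u\}=\{u,-1\}$ and hence the diagonal terms produce the $(-1)^{B_Q(y,y)}=(-1)^{2Q(y)}=1$ consistency one needs). Once the diagonal and cross terms are checked to be compatible with bi-multiplicativity, the general formula follows by expanding $y_i$ in a basis and $u_i$ multiplicatively. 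I would present the proof as: (i) base change to $L$ and reduce to the split torus; (ii) note $\mathrm{Comm}$ and $\{-,-\}$ are each bi-multiplicative and alternating, so it suffices to treat basis cocharacters; (iii) apply \cite{BD01} Proposition 3.13 to get the value on basis elements; (iv) reassemble by bilinearity of $B_Q$.
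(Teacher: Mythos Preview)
Your proposal is correct and aligns with the paper's treatment: the paper simply states this result as a direct consequence of \cite{BD01} Proposition 3.13, with no further argument given. Your elaboration on the base change to $L$, the bi-multiplicativity bookkeeping, and the sign consistency is all sound but more detailed than what the paper provides; the paper treats the proposition purely as a citation.
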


We would like to note the following useful observation. If $\bT=\bT_1\times \bT_2$, then $Y=Y_1\oplus Y_2$. We have the following consequence.

\begin{Lem}\label{lem:commute for torus}
If $B_Q(y_1,y_2)=0$ for all $y_1\in Y_1, y_2\in Y_2$, then $\oT_1$ and $\oT_2$ commute in $\oT$.
\end{Lem}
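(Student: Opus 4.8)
The plan is to reduce the claim to the commutator formula of Proposition \ref{prop:commutator of tori} after passing to a field $L/F$ over which $\bT$ splits. First I would observe that since $\oT_1$ and $\oT_2$ are subgroups of $\oT$, and since the commutator of two elements is insensitive to the central $\bK_2(L)$ (respectively $\mu_n$) factor, it suffices to show that every element of $\oT_1$ commutes with every element of $\oT_2$. Pick a field $L\supset F$ splitting $\bT$ (hence splitting $\bT_1$ and $\bT_2$). Then $\bT(L)=Y\otimes_\bZ L^\times$ and the decomposition $Y=Y_1\oplus Y_2$ gives $\bT(L)=\bT_1(L)\times\bT_2(L)$, with $\bT_i(L)$ generated by the elements $y(u)$ for $y\in Y_i$, $u\in L^\times$.

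Next I would invoke Proposition \ref{prop:commutator of tori}: for $y_1\in Y_1$, $y_2\in Y_2$ and $u_1,u_2\in L^\times$ one has
\[
\mathrm{Comm}\bigl(y_1(u_1),y_2(u_2)\bigr)=\{u_1,u_2\}^{B_Q(y_1,y_2)}=1,
\]
using the hypothesis $B_Q(y_1,y_2)=0$. Since the elements $y_1(u_1)$ (resp. $y_2(u_2)$) generate $\bT_1(L)$ (resp. $\bT_2(L)$), and since the commutator map $\mathrm{Comm}(x,-)$ and $\mathrm{Comm}(-,z)$ are homomorphisms into the central subgroup $\bK_2(L)$ once one of the arguments is fixed (this is the standard bimultiplicativity of the commutator in a central extension), it follows that any lift of an element of $\bT_1(L)$ commutes in $\overline{\bT}(L)$ with any lift of an element of $\bT_2(L)$. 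Hence $\overline{\bT_1}(L)$ and $\overline{\bT_2}(L)$ commute inside $\overline{\bT}(L)$.

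Finally I would descend back to $F$: the extension $\oT$ (whether the $\bK_2$-torsor evaluated at $F$, or the topological $\mu_n$-cover) embeds into its counterpart over $L$ compatibly with the decomposition, so commuting over $L$ forces $\oT_1$ and $\oT_2$ to commute in $\oT$.

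\textbf{Main obstacle.} The only genuinely delicate point is the bimultiplicativity step: one must check that fixing one argument of $\mathrm{Comm}$ really does yield a homomorphism in the other, which relies on the centrality of $\bK_2(L)$ in $\overline{\bT}(L)$ and on the identity $\mathrm{Comm}(xx',z)=\mathrm{Comm}(x,z)\,{}^{x}\mathrm{Comm}(x',z)=\mathrm{Comm}(x,z)\mathrm{Comm}(x',z)$ valid precisely because $\mathrm{Comm}$ lands in the center. Everything else is bookkeeping with the splitting $Y=Y_1\oplus Y_2$ and the functoriality of the pushout construction. A minor care point is making sure the generators $y(u)$ genuinely generate $\bT_i(L)$ as an abstract group, which is immediate from $\bT_i(L)\cong Y_i\otimes_\bZ L^\times$.
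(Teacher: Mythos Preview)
Your proposal is correct and follows the same path as the paper, which simply says ``This is an immediate consequence of Proposition \ref{prop:commutator of tori}.'' You have spelled out that deduction in detail: apply the commutator formula over a splitting field, use bimultiplicativity of the commutator in a central extension to pass from generators to all elements, then descend.

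One small correction on the descent step: the phrase ``embeds into its counterpart over $L$'' is not quite the right justification, since $\bK_2(F)\to\bK_2(L)$ need not be injective. The cleaner argument is that the commutator $\bT_1\times\bT_2\to\bK_2$ is a bimultiplicative morphism of Zariski sheaves, and Proposition \ref{prop:commutator of tori} (via \cite{BD01} Proposition 3.13) identifies it with the pairing determined by $B_Q$; the hypothesis $B_Q|_{Y_1\times Y_2}=0$ then kills it as a morphism of sheaves, hence on $F$-points and after pushout to $\mu_n$. This is a rewording rather than a gap---your overall strategy is sound and matches the paper.
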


\begin{proof}
This is an immediate consequence of Proposition \ref{prop:commutator of tori}.
\end{proof}

Let $\bG=\bG_1\times \bG_2$ with maximal torus $\bT=\bT_1\times \bT_2$. Then there is a corresponding decomposition of cocharacter lattice $Y=Y_1\oplus Y_2$. Let $\obG\in \CExt(\bG,\bK_2)$. This gives $\obG_i \in \CExt(\bG_i,\bK_2)$ for $i=1,2$. We have inclusions $\oG_i\to \oG, i= 1, 2$.

\begin{Lem}\label{lem:commute for G}
If $B_Q(y_1,y_2)=0$ for all $y_1\in Y_1, y_2\in Y_2$, then $\oG_1$ and $\oG_2$ commute in $\oG$.
\end{Lem}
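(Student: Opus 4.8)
The plan is to reduce the statement about $\oG_1$ and $\oG_2$ to the corresponding statement for maximal tori, which is exactly Lemma \ref{lem:commute for torus}, and then propagate commutativity from the tori to all of $\oG_1$ and $\oG_2$ using the fact that each $\bG_i$ is generated by $\bT_i$ together with its root subgroups, combined with the uniqueness of unipotent splittings. First I would fix the decomposition $\bG=\bG_1\times\bG_2$, $\bT=\bT_1\times\bT_2$, $Y=Y_1\oplus Y_2$; since $\Phi=\Phi_1\sqcup\Phi_2$ (each root of $\bG_i$ being trivial on $\bT_{3-i}$), the root subgroups $\bU_\al$ for $\al\in\Phi_i$ lie in $\bG_i$. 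In $\oG$ we have the canonical unipotent section $i:\mathcal N_G\to\oG$, and its restriction to each $U_\al$ ($\al\in\Phi_i$) is the splitting $\tilde x_\al$ used earlier; moreover $G_i=\bG_i(F)$ is generated by $T_i=\bT_i(F)$ and the $U_\al$, $\al\in\Phi_i$. So it suffices to show that the generators of (a subgroup of finite index of, or a dense subgroup of) $\oG_1$ commute with those of $\oG_2$, i.e. that elements of $\{\tilde x_\al(u):\al\in\Phi_1\}\cup\oT_1$ commute with elements of $\{\tilde x_\be(v):\be\in\Phi_2\}\cup\oT_2$.

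The three types of commutators to check are: (a) $\oT_1$ versus $\oT_2$; (b) $\oT_i$ versus $\tilde x_\be(v)$ for $\be\in\Phi_{3-i}$; and (c) $\tilde x_\al(u)$ versus $\tilde x_\be(v)$ for $\al\in\Phi_1$, $\be\in\Phi_2$. Case (a) is precisely Lemma \ref{lem:commute for torus}, given the hypothesis $B_Q(y_1,y_2)=0$ on $Y_1\times Y_2$. For case (c), the commutator $[x_\al(u),x_\be(v)]$ in $G$ is trivial because $x_\al(u)\in G_1$ and $x_\be(v)\in G_2$ commute in the direct product $G=G_1\times G_2$; by the Chevalley commutator formula, the relevant sum of roots $i\al+j\be$ is never a root (as $\al\in\Phi_1$, $\be\in\Phi_2$ generate a rank-2 reducible system), so there are no unipotent "correction" terms. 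Since the unipotent section $i$ is a group homomorphism on each unipotent subgroup and is $G$-equivariant, the commutator $[\tilde x_\al(u),\tilde x_\be(v)]$ in $\oG$ maps to the identity in $G$, hence lies in the central $\mu_n$; but $G$-equivariance of $i$ forces $\tilde x_\al(u)\,\tilde x_\be(v)\,\tilde x_\al(u)^{-1}=i\bigl(x_\al(u)x_\be(v)x_\al(u)^{-1}\bigr)=i(x_\be(v))=\tilde x_\be(v)$, so they commute on the nose. Case (b) is handled similarly: $\Int(t)$ for $t\in T_i$ acts on $U_\be$ ($\be\in\Phi_{3-i}$) trivially in $G$ (since $\be$ is trivial on $\bT_i$), and the conjugation action of $\oG$ on the image of the unipotent section is, by $G$-equivariance and uniqueness of the section, determined by the conjugation action downstairs — so $\oT_i$ normalizes $\tilde x_\be(v)$ trivially.

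The main obstacle is case (b) — making the conjugation-invariance argument for tori acting on unipotent elements fully rigorous, i.e. confirming that the conjugation action of $\oG$ on $i(\mathcal N_G)$ descends to the conjugation action of $G$ on $\mathcal N_G$. This is where one genuinely uses the uniqueness clause in the construction of $i$ (as in \cite{MW95} Appendix I and \cite{Li14} Proposition 2.2.1): for $\tilde g\in\oG$ lifting $g\in G$, the map $x\mapsto \tilde g\, i(x)\,\tilde g^{-1}$ is again a section $\mathcal N_G\to\oG$ satisfying the two defining properties (homomorphism on unipotent subgroups, $G$-equivariance), hence equals $i\circ\Int(g)$; this is independent of the lift $\tilde g$ because $\mu_n$ is central. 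Applying this with $g\in T_i$ and $x=x_\be(v)$, $\be\in\Phi_{3-i}$, gives $\tilde t\,\tilde x_\be(v)\,\tilde t^{-1}=i(x_\be(v))=\tilde x_\be(v)$, as needed. Once (a), (b), (c) are in place, every generator of $\oG_1$ commutes with every generator of $\oG_2$, and since these generators topologically (indeed algebraically, at the level of abstract groups of $F$-points) generate $\oG_1$ and $\oG_2$ respectively, we conclude that $\oG_1$ and $\oG_2$ commute in $\oG$.
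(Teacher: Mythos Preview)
Your proposal is correct and follows essentially the same approach as the paper: reduce to checking that $\oT_1$ commutes with $\oT_2$ (Lemma~\ref{lem:commute for torus}), that each $\oT_i$ commutes with $\mathcal{N}_{G_{3-i}}$, and that $\mathcal{N}_{G_1}$ commutes with $\mathcal{N}_{G_2}$, the latter two following from the $G$-equivariance of the canonical unipotent section. The paper's proof is a terse version of exactly this argument; your write-up simply spells out in more detail why $G$-equivariance of $i$ forces $\tilde g\, i(x)\,\tilde g^{-1}=i(\Int(g)x)$.
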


\begin{proof}
The group $\oG$ is generated by $\oT$ and $\mathcal{N}_G$. We only have to verify the following:
\begin{enumerate}
\item $\oT_1$ and $\oT_2$ commute;
\item $\oT_1$ and $\mathcal{N}_{G_2}$ commute;
\item $\oT_2$ and $\mathcal{N}_{G_1}$ commute;
\item $\mathcal{N}_{G_1}$ and $\mathcal{N}_{G_2}$ commute;
\end{enumerate}

The first statement is simply Lemma \ref{lem:commute for torus}. The second follows from the fact that $T_1$ and $\mathcal{N}_{G_2}$ commute in $G$, and the unipotent section is $G$-equivariant. The rest is similar.
\end{proof}

\subsection{The tame case}

We now discuss the splitting of maximal compact subgroups at unramified places.

Let $F$ be a non-Archimedean field with ring of integers $\sco$.
If $\bG$ is an unramified reductive group over $F$. Suppose that the group $\bG$ has an integral model $\underline{G}$ over $\sco$. The $\bK_2$-extension $\obG$ might not be defined over $\sco$. If it is, then there is a natural splitting of $\oG$ over $K=\bG(\sco)$.

The $\bK_2$-extension $\obG$ yields a short exact sequence
\[
1\to \bK_2(F)\to \obG(F)\to \bG(F)\to 1.
\]
If $p\nmid n$, then this ``tameness'' gives an exact sequence
\[
1\to \bK_2(\sco)\to \bK_2(F)\xrightarrow{\mathrm{Hilb}_n}\mu_n\to 1.
\]
This gives a commutative diagram
\[
\begin{tikzpicture}[scale=1.5]
\node (A1) at (0,1) {$1$};
\node (B1) at (1.5,1) {$\bK_2(\sco)$};
\node (C1) at (3,1) {$\obG(\sco)$};
\node (D1) at (4.5,1) {$\bG(\sco)$};
\node (E1) at (6,1) {$1$};
\node (A2) at (0,0) {$1$};
\node (B2) at (1.5,0) {$\mu_n$};
\node (C2) at (3,0) {$\oG$};
\node (D2) at (4.5,0) {$G$};
\node (E2) at (6,0) {$1$};
\path[->,font=\scriptsize,>=angle 90]
(A1) edge  (B1)
(B1) edge  (C1)
(C1) edge  (D1)
(D1) edge  (E1)
(A2) edge  (B2)
(B1) edge (B2)
(B2) edge  (C2)
(C2) edge  (D2)
(D2) edge  (E2)
(C1) edge (C2)
(D1) edge (D2);
\end{tikzpicture}.
\]
Thus the central extension $1\to \mu_n\to \oG\to G\to 1$ is endowed with a splitting over the hyperspecial maximal compact subgroup $\bG(\sco)$.

\subsection{Adelic BD covering}

In this section, $F$ is a global field. For a place $v$ of $F$, we write $F_v$ for the completion of $F$ at $v$.

Starting with a BD extension $\obG$ over $\Spec(F)$ and a positive integer $n$ such that $|\mu_n(F)|=n$, Brylinski and Deligne showed using results of \cite{Moore68} that one inherits the following data:
\begin{itemize}
\item for each place $v$ of $k$, a local BD covering group $\oG_v$ of degree $n$;
\item for almost all $v$, a splitting $s_v:\bG(\sco_v)\to \oG_v$;
\item a restricted direct product $\prod'_v \oG_v$ with respect to the family of subgroups $s_v(\bG(\sco_v))$, from which one can define
    \[
    \oG(\ba):=\prod_v{}^{'} \oG_v/\{(\zeta_v)\in\oplus_v \mu_n(k_v):\prod_v \zeta_v=1\},
    \]
    which gives a topological central extension
    \[
    1\to \mu_n(F)\to \oG(\ba) \to G(\ba) \to 1,
    \]
    called the adelic or global BD covering group;
\item a natural inclusion
\[
\begin{tikzpicture}[scale=1.5]
\node (A1) at (0,1) {$1$};
\node (B1) at (1.5,1) {$\mu_n(F_v)$};
\node (C1) at (3,1) {$\oG_v$};
\node (D1) at (4.5,1) {$\bG(F_v)$};
\node (E1) at (6,1) {$1$};
\node (A2) at (0,0) {$1$};
\node (B2) at (1.5,0) {$\mu_n(F)$};
\node (C2) at (3,0) {$\oG(\ba)$};
\node (D2) at (4.5,0) {$\bG(\ba)$};
\node (E2) at (6,0) {$1$};
\path[->,font=\scriptsize,>=angle 90]
(A1) edge  (B1)
(B1) edge  (C1)
(C1) edge  (D1)
(D1) edge  (E1)
(A2) edge  (B2)
(B2) edge  (C2)
(C2) edge  (D2)
(D2) edge  (E2)
(C1) edge (C2)
(D1) edge (D2);
\draw (B1) edge[=] (B2);
\end{tikzpicture}
\]
for each place $v$ of $k$;
\item a natural splitting
\[
i:\bG(F)\to \oG(\ba),
\]
which allows one to consider the space of automorphic forms on $\oG(\ba)$.
\end{itemize}

In this paper, we fix an embedding $\epsilon: \mu_n\to \bc^\times$. We say a representation $\pi$ of $\oG(\ba)$ is $\epsilon$-genuine if $\mu_n$ acts via $\epsilon$.

We briefly recall how the splitting $i$ is obtained. Let $X=\Spec(\sco_F)$. Let $S_1$ is a finite set of finite places of $F$. We assume that $S_1$ is large enough so that the conclusion of \cite{BD01} 10.6 holds. Write $S=S_1\cup \{\text{infinite places}\}$. This gives a central extension
\[
1 \to \mathrm{H}^0(X-S_1,\bK_2) \to E_1 \to G(X-S_1) \to 1.
\]
For $v$ a place of $F$, it maps to the local central extension
\[
1 \to \mu_n \to \oG_v \to G_v \to 1.
\]
For an unramified place $v$, the map factors through a central extension
\[
1\to \bK_2(\sco_v) \to \overline{\bG(\sco_v)} \to \bG(\sco_v) \to 1.
\]
If $p\nmid n$,  the exact sequence
\[
1\to \bK_2(\sco_v)\to \bK_2(F_v)\to F_v^\times \to 1
\]
shows that $\bK_2(\sco_v)$ maps to trivially to $\mu_n$. We obtain a trivialization of $\oG_v$ over $\bG(\sco_v)$.

We now have a commutative diagram
\[
\begin{tikzpicture}[scale=2]
\node (A1) at (2.5,1) {$1$};
\node (B1) at (4,1) {$\mathrm{H}^0(X-S_1,\bK_2)$};
\node (C1) at (6,1) {$E_1$};
\node (D1) at (8.5,1) {$G(X-S_1)$};
\node (E1) at (10.5,1) {$1$};
\node (A2) at (2.5,0) {$1$};
\node (B2) at (4,0) {$\prod\limits_{v\in S}\mu_v$};
\node (C2) at (6,0) {$\prod\limits_{v\notin S}\oG_v \times \prod\limits_{v\in X-S_1}\bG(\sco_v)$};
\node (D2) at (8.5,0) {$\prod\limits_{v\notin S}G_v \times \prod\limits_{v\in X-S_1}\bG(\sco_v)$};
\node (E2) at (10.5,0) {$1$};
\path[->,font=\scriptsize,>=angle 90]
(A1) edge  (B1)
(B1) edge  (C1)
(C1) edge  (D1)
(D1) edge  (E1)
(A2) edge  (B2)
(B2) edge  (C2)
(C2) edge  (D2)
(D2) edge  (E2)
(C1) edge (C2)
(D1) edge (D2);
\draw (B1) edge[=] (B2);
\end{tikzpicture}
\]
provided that for all $v$ in $X-S_1$, $p\nmid n$. This holds for $S_1$ is large enough. The first vertical map, composed with the reciprocity map $\prod \zeta_v$ with values in $\mu_n$, vanishes. We hence obtain
\[
\begin{tikzpicture}[scale=2]
\node (D1) at (8,1) {$G(X-S_1)$};
\node (A2) at (2,0) {$1$};
\node (B2) at (3,0) {$\mu_n$};
\node (C2) at (5,0) {$\overline{\prod\limits_{v\notin S}G_v} \times \prod\limits_{v\in X-S_1}\bG(\sco_v)$};
\node (D2) at (8,0) {$\prod\limits_{v\notin S}G_v \times \prod\limits_{v\in X-S_1}\bG(\sco_v)$};
\node (E2) at (10,0) {$1$};
\path[->,font=\scriptsize,>=angle 90]
(A2) edge  (B2)
(B2) edge  (C2)
(C2) edge  (D2)
(D2) edge  (E2)
(D1) edge (C2)
(D1) edge (D2);
\end{tikzpicture}
\]
Taking direct limit over $S$ gives the desired natural splitting:
\[
\begin{tikzpicture}[scale=1.5]
\node (D1) at (6.5,1) {$G(F)$};
\node (A2) at (2,0) {$1$};
\node (B2) at (3.5,0) {$\mu_n$};
\node (C2) at (5,0) {$\oG(\ba)$};
\node (D2) at (6.5,0) {$G(\ba)$};
\node (E2) at (8,0) {$1$};
\path[->,font=\scriptsize,>=angle 90]
(A2) edge  (B2)
(B2) edge  (C2)
(C2) edge  (D2)
(D2) edge  (E2)
(D1) edge (C2)
(D1) edge (D2);
\end{tikzpicture}
\]

\section{Pullback, pushout and Baer sum}\label{sec:pullback pushout baer sum}

We now discuss several constructions that give new exact sequences: pullback, pushout and the Baer sum. In this section we would like to describe these constructions in terms of the BD data.

\subsection{Pushout}

We now recall the definition of pushout.
\begin{Def}
For a central extension
\[
1\to A\xrightarrow{i} E\xrightarrow{p} G\to 1,
\]
and a homomorphism $f:A\to B$ of abelian groups, we define
\[
f_\ast(E):=(B\times E)/\la (f(a), i(a)^{-1}): a\in A \ra.
\]
The maps $B\to f_\ast(E), b\mapsto (b,1)$ and $f_{\ast}(E) \to G, (b,e)\mapsto p(e)$ define an exact sequence
\[
1\to B \to f_\ast(E) \to G \to 1.
\]
This exact sequence is called the pushout by $f$.
\end{Def}

\subsection{Baer sum}

Another method to construct new exact sequences is the Baer sum. In this paper, we only consider the Baer sum of $n$ copies of an exact sequence.

Given an exact sequence
\begin{equation}\label{eq:exact sequence in pushout}
1\to A \to E \to G \to 1
\end{equation}
with $A$ abelian. By taking the direct sum of $n$ copies of the exact sequence \eqref{eq:exact sequence in pushout}, we obtain
\[
1\to \oplus_{i=1}^n A \to \oplus_{i=1}^n E \to \oplus_{i=1}^n G \to 1.
\]
By pushing out the exact sequence via the product map
\[
\pr:\prod_{i=1}^n A\to A, \qquad (x_i)\mapsto \prod_{i=1}^n x_i,
\]
we obtain an exact sequence:
\[
1\to A \to \pr_\ast(\oplus_{i=1}^n E) \to \oplus_{i=1}^n G \to 1.
\]
Now we pull back this exact sequence via the diagonal map
\[
d:G \to \oplus_{i=1}^n G,\qquad x\mapsto (x,\cdots,x)
\]
to obtain
\[
1\to A \to d^\ast(p_{\ast} \oplus_{i=1}^n E) \to G \to 1.
\]
This exact sequence is the Baer sum of $n$ copies of \eqref{eq:exact sequence in pushout}.

We now claim that this is also the same as pushing out \eqref{eq:exact sequence in pushout} by the map $[n]: A\to A, x\mapsto x^n$.

\begin{Lem}
The following commutative diagram gives an isomorphism of exact sequences:
\[
\begin{tikzpicture}[scale=1.5]
\node (A1) at (0,1) {$1$};
\node (B1) at (1.5,1) {$A$};
\node (C1) at (3,1) {$[n]_\ast(E)$};
\node (D1) at (4.5,1) {$G$};
\node (E1) at (6,1) {$1$};
\node (A2) at (0,0) {$1$};
\node (B2) at (1.5,0) {$A$};
\node (C2) at (3,0) {$d^\ast(p_{\ast} \oplus_{i=1}^n E)$};
\node (D2) at (4.5,0) {$G$};
\node (E2) at (6,0) {$1$};
\path[->,font=\scriptsize,>=angle 90]
(A1) edge  (B1)
(B1) edge  (C1)
(C1) edge  (D1)
(D1) edge  (E1)
(A2) edge  (B2)
(B2) edge  (C2)
(C2) edge  (D2)
(D2) edge  (E2)
(C1) edge (C2)
(D1) edge (D2)
(B1) edge (B2);
\end{tikzpicture}
\]
\end{Lem}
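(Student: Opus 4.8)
The plan is to realize the vertical map in the stated diagram by an explicit formula and then invoke the short five lemma. First I would unwind the two sides. By definition $[n]_\ast(E)$ is the quotient of $A\times E$ by the subgroup generated by the elements $(a^n, i(a)^{-1})$, $a\in A$; its map to $G$ is $[c,e]\mapsto p(e)$ and its copy of $A$ is $c\mapsto[c,1]$. On the other side, $\pr_\ast(\oplus_{i=1}^n E)$ is the quotient of $A\times(\oplus_{i=1}^n E)$ by the elements $\big(\textstyle\prod_i a_i,\,(i(a_i)^{-1})_i\big)$ with $(a_i)\in\prod_{i=1}^n A$, with class map written $[b,(e_1,\dots,e_n)]$; and $d^\ast(\pr_\ast\oplus_{i=1}^n E)$ is the subgroup of pairs $(g,z)\in G\times\pr_\ast(\oplus_{i=1}^n E)$ such that the image of $z$ in $\oplus_{i=1}^n G$ equals $(g,\dots,g)$.

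Next I would define $\Psi\colon A\times E\to d^\ast(\pr_\ast\oplus_{i=1}^n E)$ by $\Psi(c,e)=\big(p(e),\,[c,(e,e,\dots,e)]\big)$, which lands in the pullback since each component of $(e,\dots,e)$ maps to $p(e)$. Substituting the diagonal tuple $(a,a,\dots,a)$ (whose product is $a^n$) into the defining relations of $\pr_\ast(\oplus_{i=1}^n E)$, together with $p(i(a))=1$, shows $\Psi(ca^n, e\,i(a)^{-1})=\Psi(c,e)$, so $\Psi$ descends to $\overline\Psi\colon[n]_\ast(E)\to d^\ast(\pr_\ast\oplus_{i=1}^n E)$. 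Since the diagonal $E\to\oplus_{i=1}^n E$ is a homomorphism and the group laws on the pushout and the pullback are induced componentwise, $\overline\Psi$ is a group homomorphism; and tracing the identifications of $A$ and $G$ — namely $c\mapsto[c,1]\mapsto(1,[c,(1,\dots,1)])$ on the left equals the copy of $A$ on the right, and $[c,e]\mapsto p(e)$ on both sides — shows that the square in the statement commutes. A morphism of central extensions inducing the identity on the kernel and on the quotient is automatically an isomorphism, so $\overline\Psi$ is the desired isomorphism of exact sequences.

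If a direct inverse is preferred, I would describe it as follows: given $(g,[b,(e_1,\dots,e_n)])$ with every $p(e_j)=g$, pick any $e\in E$ with $p(e)=g$, write $e_j=e\,i(a_j)$ with $a_j\in A$ uniquely determined, and use the relation with the tuple $(a_1,\dots,a_n)$ to rewrite $[b,(e_1,\dots,e_n)]=\big[b\textstyle\prod_j a_j,\,(e,\dots,e)\big]$; then send the pair to the class of $\big(b\prod_j a_j,\ e\big)$ in $[n]_\ast(E)$. Replacing $e$ by $e\,i(a_0)$ multiplies $\prod_j a_j$ by $a_0^{-n}$, which is exactly absorbed by the defining relation of $[n]_\ast(E)$, so the assignment is well defined and inverts $\overline\Psi$.

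I do not expect a genuine obstacle: the only care needed is the bookkeeping of the two quotient relations and checking that $\Psi$ respects them, a short computation. Conceptually the lemma is nothing but the additivity of $H^2(G,-)$: pushout along $[n]\colon A\to A$ induces multiplication by $n$ on isomorphism classes of central extensions, while the iterated Baer sum of $n$ copies of $E$ represents $n[E]$; the content of the lemma is merely to pin down one isomorphism compatible with the chosen presentations, which is what the explicit $\overline\Psi$ provides.
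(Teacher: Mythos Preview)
Your proof is correct and follows essentially the same approach as the paper: the paper defines the map $[n]_\ast(E)\to d^\ast(p_\ast\oplus_{i=1}^n E)$ by $(a,e)\mapsto (a,(e,\ldots,e))$ and simply asserts that it is straightforward to check well-definedness and that it is an isomorphism of exact sequences. Your write-up is in fact more thorough, supplying the verification of well-definedness, the appeal to the short five lemma, and an explicit inverse, but the underlying idea is identical.
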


\begin{proof}
Recall that $[n]_\ast(E)=(A\times E)/\la x^n,i^{-1}(x)\mid x\in A \ra$ and \[
p_{\ast} (\oplus_{i=1}^n E )= A \times (\oplus_{i=1}^n E)/\la \prod_{i=1}^n x_i,(i^{-1}(x_1),\cdots,i^{-1}(x_n)) \mid  (x_1,\cdots,x_n)\in \oplus_{i=1}^n A \ra
\]
We now define $[n]_\ast(E) \to d^\ast(p_{\ast} \oplus_{i=1}^n E)$ by
\[
(a,e)\mapsto (a,(e,\cdots,e)).
\]
It is straightforward to check that this is well-defined and is an isomorphism of exact sequences.
\end{proof}

\subsection{Functoriality of pullback}

Let $f:\bG\to \bH$ be a morphism of connected reductive groups. Let
\[
1\to \bK_2\to \obH\to \bH\to 1
\]
be a multiplicative $\bK_2$-torsor on $\bH$. By pulling back via $f$, we obtain multiplicative $\bK_2$-torsor on $\bG$:
\[
1\to \bK_2\to f^\ast(\obH)\to \bG\to 1.
\]
For ease of notations, let us write $\obG=f^{\ast}(\obH)$. Thus, this fits into a commutative diagram
\[
\begin{tikzpicture}[scale=1.5]
\node (A1) at (0,1) {$1$};
\node (B1) at (1.5,1) {$\bK_2$};
\node (C1) at (3,1) {$\obG$};
\node (D1) at (4.5,1) {$\bG$};
\node (E1) at (6,1) {$1$};
\node (A2) at (0,0) {$1$};
\node (B2) at (1.5,0) {$\bK_2$};
\node (C2) at (3,0) {$\obH$};
\node (D2) at (4.5,0) {$\bH$};
\node (E2) at (6,0) {$1$};
\path[->,font=\scriptsize,>=angle 90]
(A1) edge  (B1)
(B1) edge  (C1)
(C1) edge  (D1)
(D1) edge  (E1)
(A2) edge  (B2)
(B2) edge  (C2)
(C2) edge  (D2)
(D2) edge  (E2)
(C1) edge (C2)
(D1) edge (D2);
\draw (B1) edge[=] (B2);
\end{tikzpicture}
\]
and gives a functor
\begin{equation}\label{eq:functor of pullback}
\CExt(\bH,\bK_2)\to \CExt(\bG,\bK_2).
\end{equation}
At every local place $v$, the pullback determines the following data:
\begin{itemize}
\item At every local place $v$, we have a commutative diagram
\[
\begin{tikzpicture}[scale=1.5]
\node (A1) at (0,1) {$1$};
\node (B1) at (1.5,1) {$\mu_n$};
\node (C1) at (3,1) {$\oG_v$};
\node (D1) at (4.5,1) {$G_v$};
\node (E1) at (6,1) {$1$};
\node (A2) at (0,0) {$1$};
\node (B2) at (1.5,0) {$\mu_n$};
\node (C2) at (3,0) {$\oH_v$};
\node (D2) at (4.5,0) {$H_v$};
\node (E2) at (6,0) {$1$};
\path[->,font=\scriptsize,>=angle 90]
(A1) edge  (B1)
(B1) edge  (C1)
(C1) edge  (D1)
(D1) edge  (E1)
(A2) edge  (B2)
(B2) edge  (C2)
(C2) edge  (D2)
(D2) edge  (E2)
(C1) edge (C2)
(D1) edge (D2);
\draw (B1) edge[=] (B2);
\end{tikzpicture}
\]
\item The commutative diagram is compatible with the lift of unipotent elements. In other words, the following diagram commutes:
\[
\begin{tikzpicture}[scale=1.5]
\node (A1) at (0,1) {$\mathcal{N}_{G_v}$};
\node (B1) at (1.5,1) {$\oG_v$};
\node (A2) at (0,0) {$\mathcal{N}_{H_v}$};
\node (B2) at (1.5,0) {$\oH_v$};
\path[->,font=\scriptsize,>=angle 90]
(A1) edge  (B1)
(A2) edge  (B2)
(A1) edge (A2)
(B1) edge (B2);
\end{tikzpicture}
\]
\item In the tame case, the commutative diagram is compatible with the natural lift of maximal compact subgroups:
\[
\begin{tikzpicture}[scale=1.5]
\node (A1) at (0,1) {$\bG(\sco_v)$};
\node (B1) at (1.5,1) {$\oG_v$};
\node (A2) at (0,0) {$\bH(\sco_v)$};
\node (B2) at (1.5,0) {$\oH_v$};
\path[->,font=\scriptsize,>=angle 90]
(A1) edge  (B1)
(A2) edge  (B2)
(A1) edge (A2)
(B1) edge (B2);
\end{tikzpicture}
\]
\end{itemize}

We now move to the global setup. So from now on, $F$ is a global field. The local homomorphisms glue to
\[
\prod_v \oG_v \to \prod_v \oH_v.
\]
As $f_v(s_v(\bG(\sco_v)))\subset s_v(\bH(\sco_v))$, we obtain a homomorphism
\[
\prod_v{}' \oG_v\to \prod_v{}' \oH_v\to \oH(\ba).
\]
This map factors through
\[
f_\ba:\oG(\ba) \to \oH(\ba).
\]
From the construction of the natural splitting $H(F) \to \oH(\ba)$, it is not hard to check that this is compatible with the splitting over rational points. In other words, the diagram
\[
\begin{tikzpicture}[scale=1.5]
\node (A1) at (0,1) {$\bG(F)$};
\node (B1) at (1.5,1) {$\oG(\ba)$};
\node (A2) at (0,0) {$\bH(F)$};
\node (B2) at (1.5,0) {$\oH(\ba)$};
\path[->,font=\scriptsize,>=angle 90]
(A1) edge  (B1)
(A2) edge  (B2)
(A1) edge  (A2)
(B1) edge  (B2);
\end{tikzpicture}
\]
commutes.

We now describe the functor \eqref{eq:functor of pullback} in terms of the BD data. Let $\bT_G$ and $\bT_H$ be maximal $F$-tori of $\bG$ and $\bH$, respectively. We assume that $f(\bT_G) \subset \bT_H$. The map $\bT_G \to \bT_H$ induces a map $Y_G \to Y_H$, which gives $\sce_G$ as the pullback of $\sce_H$ via $Y_G\to Y_H$:
\begin{equation}\label{eq:pullback of second invariant}
\begin{tikzpicture}[scale=1.5]
\node (A1) at (0,1) {$1$};
\node (B1) at (1.5,1) {$F_s^\times$};
\node (C1) at (3,1) {$\sce_G$};
\node (D1) at (4.5,1) {$Y_G$};
\node (E1) at (6,1) {$1$};
\node (A2) at (0,0) {$1$};
\node (B2) at (1.5,0) {$F_s^\times$};
\node (C2) at (3,0) {$\sce_H$};
\node (D2) at (4.5,0) {$Y_H$};
\node (E2) at (6,0) {$1$};
\path[->,font=\scriptsize,>=angle 90]
(A1) edge  (B1)
(B1) edge  (C1)
(C1) edge  (D1)
(D1) edge  (E1)
(A2) edge  (B2)
(B2) edge  (C2)
(C2) edge  (D2)
(D2) edge  (E2)
(C1) edge (C2)
(D1) edge (D2);
\draw (B1) edge[=] (B2);
\end{tikzpicture}.
\end{equation}
The homomorphism $\bG \to \bH$ also determines a map $\bG^{sc} \to \bH^{sc}$. This gives a map $\sce_{\bG^{sc}} \to \sce_{\bH^{sc}}$. It is easy to check that the image of $\sce_{\bG^{sc}} \to \sce_{\bH^{sc}} \to \sce_H$ agrees with the image of $\sce_G \to \sce_H$, which gives a commutative diagram
\begin{equation}\label{eq:pullback of third invariant}
\begin{tikzpicture}[scale=1.5]
\node (A1) at (0,1) {$1$};
\node (B1) at (1.5,1) {$F_s^\times$};
\node (C1) at (3,1) {$\sce_{\bG^{sc}}$};
\node (D1) at (4.5,1) {$Y_G^{sc}$};
\node (E1) at (6,1) {$1$};
\node (A2) at (0,0) {$1$};
\node (B2) at (1.5,0) {$F_s^\times$};
\node (C2) at (3,0) {$\sce_G$};
\node (D2) at (4.5,0) {$Y_G$};
\node (E2) at (6,0) {$1$};
\path[->,font=\scriptsize,>=angle 90]
(A1) edge  (B1)
(B1) edge  (C1)
(C1) edge  (D1)
(D1) edge  (E1)
(A2) edge  (B2)
(B2) edge  (C2)
(C2) edge  (D2)
(D2) edge  (E2)
(C1) edge (C2)
(D1) edge (D2);
\draw (B1) edge[=] (B2);
\end{tikzpicture}.
\end{equation}

\begin{Prop}
With notations as above, the functor \eqref{eq:functor of pullback} can be described in terms of BD data:
\[
\BD(\bH,\bT_H) \to \BD(\bG,\bT_G),\qquad (Q_H,\sce_H,f_H)\mapsto (Q_G,\sce_G,f_G),
\]
where
\begin{itemize}
\item $Q_G=Q_H|_{Y_G}$;
\item $\sce_G$ is given by the top row of \eqref{eq:pullback of second invariant};
\item $f_G$ is given by the commutative diagram in \eqref{eq:pullback of third invariant}.
\end{itemize}
\end{Prop}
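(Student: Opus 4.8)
The plan is to use the equivalence $\CExt(\bG,\bK_2)\simeq\BD(\bG,\bT_G)$ of the classification theorem of Section \ref{sec:BD extensions} and to identify, one at a time, the three invariants attached to $\obG:=f^\ast(\obH)$. Write $\iota:Y_G\to Y_H$ for the map on cocharacter lattices induced by $f$ (recall $f(\bT_G)\subset\bT_H$). The commutative diagram \eqref{eq:pullback of third invariant}, together with the observation preceding the statement that the image of $\sce_{\bG^{sc}}\to\sce_{\bH^{sc}}\to\sce_H$ agrees with the image of $\sce_G\to\sce_H$, has already taken care of the third invariant, so the real content is the quadratic form and the second invariant.

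For the quadratic form, I would first use functoriality of pullback of $\bK_2$-torsors: restricting $\obG$ to $\bT_G$, and restricting $\obH$ to $\bT_H$ followed by pullback along $\bT_G\to\bT_H$, produce the same object, i.e. $\obT_G\simeq(\bT_G\to\bT_H)^\ast\obT_H$ in $\CExt(\bT_G,\bK_2)$. Under the equivalence for tori recalled in Section \ref{sec:BD extensions}, $\obT_H$ corresponds to the pair $(Q_H,\sce_H)$, and pulling the extension $1\to F_s^\times\to\sce_H\to Y_H\to1$ back along $\iota$ yields the fibre product $\sce_H\times_{Y_H}Y_G$, whose commutator map is $(y_1,y_2)\mapsto(-1)^{B_{Q_H}(\iota y_1,\iota y_2)}$. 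Hence the quadratic-form invariant of $\obT_G$, and so of $\obG$, is $Q_G:=Q_H\circ\iota$, which is $Q_H|_{Y_G}$ in the notation of the statement (literally a restriction when $\iota$ is injective, as in the applications), and the extension produced is the top row of \eqref{eq:pullback of second invariant}.

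For the second invariant one must check that the $\Gamma$-equivariant extension of $Y_G$ by $F_s^\times$ obtained from $\obT_G$ by the Laurent-series recipe of Section \ref{sec:BD extensions} --- pull $\obT_G(F_s(\!(\tau)\!))$ back along $y\mapsto y(\tau)$ and push out along $\mathrm{Res}$ --- agrees with the fibre product $\sce_H\times_{Y_H}Y_G$ just described. This is a diagram chase: the homomorphism $Y_G\to\bT_G(F_s(\!(\tau)\!))$, $y\mapsto y(\tau)$, is intertwined by $\bT_G(F_s(\!(\tau)\!))\to\bT_H(F_s(\!(\tau)\!))$ with the corresponding homomorphism for $Y_H$, while $\mathrm{Res}$ is defined on $\bK_2(F_s(\!(\tau)\!))$ with no reference to the torus; so the whole recipe is compatible with the morphism $\obT_G\to\obT_H$, and its output, together with $\iota$, exhibits $\sce_G$ as the pullback of $\sce_H$ along $\iota$ --- i.e. the top row of \eqref{eq:pullback of second invariant}. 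Combining this with the previous paragraph and with \eqref{eq:pullback of third invariant}, the equivalence of the classification theorem sends $\obG$ to $(Q_G,\sce_G,f_G)$ as claimed; the compatibility square in the definition of $\BD$ commutes for $\obG$ because it commutes for $\obH$ and all vertical maps in sight are compatible.

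The step I expect to demand the most care is this verification of functoriality of the Laurent-series construction of $\sce$ under pullback of $\bK_2$-torsors of tori, keeping $\Gamma$-equivariance throughout; everything else is formal once the torus classification and the functoriality of the simply-connected cover and of the Brylinski--Deligne liftings $\tilde x_\al,\tilde n_\al$ are granted, and the third invariant is literally the diagram \eqref{eq:pullback of third invariant} already checked in the text.
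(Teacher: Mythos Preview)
Your proposal is correct and follows essentially the same approach as the paper's own proof, which is extremely terse: it simply says that the quadratic form is determined by the commutator map on $\obT_G$ and that the other two invariants follow directly from their construction from $\obG$. Your write-up is a careful unpacking of exactly these assertions---tracing the commutator through the pullback of tori to get $Q_G=Q_H|_{Y_G}$, and checking functoriality of the Laurent-series recipe for $\sce$---so you are filling in the details the paper leaves implicit rather than taking a different route.
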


\begin{proof}
The quadratic form is determined by the commutator map on $\obT_G$. The other two invariants follow from their construction from $\obG$ directly.
\end{proof}

\subsection{Functoriality of pushout}

The pushout action is functorial so it can be glued to a construction of sheaves. Let $f\in \End(\bK_2)$. Then for a multiplicative $\bK_2$-torsor $\obG$, one can push it out via $f$ to obtain a new multiplicative $\bK_2$-torsor. In this paper, we consider the following map
\[
[m]:\bK_2\to \bK_2,\qquad x\mapsto x^m
\]
for an integer $m$.
We have a natural map $\obG\to \obG^\natural$ which fits into the commutative diagram
\[
\begin{tikzpicture}[scale=1.5]
\node (A1) at (0,1) {$1$};
\node (B1) at (1.5,1) {$\bK_2$};
\node (C1) at (3,1) {$\obG$};
\node (D1) at (4.5,1) {$\bG$};
\node (E1) at (6,1) {$1$};
\node (A2) at (0,0) {$1$};
\node (B2) at (1.5,0) {$\bK_2$};
\node (C2) at (3,0) {$\obG^\natural$};
\node (D2) at (4.5,0) {$\bG$};
\node (E2) at (6,0) {$1$};
\path[->,font=\scriptsize,>=angle 90]
(A1) edge  (B1)
(B1) edge  (C1)
(C1) edge  (D1)
(D1) edge  (E1)
(A2) edge  (B2)
(B2) edge  (C2)
(C2) edge  (D2)
(D2) edge  (E2)
(C1) edge (C2)
(D1) edge (D2)
(B1) edge (B2);
\end{tikzpicture}
\]
This defines a functor
\[
\CExt(\bG,\bK_2)\to \CExt(\bG,\bK_2).
\]
At every local place $v$, we obtain the following:
\begin{itemize}
\item We have the following commutative diagram:
\[
\begin{tikzpicture}[scale=1.5]
\node (A1) at (0,1) {$1$};
\node (B1) at (1.5,1) {$\mu_n$};
\node (C1) at (3,1) {$\oG_v$};
\node (D1) at (4.5,1) {$G_v$};
\node (E1) at (6,1) {$1$};
\node (A2) at (0,0) {$1$};
\node (B2) at (1.5,0) {$\mu_n$};
\node (C2) at (3,0) {$\oG^\natural_v$};
\node (D2) at (4.5,0) {$G_v$};
\node (E2) at (6,0) {$1$};
\path[->,font=\scriptsize,>=angle 90]
(A1) edge  (B1)
(B1) edge  (C1)
(C1) edge  (D1)
(D1) edge  (E1)
(A2) edge  (B2)
(B2) edge  (C2)
(C2) edge  (D2)
(D2) edge  (E2)
(C1) edge (C2)
(D1) edge (D2)
(B1) edge (B2);
\end{tikzpicture}.
\]
where the first vertical map is $x\mapsto x^m$.
\item the map $\mathcal{N}_{G_v} \to \oG_v \to \oG_v^{\natural}$ is the canonical unipotent section for $\oG_v^{\natural}$;
\item in the tame case, the map $\bG(\sco_v)\to \oG_v \to \oG_v^{\natural}$ is the natural splitting of maximal compact subgroups.
\end{itemize}

Globally, we can glue the local maps to obtain a global map. The map $G(F) \to \oG(\ba) \to \oG^{\natural}(\ba)$ is the natural splitting for the multiplicative $\bK_2$-torsor $\obG^\natural$.

We now describe the functor $\CExt(\bG,\bK_2)\to \CExt(\bG,\bK_2)$ in terms of BD data.

\begin{Prop}
The functor
\[
\CExt(\bG,\bK_2)\to \CExt(\bG,\bK_2), \qquad \obG \mapsto \obG^{\natural}
\]
in terms of the BD data is given by
\[
\BD(\bG,\bT)\mapsto \BD(\bG,\bT),\qquad (Q,\sce,f)\mapsto (Q^{\natural},\sce^{\natural},f^{\natural}),
\]
where
\begin{itemize}
\item $Q^{\natural}=mQ$;
\item $\sce^{\natural}$ is obtained from $\sce$ by pushing out via the map $[m]: F_s^\times \to F_s^\times, x\mapsto x^m$;
\item $f^{\natural}$ is obtained by pushing out the commutative diagram in \eqref{eq:third invariant} via the map $[m]$.
\end{itemize}
\end{Prop}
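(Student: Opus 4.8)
The plan is to run each of the three invariants through the recipe that produces $(Q,\sce,f)$ from $\obG$, using two formal facts about central extensions: pushing out along a homomorphism of the kernel commutes with pulling back along a homomorphism of the quotient, and $(g\circ h)_\ast = g_\ast\circ h_\ast$. The most efficient route is to first observe, via the Lemma above identifying $[m]_\ast(E)$ with the $m$-fold Baer sum of $E$, that $\obG^\natural=[m]_\ast\obG$ is nothing but the $m$-fold Baer sum of $\obG$ in the Picard category $\CExt(\bG,\bK_2)$; equivalently $\obG^\natural\cong m\cdot\obG$. Since the equivalence $\CExt(\bG,\bK_2)\simeq\BD(\bG,\bT)$ is an equivalence of commutative Picard categories, with addition of $\BD$-data given by adding the quadratic forms and Baer-summing the extensions $\sce$ and the morphisms $f$, the datum of $\obG^\natural$ is $m\cdot(Q,\sce,f)$, whose quadratic form is $mQ$. (Alternatively, one reads off $B_{Q^\natural}=mB_Q$ from the commutator formula of Proposition \ref{prop:commutator of tori}, since the commutator of $f_\ast E$ is $f$ composed with that of $E$, and then pins down $Q^\natural=mQ$ by pulling back along cocharacters $y\colon\bG_m\to\bT$, whose numerical invariant is $Q(y)$ and is multiplied by $m$ under $[m]_\ast$.)

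For the second invariant, applying the Lemma once more — now to $\sce$ regarded as a central extension of $Y$ by $F_s^\times$ — the $m$-fold Baer sum of $\sce$ is exactly $[m]_\ast\sce$, the pushout of $\sce$ by $[m]\colon F_s^\times\to F_s^\times$; hence $\sce^\natural=[m]_\ast\sce$. One can also see this directly from the residue-map construction of $\sce$: pulling back $[m]_\ast\obT$ along $y\mapsto y(\tau)$ gives $[m]_\ast$ of the pulled-back extension, since pullback commutes with $[m]_\ast$, and then, because $\Res$ is a homomorphism, $\Res\circ[m]_{\bK_2}=[m]_{F_s^\times}\circ\Res$, so the subsequent pushout along $\Res$ produces the pushout of $\sce$ by $[m]$ on $F_s^\times$. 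Applying the same two steps to the simply connected group shows $\obG^{sc,\natural}=[m]_\ast\obG^{sc}$, so $\sce_{(Q^\natural)^{sc}}=\sce_{mQ^{sc}}=[m]_\ast\sce_{Q^{sc}}$, which is the expected source of $f^\natural$.

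For the third invariant, $f$ is a morphism of central extensions and pushout is a functor on central extensions, so the commuting square \eqref{eq:third invariant} is carried termwise to the square obtained by pushing out $F_s^\times$, $\sce_{Q^{sc}}$ and $\sce$ by $[m]$; the resulting vertical map $[m]_\ast\sce_{Q^{sc}}\to[m]_\ast\sce$ is, by definition, the pushout of the diagram \eqref{eq:third invariant} via $[m]$, and it is $\Gamma$-equivariant because $[m]$ is. The compatibility condition $f^\natural=(f')^\natural\circ\phi$ with morphisms $\phi$ between extensions is likewise inherited from functoriality. Together with the first two steps this yields $(Q^\natural,\sce^\natural,f^\natural)=(mQ,\,[m]_\ast\sce,\,[m]_\ast\text{-pushout of }\eqref{eq:third invariant})$ as claimed.

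The main obstacle is purely bookkeeping: one must check that the two operations entering the construction of $\sce$ — pullback along $y\mapsto y(\tau)$ and pushout along $\Res$ — each interact with $[m]_\ast$ as asserted (the only non-formal ingredient being $\Res\circ[m]=[m]\circ\Res$), and that ``$m$-fold Baer sum of the datum $(Q,\sce,f)$'' genuinely matches ``pushout by $[m]$'' on all three slots. For the $Q$- and $\sce$-slots this is exactly the content of the already-proven Lemma, and for the $f$-slot it is the observation that Baer-summing $m$ copies of a morphism of extensions is the same as pushing the morphism out by $[m]$, verified by the same diagram chase used to prove that Lemma.
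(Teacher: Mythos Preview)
Your proposal is correct and follows essentially the same route as the paper, only with considerably more detail. The paper's own proof is two sentences: it cites \cite{Weissman16a} Theorem 2.2 for the functoriality of Baer-multiples and then says the three invariants ``follow from the construction directly,'' noting that $Q^\natural$ is read off from the commutator on $\obT^\natural$. Your argument unpacks exactly this: you invoke the paper's earlier Lemma identifying $[m]_\ast$ with the $m$-fold Baer sum, push through the Picard equivalence $\CExt(\bG,\bK_2)\simeq\BD(\bG,\bT)$, and then verify slot-by-slot that $m\cdot(Q,\sce,f)=(mQ,[m]_\ast\sce,[m]_\ast f)$. The alternative direct check you sketch (pullback commutes with $[m]_\ast$, and $\Res\circ[m]=[m]\circ\Res$) is precisely what ``follows from the construction'' means in the paper's terse phrasing. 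One small point worth making explicit, which neither you nor the paper spells out: the source of $f^\natural$ is $\sce_{(mQ)^{sc}}$, not literally $[m]_\ast\sce_{Q^{sc}}$, so the statement implicitly uses the canonical identification $\sce_{mQ^{sc}}\cong[m]_\ast\sce_{Q^{sc}}$ coming from the simply-connected case; your remark ``Applying the same two steps to the simply connected group'' is the right justification for this.
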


\begin{proof}
The functoriality of Baer-multiples $\obG \mapsto \obG^{\natural}$ can be found in \cite{Weissman16a} Theorem 2.2. Indeed, the quadratic form $Q^{\natural}$ is determined by the multiplication map on $\obT^{\natural}$. The other two invariants again follows from the construction directly. 
\end{proof}

Observe that if $m\equiv -1 \mod n$, then
\[
[m]:\bK_2 \to \bK_2, \qquad \zeta \mapsto \zeta^{m}.
\]
becomes $\mu_n\to \mu_n, \zeta \mapsto \zeta^{-1}$ after taking the Hilbert symbol. Thus,
\[
\oG_v^\natural =(\mu_n\times \oG_v)/\la (\zeta,\zeta):\zeta\in \mu_n\ra.
\]
We have a commutative diagram
\[
\begin{tikzpicture}[scale=1.5]
\node (A1) at (0,1) {$1$};
\node (B1) at (1.5,1) {$\mu_n$};
\node (C1) at (3,1) {$\oG_v$};
\node (D1) at (4.5,1) {$G_v$};
\node (E1) at (6,1) {$1$};
\node (A2) at (0,0) {$1$};
\node (B2) at (1.5,0) {$\mu_n$};
\node (C2) at (3,0) {$\oG_v^\natural$};
\node (D2) at (4.5,0) {$G_v$};
\node (E2) at (6,0) {$1$};
\path[->,font=\scriptsize,>=angle 90]
(A1) edge  (B1)
(B1) edge  (C1)
(C1) edge  (D1)
(D1) edge  (E1)
(A2) edge  (B2)
(B2) edge  (C2)
(C2) edge  (D2)
(D2) edge  (E2)
(C1) edge (C2)
(D1) edge (D2)
(B1) edge (B2);
\end{tikzpicture},
\]
where the first vertical map is the inverse map and the second vertical map $\oG_v\to \oG_v^\natural$ is a group isomorphism. This works globally as well.

\section{Doubling variables}\label{sec:doubling variables}

We now review some definitions related to classical groups that are used in the twisted doubling integrals. Here we use the conceptual description in \cite{Cai21}. 

\subsection{Classical groups}\label{sec:classical groups}

For the definition of classical groups, we follow the setup in \cite{Yamana14}. 

By an involution of an algebra $D$ whose center $E$ contains $F$, we mean an arbitrary anti-automorphism $\rho$ of $D$ of order two under which $F$ is the fixed subfield of $E$. We denote the restriction of $\rho$ to $E$ also by $\rho$. We take a couple of $(D,\rho)$ belonging to the following five types:

\begin{enumerate}[(a)]
  \item $D=E=F$ and $\rho$ is the identity map;
  \item $D$ is a division quaternion algebra over $E=F$ and $\rho$ is the main involution of $D$;
  \item $D$ is a division algebra central over a quadratic extension $E$ of $F$ and $\rho$ generates $\mathrm{Gal}(E/F)$;
  \item $D=\M_2(E)$, $E=F$ and $\begin{pmatrix} a & b \\ c & d \end{pmatrix}^\rho = \begin{pmatrix} d & -b \\ -c & a \end{pmatrix}$;
  \item $D=\bD\oplus \bD^{\op}$, $E=F\oplus F$ and $(x,y)^\rho=(y,x)$, where $\bD$ is a division algebra central over $F$ and $\bD^{\op}$ is its opposite algebra.
\end{enumerate}

If $F$ is local, we fix a nontrivial additive character $\psi_F$ of $F$; if $F$ is global, we fix a nontrivial additive character $\psi_F$ of $F\bs \ba$.
If $E=F$, we set $\psi=\psi_F$; if $E/F$ is an \'etale quadratic algebra, we set $\psi=\psi_F\circ \mathrm{tr}_{E/F}$. The global version is defined similarly. If $x$ is a square matrix with coordinates in $D$, then $\nu(x)\in E$ and $\tau(x)\in E$ stand for its reduced norm and reduced trace to the center $E$ of $D$.

The rank of $D$ as a module over $E$ is a square of a natural number which will be denoted by $d$. We assume $D$ to be division if $F$ is a number field, so that $D$ is of type (d) (resp. (e)) will appear in our later discussion as a localization of a global $D$ of type (b) (resp. (c)).

Let $\epsilon$ be either $1$ or $-1$. We fix once and for all the triple $(D,\rho,\epsilon)$.

Let $W$ be a free left $D$-module of rank $m$. By an $\epsilon$-skew hermitian space we mean a structure $\scw=(W,\la\ ,\ \ra)$, where $\la \ , \ \ra$ is a $\epsilon$-skew hermitian form on $W$, that is, an $F$-bilinear map $\la \ , \ \ra: W\times W\to D$ such that
\[
\la x, y\ra^\rho = -\epsilon\la y , x \ra,\qquad \la ax,by\ra=a\la x,y\ra b^\rho,  \ (a,b\in D; \ x,y \in W).
\]
Such a form is called non-degenerate if $\la x ,W\ra=0$ implies that $x=0$. We assume that $\la \ , \ \ra$ is non-degenerate.

We denote the ring of all $D$-linear endomorphisms of $W$ by $\End_D(W)$ and set $\bGL_D(W)=\End_D(W)^\times$. Note that $\bGL_D(W)$ acts on $W$ on the right. We sometimes write $\bGL_{W;D}$ for $\bGL_D(W)$ for ease of notations. 
Let
\[
\bG=
\{
g\in \bGL_D(W): \la xg,yg\ra = \la x,y\ra \text{ for all }x,y\in W
\}
\]
be the unitary group of $(W,\la \ , \ \ra)$, which is 
a reductive algebraic group defined over $F$. It is important to realize that $\bG$ always comes together with a space $W$ and a form $\la \ , \ \ra$. We usually just speak of $\bG$ and the data $\scw=(W,\la\ ,\ \ra)$ will be implicitly understood. We write $\bG=\bG(\scw)$ when the dependence of $\bG$ on $\scw$ needs to be stressed. 

\subsection{Doubling homomorphism}

Let $\scw=(W,\la \ , \ \ra)$ be one of the $\epsilon$-skew hermitian forms described above. Let $k$ be a fixed positive integer.  We would like to define the following in this section:
\begin{equation}\label{eq:doubling data}
(\bG,\bG^{\square,k},\iota,\bP, \bN_{\scw,k}^{\bullet}, \psi_{\scw,k}^{\bullet}).
\end{equation}

Put $W^{\square,k}=W^{\oplus 2k}$. We usually write
\[
W^{\square,k} = W_{1,+}\oplus W_{2,+}\oplus \cdots \oplus  W_{k,+}\oplus W_{k,-}\oplus \cdots \oplus W_{2,-} \oplus W_{1,-}
\]
to distinguish the copies of $W$ in $W^{\square,k}$. We write an element in $W^{\square,k}$ as
\[
(\bx;\by)=(x_1,\cdots, x_k; y_{k},\cdots, y_{1}),\qquad x_i\in W_{i,+}, \ y_i\in W_{i,-}.
\]
Define an $\epsilon$-skew hermitian form $\la \ , \ \ra^{\square,k} $ on $W^{\square,k}$ by
\[
\la (\bx;\by), (\bx';\by') \ra^{\square,k} = \sum_{i=1}^k(\la x_i,x'_i\ra-\la y_{i},y'_{i}\ra) \qquad (x_i,x'_i\in W_{i,+}; y_{i},y'_{i}\in W_{i,-}).
\]
Let $\bG^{\square,k}$ denote the unitary group of $(W^{\square,k}, \la \ , \ \ra^{\square,k})$.

For $W^\square =W_+\oplus W_-$, let
\[
W^\nabla=\{(x,-x)\in W_{+}\oplus W_{-}: x\in W\}
\]
be the graph of minus the identity map from $W$ to $W$, and
\[
W^{\Delta}=\{(x,x)\in W_{+}\oplus W_{-}: x\in W\}.
\]
be the graph of the identity map. Given $x\in W$, we write
\[
x^\Delta=(x ,x)\in W^\Delta \text{ and } x^\nabla=(x,-x)\in W^\nabla.
\]
We have the following observations:
\begin{enumerate}
\item For each $i$, $W_i^\square=W_{i,+}\oplus W_{i,-}=W^\Delta_{i}+ W_i^\nabla.$
Both $W_i^\Delta$ and $W_i^\nabla$ is totally isotropic in $W^{\square,k}$.
\item The space $W^\Delta$ is  isomorphic to $W$ as vector spaces via
\[
W^\Delta \simeq W, \qquad (x,x)\mapsto x.
\]
The space $W^\nabla$ is identified with $W$ via $(x,-x)\mapsto 2x$. Thus, we can view $\bG(\scw)$ as a subgroup of $\bGL_D(W^\Delta)$ or $\bGL_D(W^\nabla)$, and identify $\Hom_D(W_i^\nabla,W_j^\nabla)$ with $\End_D(W)$.
\end{enumerate}

Define
\[
W^{\Delta,k}=W_1^\Delta \oplus \cdots \oplus W_k^\Delta,\qquad W^{\nabla,k}=W_1^\nabla \oplus \cdots \oplus W_k^\nabla.
\]
Both spaces are totally isotropic in $W^{\square,k}$ and $W^{\square,k} =W^{\Delta,k}+W^{\nabla,k}$. This is a complete polarization of $W^{\square,k}$. Unless otherwise specified, we write $\bP=\bP(W^{\Delta,k})$.

We first construct a Fourier coefficient for the group $\bG^{\square,k}$.  
We choose the following flag of totally isotropic subspaces in $W^{\square,k}$
\begin{equation}\label{eq:flag in twisted doubling}
0\subset W_k^\nabla \subset W_{k-1}^\nabla \oplus W_{k}^\nabla \subset \cdots \subset W_2^\nabla \oplus \cdots \oplus W_{k}^\nabla.
\end{equation}
Let $\bP_{\scw,k}^{\bullet}=\bM_{\scw,k}^{\bullet}\cdot \bN_{\scw,k}^{\bullet}$ be the corresponding parabolic subgroup. Then
\[
\bM_{\scw,k}^{\bullet}\simeq \bGL_D(W_{k}^\nabla)\times \cdots \times \bGL_D(W_{2}^\nabla) \times \bG(\mathcal{W}_1^\square).
\]
The character is defined on the group $\bN_{\scw,k}^{\bullet}$.

We reindex the flag in \eqref{eq:flag in twisted doubling} as
\[
0\subset Y_1\subset \cdots \subset Y_{k-1}
\]
and extend it to
\[
0\subset  Y_1\subset \cdots \subset Y_{k-1} \subset Y_{k-1}^{\perp}\subset \cdots \subset Y_1^\perp\subset W^{\square,k}.
\]
Note that except $Y_{k-1}^\perp/Y_{k-1}=W_1^\square$, the quotient between two successive terms is isomorphic to either $W^\Delta$ or $W^\nabla$. For convenience, we write $Y_0=0$ and $Y_k:=Y_{k-1}^\perp$.

To describe the character of $\bN_{\scw,k}^{\bullet}(F)\bs \bN_{\scw,k}^{\bullet}(\ba)$, we have to specify elements
\[
A_i\in \Hom_D(Y_i/Y_{i-1},Y_{i+1}/Y_i)\simeq \End_D(W^\nabla),\qquad i=1,\cdots, k-2,
\]
and
\[
A_{k-1}\in \Hom_D(Y_{k-1}/Y_{k-2},Y_{k-1}^{\perp}/Y_{k-1})\simeq \Hom_D(W^\nabla,W^\square).
\]
We choose $A_1,\cdots,A_{k-2}$ to be the identity map in $\End_D(W^{\nabla})$. The map
\[
Y_{k-1}/Y_{k-2}\xrightarrow{A_{k-1}} Y_{k-1}^{\perp}/Y_{k-1}\xrightarrow{A_k} Y_{k-2}^{\perp}/Y_{k-1}^{\perp}
\]
is translated from
\[
W^\nabla\to W_+\oplus W_- \to W^\Delta,\qquad x^{\nabla}\mapsto (2x,0)\mapsto 2x^{\Delta}.
\]
Note that $A_{k}\circ A_{k-1}$ is an isomorphism.

An element $u\in \bN_{\scw,k}^{\bullet}(F)\bs \bN_{\scw,k}^{\bullet}(\ba)$ induces
\[
u_i:(Y_{i+1}/Y_i)\otimes (F\bs \ba) \to (Y_i/Y_{i-1})\otimes (F\bs \ba).
\]
Then we define
\[
\psi_{\scw,k}^{\bullet}:\bN_{\scw,k}^{\bullet}(F)\bs \bN_{\scw,k}^{\bullet}(\ba) \to \bc,\qquad u\mapsto \psi\left(\sum_{i=1}^{k-1}\tau(u_i\circ A_i)\right).
\]

Given $(g_1,g_2)\in \bG\times \bG$, we define its action on $W_{1,+}\oplus W_{2,+}\oplus \cdots \oplus  W_{k,+}\oplus W_{k,-}\oplus \cdots \oplus W_{2,-} \oplus W_{1,-}$ via
\[
(x_1,\cdots, x_k; y_{k},\cdots, y_2,y_{1})(g_1,g_2)=(x_1 g_1,\cdots, x_kg_1; y_{k}g_1,\cdots, y_2g_1,y_{1}g_2).
\]
This extends to an action of $\bG\times \bG$ on $W^{\square,k}$ and gives a map
\[
\iota=\iota_k: \bG\times \bG \to \bG^{\square,k}.
\]
It is in fact a homomorphism and in particular, the images of these two copies of $\bG$ commute in $\bG^{\square,k}$.
It is straightforward to check that $\iota(G\times G)$ lies in the stabilizer of $\psi_{\scw,k}^{\bullet}$ in $\bG^{\square,k}$.

Recall that for a subgroup $J$ of $G$, we define $J^{\diamondsuit}=\{(g,g)\in G\times G \mid g\in J\}$. We have the following results from \cite{Cai21}.

\begin{Lem}[\cite{Cai21} Lemma 5.1]
We have $\iota(\bG\times \bG)\cap \bP=\iota(\bG^\diamondsuit)$.
\end{Lem}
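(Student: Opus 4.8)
The plan is to prove the two inclusions separately. The inclusion $\iota(\bG^{\diamondsuit}) \subseteq \iota(\bG \times \bG) \cap \bP$ is the easy direction: since $\iota(\bG^{\diamondsuit}) \subseteq \iota(\bG \times \bG)$ trivially, it suffices to check that $\iota(g,g)$ preserves the flag defining $\bP = \bP(W^{\Delta,k})$, equivalently that $\iota(g,g)$ stabilizes the totally isotropic subspace $W^{\Delta,k} = W_1^\Delta \oplus \cdots \oplus W_k^\Delta$. From the explicit formula for the action of $\bG \times \bG$ on $W^{\square,k}$, an element $x^{\Delta} = (x,x) \in W_i^{\Delta}$ is sent under $\iota(g,g)$ to $(xg, xg) = (xg)^{\Delta} \in W_i^{\Delta}$, so each $W_i^{\Delta}$ is preserved; hence $\iota(g,g) \in \bP$.

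\textbf{The reverse inclusion.} For $\iota(\bG \times \bG) \cap \bP \subseteq \iota(\bG^{\diamondsuit})$, I would start with an element $\iota(g_1, g_2) \in \bP$ and show $g_1 = g_2$. The key point is that $\bP$ preserves $W^{\Delta,k}$, so in particular $\iota(g_1,g_2)$ must preserve $W_1^{\Delta} = \{(x,x) : x \in W\}$ (it even preserves this bottom piece of the flag once one checks $W_1^\Delta$ is the first step; more robustly, $\iota(g_1,g_2)$ preserves the whole sum $W^{\Delta,k}$ and acts block-diagonally with respect to the decomposition into $W_i$'s because $(g_1,g_2)$ acts separately on each pair $W_{i,+} \oplus W_{i,-}$ — in fact $g_1$ on all $W_{i,\pm}$ with the sole exception of $W_{1,-}$, where $g_2$ acts). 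Concretely, for $x \in W$ we compute $(x,x)\iota(g_1,g_2)$: looking at the indices, $x \in W_{1,+}$ contributes $xg_1$ and $x \in W_{1,-}$ contributes $xg_2$, so $(x,x)\iota(g_1,g_2) = (xg_1, xg_2)$. For this to lie in $W_1^{\Delta}$ for every $x \in W$ we need $xg_1 = xg_2$ for all $x$, i.e. $g_1 = g_2$. Thus $\iota(g_1,g_2) = \iota(g,g) \in \iota(\bG^{\diamondsuit})$ with $g = g_1 = g_2$.

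\textbf{Main obstacle.} The only subtle point is the bookkeeping around the asymmetric roles of $W_{1,-}$ versus the other copies $W_{i,-}$ in the action of $\iota$, and making sure that the definition of $\bP = \bP(W^{\Delta,k})$ as the stabilizer of the isotropic space $W^{\Delta,k}$ (rather than of a full flag) is the relevant one — one must verify that stabilizing $W^{\Delta,k}$ already forces the block-diagonal behavior that pins down $g_1 = g_2$. Once the indexing conventions from the definition of $\iota_k$ are unwound carefully, the argument reduces to the elementary linear-algebra observation above; there is no genuine difficulty beyond tracking which factor of $\bG \times \bG$ acts on which summand. I would therefore present the proof by first recalling that $\bP$ stabilizes $W^{\Delta,k}$, then doing the two inclusions as above, with the reverse inclusion being the one that uses the precise formula for $\iota$.
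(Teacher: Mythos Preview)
Your argument is correct. Both inclusions are handled properly, and your treatment of the reverse inclusion---using that $\iota(g_1,g_2)$ preserves each $W_i^{\square}$ so that $(xg_1,xg_2)\in W^{\Delta,k}\cap W_1^{\square}=W_1^{\Delta}$ forces $g_1=g_2$---is exactly the point.

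Note, however, that the present paper does not actually supply a proof of this lemma: it is stated with a citation to \cite{Cai21} Lemma 5.1 and no argument is given here. Your proof is the natural direct argument and is almost certainly what appears in the cited reference; there is nothing further to compare.
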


\begin{Lem}[\cite{Cai21} Lemma 5.2]\label{lem:modular quasicharacter}
The modular quasicharacter $\delta_{\iota(\bG\times \bG);\bN_{\scw,k}^{\bullet}}(\iota(g_1,g_2))=1$ for any $g_1,g_2\in \bG$.
\end{Lem}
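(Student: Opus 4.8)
The plan is to compute the modular character $\delta_{\bN_{\scw,k}^\bullet}$ of the parabolic $\bP_{\scw,k}^\bullet$ restricted to $\iota(\bG\times\bG)$ directly from the adjoint action on $\Lie(\bN_{\scw,k}^\bullet)$, and show it is trivial. First I would record that $\bN_{\scw,k}^\bullet$ is the unipotent radical of the parabolic stabilizing the flag \eqref{eq:flag in twisted doubling}, and that its Lie algebra has a filtration whose graded pieces are built out of $\Hom_D$-spaces between the successive quotients $Y_i/Y_{i-1}$, together with a ``top'' Hermitian piece coming from $\bG(\mathcal{W}_1^\square)$. Concretely, as a $\bM_{\scw,k}^\bullet$-module, $\Lie(\bN_{\scw,k}^\bullet)$ decomposes into summands of the form $\Hom_D(W^\nabla_i,W^\nabla_j)$ (for $i<j\le k-1$), $\Hom_D(W^\nabla_i,W^\square_1)$, and the nilpotent part inside $\Lie(\bG(\mathcal{W}_1^\square))$ relative to the isotropic line $W_1^\nabla$; each summand contributes its determinant (as a function of the Levi element) to $\delta_{\bN_{\scw,k}^\bullet}$.

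The key point is how $\iota(g_1,g_2)$ sits inside $\bM_{\scw,k}^\bullet\simeq \bGL_D(W_k^\nabla)\times\cdots\times\bGL_D(W_2^\nabla)\times\bG(\mathcal{W}_1^\square)$. By construction of the action in \S\ref{sec:doubling variables}, the pair $(g_1,g_2)$ acts on each $W_i^\nabla$ ($i=2,\dots,k$) through the identification $W_i^\nabla\simeq W$, and this action is by $g_1$ alone — indeed $(x,-x)(g_1,g_2)=(xg_1,-xg_1)$ since the minus-copies $W_{i,-}$ with $i\ge 2$ all transform by $g_1$. On the bottom block $\mathcal{W}_1^\square=W_{1,+}\oplus W_{1,-}$ the pair acts by $g_1$ on $W_{1,+}$ and $g_2$ on $W_{1,-}$, so $\iota(g_1,g_2)$ lands in $\bG(\mathcal{W}_1^\square)$, whose modular contribution is automatically trivial because $\bG(\mathcal{W}_1^\square)$ is itself a (reductive) classical group sitting inside its own Levi and the relevant piece of $\Lie\bN_{\scw,k}^\bullet$ is a module on which such an element acts with determinant $1$ — or more cleanly, one checks the contributions from $\Hom_D(W_i^\nabla,W_1^\square)$ and from within $\bG(\mathcal{W}_1^\square)$ cancel in pairs under the pairing induced by the form. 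For the $\bGL_D(W_i^\nabla)$-factors, each acts by (the image of) $g_1$, and since $g_1\in\bG(\scw)$ is a unitary group element, $\nu(g_1)\in E^\times$ satisfies $\nu(g_1)^\rho\nu(g_1)=1$ (type by type: reduced norm $1$, or norm-one in $E/F$, etc.), and the determinant characters attached to $\Hom_D(W_i^\nabla,W_j^\nabla)$-summands are powers of $\nu(g_1)\nu(g_1)^{\rho}$ or of $\nu(g_1)/\nu(g_1)$ which are trivial on $\bG(\scw)$. I would run this bookkeeping so that every graded piece of $\Lie\bN_{\scw,k}^\bullet$ contributes a character that is trivial on $\iota(\bG\times\bG)$.

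The main obstacle is the careful accounting of the ``top'' piece: the quotient $Y_{k-1}^\perp/Y_{k-1}=W_1^\square$ is not one of the $W^\nabla$-type quotients, so the summand $\Hom_D(Y_{k-1}/Y_{k-2},W_1^\square)\simeq\Hom_D(W^\nabla,W^\square)$ together with the interaction with $\Lie\bG(\mathcal{W}_1^\square)$ has to be handled with the Hermitian structure rather than by a naive determinant count, and one must be sure the form's contribution (the part of $\Lie\bN$ pairing a $\Hom$-piece with its ``transpose'') is included exactly once. The cleanest route is probably to note that $\delta_{\iota(\bG\times\bG);\bN_{\scw,k}^\bullet}$ is a character of $\bG\times\bG$ valued in $\br_{>0}$, hence factors through the abelianization; since $\bG=\bG(\scw)$ has, in each of the five types, abelianization killed by the unitary/norm-one condition (or is already semisimple), any such positive character is forced to be trivial, which reduces the whole computation to checking that $\delta$ is a rational character of $\iota(\bG\times\bG)$ — and that follows because $\iota$ is an algebraic homomorphism and $\delta_{\bN_{\scw,k}^\bullet}$ is a rational character of $\bP_{\scw,k}^\bullet$. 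I would present the direct graded-piece computation as the main argument and mention this abelianization shortcut as a sanity check, since it is exactly the argument of \cite{Cai21} Lemma 5.2 in the linear case and transfers verbatim once the Levi embedding of $\iota(g_1,g_2)$ is pinned down as above.
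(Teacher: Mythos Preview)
The paper does not actually prove this lemma; it merely cites \cite{Cai21} Lemma~5.2 and moves on. So there is no in-paper argument to compare against.

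Your shortcut is correct and is the clean way to do this: $\delta_{\iota(\bG\times\bG);\bN_{\scw,k}^\bullet}$ is a positive real-valued rational character of $\bG\times\bG$, and for each of the classical groups in question (symplectic, special orthogonal, unitary, and their inner forms) every such character is trivial --- either because the group is semisimple, or because any character factors through $|\nu(\cdot)|$, which is $1$ on isometry groups since $\nu(g)^\rho\nu(g)=1$ forces $|\nu(g)|=1$. That alone finishes the lemma. You identify the Levi image of $\iota(g_1,g_2)$ correctly: $g_1$ on each $\GL_D(W_i^\nabla)$ for $i\ge 2$, and $\iota_1(g_1,g_2)\in\bG(\mathcal W_1^\square)$ on the bottom block.

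Your ``main argument'' is less solid. There is no ``nilpotent part inside $\Lie(\bG(\mathcal W_1^\square))$'' contributing to $\Lie\bN_{\scw,k}^\bullet$: the group $\bG(\mathcal W_1^\square)$ sits entirely in the Levi $\bM_{\scw,k}^\bullet$, so nothing inside it belongs to the unipotent radical, and there is nothing ``from within $\bG(\mathcal W_1^\square)$'' to cancel against the $\Hom_D(W_i^\nabla,W_1^\square)$ pieces. What actually happens is that the adjoint action of $\bG(\mathcal W_1^\square)$ on each $\Hom_D(W_1^\square,Y_j/Y_{j-1})$-block has determinant a power of $\nu(g_0)$, and $|\nu(g_0)|=1$ for $g_0$ in a classical group --- which is just the shortcut again. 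I would drop the graded-piece bookkeeping and present only the character-theoretic argument.
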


\subsection{The case of special orthogonal groups}

We now discuss the case of special orthogonal groups. Since the group $\mathbf{O}(W)$ is disconnected, to consider multiplicative $\bK_2$-torsors, it would be better to consider its connected component $\bSO(W)$. We now explain the modifications in order to develop the twisted doubling integrals. For the group $\mathbf{O}(W)$ and a fixed positive integer $k$, we have defined a list of input $(\bG,\bG^{\square},\iota, \bP,\bN_{\scw,k}^{\bullet},\psi_{\scw,k}^{\bullet})$. We now explain how to define it for the group $\bSO(W)$.

The doubling homomorphism $\iota:\mathbf{O}(W) \times \mathbf{O}(W) \to \mathbf{O}(W^{\square,k})$ restricts to
\[
\iota: \bSO(W) \times \bSO(W) \to \bSO(W^{\square,k}).
\]
Note that $\bP, \bN_{\scw,k}^{\bullet} \subset \bSO(W^{\square,k})$. Thus, we can still use $\psi_{\scw,k}^{\bullet}$ and $\bN_{\scw,k}^{\bullet}$ to define a Fourier coefficient of an automorphic form on $\bSO(W^{\square,k})(\ba)$. Thus we take
\[
(\bSO(W), \bSO(W^{\square,k}),\iota, \bP,\bN_{\scw,k}^{\bullet}, \psi_{\scw,k}^{\bullet})
\]
to be the input in the case of special orthogonal groups. To unify our discussion, if $\bG=\bSO(W)$, then we take $\bG^{\square,k}=\bSO(W^{\square,k})$.

The case of inner forms of orthogonal groups case can be treated similarly.

\section{Degenerate representations}\label{sec:degenerate reps}

The purpose of this section is to discuss a family of representations that are used in the global zeta integrals. These can be viewed as the analog of the generalized Speh representations in the covering group setup. As indicated in \cite{Cai21}, in order to prove the global identity, one only has to use information on Fourier coefficients of these representations. We will discuss the conjectural construction of such representations in Section \ref{sec:construction of Speh}.

\subsection{Degenerate Whittaker models}

We first recall the definition of degenerate Whittaker models.  One can attach a degenerate Whittaker models for a Whittaker pair (see \cite{GGS17,MW87}). Given an admissible representation $\pi$, an important question is to find the largest nilpotent orbits that support degenerate Whittaker models for $\pi$. Locally, this determination is related to other nilpotent invariants such as the wave-front set. We refer the reader to the introduction of \cite{GGS17} for a comprehensive account of discussion.

In this paper, we consider only a subclass of degenerate Whittaker models for $\GL_{n,D}$. As explained in \cite{Cai21} Section 2.4, this is sufficient for determining nilpotent invariants and fits into our examples later. As unipotent subgroups split canonically over covering groups, these notions transfers from the linear case to the covering group case automatically. In the following, we only define these in the linear case.

Let $R=F$ if $F$ is a local field, and $R=\ba/F$ if $F$ is a number field. Fix a nontrivial additive character $\psi_F:R \to \bc^\times$. Let $D$ be a central division algebra as in Section \ref{sec:classical groups}. Let $W$ be a free left $D$-module of rank $m$ and consider the group $\bGL_D(W)$. Recall that we sometimes write $\bGL_{W;D}$ for $\bGL_D(W)$ for ease of notations.  Let
\[
\scy:0\subset Y_1\subset Y_2\subset \cdots \subset Y_k\subset W
\]
be a flag of distinct subspaces of $W$. We sometimes write $Y_0=\{0\}$ and $Y_{k+1}=W$ for convenience. The stabilizer of $\scy$ is a parabolic subgroup $\bP(\scy)=\bM(\scy)\cdot \bN(\scy)$ with Levi component $\bM(\scy)$. Then as algebraic groups,
\[
\bN(\scy)^{ab}\cong \prod_{i=1}^k \Hom_D(Y_{i+1}/Y_i,Y_i/Y_{i-1}),\qquad u\mapsto (u_i)_{i=1}^{k}.
\]
To give a character of $\bN(\scy)(R)$, we specify an element in
\[
\sca=(A_1,\cdots, A_k)\in \prod_{i=1}^k \Hom_D(Y_{i}/Y_{i-1},Y_{i+1}/Y_i).
\]
More concretely, given such an $\sca$, we define a character $\psi_{\sca}$ of $N(\scy)(R)$ by
\[
\psi_{\sca}(u)=\psi\left(\sum_{i=1}^k \tau(u_i\circ A_i)\right).
\]
Here, $\psi=\psi_F\circ \mathrm{tr}_{E/F}$. 

Assume now we have a pair $(\bN(\scy),\psi_\sca)$. Globally, for an irreducible automorphic representation $\pi$ of $\oGL_{W;D}(\ba)$, we define the $(N(\scy),\psi_\sca)$-Fourier coefficient of $\phi\in \pi$ as
\[
\phi^{N(\scy),\psi_\sca}(g)=\int\limits_{[N(\scy)]} \phi(ug)\psi_{\sca}(u) \ du.
\]
Locally, we consider the space $\Hom_{N(\scy)(F)}(\pi,\psi_{\sca})$ of $(N(\scy),\psi_{\sca})$-functional for an admissible representation $\pi$ of $\oGL_{W;D}(F)$. 

\subsection{Representations of type $(k,m)_D$}

The purpose of this section is to introduce the notion of representations of type $(k,m)_D$, both locally and globally. These representations are supported on a suitable nilpotent orbit and admits unique models of degenerate type. In the linear case and when $D$ is a field, the generalized Speh representations are examples of such representations.

In this section, we assume that $\dim_D W=km$.

\begin{Def}\label{def:orbit kn}
We say a pair $(N(\scy),\psi_{\sca})$ is in the orbit $(k^m)_D$ if $\scy$ is of the form 
\[
0\subset Y_1 \subset \cdots \subset Y_{k-1} \subset W
\] 
and for $i=1,\cdots, k-1$, $\dim_D Y_i=mi$ and $A_i$ is an isomorphism.
\end{Def}

The stabilizer of a coefficient in the orbit $(k^m)_D$ is isomorphic to $\GL_{m,D}$.

\begin{Def}
We say a pair  $(N(\scy),\psi_{\sca})$ lies in an orbit higher than $(k^m)_D$ if
\[
A_{i+k-1}\circ \cdots \circ A_{i}\neq 0
\]
for some $i$.
\end{Def}
Note that this implies that there are at least $k$ proper subspaces in the flag $\scy$.

\begin{Def}\label{def:local (k,n)}
We say an representation $\theta$ of a local group $\oGL_{W;D}$ is of type $(k,m)_D$ if the following two conditions holds:
  \begin{enumerate}
    \item For a pair $(N(\scy),\psi_{\sca})$ that lies in the orbit $(k^m)_D$,
        \[
        \dim\Hom_{N(\scy)}(\theta,\psi_{\sca})=1.
        \]
    \item For any pair $(N(\scy),\psi_{\sca})$ that lies in an orbit higher than $(k^m)_D$,
        \[
        \dim \Hom_{N(\scy)}(\theta,\psi_{\sca})=0.
        \]
  \end{enumerate}
\end{Def}

\begin{Rem}
By Frobenius reciprocity, $\Hom_{N(\scy)}(\theta,\psi_{\sca})\simeq \Hom_{\oGL_{W;D}}(\theta, \Ind_{N(\scy)}^{\oGL_{W;D}}(\psi_{\sca}))$. An element in the latter space is called a $(N(\scy),\psi_{\sca})$-model for $\theta$. For a representation $\theta$ of type $(k,m)_D$, we write $\Wh_{N(\scy),\psi_{\sca}}(\theta)$ for the image of a nonzero map in $\Hom_{\oGL_{W;D}} (\theta,\Ind_{N(\scy)}^{\oGL_{W;D}}(\psi_{\sca}))$.
\end{Rem}

\begin{Def}
  We say an irreducible automorphic representation $\theta$ of $\oGL_{W;D}(\ba)$ is of type $(k,m)_D$ if the following conditions holds:
  \begin{enumerate}
    \item The representation supports a nonzero $(N(\scy),\psi_{\sca})$-Fourier coefficient such that the pair is in the orbit $(k^m)_D$.
    \item For any pair $(N(\scy),\psi_{\sca})$ lies in an orbit higher than $(k^m)_D$, the $(N(\scy),\psi_A)$-Fourier coefficient vanishes identically.
    \item The local component $\theta_v$ is a representation of type $(k,m)_D$ for every place $v$.
  \end{enumerate}
We also say that the nilpotent orbit attached to $\theta$ is $(k^m)_D$ if only parts (1) and (2) hold.
\end{Def}

\subsection{Invariance under stabilizer}\label{sec:invariance under stab}

This section is a straightforward adaption of \cite{Cai21} Section 2.4.2 to the case of covering groups. We collect necessary results but omit the proofs.

We now assume that $\dim_D W=km$ and the representation $\theta$ of $\oGL_{W;D}$ is of type $(k,m)_D$. We consider the following situation:
\[
0\subset Y_1 \subset \cdots \subset Y_{k-1}\subset W
\]
such that $\dim_D Y_i=mi$, $A_2,\cdots, A_{k-1}$ are isomorphisms, the rank of $A_{1}$ is $a>0$ (which might not be of full rank).

The Fourier coefficients defined by such a pair enjoy an extra invariance property. We start with the case of $a=m$. Recall that the stabilizer $\St_{\sca}$ of a pair $(N(\scy),\psi_{\sca})$ that lies in the orbit $(k^m)_D$ is isomorphic to $\GL_{m,D}$. We start with the local version.

\begin{Lem}[\cite{Cai21} Lemma 2.14]\label{lem:invariance I local}
Let $\theta$ be an irreducible $\epsilon$-genuine admissible representation of $\oGL_{W;D}(F)$ that is of type $(k,m)_D$.
\begin{enumerate}
\item The stabilizer $\overline{\St}_{\sca}$ acts on $\Hom_{N(\scy)}(\theta,\psi_{\sca})$ via an $\epsilon$-genuine character $\overline{\chi}_\theta:\overline{\St}_{\sca}(F) \to \bc^\times$.
\item For $f\in \Wh_{N(\scy),\psi_{\sca}}(\theta)$,
\[
f(gh)=\overline{\chi}_{\theta}(g)f(h)
\]
for $g\in \overline{\St}_{\sca}(F)$ and $h\in \oGL_{W;D}(F)$.
\end{enumerate}
\end{Lem}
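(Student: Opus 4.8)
The plan is to reduce this to the corresponding linear statement in \cite{Cai21} Lemma 2.14, exploiting the fact that unipotent subgroups split canonically in a Brylinski--Deligne cover. First I would observe that $N(\scy)$ is a unipotent subgroup of $\GL_{W;D}$, so by the canonical $G$-equivariant unipotent splitting $i:\mathcal{N}_G\to\oG$ recalled in Section~\ref{sec:topological covering}, the character $\psi_{\sca}$ of $N(\scy)(F)$ pulls back to a character of the preimage $\overline{N(\scy)}(F)$ which is trivial on $\mu_n$, and any element of $\Hom_{N(\scy)}(\theta,\psi_{\sca})$ is automatically $\epsilon$-genuine on the $\mu_n$-part. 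The normalizer $\St_{\sca}\cong\GL_{m,D}$ acts on the pair $(N(\scy),\psi_{\sca})$ by conjugation; because the unipotent section is $G$-equivariant, this conjugation action lifts to an action of $\overline{\St}_{\sca}(F)$ on $\overline{N(\scy)}(F)$ preserving the pulled-back character, hence an action on $\Hom_{N(\scy)}(\theta,\psi_{\sca})$.

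Next, for part (1), I would argue that this action is by scalars. Since $\theta$ is of type $(k,m)_D$ we have $\dim\Hom_{N(\scy)}(\theta,\psi_{\sca})=1$ by Definition~\ref{def:local (k,n)}, so $\overline{\St}_{\sca}(F)$ acts on a one-dimensional space, i.e.\ through a character $\overline{\chi}_\theta:\overline{\St}_{\sca}(F)\to\bc^\times$. That $\overline{\chi}_\theta$ is $\epsilon$-genuine follows because $\mu_n$ is central in $\oGL_{W;D}(F)$ and acts on $\theta$ via $\epsilon$; one computes the action of $\zeta\in\mu_n\subset\overline{\St}_{\sca}(F)$ on a functional $f$ and sees it is multiplication by $\epsilon(\zeta)$, exactly as in the linear-to-covering bookkeeping already used throughout the paper.

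For part (2), given $f\in\Wh_{N(\scy),\psi_{\sca}}(\theta)$, i.e.\ $f$ is in the image of a nonzero map in $\Hom_{\oGL_{W;D}(F)}(\theta,\Ind_{N(\scy)}^{\oGL_{W;D}(F)}(\psi_{\sca}))$, the identity $f(gh)=\overline{\chi}_\theta(g)f(h)$ for $g\in\overline{\St}_{\sca}(F)$ is the translation of part (1) through Frobenius reciprocity: right translation by $g$ corresponds to the action of $g$ on the functional, which is multiplication by $\overline{\chi}_\theta(g)$. This is a formal manipulation identical to the linear case in \cite{Cai21}, with the single additional point that $g$ now ranges over the cover $\overline{\St}_{\sca}(F)$ rather than $\St_{\sca}(F)$.

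The main obstacle is not any of these steps individually but checking that the $G$-equivariant unipotent splitting behaves well under the conjugation action of the (non-split!) group $\overline{\St}_{\sca}(F)$---that is, that conjugating the canonical section $i$ of $\mathcal{N}_{G}$ by an element of $\overline{\St}_{\sca}(F)$ again gives the canonical section of the conjugated unipotent, so that $\psi_{\sca}$ is genuinely stabilized after lifting. This is guaranteed by the uniqueness and $G$-equivariance of $i$ (the conjugate of the canonical section is a section with the same two defining properties, hence equals it), but it is the one place where the covering-group structure genuinely enters, and it is worth spelling out rather than asserting by analogy with the linear case.
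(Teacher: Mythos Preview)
Your proposal is correct. The paper itself does not give a proof of this lemma: the section explicitly states that the results are a straightforward adaptation of \cite{Cai21} Section 2.4.2 to covering groups and that the proofs are omitted, so there is nothing in the paper to compare against beyond the citation. Your argument---using the one-dimensionality of $\Hom_{N(\scy)}(\theta,\psi_{\sca})$ to get a character, checking $\epsilon$-genuineness via the central action of $\mu_n$, and deducing (2) from (1) by Frobenius reciprocity---is exactly the expected adaptation, and your remark that the $G$-equivariance of the canonical unipotent section is the point where the covering structure genuinely enters is apt.
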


Here is the global version.

\begin{Lem}[\cite{Cai21} Lemma 2.15]\label{lem:invariance I global}
Let $\theta$ be an irreducible unitary $\epsilon$-genuine automorphic representation of $\GL_{W;D}(\ba)$. Then there is a character $\overline{\chi}_\theta: \overline{\St}_{\sca}(F\bs \ba) \to \bc^\times$ such that, for any $\phi\in \theta$,
\[
\phi^{N(\scy),\psi_{\sca}}(gh) =\overline{\chi}_{\theta}(g)\phi^{N(\scy),\psi_{\sca}}(h)
\]
for any $g\in \overline{\St}_{\sca}(F\bs \ba)$ and $h\in \oGL_{W;D}(\ba)$.
\end{Lem}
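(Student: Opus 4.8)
The plan is to deduce the global statement from the local one (Lemma \ref{lem:invariance I local}) together with the fact that $\theta$ is automorphic, hence factorizable. First I would fix a pair $(N(\scy),\psi_{\sca})$ in the orbit $(k^m)_D$ with stabilizer $\overline{\St}_{\sca}\cong \oGL_{m,D}$, and recall that the Fourier coefficient map $\phi\mapsto \phi^{N(\scy),\psi_{\sca}}$ intertwines $\theta$ with the space of functions on $\oGL_{W;D}(\ba)$ on which $N(\scy)(F)\bs N(\scy)(\ba)$ acts by $\psi_{\sca}$. The key point is that $\overline{\St}_{\sca}(\ba)$ normalizes $(N(\scy),\psi_{\sca})$ — this is the analogue, for the flag datum, of the fact that $\iota(G\times G)$ stabilizes $\psi_{\scw,k}^{\bullet}$ — so right translation by $g\in\overline{\St}_{\sca}(\ba)$ sends a vector realizing $\Hom_{N(\scy)}(\theta,\psi_{\sca})$ to another such vector; since $\theta_v$ is of type $(k,m)_D$ this Hom-space is one-dimensional at each place, so the action is by a (genuine) scalar. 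Running this placewise and using that $\theta=\otimes_v'\theta_v$ and $\overline{\St}_{\sca}(\ba)=\prod_v'\overline{\St}_{\sca}(F_v)$, the collection of local characters $\overline{\chi}_{\theta_v}$ from Lemma \ref{lem:invariance I local} glues to a character $\overline{\chi}_\theta$ of $\overline{\St}_{\sca}(\ba)$.

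Second I would check that $\overline{\chi}_\theta$ is automorphic, i.e.\ that it is trivial on $\St_{\sca}(F)$ (so that it descends to $\overline{\St}_{\sca}(F\bs\ba)$ as stated). This follows because, for $\gamma\in\St_{\sca}(F)$, right translation by the rational point $\gamma$ (lifted via the canonical splitting $\bG(F)\to\oG(\ba)$, which restricts compatibly to $\overline{\St}_{\sca}$) preserves the space of automorphic forms on $\oGL_{W;D}(\ba)$ and, after a change of variables $u\mapsto \gamma u\gamma^{-1}$ in the Fourier integral together with $\psi_{\sca}(\gamma u\gamma^{-1})=\psi_{\sca}(u)$ (rationality of the conjugation action and the fact that $\psi_F$ kills $F$), one gets $\phi^{N(\scy),\psi_{\sca}}(\gamma h)=\phi^{N(\scy),\psi_{\sca}}(h)$ for all $\phi\in\theta$, forcing $\overline{\chi}_\theta(\gamma)=1$. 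Finally I would record the asserted transformation law: unwinding the definition of the Fourier coefficient and using that $g\in\overline{\St}_{\sca}(\ba)$ normalizes $(N(\scy),\psi_{\sca})$, a substitution $u\mapsto gug^{-1}$ inside the integral produces exactly $\phi^{N(\scy),\psi_{\sca}}(gh)=\overline{\chi}_\theta(g)\,\phi^{N(\scy),\psi_{\sca}}(h)$, with the scalar independent of $\phi$ and $h$ by the local multiplicity-one input.

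The main obstacle — and the only place where genuine work beyond bookkeeping is needed — is the passage from the pointwise (per-vector) statement to a single character independent of the vector: one must know that the one-dimensionality of $\Hom_{N(\scy)(F_v)}(\theta_v,\psi_{\sca})$ at each place propagates to the global Fourier coefficient functional being, up to scalars, unique, so that the scalar $\overline{\chi}_\theta(g)$ is well-defined. This is exactly where the hypothesis that every local component $\theta_v$ is of type $(k,m)_D$ (rather than merely that $\theta$ has the right global orbit) is used, and where the argument genuinely mirrors the linear case in \cite{Cai21} Section 2.4.2 — the only new ingredient being that all groups and sections involved are replaced by their Brylinski--Deligne covers, which is harmless because unipotent subgroups split canonically and the splitting is $\oG$-equivariant, so the conjugation manipulations above lift verbatim. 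Since, as remarked in Section \ref{sec:invariance under stab}, the proofs are a straightforward adaptation of the linear case, I would at this point simply cite \cite{Cai21} Lemma 2.15 for the details.
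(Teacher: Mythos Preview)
Your proposal is correct and aligns with the paper's treatment: the paper omits the proof entirely, stating that the results in Section~\ref{sec:invariance under stab} are a straightforward adaptation of \cite{Cai21} Section~2.4.2 and citing \cite{Cai21} Lemma~2.15 directly. Your outline---deducing a character from local multiplicity one (the type $(k,m)_D$ hypothesis on each $\theta_v$), gluing via factorizability, and checking triviality on $\St_{\sca}(F)$ by automorphy of $\phi$ together with $\St_{\sca}$-stability of $\psi_{\sca}$---is exactly the expected argument, and your concluding remark that one should simply cite \cite{Cai21} is precisely what the paper does.
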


We now consider the case $a<m$. Define $S_{\sca}$ to be the subgroup of $M(\scy)$:
\[
N(\mathrm{Ker}(A_1))\times \{1\} \times\cdots \times \{1\} \subset \GL(Y_1)\times \GL(Y_2/Y_1) \times \cdots \times \GL(W/Y_{k-1}).
\]
Here, $N(\mathrm{Ker}(A_1))$ is the unipotent radical of the parabolic subgroup of $\GL(Y_1)$ stabilizing $\mathrm{Ker}(A_1)$. 
Then the unipotent group $S_{\sca}$ is in the stabilizer of the pair $(N(\scy),\psi_{\sca})$. (Note that $S_{\sca}$ is not the full stabilizer.) The Fourier coefficient $\phi^{N(\scy),\psi_{\sca}}(g)$ is left-invariant under $[S_{\sca}]$.

\begin{Prop}[\cite{Cai21} Proposition 2.17]\label{prop:invariance II}
For $\phi\in \theta$,
\[
\phi^{N(\scy),\psi_{\sca}}(gh)=\phi^{N(\scy),\psi_{\sca}}(h)
\]
for any $g\in S_{\sca}(F\bs \ba)$ and $h\in \oGL_{W;D}(\ba)$.
\end{Prop}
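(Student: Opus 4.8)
The statement to be proved is Proposition \ref{prop:invariance II}, the invariance of the Fourier coefficient $\phi^{N(\scy),\psi_{\sca}}$ under $[S_{\sca}]$ in the case where $A_1$ has rank $a<m$. Since the proposition is cited from \cite{Cai21} Proposition 2.17 and the surrounding text emphasizes that unipotent subgroups split canonically over the covering group, the plan is to reduce to the linear case proved in loc.\ cit.\ and then invoke that the unipotent section is a group homomorphism and is $\oGL_{W;D}$-equivariant, so that the linear computation transports verbatim. First I would observe that $S_{\sca}=N(\mathrm{Ker}(A_1))\times\{1\}\times\cdots\times\{1\}$ is a unipotent subgroup of $\bM(\scy)\subset \bGL_{W;D}$; by the discussion of unipotent subgroups in Section \ref{sec:topological covering}, it lifts canonically and uniquely to $\oGL_{W;D}$, and the Fourier coefficient $\phi^{N(\scy),\psi_{\sca}}$ is genuinely a function on the covering group that is built from integrating over the (split) unipotent $[N(\scy)]$ against the character $\psi_{\sca}$.

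The core of the argument is the following change-of-variables / normalizing computation, carried out exactly as in \cite{Cai21}. For $s\in S_{\sca}(F\bs\ba)$, one wants to show $\int_{[N(\scy)]}\phi(u\, s\, h)\psi_{\sca}(u)\,du=\int_{[N(\scy)]}\phi(u\,h)\psi_{\sca}(u)\,du$. Since $s$ normalizes $N(\scy)$ (it lies in the Levi $\bM(\scy)$ and stabilizes the flag), conjugation $u\mapsto s u s^{-1}$ is a measure-preserving automorphism of $[N(\scy)]$, so after the substitution the left side becomes $\int_{[N(\scy)]}\phi(u\,h)\,\psi_{\sca}(s^{-1}u s)\,du$. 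Hence it suffices to check that $\psi_{\sca}$ is invariant under conjugation by $S_{\sca}$, i.e.\ $\psi_{\sca}(s^{-1}u s)=\psi_{\sca}(u)$ for all $s\in S_{\sca}$, $u\in N(\scy)$. Writing the abelianization $\bN(\scy)^{ab}\cong\prod_i\Hom_D(Y_{i+1}/Y_i,Y_i/Y_{i-1})$, the conjugation by an element $n\in N(\mathrm{Ker}(A_1))\subset\GL(Y_1)$ only affects the first component, sending $u_1\mapsto n^{-1}u_1$ (up to the relevant identifications), and possibly feeds into the second component; one then checks $\tau(n^{-1}u_1\circ A_1)=\tau(u_1\circ A_1)$ because $n$ acts trivially on the image of $A_1$ modulo $\mathrm{Ker}(A_1)$ — precisely the reason $N(\mathrm{Ker}(A_1))$ rather than the full unipotent radical of $\GL(Y_1)$ was chosen. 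The contributions to $\tau(u_2\circ A_2)$ coming from the commutator terms vanish for the same reason, since $A_2$ is an isomorphism and the extra terms land in $\mathrm{Ker}(A_1)\cdot(\text{something})$ which $A_1$, resp.\ the trace pairing, annihilates. This is the step I expect to be the main obstacle: keeping track of how conjugation by $S_{\sca}$ interacts with the non-abelian part of $N(\scy)$ and verifying that all the cross terms contribute trivially to $\psi_{\sca}$; it is a bookkeeping argument with the Heisenberg-type commutator relations of $N(\scy)$, but it is entirely at the level of the linear group $\bGL_{W;D}$.

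Finally, I would note that no genuineness or cocycle input is needed here: because $\phi$ is left-invariant under the canonical lift of $[S_{\sca}]$ exactly when the linear-group integral is $S_{\sca}$-invariant, and the unipotent section is a homomorphism compatible with the whole adelic covering (Section \ref{sec:topological covering}), the linear result of \cite{Cai21} Proposition 2.17 passes to $\oGL_{W;D}(\ba)$ without change. So the write-up can legitimately say ``this is a straightforward adaption of \cite{Cai21} Section 2.4.2'' and either reproduce the short conjugation computation or refer the reader to loc.\ cit., since the only new ingredient — the canonical splitting of $S_{\sca}$ over the cover — has already been recorded.
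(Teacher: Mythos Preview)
Your change-of-variables step is incorrect, and this is a genuine gap rather than a bookkeeping slip. Starting from $\int_{[N(\scy)]}\phi(u\,s\,h)\psi_{\sca}(u)\,du$ and substituting $u\mapsto s u s^{-1}$ yields $\int_{[N(\scy)]}\phi(s\,u\,h)\,\psi_{\sca}(s u s^{-1})\,du$, not $\int_{[N(\scy)]}\phi(u\,h)\,\psi_{\sca}(s^{-1} u s)\,du$ as you wrote. The extra $s$ on the left of $\phi$ cannot be removed: $\phi$ is an automorphic form, hence left invariant under $\GL_{W;D}(F)$, but there is no reason for it to be left invariant under $S_{\sca}(\ba)$. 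What your computation actually establishes is only that $S_{\sca}$ stabilizes the pair $(N(\scy),\psi_{\sca})$, i.e.\ that the function $g\mapsto \phi^{N(\scy),\psi_{\sca}}(gh)$ is well defined on $[S_{\sca}]$. That is the \emph{starting point} of the argument, not its conclusion.

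The missing idea is that the invariance genuinely uses the hypothesis that $\theta$ is of type $(k,m)_D$; your sketch never invokes it. The argument in \cite{Cai21} proceeds by Fourier-expanding the function $g\mapsto \phi^{N(\scy),\psi_{\sca}}(gh)$ along $[S_{\sca}]$. A nontrivial character of $[S_{\sca}]\simeq [\Hom_D(Y_1/\ker A_1,\ker A_1)]$ corresponds to a nonzero $A_0':\ker A_1\to Y_1/\ker A_1$, and combining it with $\psi_{\sca}$ produces a Fourier coefficient attached to the refined flag $0\subset \ker A_1\subset Y_1\subset\cdots\subset Y_{k-1}\subset W$ with data $(A_0',A_1',A_2,\ldots,A_{k-1})$, where $A_1'$ is the injection induced by $A_1$. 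Since $A_2,\ldots,A_{k-1}$ are isomorphisms and $A_1'$ is injective, the composition $A_{k-1}\circ\cdots\circ A_1'\circ A_0'$ is nonzero whenever $A_0'\neq 0$, so this pair lies in an orbit higher than $(k^m)_D$ and the corresponding coefficient vanishes by Definition~\ref{def:local (k,n)}/the global analogue. Only the trivial mode survives, which is exactly the asserted invariance. Your remarks about the canonical unipotent splitting over the cover are correct and are indeed all that is needed to transport this from \cite{Cai21} to $\oGL_{W;D}(\ba)$, but you must supply (or at least correctly cite) the Fourier-expansion argument rather than the change-of-variables one.
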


\begin{Rem}
To further develop the local and global theory of the twisted doubling integrals, we need finer properties of these representations. For instance, multiplicativity of $\gamma$-factors corresponds to the properties of representations of type $(k,m)_D$ with respect to parabolic induction. As we do not require these properties in the present paper, we leave them to a future article.
\end{Rem}

\section{Basic setup in the linear case}\label{sec:Basic setup in the linear case}

We now review the basic setup of the twisted doubling integrals from \cite{Cai21} Section 6. Write $\bP=\bM \cdot \bN$. Then $P(F)\bs G^{\square,k}(F)$ can be identified with the flag variety $\Omega(W^{\square,k})$ of maximal totally isotropic subspaces of $W^{\square,k}$. (In the case of special orthogonal groups, this corresponds to a subset of $\Omega(W^{\square,k})$.) The identification is given by $\gamma\mapsto W^{\Delta,k}\gamma$. We write $L=W^{\Delta,k}\gamma$.

We define the following subset of $G^{\square,k}(F)$:
\[
\Omega_1=\{\gamma \in G^{\square,k}(F): \psi_{\scw,k}^{\bullet}|_{[\gamma^{-1} N\gamma \cap N_{\scw,k}^{\bullet}]}\neq 1\}.
\]
If $\gamma \in G^{\square,k}(F) - \Omega_1$, then $\psi_{\scw,k}^{\bullet}|_{[\gamma^{-1} N\gamma \cap N_{\scw,k}^{\bullet}]}= 1$. The character $\psi_{\scw,k}^{\bullet}$ induces a character on a unipotent subgroup of $[\gamma^{-1}N\gamma\bs \gamma^{-1}P\gamma]$, which is isomorphic to a general linear group over $D$. It is defined by the following pair
\begin{equation}\label{eq:character induced on levi}
([\gamma^{-1} N\gamma \cap N_{\scw,k}^{\bullet}\bs \gamma^{-1} P\gamma \cap N_{\scw,k}^{\bullet}],\psi_{\scw,k}^{\bullet}).
\end{equation}
We define
\[
\Omega_2=\{\gamma \in G^{\square,k}(F)-\Omega_1: \eqref{eq:character induced on levi} \text{ is given by a pair that lies in an orbit higher than }(k^m)_D\}.
\]
Both $\Omega_1$ and $\Omega_2$ are double cosets in $P(F)\bs G^{\square,k}(F)/N_{\scw,k}^{\bullet}(F)$ and we have a nice geometric interpretation of $\Omega_1\cup \Omega_2$.

\begin{Lem}[\cite{Cai21} Section 6.1]
With notations being as above, then
\[
\Omega_1\cup \Omega_2=\{\gamma \in G^{\square,k}(F): L\cap Y_{k-1}=\{0\}\}.
\]
\end{Lem}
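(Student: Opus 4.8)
The plan is to recast membership in $\Omega_1$ and in $\Omega_2$ as conditions on the relative position of the Lagrangian $L=W^{\Delta,k}\gamma$ with respect to the extended flag
\[
0\subset Y_1\subset \cdots \subset Y_{k-1}\subset Y_k=Y_{k-1}^\perp\subset \cdots \subset Y_1^\perp\subset W^{\square,k},
\]
and then to read off that their union is exactly the locus $L\cap Y_{k-1}=\{0\}$. Since $\bN_{\scw,k}^{\bullet}$ stabilizes each $Y_i$ while $\bP$ stabilizes $W^{\Delta,k}$, both $\Omega_1\cup\Omega_2$ and $\{\gamma:L\cap Y_{k-1}=\{0\}\}$ are unions of $(P(F),N_{\scw,k}^{\bullet}(F))$-double cosets; so it is enough to check the equality coset by coset in $P(F)\backslash G^{\square,k}(F)/N_{\scw,k}^{\bullet}(F)$, replacing $\gamma$ by any convenient representative. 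I would first unwind the groups involved: $\gamma^{-1}N\gamma$ is the unipotent radical of the Siegel parabolic $\gamma^{-1}P\gamma=\bP(L)$, that is, the set of $u$ with $(u-1)W^{\square,k}\subset L$ and $(u-1)L=0$ (recall $L^\perp=L$); hence $\gamma^{-1}N\gamma\cap N_{\scw,k}^{\bullet}$ consists of those $u\in N_{\scw,k}^{\bullet}$ --- which already preserve the flag $(Y_i)$ and act trivially on its successive quotients --- that in addition act trivially on $L$ and on $W^{\square,k}/L$.

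The core of the argument is a single block computation, performed in a basis of $W^{\square,k}$ adapted simultaneously to the flag $(Y_i)$ and to $L$. This computation should express the group $\gamma^{-1}N\gamma\cap N_{\scw,k}^{\bullet}$, the general linear quotient $[\gamma^{-1}N\gamma\cap N_{\scw,k}^{\bullet}\backslash\gamma^{-1}P\gamma\cap N_{\scw,k}^{\bullet}]$, the restriction $\psi_{\scw,k}^{\bullet}|_{[\gamma^{-1}N\gamma\cap N_{\scw,k}^{\bullet}]}$, and the induced pair \eqref{eq:character induced on levi}, all in terms of the numbers $\dim_D(L\cap Y_i)$ and of which composites $A_{j}\circ\cdots\circ A_{i}$ of the defining maps $A_1,\dots,A_{k-1}$ remain nonzero after intersecting with $\bP(L)$. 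The qualitative phenomenon to establish is that a nonzero intersection with a flag term ``absorbs'' steps of the configuration underlying $\psi_{\scw,k}^{\bullet}$ into $\gamma^{-1}N\gamma$ --- trivializing the corresponding components of $\psi_{\scw,k}^{\bullet}$ and shortening the chain of composable maps that descends to the general linear quotient --- whereas transversality ($L\cap Y_i=\{0\}$) leaves that configuration intact.

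With this dictionary the two inclusions follow. If $L\cap Y_{k-1}\neq\{0\}$, pick $0\neq v\in L\cap Y_{k-1}$; feeding $v$ through the block computation produces relations in $\gamma^{-1}N\gamma\cap N_{\scw,k}^{\bullet}$ that force $\psi_{\scw,k}^{\bullet}|_{[\gamma^{-1}N\gamma\cap N_{\scw,k}^{\bullet}]}=1$, so $\gamma\notin\Omega_1$, and that shorten the surviving chain so that no product $A_{i+k-1}\circ\cdots\circ A_i$ of $k$ consecutive maps is nonzero on the quotient, so the induced pair does not lie in an orbit higher than $(k^m)_D$ and $\gamma\notin\Omega_2$. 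Conversely, if $L\cap Y_{k-1}=\{0\}$, then $L\cap Y_i=\{0\}$ for all $i\le k-1$, no absorption occurs, and the configuration underlying $\psi_{\scw,k}^{\bullet}$ survives intersecting with $\bP(L)$: either it already makes $\psi_{\scw,k}^{\bullet}$ nontrivial on $\gamma^{-1}N\gamma\cap N_{\scw,k}^{\bullet}$, so $\gamma\in\Omega_1$, or, when $\psi_{\scw,k}^{\bullet}$ restricts trivially there, a composite of $k$ consecutive maps descends nontrivially to the general linear quotient, placing the induced pair in an orbit higher than $(k^m)_D$, so $\gamma\in\Omega_2$. Together these give $\Omega_1\cup\Omega_2=\{\gamma:L\cap Y_{k-1}=\{0\}\}$.

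The main obstacle is the bookkeeping inside the block computation. One must track, uniformly over the five types $(D,\rho,\epsilon)$ and along the whole extended flag --- in particular through the non-degenerate middle piece $Y_{k-1}^\perp/Y_{k-1}\simeq W_1^\square$, which is neither a $W^\nabla$- nor a $W^\Delta$-quotient --- exactly which composites of the $A_i$ remain nonzero after intersecting with $\bP(L)$, and then match the resulting combinatorics precisely both to the defining condition $A_{i+k-1}\circ\cdots\circ A_i\neq 0$ for an orbit higher than $(k^m)_D$ and to the (non)triviality of $\psi_{\scw,k}^{\bullet}|_{[\gamma^{-1}N\gamma\cap N_{\scw,k}^{\bullet}]}$. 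Once the dictionary between $\{\dim_D(L\cap Y_i)\}$ and these invariants is pinned down, the equality with $\{L\cap Y_{k-1}=\{0\}\}$ is immediate.
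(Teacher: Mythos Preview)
The paper's own proof is not a direct argument at all: it simply invokes \cite{Cai21} Section 6.1 for the cases treated there and remarks that the only new case in the present paper, the special orthogonal groups, follows from the orthogonal case already handled in \cite{Cai21}. Your outline --- translating membership in $\Omega_1$ and $\Omega_2$ into conditions on the relative position of $L$ with respect to the flag $(Y_i)$ via a block computation --- is essentially what one must do to reprove the result of \cite{Cai21} from scratch, and is thus a genuinely different (and much more substantive) route than the paper's one-line citation.

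However, your two implications are reversed, and this is not merely bookkeeping. Test $\gamma=1$: then $L=W^{\Delta,k}$, and since $Y_{k-1}\subset W^{\nabla,k}$ with $W^{\Delta,k}\cap W^{\nabla,k}=0$ one has $L\cap Y_{k-1}=\{0\}$; yet the identity represents the main orbit in $\tilde G^{\square,k}(F)$ (Proposition~\ref{prop:results about orbits}), so $1\notin\Omega_1\cup\Omega_2$. Conversely, take $\gamma$ with $L=W^{\nabla,k}$: then $L\cap Y_{k-1}=Y_{k-1}\neq\{0\}$, but a direct check shows $\psi_{\scw,k}^{\bullet}$ is nontrivial on $\gamma^{-1}N\gamma\cap N_{\scw,k}^{\bullet}$, so this $\gamma$ lies in $\Omega_1$. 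The correct equality (as in \cite{Cai21}) is therefore $\Omega_1\cup\Omega_2=\{\gamma:L\cap Y_{k-1}\neq\{0\}\}$; the displayed statement carries a sign typo, and your heuristic that a nonzero $v\in L\cap Y_{k-1}$ ``absorbs'' steps and forces the character to be trivial runs exactly backward: such intersections are precisely what allow $\gamma^{-1}N\gamma\cap N_{\scw,k}^{\bullet}$ to contain elements with nonzero components $u_i$ on which $\psi_{\scw,k}^{\bullet}$ is nontrivial.
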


\begin{proof}
The only new case is the case of special orthogonal groups, which follows from the case of orthogonal cases.
\end{proof}

Let $\tilde G^{\square,k}(F)=G^{\square,k}(F) - (\Omega_1\cup \Omega_2)$ and consider $P(F)\bs \tilde G^{\square,k}(F) /N_{\scw,k}^{\bullet}(F)$. It is stable under the right action of $\iota(G\times G)(F)$.

The results in \cite{Cai21} Section 6.2 can be summarized as follows:

\begin{Prop}[\cite{Cai21} Section 6.2]\label{prop:results about orbits}
We have the following.
\begin{enumerate}
\item The double coset $P(F)\bs \tilde G^{\square,k}(F) /\iota(G\times G)N_{\scw,k}^{\bullet}(F)$ is finite.
\item For an $\iota(G\times G)(F)$-orbit in $P(F)\bs \tilde G^{\square,k}(F) /\iota(G\times G)N_{\scw,k}^{\bullet}(F)$, one can choose a representative $\gamma$ such that
\begin{itemize}
\item The Fourier coefficient in \eqref{eq:character induced on levi} is of the form studied in Section \ref{sec:invariance under stab}. The value $a$ can be determined explicitly by $\gamma$. If $a<m$, let $S_\lambda$ denote the subgroup $S_\mathcal{A}$ in Section \ref{sec:invariance under stab}.
\end{itemize}
\item The stabilizer $R_-$ in $\{1\}\times G$ of each representative in (2) contains the unipotent radical $N_-$ of a parabolic subgroup of $\{1\}\times G$ as a normal subgroup.
\item If $a<m$, then $N^-$ is nontrivial and the projection of $\{1\} \times N^-$ to $M$ is a subgroup of $S_\lam$. In this case we say that this orbit is \textit{negligible}.
\item If $N^-$ is trivial, we call this orbit the \textit{main orbit}. The representative of the main orbit can be chosen to be the identity element. The stabilizer of the identity element in $\iota(G\times G)(F)$ is $P(F)\cap \iota(G\times G)(F)=\iota(G^{\diamondsuit})(F)$.
\end{enumerate}
\end{Prop}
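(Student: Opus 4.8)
The plan is to reduce the proposition to the purely linear orbit analysis of \cite{Cai21} Section 6.2. Observe first that nothing here involves the Brylinski--Deligne extension: the statement concerns only the linear algebraic groups $\bG^{\square,k}$, $\bP=\bM\cdot\bN$, $\bN_{\scw,k}^{\bullet}$, $\iota(\bG\times\bG)$ and the character $\psi_{\scw,k}^{\bullet}$. For every classical group $\bG$ treated in \cite{Cai21}, that is, all of our cases except the special orthogonal ones, parts (1)--(5) are exactly the content of \cite{Cai21} Section 6.2, and I would invoke them after matching notation.

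It remains to handle $\bG=\bSO(W)$, which I would deduce by restriction from $\mathbf{O}(W)$, in the same spirit as the proof of the Lemma preceding this Proposition. Recall from Section \ref{sec:doubling variables} that in this case $\bG^{\square,k}=\bSO(W^{\square,k})$, that $\bP$ and $\bN_{\scw,k}^{\bullet}$ already lie in $\bSO(W^{\square,k})$, and that $\bP$, $\bN_{\scw,k}^{\bullet}$, $\psi_{\scw,k}^{\bullet}$, $\iota$ are the restrictions of the corresponding objects attached to $\mathbf{O}(W)$. Since $\iota(g_1,g_2)$ acts on $W^{\square,k}$ by $g_1$ on $2k-1$ of the $2k$ copies of $W$ and by $g_2$ on the remaining one, one has $\det\iota(g_1,g_2)=(\det g_1)^{2k-1}\det g_2$, so $\iota(\mathbf{O}(W)\times\mathbf{O}(W))(F)$ meets both cosets of $\bSO(W^{\square,k})(F)$ in $\mathbf{O}(W^{\square,k})(F)$. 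Now by the Lemma preceding this Proposition, $\Omega_1\cup\Omega_2$ is cut out by the determinant-insensitive condition $L\cap Y_{k-1}=\{0\}$, so the set $\tilde G^{\square,k}(F)$ in the $\bSO$ setting is its intersection with $\bSO(W^{\square,k})(F)$ in the $\mathbf{O}$ setting; combined with the finiteness of $\bP(F)\bs\tilde G^{\square,k}(F)/\iota(\mathbf{O}\times\mathbf{O})\bN_{\scw,k}^{\bullet}(F)$ and the surjectivity above, this gives (1). For (2) I would take a representative $\gamma$ of an $\mathbf{O}$-orbit, adjust it by an element of $\iota(\mathbf{O}\times\mathbf{O})\bN_{\scw,k}^{\bullet}$ so that $\gamma\in\bSO(W^{\square,k})(F)$, and note that the induced Fourier coefficient \eqref{eq:character induced on levi}, the integer $a$, the group $S_\lambda$ and the stabilizer are all read off from the relative position of $L=W^{\Delta,k}\gamma$ with the flag $\scy$ and the polarization, hence are unaffected by the passage from $\mathbf{O}$ to $\bSO$.

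For (3)--(5), the stabilizer $R_-$ in $\{1\}\times\bSO(W)$ of such a $\gamma$ is the intersection of the corresponding $\{1\}\times\mathbf{O}(W)$-stabilizer with $\{1\}\times\bSO(W)$, and both the unipotent radical $N_-$ and the inclusion of the image of $\{1\}\times N_-$ in $\bM$ into $S_\lambda$ are unchanged under passage to the identity component; hence the negligible/main dichotomy carries over verbatim, and for the main orbit the identity is still a representative, with stabilizer $\bP(F)\cap\iota(\bSO\times\bSO)(F)=\iota(\bG^{\diamondsuit})(F)$ by \cite{Cai21} Lemma 5.1. The step I expect to be the main obstacle is the bookkeeping in the second paragraph: one must check that restricting from $\mathbf{O}(W)$ to $\bSO(W)$ never destroys the existence of a representative of the asserted form --- in particular, if an $\mathbf{O}$-orbit splits into two $\bSO$-orbits, that \emph{both} pieces retain representatives with the same invariants $a$, $N_-$ and $S_\lambda$ --- and that the $\mathbf{O}(W)$-stabilizers surject onto the relevant component groups, so that the $\bSO(W)$-orbit structure is genuinely controlled by the orthogonal case. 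Granting this compatibility, all five assertions follow.
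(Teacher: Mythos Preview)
Your approach is exactly the paper's: cite \cite{Cai21} Section 6.2 for all cases except $\bSO(W)$, and deduce the latter from the $\mathbf{O}(W)$ case. The paper's proof is in fact terser than yours---it simply asserts that the natural map
\[
P(F)\bs \tilde{\SO}(W^{\square,k})(F) /\iota(\SO(W)\times \SO(W))N_{\scw,k}^{\bullet}(F) \longrightarrow P(F)\bs \tilde{\mathrm{O}}(W^{\square,k})(F) /\iota(\mathrm{O}(W)\times \mathrm{O}(W))N_{\scw,k}^{\bullet}(F)
\]
is a \emph{bijection}, and leaves the remaining verifications implicit.

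The bijection is precisely the point you flag as ``the main obstacle,'' so your caution is well placed; but it dissolves once you observe that the stabilizer of the Lagrangian $W^{\Delta,k}$ in $\mathrm{O}(W^{\square,k})$ already lies in $\SO(W^{\square,k})$ (the induced action on $W^{\Delta,k}$ and on $W^{\square,k}/W^{\Delta,k}$ are contragredient, forcing determinant $1$), so the $P$ on both sides is literally the same group. Surjectivity you already have from $\det\iota(g_1,g_2)=\det g_1\cdot\det g_2$. For injectivity, if $\gamma'=p\,\gamma\,\iota(g_1,g_2)\,n$ with $\gamma,\gamma'\in\SO(W^{\square,k})(F)$ and $(g_1,g_2)\in\mathrm{O}(W)\times\mathrm{O}(W)$, then $\det g_1=\det g_2$; when both are $-1$, pick any $h\in\mathrm{O}(W)(F)$ with $\det h=-1$ and note that $\iota(h,h)$ acts by $h$ on every copy of $W$, hence preserves $W^{\Delta,k}$ and lies in $P(F)\cap\iota(G\times G)(F)=\iota(G^{\diamondsuit})(F)$. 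Rewriting $\iota(g_1,g_2)=\iota(h,h)\,\iota(h^{-1}g_1,h^{-1}g_2)$ and using that $\iota(h,h)$ normalizes $N_{\scw,k}^{\bullet}$ and stabilizes $\psi_{\scw,k}^{\bullet}$ (it lies in $\iota(G^{\diamondsuit})$), one absorbs $\iota(h,h)$ into the $\iota(\SO\times\SO)N_{\scw,k}^{\bullet}$ factor on the right and into $P$ on the left after conjugating past $\gamma$ within the same double coset. Thus no $\mathbf{O}$-orbit splits, and all of your ``bookkeeping'' concerns evaporate: the invariants $a$, $N_-$, $S_\lambda$ are read off the same geometric data in both settings.
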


\begin{proof}
The only case to check is the case of special orthogonal groups or its inner forms. It can be deduced from the orthogonal group case by noting that there is a bijection between \[
P(F)\bs \tilde{\mathrm{O}}(W^{\square,k})(F) /\iota(\mathrm{O}(W)\times \mathrm{O}(W))N_{\scw,k}^{\bullet}(F)
\]
and
\[
P(F)\bs \tilde{\SO}(W^{\square,k})(F) /\iota(\SO(W)\times \SO(W))N_{\scw,k}^{\bullet}(F).
\]
\end{proof}

\section{Assumptions in the covering group case}\label{sec:covering assumptions}

We now discuss the necessary modifications in the case of covering groups. In order to keep the length of this section reasonable, we defer some of the proofs to Section \ref{sec:BD data explicit}.

From now on, we consider $\bG$ to be one of the following groups:
\begin{enumerate}
\item $\bSp(W)$ or its inner forms;
\item $\bSO(W)$ with $\dim W$ even or its inner forms ;
\item $\bSO(W)$ with $\dim W$ odd and $\dim W\geq 3$;
\item $\bU(W)$.
\end{enumerate}
For each group in the list and a fixed integer $k$, we have a list of input $(\bG,\bG^{\square,k}, \iota, \bP,\bN_{\scw,k}^{\bullet},\psi_{\scw,k}^{\bullet})$ from the previous section.

\begin{Rem}
The group $\bSO_1$ is trivial, so all the results in this section are trivial in this case. We will exclude this case. (We still need to consider this case locally in order to discuss multiplicativity, for example.)
\end{Rem}

Let $n$ be a fixed positive integer. Let $\obG\in \CExt(\bG,\bK_2)$ which is classified by the BD data $(Q,\sce,f)$ given a choice of a maximal $F$-torus $\bT$. In the case of unitary groups, $\bU(W)_{F_s}\simeq \bGL_{dm,F_s}$. We also assume that quadratic form $Q$ in the BD data is decomposable. (This means that if we write $Y=Y_1\oplus Y_2$ according to $\bT=\bT_1\times \bT_2$, then $Q$ can be written as the direct sum of $Q|_{Y_1}$ and $Q|_{Y_2}$.) This assumption will greatly simplify the situation. We will discuss more on this in Remark \ref{rem:commutative assumption}.

\begin{Lem}
Assume that $D$ is a field and $\dim_D W> 1$. Then $2 \mid Q(\al^\vee)$ where $\al^\vee$ is a coroot in the Siegel parabolic subgroup of $\bG$.
\end{Lem}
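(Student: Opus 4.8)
The plan is to realize the coroot $\al^\vee$ attached to the Siegel parabolic inside a concrete $\bSL_2$ or $\bSL_2\times\bSL_2$ subgroup (respectively, a quasi-split unitary analogue) and then compute $Q(\al^\vee)$ using the compatibility of $Q$ with Weyl-orbits together with the decomposability hypothesis. First I would recall the explicit description of the root datum for each of the four families $\bSp(W)$, $\bSO(W)$ ($\dim W$ even or odd), $\bU(W)$ over $F_s$, where in all cases the group splits (for unitary groups $\bU(W)_{F_s}\simeq\bGL_{dm,F_s}$, as noted in the excerpt), so the absolute root system is of type $C_m$, $D_m$, $B_m$, or $A_{dm-1}$ respectively. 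The Siegel parabolic corresponds to the simple root $\al$ whose coroot $\al^\vee$ spans the ``long'' direction: concretely, writing the standard basis $e_1,\dots$ of the cocharacter lattice of the diagonal torus, one has $\al^\vee = e_i - e_{-i}$ (or $2e_i$ in type $C$) — in each case $\al^\vee$ is a \emph{long} coroot, i.e.\ the coroot of a short root.

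The key input is the $W$-invariance of $Q$ (every $\obG\in\CExt(\bG,\bK_2)$ has a $W$-invariant $Q$ by the classification theorems recalled above), which forces $Q$ to take a single value on the $W$-orbit of $\al^\vee$. For types $B$, $C$, $D$ the coroot lattice orbit structure is standard: the long coroots form one Weyl orbit, the short coroots another, and invariance of $Q$ under $W$ together with the relation $Q(\beta^\vee) = \frac{\la\beta,\beta\ra}{2}\cdot(\text{common value})$ pins down $Q(\al^\vee)$ up to the overall scaling. The second key input is decomposability of $Q$: once I exhibit, inside $\bG$, a torus splitting $\bT = \bT_1\times\bT_2$ compatible with the decomposition and a coroot of the form $e_i - e_{-i}$ lying ``across'' the two factors (or realize $\al^\vee$ as $\beta^\vee + \gamma^\vee$ where $\beta^\vee,\gamma^\vee$ are orthogonal with respect to $B_Q$ because they sit in complementary summands), decomposability gives $Q(\al^\vee) = Q(\beta^\vee) + Q(\gamma^\vee) + B_Q(\beta^\vee,\gamma^\vee) = Q(\beta^\vee) + Q(\gamma^\vee)$ with $Q(\beta^\vee)=Q(\gamma^\vee)$ by $W$-invariance — hence $Q(\al^\vee) = 2Q(\beta^\vee)$ is even. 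Here the hypothesis $\dim_D W > 1$ is exactly what guarantees the ambient group is large enough to contain the required product torus, i.e.\ that there is genuinely ``room'' to split $\al^\vee$; the hypothesis that $D$ is a field excludes the quaternionic and split-quaternionic cases (b),(d) where the relevant $\bSL_2$'s are replaced by $\bSL_1(D)$-type factors and the argument would need modification (cf.\ Remark \ref{rem:commutative assumption}).

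The main obstacle will be the type-$D$ case $\bSO_{2m}$, flagged in the introduction as ``the most sophisticated one'': here the root system is $D_m$, all coroots have the same length, so there is no long/short dichotomy to exploit, and the Siegel coroot $\al^\vee = e_{m-1} + e_m$ must be handled by writing it as $(e_{m-1} - e_{-m}) $ paired against the decomposition coming from a rank-$(m-1)$ plus rank-$1$ (or two rank-$(m-1)$) splitting of $W$, using that $Q$ is $W$-invariant so $Q(e_i \pm e_j)$ is constant over all $i\neq j$ and all signs, whence writing $e_{m-1}+e_m = (e_{m-1}-e_1) + (e_1+e_m)$ — a sum of two coroots orthogonal under $B_Q$ and in the same $W$-orbit — yields evenness. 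A secondary subtlety is the unitary case (type $A_{dm-1}$), where all coroots are $W$-conjugate and the Siegel coroot is $e_i - e_{dm+1-i}$; decomposability with respect to the splitting $\bGL_{d}\times\cdots$ (the torus of $\bU(W)$ base-changed) again writes this as a sum of two $B_Q$-orthogonal coroots in one orbit, giving the factor of $2$. I would organize the final proof as a short case analysis (C; B; D; A), in each case (i) naming $\al^\vee$ explicitly, (ii) exhibiting the orthogonal splitting $\al^\vee = \beta^\vee + \gamma^\vee$ into equal-$W$-orbit pieces, and (iii) concluding $Q(\al^\vee) = 2Q(\beta^\vee)\in 2\bZ$.
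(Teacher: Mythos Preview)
Your strategy of writing the Siegel coroot as $\al^\vee=\beta^\vee+\gamma^\vee$ with $\beta^\vee,\gamma^\vee$ two $B_Q$-orthogonal coroots in the same $W$-orbit breaks down in the very case you flag as delicate. For $\bSO_{2m}$ you propose
\[
e_{m-1}^\vee+e_m^\vee=(e_{m-1}^\vee-e_1^\vee)+(e_1^\vee+e_m^\vee),
\]
but these two coroots share the index $1$; one computes
\[
B_Q(e_{m-1}^\vee-e_1^\vee,\ e_1^\vee+e_m^\vee)=-B_Q(e_1^\vee,e_1^\vee)=-2Q(e_1^\vee),
\]
which is nonzero whenever $Q\neq 0$. (They certainly do not ``sit in complementary summands'' of any splitting of $Y$.) The same obstruction arises in types $B$ and $A$: any way of writing $e_i^\vee-e_j^\vee$ as a sum of two coroots forces a repeated index, and the diagonal term $B_Q(e_k^\vee,e_k^\vee)$ spoils orthogonality. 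Only in type $C$ does your recipe go through, because there $\pm e_i^\vee$ are themselves (short) coroots and $e_i^\vee-e_j^\vee=e_i^\vee+(-e_j^\vee)$ has disjoint support.

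The paper does not try to decompose $\al^\vee$ into coroots at all. The point it uses is that for each of these groups the standard basis vectors $e_i^\vee$ already lie in the full cocharacter lattice $Y$ (for $\bSO_{2m}$ and $\bSO_{2m+1}$ this is exactly because $Y\supsetneq Y^{sc}$, the groups not being simply connected; for $\bSp_{2m}$ the $e_i^\vee$ are the short coroots; for $\bU(W)$ one has $Y=\bZ^{dm}$). Since $Q$ is $\bZ$-valued on all of $Y$, the number $b:=Q(e_i^\vee)$ is an integer. Then either $W$-invariance (types $B,C,D$, where the Weyl group contains enough sign changes to force $B_Q(e_i^\vee,e_j^\vee)=0$ for $i\neq j$) or the decomposability hypothesis (type $A$, where it forces $q=0$) gives $Q(e_i^\vee-e_j^\vee)=2b$. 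The paper packages the type $D$ step slightly differently, writing $2e_{n}^\vee$ as the sum of the two genuinely orthogonal coroots $e_{n-1}^\vee\pm e_n^\vee$ to get $4b=2a$, but the mechanism is the same. Note in particular that decomposability is \emph{only} needed in the unitary case---it is not a uniform hypothesis across all types, and the lemma is in fact false for unitary groups without it.
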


This result will be proved case-by-case in Section \ref{sec:BD data explicit}. In the unitary case, this result is not true without the decomposable assumption.

Let $n_Q=n/\gcd(n,Q(\al^\vee))$ where $\al^\vee$ is a coroot in the Siegel parabolic subgroup of $\bG_{F_s}$. In the case $\bG=\bU_1$, the quadratic form is $Q(e_1^\vee)=a$. We define $n_Q=n/\gcd(n,2a)$. Without loss of generality, we assume that $n_Q$ is either $n$ or $n/2$. 

Let $\overline{\bG^{\square,kn_Q}} \in \CExt(\bG^{\square,kn_Q},\bK_2)$. The pullback of $\overline{\bG^{\square,kn_Q}} $ via the doubling homomorphism
\[
\iota:\bG\times \bG\to \bG^{\square,kn_Q},
\]
gives two multiplicative $\bK_2$-torsors on $\bG$:
\[
1\to \bK_2\to \obG_- \to \iota(1\times \bG) \to 1
\]
and
\[
1\to \bK_2\to \obG_+ \to \iota(\bG\times 1) \to 1.
\]
A priori, these two multiplicative $\bK_2$-torsors may not be isomorphic. In fact, a simple calculation on the quadratic form suggests that they are not isomorphic unless $kn_Q=1$. In any case, what we need is the following result.

\begin{Thm}\label{thm:existence of cover}
For $\obG \in \CExt(\bG,\bK_2)$, there exists $\overline{\bG^{\square,kn_Q}} \in \CExt(\bG^{\square,kn_Q},\bK_2)$ such that $\obG_-\simeq \obG$ and $\obG_+$ is the pushout of $\obG$ by the endomorphism $[2kn_Q-1]:\bK_2\to \bK_2$.
\end{Thm}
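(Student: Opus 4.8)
The plan is to translate the statement into the language of BD data and then to build the required $\bK_2$-torsor on $\bG^{\square,kn_Q}$ one block at a time. By the descriptions of the pullback functor and of the pushout by $[m]:\bK_2\to\bK_2$ in terms of BD data, it suffices to do the following. Fix maximal $F$-tori $\bT\subset\bG$ and $\bT^\square\subset\bG^{\square,kn_Q}$ with $\iota(\bT\times\bT)\subset\bT^\square$; write $Y$ and $Y^\square$ for their cocharacter lattices and $j_-,j_+:Y\to Y^\square$ for the maps induced by $\iota(1\times\bullet)$ and $\iota(\bullet\times1)$. If $(Q,\sce,f)$ is the BD datum of $\obG$, one must produce a BD datum $(Q^\square,\sce^\square,f^\square)$ for $\bG^{\square,kn_Q}$ whose pullback along $j_-$ is $(Q,\sce,f)$ and whose pullback along $j_+$ is $\bigl((2kn_Q-1)Q,\,[2kn_Q-1]_\ast\sce,\,[2kn_Q-1]_\ast f\bigr)$; the latter is precisely the BD datum of the pushout of $\obG$ by $[2kn_Q-1]$. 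The existence of such $(Q^\square,\sce^\square,f^\square)$ is the type-dependent input deferred to Section~\ref{sec:BD data explicit}.

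The reason this is the right reduction comes from unwinding the doubling homomorphism. Since $W^{\square,kn_Q}=W^{\oplus 2kn_Q}$ and, in the notation of Section~\ref{sec:doubling variables}, $g_1$ acts on the $2kn_Q-1$ summands $W_{1,+},\dots,W_{kn_Q,+},W_{kn_Q,-},\dots,W_{2,-}$ while $g_2$ acts only on $W_{1,-}$, each summand is identified with $W$ and carries a copy of the root datum of $\bG$ inside that of $\bG^{\square,kn_Q}$. Thus $Y^\square$ decomposes into such blocks (plus extra coordinates that are invisible for $\bSp$ and split $\bSO$ but enter in the non-split or non-simply-connected cases): $j_-$ is the inclusion of the block attached to $W_{1,-}$, and $j_+$ is the diagonal embedding into the $2kn_Q-1$ blocks attached to the other summands. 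Consequently, if $(Q^\square,\sce^\square,f^\square)$ can be chosen so that it restricts to $(Q,\sce,f)$ on each block and so that distinct blocks are orthogonal for $B_{Q^\square}$ (which forces the corresponding subextensions of $\sce^\square$ to commute), then the pullback along $j_-$ is $\obG$, while the pullback along $j_+$ is the Baer sum of $2kn_Q-1$ copies of $(Q,\sce,f)$; by the lemma of Section~\ref{sec:pullback pushout baer sum} identifying the $m$-fold Baer sum with the pushout by $[m]$, this is the BD datum of $[2kn_Q-1]_\ast\obG$. The integer $n_Q$ is chosen so that this blockwise normalization is available on the doubled group; its value is dictated by the divisibility statement $2\mid Q(\al^\vee)$ for Siegel coroots (and its analogues for the other types), and one has $n\mid 2kn_Q$, so that $2kn_Q-1\equiv-1\bmod n$.

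For the construction of $(Q^\square,\sce^\square,f^\square)$ itself I would argue case by case, as in Section~\ref{sec:BD data explicit}. When the relevant category $\CExt$ is rigid --- notably when $\bG^{\square,kn_Q}$ is simply connected, as in the symplectic case --- an object is just a $W$- and $\Gamma$-invariant $\bZ$-valued quadratic form, so one need only check that the ``direct sum over blocks'' of $Q$ is $W(\bG^{\square,kn_Q})$-invariant with distinct blocks $B_{Q^\square}$-orthogonal, which is an explicit coroot computation, and there is no $\sce$ or $f$ to match. In the remaining cases --- orthogonal and unitary groups, with $\bSO_{2m}$ the most delicate --- one must in addition exhibit the $\Gamma$-equivariant extension $\sce^\square$ and the morphism $f^\square$ and verify the pullback compatibilities; here the decomposability hypothesis on $Q$ is exactly what allows $\sce^\square$ to be built blockwise, and the $\Gamma$-action on $Y^\square$ (which permutes and twists the blocks) must be tracked carefully. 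I expect the main obstacle to be this last point for $\bSO_{2m}$ --- matching the full extension datum $(\sce^\square,f^\square)$, not merely the quadratic form, in a $\Gamma$-equivariant way when $\bG$ is neither split nor simply connected --- which is also why some unitary cases have to be excluded (see Remark~\ref{rem:commutative assumption}).
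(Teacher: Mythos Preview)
Your proposal is correct and follows essentially the same approach as the paper: reduce to BD data, build the datum blockwise so that each of the $2kn_Q$ copies of $\bG$ inherits $(Q,\sce,f)$, and identify the pullback along the diagonal $j_+$ with a $(2kn_Q-1)$-fold Baer sum, hence with the pushout by $[2kn_Q-1]$. The paper's only organizational twist is a two-step factorization (Propositions~\ref{prop:surj I} and~\ref{prop:surj II} feeding Corollary~\ref{cor:surj III}) through the basic doubling $\bG\times\bG\to\bG^\square$ and then $(\bG^\square)^k\to\bG^{\square,k}$, so that the delicate case-by-case construction of $(\sce^\square,f^\square)$ need only be carried out once, for $k=1$.

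One small correction: $n_Q$ plays no role in the existence argument --- Corollary~\ref{cor:surj III} is proved for arbitrary $k$ --- and the divisibility $2\mid Q(\al^\vee)$ is used not to pin down $n_Q$ but to make the sign $\epsilon_{\al,\beta}^{Q(\beta^\vee)}$ in Lemma~\ref{lem:additivity of sQ} trivial when gluing $f^\square$ across blocks in the orthogonal cases.
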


This theorem is proved as a special case of Corollary \ref{cor:surj III}.

Let $\overline{\bG^{\square,kn_Q}} $ be a multiplicative $\bK_2$-torsor given by the theorem. We simply write $\obG=\obG_-$ and $\obG^\natural = \obG_+$. Then these two extensions fit into the following commutative diagram:
\[
\begin{tikzpicture}[scale=1.5]
\node (A1) at (0,1) {$1$};
\node (B1) at (1.5,1) {$\bK_2$};
\node (C1) at (3,1) {$\obG$};
\node (D1) at (4.5,1) {$\iota(1\times \bG)$};
\node (E1) at (6,1) {$1$};
\node (A2) at (0,0) {$1$};
\node (B2) at (1.5,0) {$\bK_2$};
\node (C2) at (3,0) {$\obG^\natural$};
\node (D2) at (4.5,0) {$\iota(\bG\times 1)$};
\node (E2) at (6,0) {$1$};
\path[->,font=\scriptsize,>=angle 90]
(A1) edge  (B1)
(B1) edge  (C1)
(C1) edge  (D1)
(D1) edge  (E1)
(A2) edge  (B2)
(B2) edge  (C2)
(C2) edge  (D2)
(D2) edge  (E2)
(C1) edge (C2)
(D1) edge (D2)
(B1) edge (B2);
\end{tikzpicture}
\]
Here, the leftmost vertical map is $[2kn_Q-1]: \bK_2 \to \bK_2$. We fix a morphism $\obG \to \obG^\natural$ once and for all.

The decomposable assumption in the unitary case significantly simplifies our argument since the following result is true. This will be proved in Proposition \ref{prop:commutative}.

\begin{Lem}\label{lem:commute}
The multiplicative $\bK_2$-torsors $\obG^\natural$ and $\obG$ commute in $\overline{\bG^{\square,kn_Q}}$.
\end{Lem}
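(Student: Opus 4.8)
The plan is to reduce the statement to Lemma \ref{lem:commute for G}, which says that two factors $\oG_1$ and $\oG_2$ in a product $\bG=\bG_1\times\bG_2$ commute in $\oG$ provided the mixed bilinear form $B_Q$ vanishes. The two subgroups $\iota(1\times\bG)$ and $\iota(\bG\times 1)$ of $\bG^{\square,kn_Q}$ commute with each other as \emph{linear} groups (this was recorded when $\iota$ was defined: ``the images of these two copies of $\bG$ commute in $\bG^{\square,k}$''), so their product is the image of $\bG\times\bG$, and after passing to the algebraic closure this product of commuting subgroups looks like $\bG_{F_s}\times\bG_{F_s}$ sitting inside $(\bG^{\square,kn_Q})_{F_s}$. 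Thus what must be checked is that the BD quadratic form $Q^\square$ of $\overline{\bG^{\square,kn_Q}}$, restricted to the cocharacter lattice of a maximal torus of this doubled subgroup, decomposes as a direct sum with no mixed terms, i.e. $B_{Q^\square}(y_+,y_-)=0$ for $y_+$ a cocharacter of $\iota(\bG\times 1)$ and $y_-$ a cocharacter of $\iota(1\times\bG)$.

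First I would fix compatible maximal $F$-tori: a maximal torus $\bT$ of $\bG$ with cocharacter lattice $Y$, and the torus $\bT\times\bT$ of $\iota(\bG\times\bG)$, which $\iota$ carries into a maximal torus $\bT^\square$ of $\bG^{\square,kn_Q}$. Write $Y_+ $ and $Y_-$ for the images in $Y^\square$ of the cocharacter lattices of the two copies of $\bT$; by construction $Y_+\oplus Y_-$ is (a finite-index subgroup inside, or exactly) the cocharacter lattice of $\iota(\bT\times\bT)$. The content to verify is $B_{Q^\square}|_{Y_+\times Y_-}=0$. Here is where the decomposability hypothesis on $Q$ (in the unitary case) and the explicit construction of $\overline{\bG^{\square,kn_Q}}$ from Theorem \ref{thm:existence of cover} / Corollary \ref{cor:surj III} enter: the torsor $\overline{\bG^{\square,kn_Q}}$ is produced precisely so that its pullback along $\iota$ to each factor is controlled ($\obG_-\simeq\obG$ and $\obG_+$ the $[2kn_Q-1]$-pushout), and in Section \ref{sec:BD data explicit} one has an explicit description of $Q^\square$ on the cocharacter lattice of $\bT^\square$. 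I would substitute cocharacters supported on the ``$+$'' copies of $W$ versus the ``$-$'' copies of $W$ into that explicit $Q^\square$ and read off that the bilinear form pairs them to $0$ — intuitively because a cocharacter of $\iota(\bG\times1)$ only moves the $W_{i,+}$-coordinates and a cocharacter of $\iota(1\times\bG)$ only moves the $W_{i,-}$-coordinates (indeed only the last one, $W_{1,-}$), so the quadratic form, which is a sum over coordinate blocks, has no cross term. Once $B_{Q^\square}(y_+,y_-)=0$ is established on the cocharacter lattice, Lemma \ref{lem:commute for G} (applied with $\bG_1=\iota(\bG\times1)$, $\bG_2=\iota(1\times\bG)$, using that the generators are the torus and the unipotent sections, the latter commuting by $G$-equivariance of the unipotent splitting) gives that $\obG^\natural=\obG_+$ and $\obG=\obG_-$ commute inside $\overline{\bG^{\square,kn_Q}}$, which is the claim.

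The main obstacle I expect is bookkeeping rather than a conceptual difficulty: one must be careful that $Y_+\oplus Y_-$ really is (or is commensurable in a harmless way with) the full cocharacter lattice of a genuine maximal torus of $\bG^{\square,kn_Q}$ on which the BD data is defined, and that Lemma \ref{lem:commute for G} is being applied to an honest product decomposition of an algebraic subgroup rather than merely to two abstractly commuting subgroups — so I would phrase it as: the subgroup $\iota(\bG\times\bG)\subset\bG^{\square,kn_Q}$ is isomorphic to $\bG\times\bG$ (again, by the already-cited fact that the two images commute and intersect trivially, together with the explicit doubling action on $W^{\square,kn_Q}$), apply the restriction-to-a-subgroup functoriality of BD data, and then invoke Lemma \ref{lem:commute for G} internally. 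In the unitary case the decomposability of $Q$ is exactly what guarantees the restricted form $Q^\square$ has no mixed term — without it the claim can fail, consistent with Remark \ref{rem:commutative assumption} — so I would flag that the argument uses that hypothesis essentially here. Since the explicit formula for $Q^\square$ is the object of Section \ref{sec:BD data explicit}, the cleanest writeup simply cites Proposition \ref{prop:commutative} for the vanishing and then applies Lemma \ref{lem:commute for G}.
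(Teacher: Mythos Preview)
Your proposal is correct and follows essentially the same approach as the paper: the paper's proof of this lemma simply defers to Proposition \ref{prop:commutative}, whose proof in turn says ``This follows from Lemma \ref{lem:commute for G} and a simple calculation on the quadratic form,'' with the case-by-case verification of $B_{Q^\square}|_{Y_+\times Y_-}=0$ carried out in Section \ref{sec:BD data explicit}. Your final sentence—cite Proposition \ref{prop:commutative} for the vanishing and then apply Lemma \ref{lem:commute for G}—is exactly the paper's argument, and your discussion of the bookkeeping (pulling back the extension to $\iota(\bG\times\bG)\simeq\bG\times\bG$ before invoking Lemma \ref{lem:commute for G}, and handling the odd orthogonal case where $\bT\times\bT$ is not maximal) makes explicit what the paper leaves implicit.
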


A consequence of this lemma is that we have a doubling homomorphism of multiplicative $\bK_2$-torsors (instead of a map of sets):
\[
\obG^\natural \times \obG \to \overline{\bG^{\square,kn_Q}}.
\]
By composing it with the fixed morphism $\obG\to \obG^{\natural}$, we obtain a homomorphism
\[
\iota:\obG\times \obG\to \overline{\bG^{\square,kn_Q}},
\]
which fits into the following commutative diagram:
\[
\begin{tikzpicture}[scale=1.5]
\node (A1) at (0,1) {$1$};
\node (B1) at (1.5,1) {$\bK_2\times \bK_2$};
\node (C1) at (3,1) {$\obG \times \obG$};
\node (D1) at (4.5,1) {$\bG\times \bG$};
\node (E1) at (6,1) {$1$};
\node (A2) at (0,0) {$1$};
\node (B2) at (1.5,0) {$\bK_2$};
\node (C2) at (3,0) {$\overline{\bG^{\square,kn_Q}}$};
\node (D2) at (4.5,0) {$\bG^{\square,kn_Q}$};
\node (E2) at (6,0) {$1$};
\path[->,font=\scriptsize,>=angle 90]
(A1) edge  (B1)
(B1) edge  (C1)
(C1) edge  (D1)
(D1) edge  (E1)
(A2) edge  (B2)
(B2) edge  (C2)
(C2) edge  (D2)
(D2) edge  (E2)
(C1) edge (C2)
(D1) edge (D2)
(B1) edge (B2);
\end{tikzpicture}.
\]
Here the first vertical map is given by
\[
\bK_2\times \bK_2 \to \bK_2,\qquad (x,y)\mapsto x^{2kn_Q-1}y.
\]

\subsection{The local doubling homomorphism}

We now discuss the local and global consequences using results from Section \ref{sec:pullback pushout baer sum}. The homomorphism
\[
\iota:\oG_v^\natural\times \oG_v\to \overline{G^{\square,kn_Q}_v}
\]
is a lift of the local doubling homomorphism $\iota:G_v\times G_v\to G^{\square,kn_Q}_v$. Here are the consequences:
\begin{itemize}
\item The images of $\oG_v$ and $\oG_v^\natural$ commutes in $\overline{G^{\square,kn_Q}_v}$.
\item We have a commutative diagram
\[
\begin{tikzpicture}[scale=1.5]
\node (A1) at (0,1) {$1$};
\node (B1) at (1.5,1) {$\mu_n\times \mu_n$};
\node (C1) at (3,1) {$\oG_v \times \oG_v$};
\node (D1) at (4.5,1) {$G_v\times G_v$};
\node (E1) at (6,1) {$1$};
\node (A2) at (0,0) {$1$};
\node (B2) at (1.5,0) {$\mu_n$};
\node (C2) at (3,0) {$\overline{G^{\square,kn_Q}_v}$};
\node (D2) at (4.5,0) {$G^{\square,kn_Q}_v$};
\node (E2) at (6,0) {$1$};
\path[->,font=\scriptsize,>=angle 90]
(A1) edge  (B1)
(B1) edge  (C1)
(C1) edge  (D1)
(D1) edge  (E1)
(A2) edge  (B2)
(B2) edge  (C2)
(C2) edge  (D2)
(D2) edge  (E2)
(C1) edge (C2)
(D1) edge (D2)
(B1) edge (B2);
\end{tikzpicture}
\]
where the first vertical map is given by
\[
\mu_n\times \mu_n \to \mu_n,\qquad (\zeta_1,\zeta_2)\mapsto \zeta_1^{-1}\zeta_2.
\]
\end{itemize}

\begin{Lem}
We have the following.
\begin{enumerate}
\item For a unipotent subgroup $\bU$ of $\bG$,
\[
i_u(\iota(g_1,g_2))=\iota(i_u(g_1),i_u(g_2))
\]
for $g_1,g_2\in \bU(F_v)$.
\item Let $F$ be a non-Archimedean field. Assume that $p\nmid n$. We have
\[
s_v(\iota(g_1,g_2))=\iota(s_v(g_1),s_v(g_2)).
\]
for $g_1,g_2\in \bG(\sco_v)$. 
\end{enumerate}
\end{Lem}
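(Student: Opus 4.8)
\emph{Proof sketch.} The plan is to deduce both identities from material already in Section~\ref{sec:pullback pushout baer sum}: the compatibility of the canonical unipotent section (resp.\ the tame splitting) with pullback and with pushout, together with the fact that both of these canonical sections are compatible with forming direct products of groups.

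For (1): since $\iota$ is a morphism of algebraic groups, $\bV:=\iota(\bU\times\bU)$ is a unipotent subgroup of $\bG^{\square,kn_Q}$, so both $i_u(\iota(g_1,g_2))$ and $\iota(i_u(g_1),i_u(g_2))$ lie in the preimage of $V(F_v)$. I would observe that, as functions of $(g_1,g_2)\in U(F_v)\times U(F_v)$, both sides are group homomorphisms into $\overline{G^{\square,kn_Q}_v}$ lifting $(g_1,g_2)\mapsto\iota(g_1,g_2)\in V(F_v)$: the left-hand side because $i_u$ restricts to a homomorphism on the unipotent group $V(F_v)$ and $\iota$ is a homomorphism of algebraic groups, and the right-hand side because $i_u$ is a homomorphism on $U(F_v)$ and $\iota\colon\oG_v\times\oG_v\to\overline{G^{\square,kn_Q}_v}$ is a homomorphism by Lemma~\ref{lem:commute}. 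Two homomorphic lifts of the same homomorphism differ by a homomorphism $U(F_v)\times U(F_v)\to\mu_n$; but $U(F_v)$ is uniquely divisible in characteristic $0$ (the exponential identifies it, as an $F_v$-variety, with its Lie algebra, which is an $F_v$-vector space), hence neither it nor its square admits a nontrivial homomorphism to the finite group $\mu_n$, so the two sides agree. Alternatively, and more in the spirit of Section~\ref{sec:pullback pushout baer sum}: by Theorem~\ref{thm:existence of cover} and Lemma~\ref{lem:commute} the pullback $\iota^{\ast}(\overline{\bG^{\square,kn_Q}})$ is the pushout of $\obG^\natural\times\obG$ along $(x,y)\mapsto x^{2kn_Q-1}y$; its canonical unipotent section over $\mathcal{N}_{(G\times G)_v}=\mathcal{N}_{G_v}\times\mathcal{N}_{G_v}$ is, by uniqueness of the unipotent section together with pushout-compatibility, the composite of $(n_1,n_2)\mapsto(i_u(n_1),i_u(n_2))$ with the pushout map; feeding this through the pullback-compatibility of unipotent sections recorded in Section~\ref{sec:pullback pushout baer sum} yields exactly $i_u(\iota(g_1,g_2))=\iota(i_u(g_1),i_u(g_2))$.

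For (2): I would run the same pullback argument, now invoking the stated compatibility of the natural splitting over hyperspecial maximal compacts with pullback. Since $s_v$ is not pinned down by a universal property, the one extra step is to return to its construction from an integral model together with the tameness sequence $1\to\bK_2(\sco_v)\to\bK_2(F_v)\to F_v^\times\to1$, and check that this construction commutes with direct products of groups: the integral model of $\bG\times\bG$ is the product of the integral models, and $\bK_2(\sco_v)$ still maps trivially to $\mu_n$, so the natural splitting of $\iota^{\ast}(\overline{\bG^{\square,kn_Q}})$ over $\bG(\sco_v)\times\bG(\sco_v)$ is the product of the splittings of the factors $\obG^\natural$ and $\obG$, and these agree on $\bG(\sco_v)$ under $\obG\to\obG^\natural$ by pushout-compatibility. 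Combining with the pullback-compatibility of $s_v$ gives $s_v(\iota(g_1,g_2))=\iota(s_v(g_1),s_v(g_2))$. The only point requiring genuine care is this last verification that the explicit tame construction of $s_v$ is compatible with direct products and with pullback along $\iota$; the remainder is formal.
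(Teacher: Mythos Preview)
Your proposal is correct and follows essentially the same route as the paper: the paper's proof is the single sentence ``These are simply consequences of the results in Section~\ref{sec:pullback pushout baer sum},'' and what you have written is precisely an unpacking of that sentence, invoking the pullback-compatibility and pushout-compatibility of the canonical unipotent section and of the tame splitting recorded there. Your alternative direct argument for (1) via unique divisibility of $U(F_v)$ is also fine and is in fact the mechanism behind the uniqueness of the unipotent section; your caution in (2) about checking product-compatibility of $s_v$ is reasonable but not something the paper singles out.
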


\begin{proof}
These are simply consequences of the results in Section \ref{sec:pullback pushout baer sum}.
\end{proof}

\subsection{Splitting over the diagonal copy}

We now discuss an important consequence of Lemma \ref{lem:commute}. We write $\overline{(G\times G)}_v$ to be the pullback of $\overline{G^{\square,kn_Q}_v}$ via $\iota:G_v\times G_v \to G^{\square,kn_Q}_v$.

\begin{Lem}\label{lem:local diagonal splitting}
With the above assumptions, there is a natural splitting $G^{\diamondsuit}_v \to \overline{(G\times G)}_v$.
\end{Lem}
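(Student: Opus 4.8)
The plan is to produce the splitting of $G^{\diamondsuit}_v$ from the doubling homomorphism $\iota:\oG_v^\natural\times \oG_v\to \overline{G^{\square,kn_Q}_v}$ together with the commutativity in Lemma \ref{lem:commute}. First I would recall that by construction we have a fixed morphism $\obG\to\obG^\natural$, hence on $F_v$-points a lift of the identity map, $j:\oG_v\to\oG_v^\natural$, which on $\mu_n$ is the $(2kn_Q-1)$-st power map; since $2kn_Q-1\equiv -1\pmod n$, this is the inverse map on $\mu_n$. Now for $g\in G_v$ pick any preimage $\tilde g\in\oG_v$; then the element $\iota(j(\tilde g),\tilde g)\in\overline{G^{\square,kn_Q}_v}$ maps to $\iota(g,g)$ under the projection, so it lies in $\overline{(G\times G)}_v$ (the pullback). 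The key point is that this element does not depend on the choice of $\tilde g$: replacing $\tilde g$ by $\zeta\tilde g$ with $\zeta\in\mu_n$ replaces $j(\tilde g)$ by $j(\zeta)j(\tilde g)=\zeta^{-1}j(\tilde g)$ (using that $j$ is a homomorphism and $\mu_n$ is central), so in $\overline{G^{\square,kn_Q}_v}$ the product $\iota(j(\zeta\tilde g),\zeta\tilde g)=\iota(\zeta^{-1}j(\tilde g),\zeta\tilde g)$ equals, by the description of the first vertical map $\mu_n\times\mu_n\to\mu_n,\ (\zeta_1,\zeta_2)\mapsto\zeta_1^{-1}\zeta_2$, the element $((\zeta^{-1})^{-1}\zeta)^{-1}\cdot\iota(j(\tilde g),\tilde g)=\zeta^{-2}\cdot\iota(j(\tilde g),\tilde g)$ — wait, this needs to come out trivial, so here one must instead track the central value carefully via the cocycle identity $\iota(j(\tilde g),\tilde g)$ against $\iota(j(\tilde g'),\tilde g')$. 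The clean way: define $\sigma(g):=\iota(j(\tilde g),\tilde g)$ and observe that the ambiguity lands in $\mu_n$ via $(\zeta_1,\zeta_2)\mapsto\zeta_1^{-1}\zeta_2$ applied to $(j(\zeta),\zeta)=(\zeta^{-1},\zeta)$, giving $(\zeta^{-1})^{-1}\zeta=\zeta^2$ — which is not obviously trivial, so this is exactly the point where Lemma \ref{lem:commute} and the precise normalization of $j$ (with the $-1$ power, not $+1$) must be invoked to cancel the square, or rather one rewrites $j(\tilde g)$ using commutativity so that the two $\mu_n$-factors enter with opposite signs and cancel.

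So, more carefully: the map $\iota:\obG\times\obG\to\overline{\bG^{\square,kn_Q}}$ constructed above sends $\mu_n\times\mu_n\to\mu_n$ by $(\zeta_1,\zeta_2)\mapsto\zeta_1^{-1}\zeta_2$, and factors as first applying $\obG\to\obG^\natural$ to the left factor and then using the commuting lift $\obG^\natml\times\obG\to\overline{\bG^{\square,kn_Q}}$. For $g\in G_v^{\diamondsuit}$, i.e.\ $g\in G_v$ diagonally embedded, choose $\tilde g\in\oG_v$ over $g$ and set $\sigma(g)=\iota(\tilde g,\tilde g)\in\overline{G^{\square,kn_Q}_v}$, using the homomorphism $\iota:\oG_v\times\oG_v\to\overline{G^{\square,kn_Q}_v}$ displayed just before this lemma. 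Replacing $\tilde g$ by $\zeta\tilde g$ multiplies $\sigma(g)$ by the image of $(\zeta,\zeta)$ under $\mu_n\times\mu_n\to\mu_n$, which is $\zeta^{-1}\zeta=1$. Hence $\sigma(g)$ is well-defined, independent of the lift, and lands in $\overline{(G\times G)}_v$. That $\sigma$ is a homomorphism follows because $\iota:\oG_v\times\oG_v\to\overline{G^{\square,kn_Q}_v}$ is a group homomorphism (guaranteed by Lemma \ref{lem:commute}), so $\sigma(g_1g_2)=\iota(\widetilde{g_1g_2},\widetilde{g_1g_2})=\iota(\tilde g_1\tilde g_2,\tilde g_1\tilde g_2)=\iota(\tilde g_1,\tilde g_1)\iota(\tilde g_2,\tilde g_2)=\sigma(g_1)\sigma(g_2)$, using that $\widetilde{g_1g_2}$ can be taken to be $\tilde g_1\tilde g_2$ and that the two $\oG_v$-factors commute componentwise inside the product. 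Finally $\sigma$ is continuous (it is built from continuous maps and a continuous section of $\oG_v\to G_v$ over a small neighborhood) and visibly a section of $\overline{(G\times G)}_v\to G_v^{\diamondsuit}$. This gives the natural splitting $G_v^{\diamondsuit}\to\overline{(G\times G)}_v$.

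The step I expect to be the main obstacle is pinning down the sign bookkeeping in the $\mu_n$-coordinate so that the square-ambiguity genuinely cancels: the map $\iota:\obG\times\obG\to\overline{\bG^{\square,kn_Q}}$ has first coordinate $(x,y)\mapsto x^{2kn_Q-1}y$ on $\bK_2\times\bK_2$, which on $\mu_n$ (after the Hilbert symbol) is $(\zeta_1,\zeta_2)\mapsto\zeta_1^{-1}\zeta_2$ precisely because $2kn_Q-1\equiv-1\pmod n$; it is this $-1$ (rather than $+1$) that makes $\sigma$ well-defined, and one must make sure the same sign convention is used consistently when composing $\obG\to\obG^\natural$ with the commuting doubling map — otherwise one would get $\zeta^2$ instead of $1$. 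Once the conventions are aligned, the rest is the routine verification that $\sigma$ is a continuous homomorphic section, and the naturality (compatibility with unipotent sections and with the tame splitting at good places) follows from the corresponding compatibilities of $\iota$ recorded in the previous lemma.
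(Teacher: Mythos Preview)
Your second paragraph (``more carefully'') is correct and is exactly the paper's argument: since $\iota:\oG_v\times\oG_v\to\overline{G^{\square,kn_Q}_v}$ restricts on $\mu_n\times\mu_n$ to $(\zeta_1,\zeta_2)\mapsto\zeta_1^{-1}\zeta_2$, the diagonal $\mu_n^\diamondsuit$ is killed, so $g\mapsto\iota(\tilde g,\tilde g)$ is a well-defined homomorphic section of $\overline{(G\times G)}_v\to G_v^\diamondsuit$. The paper phrases this as the identification $\overline{(G\times G)}_v=(\oG_v\times\oG_v)/\mu_n^\diamondsuit$, under which $\oG_v^\diamondsuit/\mu_n^\diamondsuit\simeq G_v^\diamondsuit$, but the content is identical.

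Your first paragraph, however, is muddled and should be deleted: there you track the $\mu_n$-contribution through the map $\obG^\natural\times\obG\to\overline{\bG^{\square,kn_Q}}$ (whose restriction to $\mu_n\times\mu_n$ is multiplication, not $(\zeta_1,\zeta_2)\mapsto\zeta_1^{-1}\zeta_2$) after first applying $j$ to the left factor, and you get tangled computing $j(\zeta)^{-1}\cdot\zeta$ versus $j(\zeta)\cdot\zeta$. The clean way, which you eventually found, is to work directly with the composite $\iota:\oG_v\times\oG_v\to\overline{G^{\square,kn_Q}_v}$ whose $\mu_n$-behavior has already been recorded as $(\zeta_1,\zeta_2)\mapsto\zeta_1^{-1}\zeta_2$; no separate bookkeeping with $j$ is needed.
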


\begin{proof}
We know that the homomorphism
\[
\oG_v \times \oG_v \to \overline{(G\times G)}_v \to \overline{G^{\square,kn_Q}_v}
\]
restricts to $\mu_n\times \mu_n\to \mu_n,(\zeta_1,\zeta_2)\mapsto \zeta_1^{-1}\zeta_2$. Thus we obtain
\[
\overline{(G\times G)}_v=(\oG_v\times \oG_v)/\mu_n^\diamondsuit.
\]
This implies that the image of $\oG_v^\diamondsuit\subset \oG_v\times \oG_v$ in $\overline{(G\times G)}_v$ is $\oG_v^\diamondsuit/\mu_n^\diamondsuit\simeq G_v^{\diamondsuit}$. Thus we have a natural splitting $G_v^{\diamondsuit}\to \overline{(G\times G)}_v$.
\end{proof}

\subsection{The global doubling homomorphism}
The local doubling homomorphism glues to
\[
\iota_{\ba}:\prod_v \oG_v \times \prod_v \oG_v \to \prod_v \overline{G^{\square,kn_Q}_v}.
\]
As $\iota_v(s_v(\bG(\sco_v))\times s_v(\bG(\sco_v))) \subset s_v(\bG^{\square,kn_Q}(\sco_v))$ for almost all $v$, we obtain a homomorphism
\[
\iota_{\ba}:\prod_v{}' \ \oG_v \times \prod_v {}' \ \oG_v \to \prod_v{}'\ \overline{G^{\square,kn_Q}_v}\to \overline{G^{\square,kn_Q}}(\ba).
\]
This map factors through
\[
\iota_{\ba}:\oG(\ba)\times \oG(\ba) \to \overline{G^{\square,kn_Q}}(\ba).
\]
To summarize, we have obtain a global doubling homomorphism so that
\begin{itemize}
\item the restriction to $\mu_n\times \mu_n$ is given by
\[
\iota_{\ba}:\mu_n\times \mu_n\to \mu_n, \qquad (\zeta_1,\zeta_2)\mapsto \zeta_1^{-1}\zeta_2.
\]
\item this map is a lift of the linear doubling homomorphism $G(\ba)\times G(\ba) \to G^{\square,kn_Q}(\ba)$.
\item this map is also compatible with the local doubling homomorphism.
\end{itemize}

The section over the rational points is also compatible with the doubling homomorphism.

\begin{Prop}\label{prop:diag splitting global}
We have a commutative diagram
\[
\begin{tikzpicture}[scale=1.5]
\node (A1) at (0,1) {$G(F)\times G(F)$};
\node (B1) at (3,1) {$\oG(\ba) \times \oG(\ba)$};
\node (A2) at (0,0) {$G^{\square,kn_Q}(F)$};
\node (B2) at (3,0) {$\overline{G^{\square,kn_Q}}(\ba)$};
\path[->,font=\scriptsize,>=angle 90]
(A1) edge  (B1)
(A2) edge  (B2)
(A1) edge  (A2)
(B1) edge  (B2);
\end{tikzpicture}.
\]
\end{Prop}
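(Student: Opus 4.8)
The plan is to reduce the assertion to the functoriality statements for natural splittings already recorded in Section~\ref{sec:pullback pushout baer sum}. Fix $(g_1,g_2)\in G(F)\times G(F)$; writing $i$ for each of the natural rational-point splittings, the goal is the identity
\[
\iota_{\ba}(i(g_1),i(g_2)) = i(\iota(g_1,g_2)) \in \overline{G^{\square,kn_Q}}(\ba).
\]
Recall that $\iota_{\ba}$ on $\oG(\ba)\times\oG(\ba)$ was constructed by first applying $\phi_{\ba}\times\mathrm{id}$, where $\phi_{\ba}\colon\oG(\ba)\to\oG^{\natural}(\ba)$ is the adelic map attached to the fixed morphism $\obG\to\obG^{\natural}$, and then applying the homomorphism $\iota^{\natural}_{\ba}\colon\oG^{\natural}(\ba)\times\oG(\ba)\to\overline{G^{\square,kn_Q}}(\ba)$ glued from the doubling homomorphism of $\bK_2$-torsors; by Lemma~\ref{lem:commute} this last map is a genuine group homomorphism. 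Since $\obG^{\natural}$ is the pushout of $\obG$ along $[2kn_Q-1]$, the pushout-compatibility of natural splittings from Section~\ref{sec:pullback pushout baer sum} identifies $\phi_{\ba}(i(g_1))$ with $i(g_1)$ viewed in $\oG^{\natural}(\ba)$. So it will suffice to establish
\[
\iota^{\natural}_{\ba}(i(g_1),i(g_2)) = i(\iota(g_1,g_2)),
\]
with $i(g_1)\in\oG^{\natural}(\ba)$ and $i(g_2)\in\oG(\ba)$ on the left.

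The next step is to split the left-hand side using that $\iota^{\natural}_{\ba}$ is a homomorphism: from $(i(g_1),i(g_2)) = (i(g_1),1)\cdot(1,i(g_2))$ in $\oG^{\natural}(\ba)\times\oG(\ba)$ one gets
\[
\iota^{\natural}_{\ba}(i(g_1),i(g_2)) = \iota^{\natural}_{\ba}(i(g_1),1)\cdot \iota^{\natural}_{\ba}(1,i(g_2)).
\]
Now I would identify the restriction of $\iota^{\natural}_{\ba}$ to $\oG^{\natural}(\ba)\times\{1\}$ with the adelic map of covers attached, under pullback, to the homomorphism $\bG\to\bG^{\square,kn_Q}$, $g\mapsto\iota(g,1)$ --- this is legitimate because $\obG^{\natural}=\obG_+$ is by construction the pullback of $\overline{\bG^{\square,kn_Q}}$ along that homomorphism (Theorem~\ref{thm:existence of cover}) --- and similarly identify the restriction to $\{1\}\times\oG(\ba)$ with the pullback map attached to $g\mapsto\iota(1,g)$, since $\obG=\obG_-$ is the pullback along that map. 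Applying the functoriality of the natural splitting under pullback (Section~\ref{sec:pullback pushout baer sum}) to these two homomorphisms gives
\[
\iota^{\natural}_{\ba}(i(g_1),1) = i(\iota(g_1,1)),\qquad \iota^{\natural}_{\ba}(1,i(g_2)) = i(\iota(1,g_2)),
\]
with the splittings on the right landing in $\overline{G^{\square,kn_Q}}(\ba)$.

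Combining the last three displays with the fact that $G^{\square,kn_Q}(F)\to\overline{G^{\square,kn_Q}}(\ba)$ is a homomorphism and that $\iota(g_1,1)\iota(1,g_2)=\iota(g_1,g_2)$ yields
\[
\iota^{\natural}_{\ba}(i(g_1),i(g_2)) = i(\iota(g_1,1))\cdot i(\iota(1,g_2)) = i\big(\iota(g_1,1)\iota(1,g_2)\big) = i(\iota(g_1,g_2)),
\]
which is the desired identity. The two load-bearing ingredients are Lemma~\ref{lem:commute}, without which $\iota^{\natural}_{\ba}$ is only a map of sets and the decomposition into the two factors is unavailable, and the compatibility of the rational-point splitting $\bH(F)\to\oH(\ba)$ with pullback and pushout of $\bK_2$-torsors, established in Section~\ref{sec:pullback pushout baer sum} by inspecting the $E_1$/reciprocity construction of Section~\ref{sec:topological covering}. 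The step I expect to be fiddly is verifying that restricting the glued map $\iota^{\natural}_{\ba}$ to either factor reproduces exactly the pullback-functoriality map for the corresponding homomorphism $\bG\to\bG^{\square,kn_Q}$, so that gluing the local doubling homomorphisms does not introduce a stray character; this will come from the local compatibilities with the unipotent sections, the hyperspecial splittings $s_v$, and the splitting over rational points recorded earlier in Section~\ref{sec:covering assumptions}.
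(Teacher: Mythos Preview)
Your proof is correct and follows essentially the same approach as the paper, which disposes of the statement in one line by invoking the results of Section~\ref{sec:pullback pushout baer sum}. You have simply unpacked that invocation: the pushout compatibility (``the map $G(F)\to\oG(\ba)\to\oG^{\natural}(\ba)$ is the natural splitting'') handles the passage through $\phi_{\ba}$, and the pullback compatibility (the commutative square with $\bG(F)\to\oG(\ba)$ above $\bH(F)\to\oH(\ba)$) handles each factor of $\iota^{\natural}_{\ba}$; the step you flag as fiddly is in fact immediate, since $\obG_{\pm}$ are \emph{defined} as the pullbacks of $\overline{\bG^{\square,kn_Q}}$ along $g\mapsto\iota(g,1)$ and $g\mapsto\iota(1,g)$, so the restriction of the doubling map to each factor is the pullback map by construction.
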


\begin{proof}
This is again a consequence of the results in Section \ref{sec:pullback pushout baer sum}.
\end{proof}

We can also check that the cover splits over $\iota_{\ba}(G(\ba)^{\diamondsuit})$.

\begin{Lem}\label{lem:diagonal splitting global}
There is a natural map $G(\ba)\to \overline{(G\times G)}(\ba) \to \overline{G^{\square,kn_Q}}(\ba)$.
\end{Lem}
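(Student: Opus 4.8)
The plan is to combine the local splittings from Lemma~\ref{lem:local diagonal splitting} with the adelic construction already used for the linear splitting $G(F)\to\oG(\ba)$. First I would recall that, for each place $v$, Lemma~\ref{lem:local diagonal splitting} provides a natural splitting $G_v^{\diamondsuit}\to\overline{(G\times G)}_v$, realized as the image of $\oG_v^{\diamondsuit}$ in $\overline{(G\times G)}_v=(\oG_v\times\oG_v)/\mu_n^{\diamondsuit}$. Composing with $\overline{(G\times G)}_v\to\overline{G^{\square,kn_Q}_v}$, we obtain a local map $G_v\to\overline{G^{\square,kn_Q}_v}$ which, in view of the last part of the Lemma in the subsection ``The local doubling homomorphism'', agrees with the linear map $g\mapsto\iota(s_v(g),s_v(g))$ on $\bG(\sco_v)$ for almost all $v$; equivalently, it sends $\bG(\sco_v)^{\diamondsuit}$ into $s_v(\bG^{\square,kn_Q}(\sco_v))$ for almost all $v$.

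Next I would assemble these local maps. Since they are compatible with the integral splittings at almost all places, they glue to a homomorphism on the restricted direct product $\prod_v{}'\,\oG_v$ (with respect to $s_v(\bG(\sco_v))$), hence factor through a map $G(\ba)\to\prod_v{}'\,\overline{G^{\square,kn_Q}_v}\to\overline{G^{\square,kn_Q}}(\ba)$; this is exactly the diagonal restriction, along $\iota_{\ba}$ and the linear diagonal $G(\ba)\hookrightarrow G(\ba)\times G(\ba)$, of the splitting over $\mu_n^{\diamondsuit}$ used in defining $\overline{(G\times G)}(\ba)$. Concretely, $\overline{(G\times G)}(\ba)$ is the pullback of $\overline{G^{\square,kn_Q}}(\ba)$ along $\iota$, and since $\iota_{\ba}$ restricts to $\mu_n\times\mu_n\to\mu_n$, $(\zeta_1,\zeta_2)\mapsto\zeta_1^{-1}\zeta_2$, one gets $\overline{(G\times G)}(\ba)=(\oG(\ba)\times\oG(\ba))/\mu_n^{\diamondsuit}$; the image of $\oG(\ba)^{\diamondsuit}$ there is $\oG(\ba)^{\diamondsuit}/\mu_n^{\diamondsuit}\simeq G(\ba)^{\diamondsuit}\simeq G(\ba)$, giving the desired natural map $G(\ba)\to\overline{(G\times G)}(\ba)\to\overline{G^{\square,kn_Q}}(\ba)$.

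The one point requiring genuine care — and the main obstacle — is checking that the resulting $G(\ba)$-splitting into $\overline{(G\times G)}(\ba)$ actually descends from the naive formula $\oG(\ba)^{\diamondsuit}\to\overline{(G\times G)}(\ba)$ rather than merely existing place-by-place: one must verify that the $\mu_n$-valued reciprocity/gluing datum entering the adelic quotient is trivial on the diagonal, which is precisely where Lemma~\ref{lem:commute} (the commutativity of $\obG$ and $\obG^{\natural}$, equivalently of the two copies of $\oG_v$) is used, exactly as in the proof of Lemma~\ref{lem:local diagonal splitting} but now globally. Given the local statements and the adelic formalism of Section~\ref{sec:pullback pushout baer sum}, this is routine: the global doubling homomorphism restricts to $(\zeta_1,\zeta_2)\mapsto\zeta_1^{-1}\zeta_2$ on $\mu_n\times\mu_n$, so $\mu_n^{\diamondsuit}$ maps to $1$, and the quotient $\oG(\ba)^{\diamondsuit}/\mu_n^{\diamondsuit}$ is canonically $G(\ba)$. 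I would therefore conclude by saying that the proof follows immediately from Lemma~\ref{lem:local diagonal splitting}, the gluing of local maps compatible with the integral structures, and the factorization of $\iota_{\ba}$ through $\oG(\ba)\times\oG(\ba)$.
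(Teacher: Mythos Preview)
Your proposal is correct and follows essentially the same approach as the paper, which simply says the result is ``trivial from the local result'' (i.e., Lemma~\ref{lem:local diagonal splitting}). You have supplied considerably more detail than the paper's one-line proof, but the underlying idea---globalizing the identification $\overline{(G\times G)}=(\oG\times\oG)/\mu_n^{\diamondsuit}$ so that $\oG^{\diamondsuit}/\mu_n^{\diamondsuit}\simeq G^{\diamondsuit}$---is exactly the same.
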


\begin{proof}
This is trivial from the local result Lemma \ref{lem:diagonal splitting global}.
\end{proof}

\begin{Rem}\label{rem:commutative assumption}
An important consequence of Lemma \ref{lem:commute} is that we have $\oG\times \oG/\mu_n^\diamondsuit \simeq \overline{G\times G}$. Given a $\epsilon$-genuine representation $\pi$ and a $\epsilon^{-1}$-genuine representation $\pi'$, their tensor product descents to a representation of $\overline{G\times G}$.

We briefly explain what would happen when Lemma \ref{lem:commute} does not hold. We no longer have a homomorphism $\obG\times \obG\to \overline{\bG\times \bG}$. Thus, given two automorphic representations of $\oG(\ba)$, one cannot directly construct an automorphic representation of $\overline{G\times G}(\ba)$. Instead, one has to use a version of metaplectic tensor product, as in \cite{Mezo04,Takeda16,Takeda17}. In this paper, we do not plan to treat such cases. Moreover, the diagonal copy might not split. 
\end{Rem}

\section{The twisted doubling integrals}\label{sec:global integrals}

We can now present the global twisted doubling integral.

Notation: we use notation $[\oG]$ for $G(F)\bs \oG(\ba)$ for a multiplicative $\bK_2$-torsor of $\bG$. For a unipotent subgroup $\bN\subset \bG$, let $[N]=\bN(F)\bs \bN(\ba)$. Recall that we always identify $[N]$ with the subgroup $i_u([N])$ of $[\oG]$.

\subsection{Petersson inner product}

Let $\pi$ be an irreducible $\epsilon$-genuine cuspidal automorphic representation of $\oG(\ba)$ realized on a space $V_\pi\subset L_{\epsilon}^2(G(F)\bs \oG(\ba))$, where we fix an embedding $\pi\hookrightarrow V_\pi\subset \sca(\oG(\ba))$. The contragredient representation $\pi^\vee$ is $\epsilon^{-1}$-genuine and is realized on the complex conjugate $\overline{V_\pi}$ of $V_\pi$. The Petersson pairing $\scp=\scp_{\pi}: V_\pi\boxtimes \overline{V_\pi}\to \bc$ is defined by
\[
\scp_{\pi}(\xi_1\boxtimes \xi_2)=\dfrac{1}{n}\int\limits_{G(F)\bs \oG(\ba)}\xi_1(g)\xi_2(g) \ dg.
\]
The integrand is trivial on both $\mu_n$ and $G(F)$ and the pairing is $\oG(\ba)$-invariant.

The Petersson inner product admits a slightly different formula. The function $\xi_1\boxtimes \xi_2\in \pi\boxtimes \pi^\vee$ is a function on $\oG(\ba)\times \oG(\ba)$ which is trivial on $\mu_n^\diamondsuit$. Thus this descents to a function on $\overline{G\times G}(\ba)$, which will be denoted as $\overline{\xi_1\boxtimes \xi_2}$. By Lemma \ref{lem:diagonal splitting global}, the image of $\oG(\ba)$ under $\oG(\ba)\times \oG(\ba)\to \overline{G\times G}(\ba)$ is $G(\ba)$. It is easy to check that
\[
\scp_{\pi}(\xi_1\boxtimes \xi_2)=\int\limits_{G(F)\bs G(\ba)} \overline{\xi_1\boxtimes \xi_2}((g,g)) \ dg.
\]

\subsection{Metaplectic restricted tensor product}

We now recall the notion of meteplectic restricted tensor product. Notation: $\pi=\tilde\otimes_v'\pi_v$. We can view $\pi$ as a representation of $\prod_v' \oG_v$ via the projection map $\prod_v' \oG_v\to \oG(\ba)$. The space of the representation stays the same. Thus we can write $\pi$ as a restricted tensor product $\pi=\otimes'_v \pi_v$, where $\pi_v$ is an admissible representation of $\oG_v$.

We fix isomorphisms $V_\pi\simeq \tilde\otimes'_v \pi_v$ and $\overline{V_\pi}\simeq \tilde\otimes'_v \pi_v^\vee$.

\begin{Lem}
We can choose standard local pairing $\scp_{\pi_v}:\pi_v\boxtimes \pi_v^\vee \to \bc$ at every local place $v$ in order that
\[
\scp_{\pi}(\xi)=\prod_v \scp_{\pi_v}(\xi_v)
\]
for all $\xi=\otimes_v \xi_v\in V_\pi\boxtimes \overline{V_\pi}$, where $\scp_{\pi_v}(\xi_{0,v})=1$ for almost all the $s_v(K_v)\times s_v(K_v)$-invariant vectors $\xi_{0,v}\in \pi_v\boxtimes \pi_v^\vee$ used to defined the restricted tensor products.
\end{Lem}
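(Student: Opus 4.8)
The plan is to carry out the usual factorization of a global invariant pairing into local ones (Flath's theorem), the only genuinely new point being that one must check that uniqueness of local invariant pairings continues to hold for genuine representations of a Brylinski--Deligne cover; everything else is formally identical to the linear case.

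First I would record the source of uniqueness. By the reformulation of $\scp_\pi$ established just above, $\scp_\pi$ is an $\oG(\ba)$-invariant bilinear form on $V_\pi\boxtimes\overline{V_\pi}$, and under the identifications $V_\pi\simeq\tilde\otimes'_v\pi_v$, $\overline{V_\pi}\simeq\tilde\otimes'_v\pi_v^\vee$ it is invariant for $\prod'_v\oG_v$ acting diagonally. For each $v$, since $\pi_v$ is irreducible $\epsilon$-genuine and $\pi_v^\vee$ is its admissible contragredient ($\epsilon^{-1}$-genuine), $\mu_n^\diamondsuit$ acts trivially on $\pi_v\boxtimes\pi_v^\vee$, and Schur's lemma for admissible representations of the $\ell$-group (resp.\ real Lie group) $\oG_v$ gives $\dim_\bc\Hom_{\oG_v}(\pi_v\boxtimes\pi_v^\vee,\bc)=1$. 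Fix a basis vector $\scp_{\pi_v}^0$ of this line; it is unique up to a scalar. At almost every $v$ the space of $s_v(K_v)$-fixed vectors in $\pi_v$ (and in $\pi_v^\vee$) is a line, spanned by $\xi'_{0,v}$ (resp.\ $\xi''_{0,v}$); since $K_v$-coinvariants coincide with $K_v$-invariants, the invariant form $\scp_{\pi_v}^0$ restricts to a perfect pairing of these two lines, so $\scp_{\pi_v}^0(\xi'_{0,v}\boxtimes\xi''_{0,v})\neq 0$, and I rescale so that this value is $1$, writing $\scp_{\pi_v}:=\scp_{\pi_v}^0$ and $\xi_{0,v}:=\xi'_{0,v}\boxtimes\xi''_{0,v}$.

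Then the factorization is standard. For a finite set $S$ of places containing all archimedean places and all places where $\pi_v$ is ramified, I would restrict $\scp_\pi$ to the subspace spanned by tensors equal to $\xi'_{0,v}$ (resp.\ $\xi''_{0,v}$) outside $S$; this is a $\prod_{v\in S}\oG_v$-invariant form on the irreducible representation $(\bigotimes_{v\in S}\pi_v)\boxtimes(\bigotimes_{v\in S}\pi_v^\vee)$, and it is nonzero because $\scp_\pi$ is nondegenerate and any pair of vectors with nonzero $\scp_\pi$-value lies in this subspace for $S$ large. By the local uniqueness it equals $\lambda_S\cdot\prod_{v\in S}\scp_{\pi_v}$ for some $\lambda_S\in\bc^\times$. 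Evaluating after enlarging $S$ on vectors that are $\xi_{0,v}$ at the new places, and using $\scp_{\pi_v}(\xi_{0,v})=1$, shows $\lambda_S$ is eventually independent of $S$; absorbing this constant into $\scp_{\pi_{v_0}}$ at a single fixed place $v_0$ yields $\scp_\pi(\xi)=\prod_v\scp_{\pi_v}(\xi_v)$ on pure tensors, with $\scp_{\pi_v}(\xi_{0,v})=1$ for almost all $v$.

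I expect the only real obstacle to be the local uniqueness statement $\dim\Hom_{\oG_v}(\pi_v\boxtimes\pi_v^\vee,\bc)=1$, i.e.\ that the genuine contragredient really is ``the'' dual; but this is mild, since $\oG_v$ is again a locally compact group of the type for which the theory of admissible representations and contragredients is available, and the genuineness condition only pins down the action of the central $\mu_n$, which plays no role in the argument. The irreducibility of finite tensor products of irreducible genuine representations and the restricted-tensor-product formalism are then exactly as in the linear setting.
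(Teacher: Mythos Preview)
Your proposal is correct and follows essentially the same approach as the paper: both reduce to the local uniqueness statement $\dim_{\bc}\Hom_{\oG_v}(\pi_v\boxtimes\pi_v^\vee,\bc)=1$ via Schur's lemma (noting $\pi_v^\vee$ is $\epsilon^{-1}$-genuine), and then invoke the standard factorization argument for restricted tensor products. The paper simply cites \cite{Bump97} Section 3.5 for the latter step, whereas you have spelled out the finite-$S$ argument and the normalization at unramified places explicitly.
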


\begin{proof}
Let $\pi_v$ be an $\epsilon$-genuine representation of $\oG_v$. The action on $\pi^\vee$ is given via
\[
\la \xi_v,\pi_v^\vee(g)\xi_v^\vee \ra=\la \pi_v(g^{-1})\xi_v,\xi_v^\vee \ra.
\]
Thus $\pi_v^\vee$ is an $\epsilon^{-1}$-genuine representation of $\oG_v$. Recall that $\Hom_{\oG_v}(\pi_v,\pi_v)=\bc$. This implies that $\Hom_{\oG_v}(\pi_v\boxtimes \pi_v^\vee,\bc)=\bc$.

The Petersson inner product defines a $\oG(\ba)$-equivariant pairing on global representations
\[
\pi\boxtimes \pi^\vee \to \bc.
\]
This (abstract) pairing is also $\prod_v'\oG_v$-equivariant. One can argue as in the linear case to prove the result (see \cite{Bump97} Section 3.5 for an analogous argument).
\end{proof}

\subsection{Eisenstein series}

We now describe the Eisenstein series that appear in the global construction. Let $\theta$ be an irreducible unitary automorphic representation of $\oGL_{kmn_Q,D}(\ba)$ of type $(kn_Q,m)_D$. We define the normalized global induced representation $I(s,\theta)=\Ind_{\oP(\ba)}^{\overline{G^{\square,kmn_Q}}(\ba)}(\theta\cdot \nu^s)$.
Here $\nu$ is defined as $\oP(\ba)\to P(\ba)\to M(\ba) \to \bc^\times$.

For any holomorphic section $\tilde\phi^{(s)}$ of $I(s,\theta)$, we write $\phi^{(s)}(g)=\tilde\phi^{(s)}(g;1)$ to be the value at the identity. We form the associated Eisenstein series $E(\phi^{(s)})$ on $G^{\square,kmn_Q}(F)\bs \overline{G^{\square,kmn_Q}}(\ba)$ by
\[
E(\phi^{(s)})(g)=\sum_{\gamma\in P(F)\bs G^{\square,kmn_Q}(F)} \phi^{(s)}(\gamma g).
\]
The Eisenstein series converges for $\Re s\gg 0$. By the theory of Eisenstein series, it can be continued to a meromorphic function in $s$ on all of $\bc$ satisfying a functional equation.

We now discuss the extra invariance property in this setup.  Notation: we write $N_{\scw}^{\bullet} = N_{\scw,kn_Q}^{\bullet}\subset G^{\square,kmn_Q}$ and $\psi_{\scw}^{\bullet}=\psi_{\scw,kn_Q}^{\bullet}$ for ease of notations.

Let
\[
f^{(s)}(g)=f_{\phi}^{(s)}(g):=\int\limits_{[N_{\scw}^{\bullet} \cap P]}\phi^{(s)}(ug) \psi_{\scw}^{\bullet}(u) \ du=\int\limits_{[N_{\scw}^{\bullet}\cap P]}\tilde\phi^{(s)}(g;u) \psi_{\scw}^{\bullet}(u) \ du.
\]
The pair $(\bN_{\scw}^{\bullet}\cap \bP,\psi_{\scw}^{\bullet}|_{[N_{\scw}^{\bullet}\cap P]})$ gives a Fourier coefficient in the orbit $((kn_Q)^m)_D$ and $\iota(G^{\diamondsuit})$ lies in the stabilizer. For fixed $g$, the function $u\mapsto \phi^{(s)}(ug)$ on $(N_{\scw}^{\bullet}\cap P)(\ba)$ is an element of $\theta\cdot \nu^s$. Thus one can view $f^{(s)}(g)$ as a $(kn_Q,m)_D$-coefficient of an element in $\theta \cdot \nu^s$.

\begin{Lem}\label{lem:invariance in doubling}
There exists a character $\chi_\theta: [G] \to \bc^\times$ such that for $\iota(g,g)\in \iota(G(\ba)^\diamondsuit)$ and $h\in \overline{G^{\square,kmn_Q}}(\ba)$,
\[
f^{(s)}(\iota(g,g)h)=\chi_\theta(\nu(g)) f^{(s)}(h).
\]
\end{Lem}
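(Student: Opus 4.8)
The plan is to deduce the identity from Lemma~\ref{lem:invariance I global} once one locates where $\iota(g,g)$ lies inside the Siegel Levi. By \cite{Cai21} Lemma~5.1 one has $\iota(G^{\diamondsuit})\subset P$, so $\iota(g,g)\in\iota(G(\ba)^{\diamondsuit})$ lies in $\oP(\ba)$; let $\overline m_g\in\oM(\ba)$ be its image under $\oP(\ba)\to\oM(\ba)$, a lift of some $m_g\in M$. Since all of $\iota(G\times G)$ normalizes $N_{\scw}^{\bullet}$ and fixes $\psi_{\scw}^{\bullet}$, the element $\iota(g,g)$ normalizes $N_{\scw}^{\bullet}\cap P$ and, as recalled before the Lemma, stabilizes the pair $(N_{\scw}^{\bullet}\cap P,\psi_{\scw}^{\bullet}|_{[N_{\scw}^{\bullet}\cap P]})$, which lies in the orbit $((kn_Q)^m)_D$ with stabilizer $\St_{\sca}\cong\GL_{m,D}=\GL_D(W)$; hence $m_g\in\St_{\sca}$ and $\overline m_g\in\overline{\St}_{\sca}(\ba)$. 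Because $\iota(g,g)$ acts on the maximal isotropic $W^{\Delta}=W_1^{\Delta}\oplus\cdots\oplus W_{kn_Q}^{\Delta}$ by $g$ on each $W_i^{\Delta}\cong W$, the map $g\mapsto m_g$ is the tautological inclusion $\bG(\scw)\hookrightarrow\GL_D(W)=\St_{\sca}$; in particular $\delta_P^{1/2}(\iota(g,g))$ and $\nu^s(m_g)$ are trivial, since $\nu(g)$ is a unit idele when $\bG=\bSp,\bSO$ and lies in the norm-one subgroup when $\bG=\bU$. Finally, by Proposition~\ref{prop:diag splitting global} the splitting $\iota$ is compatible with the splitting over $F$-points, so $g\mapsto\overline m_g$ carries $G(F)$ into the canonical lift of $\St_{\sca}(F)$.

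For the unfolding I would start from $f^{(s)}(\iota(g,g)h)=\int_{[N_{\scw}^{\bullet}\cap P]}\phi^{(s)}(u\,\iota(g,g)\,h)\,\psi_{\scw}^{\bullet}(u)\,du$ and substitute $u=\iota(g,g)\,v\,\iota(g,g)^{-1}$: the Jacobian is the modular character of $(\iota(G^{\diamondsuit}),N_{\scw}^{\bullet}\cap P)$, trivial as in Lemma~\ref{lem:modular quasicharacter}, and $\psi_{\scw}^{\bullet}$ is unchanged because $\iota(G^{\diamondsuit})$ stabilizes it. This turns the integral into $\int_{[N_{\scw}^{\bullet}\cap P]}\phi^{(s)}(\iota(g,g)\,v\,h)\,\psi_{\scw}^{\bullet}(v)\,dv$; applying the transformation law of the section $\tilde\phi^{(s)}\in I(s,\theta)$ to extract $\iota(g,g)\in\oP(\ba)$ on the left, and a last change of variables on $[N_{\scw}^{\bullet}\cap P]$ (again with trivial modulus, using $m_g\in\St_{\sca}$), one gets
\[
\delta_P^{1/2}(\iota(g,g))\,\nu^s(m_g)\;\varphi_h^{N(\scy),\psi_{\sca}}(\overline m_g),
\]
where $\varphi_h\in\theta\cdot\nu^s$ is the element whose restriction to $(N_{\scw}^{\bullet}\cap P)(\ba)$ is $u\mapsto\phi^{(s)}(uh)$, so $\varphi_h^{N(\scy),\psi_{\sca}}(1)=f^{(s)}(h)$, exactly as recorded before the Lemma.

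Now I would invoke Lemma~\ref{lem:invariance I global}: since $\overline m_g\in\overline{\St}_{\sca}(\ba)$, one has $\varphi_h^{N(\scy),\psi_{\sca}}(\overline m_g)=\overline\chi_\theta(\overline m_g)\,f^{(s)}(h)$, where $\overline\chi_\theta$ is the $\epsilon$-genuine character of $\overline{\St}_{\sca}(F\bs\ba)$ attached to $\theta$. Putting $\chi_\theta(\nu(g)):=\delta_P^{1/2}(\iota(g,g))\,\nu^s(m_g)\,\overline\chi_\theta(\overline m_g)$ — which by the first paragraph depends on $g$ only through $\nu(g)$ and is independent of $s$ and $\phi$ — yields the asserted formula, and since $\overline\chi_\theta$ is trivial on $\overline{\St}_{\sca}(F)$ while $g\mapsto\overline m_g$ sends $G(F)$ into the canonical lift of $\St_{\sca}(F)$, the character $\chi_\theta$ descends to $[G]$.

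The step I expect to be the main obstacle is not any one computation but the bookkeeping linking the two covers: one must check that the image of $\iota(g,g)$ in the Siegel Levi $\oM(\ba)\cong\oGL_{kmn_Q,D}(\ba)$ of $\theta$ is precisely the lift $\overline m_g$ of $m_g\in\St_{\sca}$ implicit in Lemma~\ref{lem:invariance I global} — so that $\overline\chi_\theta(\overline m_g)$ is unambiguous, genuine-ness plays no role, and the evaluation is compatible with the splitting over $F$-points — and, relatedly, that the isomorphism $\St_{\sca}\cong\GL_D(W)$ is the one under which $g\mapsto m_g$ is the tautological inclusion. The unfolding manipulations and the triviality of the various adelic modular characters are routine by comparison.
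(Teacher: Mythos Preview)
Your approach is correct and is essentially the same as the paper's: observe that $\iota(G^{\diamondsuit})\subset P$, push to $\oM$, note the image lies in the stabilizer of the $((kn_Q)^m)_D$-coefficient, and invoke Lemma~\ref{lem:invariance I global}. The paper's proof is simply much terser---it skips the explicit change-of-variables unfolding you wrote out and the verification that $\delta_P^{1/2}$ and $\nu^s$ are trivial on the image, recording only the chain $G^{\diamondsuit}(\ba)\to\oP(\ba)\to\oM(\ba)$ and the fact that the image lands in the stabilizer.

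The obstacle you flagged (matching the two covers so that $\overline\chi_\theta(\overline m_g)$ is unambiguous) is exactly the one point the paper does spell out: it emphasizes that the composite $\oG^{\diamondsuit}(\ba)\to\oG(\ba)\times\oG(\ba)\to\overline{G^{\square,kmn_Q}}(\ba)$ is trivial on $\mu_n$ and therefore descends to a genuine homomorphism $G^{\diamondsuit}(\ba)\to\overline{G^{\square,kmn_Q}}(\ba)$ (this is Lemma~\ref{lem:local diagonal splitting}/\ref{lem:diagonal splitting global}). Once you phrase it that way the genuineness issue evaporates---the restriction of the $\epsilon$-genuine $\overline\chi_\theta$ to this linearly split copy of $G$ is an honest character of $G(\ba)$, automatically trivial on $G(F)$ by Proposition~\ref{prop:diag splitting global}---and there is no separate ``bookkeeping'' to do.
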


\begin{proof}
Recall that the homomorphism
\[
\oG^\diamondsuit(\ba) \to \oG(\ba)\times \oG(\ba) \to \overline{G^{\square,kmn_Q}}(\ba)
\]
is trivial on $\mu_n$ and thus descent to
\[
G^{\diamondsuit}(\ba)\to \overline{G^{\square,kmn_Q}}(\ba).
\]
This indeed gives a homomorphism
\[
G^{\diamondsuit}(\ba) \to \oP(\ba) \to \oM(\ba).
\]
The image of $G^{\diamondsuit}(\ba)$ lies in the stabilizer of the Fourier coefficient. Thus the Fourier coefficient is left-equivariant under a character $\chi_\theta$; in other words,
\[
f^{(s)}(\iota(g,g)h)=\chi_\theta(\nu(g))f^{(s)}(h),
\]
for $\iota(g,g)\in \iota(G(\ba)^\diamondsuit)$ and $h\in \overline{G^{\square,kmn_Q}}(\ba)$.
\end{proof}

\subsection{The global integral}

We view $\chi_\theta$ as a character of $\oG(\ba)$ via $\oG(\ba)\to G(\ba) \xrightarrow{\chi_{\theta}} \bc^\times$, which we still denote as $\chi_\theta$. We define the global integral to be
\[
Z(\xi_1\boxtimes \xi_2,\phi^{(s)})=\dfrac{1}{n^2}\int\limits_{[\oG \times \oG]} \chi_\theta(\nu(g_2))^{-1}\xi_1(g_1) \xi_2(g_2) \int\limits_{[N_{\scw}^{\bullet}]}E(\phi^{(s)})(u\cdot \iota(g_1,g_2)) \psi_{\scw}^{\bullet}(u) \ du \ dg_1 \ dg_2.
\]
Since $\iota_{\ba}(G(F)\times G(F))\subset G^{\square,kmn_Q}(F)$, this integral is well-defined.

Since the two cusp forms are rapidly decreasing on $G(F)\bs \oG(\ba)$ and the Eisenstein series is only of moderate growth, we see that the integral converges absolutely for all $s$ away from the poles of the Eisenstein series and is hence meromorphic in $s$.

\begin{Rem}
One can easily check that the integrand as a function of $\oG(\ba)\times \oG(\ba)$, is trivial on $\mu_n\times \mu_n$ and thus can be viewed as a function on $G(\ba) \times G(\ba)$. The factor $1/n^2$ will be cancelled out if we write $Z(\xi_1\boxtimes \xi_2, \phi^{(s)})$ as an integral over $G(\ba) \times G(\ba)$.
\end{Rem}

\begin{Rem}
Observe that the function $\xi_1\boxtimes \xi_2$ is trivial on $\mu_n$ and thus descents to a function on $\overline{G\times G}$. Such functions generate an irreducible cuspidal representation of $\overline{G\times G}(\ba)$, which we denote by $\overline{\pi\boxtimes \pi^\vee}$. In the proof of the main global identity below, we only use the fact that $\overline{\pi\boxtimes \pi^\vee}$ is cuspidal.
\end{Rem}

\subsection{Main global identity}

We now state the main global identity.

\begin{Thm}
  When $\Re s\gg 0$, $Z(\xi_1\boxtimes \xi_2,\phi^{(s)})$ equals
\[
\dfrac{1}{n^2}\int\limits_{G^\diamondsuit(F)\bs (\oG\times \oG)(\ba)}\chi_\theta(\nu(g_2))^{-1} \xi_1(g_1)\xi_2(g_2) \int\limits_{(N_{\scw}^{\bullet}\cap P)(F)\bs N_{\scw}^{\bullet}(\ba)} \phi^{(s)}(u \cdot \iota(g_1,g_2))\psi_{\scw}^{\bullet}(u) \ du \ dg_1 \ dg_2.
\]
\end{Thm}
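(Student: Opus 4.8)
The plan is to carry out the standard unfolding of the Eisenstein series, adapted to the covering-group setting using the orbit analysis of Proposition~\ref{prop:results about orbits}. First I would substitute the definition of $E(\phi^{(s)})$ as a sum over $P(F)\bs G^{\square,kmn_Q}(F)$ into the integral defining $Z$, and interchange the sum and the integrals over $[\oG\times\oG]$ and $[N_{\scw}^{\bullet}]$ (justified for $\Re s\gg 0$ by absolute convergence). After identifying $P(F)\bs G^{\square,kmn_Q}(F)$ with the flag variety and collapsing the $\iota(G(F)\times G(F))$-action on the sum, the integral becomes a sum over the double cosets in $P(F)\bs G^{\square,kmn_Q}(F)/\iota(G\times G)(F)N_{\scw}^{\bullet}(F)$. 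One first notes that cosets meeting $\Omega_1\cup\Omega_2$ contribute zero: for $\gamma\in\Omega_1$ the inner $\psi_{\scw}^{\bullet}$-integral over a unipotent subgroup on which $\psi_{\scw}^{\bullet}$ is nontrivial already vanishes, and for $\gamma\in\Omega_2$ the resulting inner integral is a Fourier coefficient of $\theta\cdot\nu^s$ in an orbit strictly higher than $((kn_Q)^m)_D$, which vanishes identically by the defining property of representations of type $(kn_Q,m)_D$ (part (2) of the global definition). Hence only $\gamma\in\tilde G^{\square,kmn_Q}(F)$ survive.

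Next I would treat the negligible orbits. For such an orbit, by Proposition~\ref{prop:results about orbits}(3)--(4) the stabilizer $R_-$ in $\{1\}\times G$ contains the unipotent radical $N^-$ of a proper parabolic of $G$, and the projection of $\{1\}\times N^-$ into $M$ lands inside $S_\lambda$. The corresponding term therefore contains an inner integral of the cusp form $\xi_2$ against (after conjugation) the constant character along $[N^-]$ — using the $S_\lambda$-invariance of the relevant Fourier coefficient of $\theta\cdot\nu^s$ from Proposition~\ref{prop:invariance II} to move the $N^-$-integration onto $\xi_2$ — so it vanishes by cuspidality of $\pi$ (equivalently, of $\overline{\pi\boxtimes\pi^\vee}$). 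What remains is the single main orbit, whose representative may be taken to be the identity element, with stabilizer $\iota(G^\diamondsuit)(F)=P(F)\cap\iota(G\times G)(F)$. The contribution of the main orbit collapses the outer $[\oG\times\oG]$-integral against the $P(F)\bs G^{\square,kmn_Q}(F)$-sum to an integral over $G^\diamondsuit(F)\bs(\oG\times\oG)(\ba)$, and the inner $[N_{\scw}^{\bullet}]$-integral unfolds to an integral over $(N_{\scw}^{\bullet}\cap P)(F)\bs N_{\scw}^{\bullet}(\ba)$ of $\phi^{(s)}(u\cdot\iota(g_1,g_2))\psi_{\scw}^{\bullet}(u)$, yielding exactly the claimed expression, with the normalizing constant $1/n^2$ carried along unchanged.

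The main obstacle, and the point where the covering-group setting genuinely differs from the linear case of \cite{Cai21}, is bookkeeping the splittings and $\mu_n$-actions throughout the unfolding: one must check that every manipulation is compatible with the fixed section $G^{\square,kmn_Q}(F)\to\overline{G^{\square,kmn_Q}}(\ba)$ (Proposition~\ref{prop:diag splitting global}), that the diagonal copy $\iota(G(\ba)^\diamondsuit)$ indeed splits (Lemma~\ref{lem:diagonal splitting global}) so that the character $\chi_\theta\circ\nu$ from Lemma~\ref{lem:invariance in doubling} is well-defined on it, and that the genuine cusp forms $\xi_1,\xi_2$ and the restriction to $\mu_n\times\mu_n$ via $(\zeta_1,\zeta_2)\mapsto\zeta_1^{-1}\zeta_2$ make the integrand descend correctly to $\overline{G\times G}(\ba)$, so that cuspidality of $\overline{\pi\boxtimes\pi^\vee}$ can legitimately be invoked for the negligible-orbit vanishing. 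Once these compatibilities are in place — all of which follow from Sections~\ref{sec:pullback pushout baer sum} and~\ref{sec:covering assumptions} — the combinatorial heart of the argument is formally identical to \cite{Cai21} Section~6, since the unipotent groups $N$, $N_{\scw}^{\bullet}$ and the flag variety $\Omega(W^{\square,kmn_Q})$ are unchanged by passing to the cover.
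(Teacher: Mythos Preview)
Your proposal is correct and follows essentially the same unfolding argument as the paper's proof: expand the Eisenstein series, eliminate $\Omega_1\cup\Omega_2$ via the nontrivial-character and higher-orbit arguments, kill the negligible orbits by cuspidality using Proposition~\ref{prop:invariance II}, and retain only the main orbit. The paper additionally invokes Lemma~\ref{lem:modular quasicharacter} when promoting the $J_\gamma$-invariance to $I_\gamma$-invariance under $\iota(1,N^-)$ (as in \cite{Cai21} Proposition~6.7), a small technical point you glossed over but which causes no difficulty.
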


\begin{proof}
The proof is similar to that of \cite{Cai21}. We will give a sketch here.

When $\Re s \gg 0$, the global integral becomes
\[
\begin{aligned}
&Z(\xi_1\boxtimes \xi_2,\phi^{(s)})\\
=&\dfrac{1}{n^2}\int\limits_{[\oG\times \oG]}
\chi_\theta(\nu(g_2))^{-1} \xi_1(g_1)\xi_2(g_2) \ \int\limits_{[N_{\scw}^{\bullet}]} \sum_{\gamma\in P(F)\bs G^{\square,kmn_Q}(F)} \phi^{(s)}(\gamma u \cdot \iota(g_1,g_2))\psi_{\scw}^{\bullet}(u) \ du \ dg_1 \ dg_2.\\
\end{aligned}
\]
We can rewrite the integral as a sum over $P(F)\bs G^{\square,kmn_Q}(F)/\iota(G\times G)N_{\scw}^{\bullet}(F)$. Our goal is to show that, only the double coset $P(F)\iota(G\times G)N_{\scw}^{\bullet}(F)$ supports nonzero contribution.

We first calculate the Fourier coefficient and deduce that
\begin{equation}\label{eq:FC in unfolding}
\begin{aligned}
& \int\limits_{[N_{\scw}^{\bullet}]} \ \sum_{\gamma\in P(F)\bs G^{\square,kmn_Q}(F)}  \phi^{(s)}(\gamma u \cdot \iota(g_1,g_2))\psi_{\scw}^{\bullet}(u) \ du\\
=&  \int\limits_{[N_{\scw}^{\bullet}]} \ \sum_{\gamma\in P(F)\bs G^{\square,kmn_Q}(F)/N_{\scw}^{\bullet}(F)} \ \sum_{\gamma'\in \gamma^{-1}P(F)\gamma\cap N(F)\bs N_{\scw}^{\bullet}(F)}  \phi^{(s)}(\gamma \gamma' u \cdot \iota(g_1,g_2))\psi_{\scw}^{\bullet}(u) \ du\\
=& \sum_{\gamma\in P(F)\bs G^{\square,kmn_Q}(F)/N_{\scw}^{\bullet}(F)} \ \int\limits_{(N_{\scw}^{\bullet}\cap \gamma^{-1}P\gamma)(F)\bs N_{\scw}^{\bullet}(\ba)} \phi^{(s)}(\gamma  u \cdot \iota(g_1,g_2))\psi_{\scw}^{\bullet}(u) \ du.
\end{aligned}
\end{equation}
For each $\gamma\in P(F)\bs G^{\square,kmn_Q}(F)/N_{\scw}^{\bullet}(F)$ and $h \in \overline{G^{\square,kmn_Q}}(\mathbb{A})$, we write
\[
\begin{aligned}
I_\gamma(h)=& \int\limits_{(N_{\scw}^{\bullet}\cap \gamma^{-1}P\gamma)(F)\bs N_{\scw}^{\bullet}(\ba)} \phi^{(s)}(\gamma  u  h)\psi_{\scw}^{\bullet}(u) \ du,\\
J_\gamma(h)=& \int\limits_{[N_{\scw}^{\bullet}\cap \gamma^{-1}P\gamma]} \phi^{(s)}(\gamma  u h)\psi_{\scw}^{\bullet}(u) \ du.
\end{aligned}
\]
Thus
\[
I_\gamma(h)= \int\limits_{(N_{\scw}^{\bullet}\cap \gamma^{-1}P\gamma\bs N_{\scw}^{\bullet})(\ba)} J_\gamma(uh)\psi_{\scw}^{\bullet}(u) \ du.
\]

Recall that in Section \ref{sec:Basic setup in the linear case}, we have defined two subsets $\Omega_1$ and $\Omega_2$ of $G^{\square,kmn_Q}$. 
If $\gamma\in \Omega_1$, then $J_{\gamma}(h)$ contains the following inner integral
\begin{equation}\label{eq:vanishing for omega1}
\int\limits_{[N_{\scw}^{\bullet}\cap \gamma^{-1}N\gamma]} \phi^{(s)}(\gamma  u h)\psi_{\scw}^{\bullet}(u) \ du.
\end{equation}
Note that $\phi^{(s)}$ is left invariant under $[N]$. Thus, as a function on $[N_{\scw}^{\bullet}\cap \gamma^{-1}N\gamma]$, $\phi^{(s)}(\gamma  u h)$ is a constant function. According to the definition of $\Omega_1$, the restriction of $\psi_{\scw}^{\bullet}(u)$ to $[N_{\scw}^{\bullet}\cap \gamma^{-1}N\gamma]$ is a nontrivial character. Therefore, \eqref{eq:vanishing for omega1} vanishes and so do $J_{\gamma}(h)$ and $I_{\gamma}(h)$. 

If $\gamma\in \Omega_2$, then \eqref{eq:vanishing for omega1} is constant as a function on $[N_{\scw}^{\bullet}\cap \gamma^{-1}N\gamma]$ and $J_{\gamma}(h)$ becomes a Fourier coefficient of $\theta$ which is given by a pair that lies in an orbit higher than $((kn_Q)^{m})_D$. We now have $J_{\gamma}(h)=0$ since $\theta$ is a representation of type $(kn_Q,m)_D$. Thus, $I_{\gamma}(h)=0$ as well. 

Therefore, we have shown that \eqref{eq:FC in unfolding} equals 
\[
\sum_{\gamma\in P(F)\bs \tilde G^{\square,kmn_Q}(F)/N_{\scw}^{\bullet}(F)} \ I_{\gamma}(\iota(g_1,g_2)),
\]
and therefore,
\[
Z(\xi_1\boxtimes \xi_2,\phi^{(s)})=\dfrac{1}{n^2}\int\limits_{[\oG\times \oG]}
\chi_\theta(\nu(g_2))^{-1} \xi_1(g_1)\xi_2(g_2) \ \sum_{\gamma\in P(F)\bs \tilde G^{\square,kmn_Q}(F)/N_{\scw}^{\bullet}(F)} \ I_{\gamma}(\iota(g_1,g_2)) \ dg_1 \ dg_2.
\]

We now break the sum, and exchange it with integration again. This shows that the above equation equals
\[
\dfrac{1}{n^2} \sum_{\gamma \in P(F)\bs \tilde G^{\square,kmn_Q}(F)/\iota(G\times G)N_{\scw}^{\bullet}(F)} K_{\gamma},
\]
where
\[
K_{\gamma}=\int\limits_{\iota^{-1}(\gamma P\gamma^{-1}\cap \iota(G\times G))(F)\bs (\oG\times \oG)(\ba)} \chi_\theta(\nu(g_1))^{-1}\xi_1(g_1)\xi_2(g_2) I_{\gamma}(\iota(g_1,g_2)) \ dg_1 \ dg_2.
\]
We now apply results in Proposition \ref{prop:results about orbits}.
The double coset $P(F)\bs \tilde G^{\square,kmn_Q}(F)/\iota(G\times G)N_{\scw}^{\bullet}(F)$ is finite, and the only open coset is $P(F)\iota(G\times G)N_{\scw}^{\bullet}(F)$. For negligible double cosets, by Proposition \ref{prop:results about orbits} and \ref{prop:invariance II}, we know
\[
J_{\gamma}(\iota(1,g_2)h)=J_{\gamma}(h)
\]
for $g_2 \in N_-(\ba)$ and $h\in \overline{G^{\square,kmn_Q}}(\ba)$. The proof as in \cite{Cai21} Proposition 6.7 shows that
\[
I_{\gamma}(\iota(1,g_2)h)=I_{\gamma}(h)
\]
for $g_2 \in N_-(\ba)$ and $h\in \overline{G^{\square,kmn_Q}}(\ba)$. Here we need to use Lemma \ref{lem:modular quasicharacter}.

We conclude that for a negligible double coset, the contribution $K_{\gamma}$ contains the inner integral
\[
\int\limits_{[N_-]}\xi_2(ug_2) \ du.
\]
This is zero since $\pi^\vee$ is cuspidal. Thus only the main orbit has nonzero contribution. We have arrived at
\[
Z(\xi_1\boxtimes \xi_2,\phi^{(s)})= \dfrac{1}{n^2}\int\limits_{G^\diamondsuit(F)\bs (\oG\times \oG)(\ba)}\chi_\theta(\nu(g_2))^{-1} \xi_1(g_1)\xi_2(g_2) I_1(\iota(g_1,g_2))\ d g_1 \ d g_2.
\]
This proves the result.
\end{proof}

\subsection{Euler product}

We can indeed rewrite $Z(\xi_1\boxtimes\xi_2,\phi^{(s)})$ as an Euler product using uniqueness of $(kn_Q,m)_D$-models for $\theta$.

Let $\bN_{\scw}^{\circ}=\bN_{\scw}^{\bullet}\cap \bN(W^{\nabla,kn_Q})$. Then for any $h\in \overline{G^{\square,kmn_Q}}(\ba)$,
\[
\begin{aligned}
&\int\limits_{(N_{\scw}^{\bullet}\cap P)(F)\bs N_{\scw}^{\bullet}(\ba)} \phi^{(s)}( u \cdot  \iota(g_1,g_2)h)\psi_{\scw}^{\bullet}(u) \ du\\
 =&\int\limits_{N_{\scw}^\circ(\ba)} \ \int\limits_{[N_{\scw}^{\bullet}\cap P]} \phi^{(s)}(  u u' \cdot  \iota(g_1,g_2)h)\psi_{\scw}^{\bullet}( uu') \ du \ du'\\
 =&\int\limits_{N_{\scw}^\circ(\ba)}  f^{(s)}( u' \cdot  \iota(g_1,g_2)h)\psi_{\scw}^{\bullet}( u') \ du'\\
 =&\int\limits_{N_{\scw}^\circ(\ba)} f^{(s)}(\iota(g_2,g_2)\cdot u' \cdot  \iota(g_2^{-1}g_1,1)h)\psi_{\scw}^{\bullet}( u')  \ du'\\
  =&\chi_{\theta}(\nu(g_2))\int\limits_{N_{\scw}^\circ(\ba)} f^{(s)}(u \cdot  \iota(g_2^{-1}g_1,1)h)\psi_{\scw}^{\bullet}(u)  \ du.\\
 \end{aligned}
\]
Observe that we use change of variable and Lemma \ref{lem:modular quasicharacter} in the third equality, and Lemma \ref{lem:invariance in doubling} in the last equality. We also use the fact that the canonical lift of $N_{\scw}^{\bullet}(\ba)$ is $G^{\square,kmn_Q}(\ba)$-equivariant. As a consequence, we can write $Z(\xi_1\boxtimes \xi_2,\phi^{(s)})$ as
\[
\begin{aligned}
&\dfrac{1}{n^2}\int\limits_{G^{\diamondsuit}(F)\bs (\oG\times \oG)(\ba)}\xi_1(g_1)\xi_2(g_2)\int\limits_{N_{\scw}^\circ(\ba)} f^{(s)}(u \cdot  \iota(g_2^{-1}g_1,1))\psi_{\scw}^{\bullet}( u)  \ du \ dg_2 \ dg_1\\
=&\dfrac{1}{n^2}\int\limits_{G^{\diamondsuit}(F)\bs (\oG\times \oG)(\ba)}\xi_1(g_2g_1)\xi_2(g_2)\int\limits_{N_{\scw}^\circ(\ba)} f^{(s)}(u \cdot  \iota(g_1,1))\psi_{\scw}^{\bullet}( u)  \ du\ dg_2 \ dg_1\\
=&\dfrac{1}{n^2}\int\limits_{\oG(\ba)} \ \int\limits_{[\oG]}\xi_1(g_2g_1)\xi_2(g_2)\int\limits_{N_{\scw}^\circ(\ba)} f^{(s)}(u \cdot  \iota(g_1,1))\psi_{\scw}^{\bullet}( u)  \ du\ dg_2 \ dg_1\\
=&\dfrac{1}{n}\int\limits_{\oG(\ba)}\scp(\pi(g)\xi_1\boxtimes \xi_2) \int\limits_{N_{\scw}^\circ(\ba)} f^{(s)}(u  \cdot  \iota(g,1))\psi_{\scw}^{\bullet}( u)  \ du  \ d g.\\
\end{aligned}
\]
For decomposable data, it follows from uniqueness of $(kn_Q,m)_D$-models for $\theta_v$ that
\[
f^{(s)}(g)=\prod_v f_v^{(s)}(g_v).
\]
If furthermore $\xi_i=\otimes_v \xi_{i,v}$, then
\[
Z(\xi_1\boxtimes\xi_2,\phi^{(s)})=\prod_v Z_v(\xi_{1,v}\boxtimes \xi_{2,v},f_v^{(s)}),
\]
where
\[
Z_v(\xi_{1,v}\boxtimes \xi_{2,v},f_v^{(s)}) = \dfrac{1}{n} \int\limits_{\oG_v}\scp_v(\pi(g)\xi_{1,v}\boxtimes \xi_{2,v})\int\limits_{N_{\scw}^{\circ}}f_v^{(s)}(u\cdot \iota(g,1))\psi_{\scw}^{\bullet}(u) \ du \ dg.
\]

\section{BD data of pullback}\label{sec:BD data explicit}

The goal in this section is to prove the unproven results in Section \ref{sec:covering assumptions}.

\subsection{Results}

Let $\bG$ be one of the following groups:
\begin{enumerate}
\item $\bSp(W)$ or its inner forms;
\item $\bSO(W)$ with $\dim W$ even or its inner forms;
\item $\bSO(W)$ with $\dim W$ odd and $\dim W\geq 3$;
\item $\bU(W)$.
\end{enumerate}
Observe that over $F_s$, we have to consider the following groups: $\bSp_{2m}, \bSO_{2m}, \bSO_{2m+1}$ and $\bGL_m$.

Let $\obG\in \CExt(\bG,\bK_2)$ with BD data $(Q,\sce,f)$. For simplicity, we write $\bG^{\square}:=\bG^{\square,1}$. Recall that in the unitary group case, we assume that the quadratic form $Q$ in the BD data is decomposable.

\begin{Def}
We define the subcategory
\[
\CExt(\bG, \bK_2)^{\Delta}\subset \CExt(\bG,\bK_2)\times \CExt(\bG,\bK_2)
\]
as follows: a pair $(\obG, \obG')$ is in the subcategory if and only if $\obG'$ is isomorphic to $\obG$.
\end{Def}

\begin{Def}
We define the subcategory
\[
\CExt(\bG, \bK_2)^{\natural}\subset \CExt(\bG,\bK_2)\times \CExt(\bG,\bK_2)
\]
as follows: a pair $(\obG',\obG)$ is in the subcategory if and only if $\obG'$ is isomorphic to the pushout of $\obG$ by the map $[2k-1]$.
\end{Def}

\begin{Def}
We define the subcategory
\[
\CExt(\bG^{\square},\bK_2)^\Delta
\]
of $\CExt(\bG^{\square},\bK_2) \times \cdots \times \CExt(\bG^{\square},\bK_2)$ as follows: an object $(\overline{\bG^{\square}_1},\cdots, \overline{\bG^{\square}_k})$ is in the subcategory if and only if the multiplicative $\bK_2$-torsors $\overline{\bG^{\square}_1},\cdots, \overline{\bG^{\square}_k}$ are isomorphic.
\end{Def}

\begin{Prop}\label{prop:surj I}
Let $\overline{\bG^{\square}} \in \CExt(\bG^{\square},\bK_2)$. The pullback of $\overline{\bG^{\square}} $ via the doubling homomorphism $\iota:\bG\times \bG\to \bG^{\square}$ gives two multiplicative $\bK_2$-torsors on $\bG$:
\[
1\to \bK_2\to \obG_1 \to \iota(1\times \bG) \to 1
\]
and
\[
1\to \bK_2\to \obG_2 \to \iota(\bG\times 1) \to 1.
\]
Then the resulting functor
\[
\CExt(\bG^\square,\bK_2) \to \CExt(\bG,\bK_2)\times \CExt(\bG,\bK_2)
\]
is essentially surjective on $\CExt(\bG,\bK_2)^\Delta$.
\end{Prop}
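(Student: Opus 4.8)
The plan is to prove essential surjectivity one geometric type at a time: over $F_s$ the group $\bG$ becomes one of $\bSp_{2m}$, $\bSO_{2m+1}$, $\bSO_{2m}$ or $\bGL_{dm}$, and correspondingly $\bG^{\square}$ becomes $\bSp_{4m}$, $\bSO_{4m+2}$, $\bSO_{4m}$ or $\bGL_{2dm}$. First I would reformulate the claim in terms of BD data. By the Brylinski--Deligne classification (Section~\ref{sec:BD extensions}) an object of $\CExt(\bG^{\square},\bK_2)$ is a triple $(Q^{\square},\sce^{\square},f^{\square})$ attached to a chosen maximal torus $\bT^{\square}\subset\bG^{\square}$, and the description of pullback in terms of BD data in Section~\ref{sec:pullback pushout baer sum} computes the two pullbacks by restricting $Q^{\square}$ and pulling back $\sce^{\square}$, $f^{\square}$ along the embedding of cocharacter lattices $\phi\colon Y^{(1)}\oplus Y^{(2)}\hookrightarrow Y^{\square}$ induced by $\iota$ once we choose tori with $\iota(\bT\times\bT)\subset\bT^{\square}$; here $Y^{(1)}\cong Y^{(2)}\cong Y$, with $\phi|_{Y^{(2)}}$ computing $\obG_1$ and $\phi|_{Y^{(1)}}$ computing $\obG_2$. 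Since a pair lies in $\CExt(\bG,\bK_2)^{\Delta}$ precisely when its two entries are isomorphic, the assertion becomes: for every BD datum $(Q,\sce,f)$ of $(\bG,\bT)$ there is a BD datum $(Q^{\square},\sce^{\square},f^{\square})$ of $(\bG^{\square},\bT^{\square})$ whose pullbacks along both $\phi|_{Y^{(i)}}$ recover $(Q,\sce,f)$ up to isomorphism.

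The second step, carried out case by case, is to compute $\phi$ explicitly. Choosing a polarization $W=W^{+}\oplus W^{-}$ --- and in the odd orthogonal case observing that the two copies of the anisotropic kernel combine into a hyperbolic plane in $W^{\square}$ --- one reads off bases of $Y$, $Y^{\square}$ and of the respective coroot lattices and finds that $\phi$ is an isomorphism onto $Y^{\square}$ in the $\bSp$, even orthogonal and unitary cases and an embedding onto a corank-one sublattice in the odd orthogonal case, with the two copies of $Y$ sitting symmetrically. For the split groups $\bSp_{2m}$ and $\bSO_{2m+1}$ the category $\CExt(\bG,\bK_2)$ is essentially rigid: the $\Gamma$-invariant $W$-invariant quadratic forms on $Y$ form the monoid of non-negative multiples of a single fundamental form $Q_0$, and the extension $\sce$ and the morphism $f$ introduce no further obstruction, so it suffices to produce $Q^{\square}$. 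One checks from the coordinates that the fundamental $W(\bG^{\square})$-invariant quadratic form $Q_0^{\square}$ restricts along $\phi$ to $(Q_0,Q_0)$ --- the symmetry between the two summands of $W^{\square}$ forces the two restrictions to coincide --- hence $aQ_0^{\square}$ restricts to $(aQ_0,aQ_0)$, and its $\Gamma$-invariance is automatic since $\Gamma$ acts trivially. This settles the rigid cases.

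The case $\bSO_{2m}$ (and its inner and non-split forms), which I expect to be the main obstacle, is where all three invariants must be matched simultaneously: here $Y$ strictly contains the coroot lattice $Y^{sc}$, so the lattice of invariant quadratic forms has rank two (a value on a coroot together with a value on a cocharacter outside $Y^{sc}$), the Galois group may act nontrivially on $Y$, and $\sce$ and $f$ carry genuine information. The scheme is threefold. (i) Solve, in coordinates, for a $W(\bG^{\square})$-invariant and $\Gamma$-invariant quadratic form $Q^{\square}$ on $Y^{\square}$ with $Q^{\square}|_{Y^{(1)}}=Q^{\square}|_{Y^{(2)}}=Q$; this is a small linear system and its solvability has to be checked using that $Q$ is itself $W(\bG)$- and $\Gamma$-invariant and that $\phi$ is $\Gamma$-equivariant and respects coroots. (ii) Build $\sce^{\square}$ compatibly: its commutator is fixed by $Q^{\square}$ so it is determined up to $\Hom_{\Gamma}(Y^{\square},F_s^\times)$, and I would realize it through the same pull-back-and-push-out-by-residue recipe over $F_s(\!(\tau)\!)$ used to construct $\sce$ for $\bSO_{2m}$, then check that restricting this recipe along $\phi$ reproduces $\sce$ together with its rigidification on each factor --- this rigidification bookkeeping is the delicate point. (iii) Verify the compatibility of $f$ using that the simply connected covers $\bG^{sc}\to\bG$ and $(\bG^{\square})^{sc}\to\bG^{\square}$ fit into a commutative square with $\iota$, so that $f^{\square}$ restricts to $f$ on each summand.

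Finally, the unitary case $\bU(W)$ is handled by the same three steps, now invoking the hypothesis that $Q$ is decomposable. It is precisely decomposability that makes the quadratic form on $Y^{\square}$ split into rank-one and rank-two blocks --- matching the factors of the maximal torus and the $\Gal(E/F)$-action --- on which the linear system of step (i) is solvable; the non-decomposable unitary cases are excluded exactly because this step can then fail. Thus the crux of the whole argument is the existence part of step (i) for non-split $\bSO_{2m}$ and in the unitary case, namely producing a simultaneously Weyl- and Galois-invariant quadratic form on $Y^{\square}$ restricting to the prescribed form on two non-orthogonally positioned copies of $Y$, together with the rigidification compatibility in step (ii).
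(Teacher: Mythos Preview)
Your overall strategy---reducing to BD data and handling the four geometric types separately---matches the paper's, and the symplectic case is indeed trivial for the reason you give. However, your treatment of the remaining cases has a genuine gap at step~(iii), and your classification of $\bSO_{2m+1}$ as rigid is incorrect.

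The central issue you miss is this: while $\phi\colon Y\oplus Y\to Y^{\square}$ is surjective (or corank one in the odd orthogonal case), the induced map on \emph{coroot lattices} $Y_+^{sc}\oplus Y_-^{sc}\to Y^{\square,sc}$ is \emph{not}. Its image has index~$2$: the coroot $e_n^\vee-e_{n+1}^\vee$, which crosses between the two copies of $Y$, does not lie in $Y_+^{sc}\oplus Y_-^{sc}$. Consequently $\sce_{Q^{\square,sc}}$ is not generated by the image of $\sce_{Q_+^{sc}}\oplus\sce_{Q_-^{sc}}$, and the commutative square of simply-connected covers does \emph{not} determine $f^{\square}$. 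One must \emph{choose} a value for $f^{\square}(s_{Q^{\square,sc}}(e_n^\vee-e_{n+1}^\vee))$ in $\sce^{\square}$, and this choice is constrained twice: it must be consistent with the value $f^{\square}$ is forced to take on $s_{Q^{\square,sc}}(e_{n-1}^\vee+e_n^\vee)$ (which already lies in the image from $Y_+^{sc}$, yet can be re-expressed using the new coroot), and it must be $\Gamma$-equivariant. The paper spends most of its effort here: it takes $\sce^{\square}=\pr_*(\sce\oplus\sce)$ directly (not via a residue construction), makes an explicit choice for the missing value using an auxiliary map $\mathrm{mul}\colon\pr_*(\sce\oplus\sce)\to\sce$, verifies consistency via Lemma~\ref{lem:additivity of sQ} (which is where $2\mid Q(\al^\vee)$ enters), and then proves Galois equivariance by a careful analysis of how $\Gamma$ acts on the crossing coroot (Section~\ref{sec:Digression on Chevalley}), or alternatively by Hilbert~90 in the unitary case. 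Your step~(ii) misplaces the difficulty---constructing $\sce^{\square}$ itself is easy---and your step~(iii) assumes away precisely the point that requires work.

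Separately, $\bSO_{2m+1}$ is not rigid: it is not simply connected ($Y^{sc}\subsetneq Y$ with index~$2$), so $\sce$ and $f$ carry content. Moreover $\bG^{\square}=\bSO_{4m+2}$, and since $\bT\times\bT$ has rank $2m$ while $\bT^{\square}$ has rank $2m+1$, the construction of $\sce^{\square}$ must incorporate an extra $\bZ$-factor, and $f^{\square}$ must be specified on \emph{two} missing coroots. The paper handles this by combining the even-orthogonal and unitary arguments; it is not a trivial case. (A minor point: the lattice of $W$-invariant quadratic forms on $Y$ for $\bSO_{2m}$ has rank one for $m\ge 3$, not two.)
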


\begin{Prop}\label{prop:surj II}
The pullback via the homomorphism $\bG^{\square} \times \cdots \times \bG^{\square} \to \bG^{\square,k}$ gives a functor
\[
\CExt(\bG^{\square,k},\bK_2)\to \CExt(\bG^{\square},\bK_2) \times \cdots \times \CExt(\bG^{\square},\bK_2).
\]
Here, both $\bG^{\square}$ and $\CExt(\bG^{\square},\bK_2)$ appear $k$ times. This functor is essentially surjective on $\CExt(\bG^{\square},\bK_2)^\Delta$.
\end{Prop}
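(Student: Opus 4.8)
The plan is to work entirely with Brylinski--Deligne data, using the classification theorem for connected reductive groups recalled above. The first step is to fix compatible maximal tori: choose a maximal $F$-torus $\bT^{\square}$ of $\bG^{\square}$ and take the block-diagonal torus $\bT^{\square}\times\cdots\times\bT^{\square}$ inside $\bG^{\square,k}$. This is legitimate because the block-diagonal homomorphism $\bG^{\square}\times\cdots\times\bG^{\square}\to\bG^{\square,k}$ carries the orthogonal decomposition $W^{\square,k}=\bigoplus_{i=1}^{k}W_i^{\square}$ into itself and a product of maximal tori of the factors is again a maximal torus of $\bG^{\square,k}$. On cocharacter lattices one gets an identification $Y^{\square,k}=\bigoplus_{i=1}^{k}Y^{\square}$ on which $\Gamma$ acts diagonally -- the blocks $W_i^{\square}$ being individually defined over $F$, $\Gamma$ does not permute the summands -- and under which the $i$-th inclusion $Y^{\square}\hookrightarrow Y^{\square,k}$ is precisely the map inducing the $i$-th pullback; the analogous statement holds for the coroot sublattices.

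Given $\overline{\bG^{\square}}$ with BD data $(Q^{\square},\sce^{\square},f^{\square})$, the candidate is a BD extension of $\bG^{\square,k}$ whose quadratic form is the orthogonal sum $Q^{\square,k}:=\bigoplus_{i=1}^{k}Q^{\square}$, that is $Q^{\square,k}(y_1,\dots,y_k)=\sum_i Q^{\square}(y_i)$. By construction $Q^{\square,k}|_{Y^{\square}_{(i)}}=Q^{\square}$ for every $i$, and $\Gamma$-invariance is immediate from the diagonal action. The substantive point -- and the one that forces the case-by-case treatment of this section -- is that $Q^{\square,k}$ must be shown invariant under the \emph{full} Weyl group $W(\bG^{\square,k})$, not merely under the block-preserving subgroup $W(\bG^{\square})^{k}\rtimes S_{k}$. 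Over $F_s$ the group $\bG^{\square,k}$ is $\bSp$, even $\bSO$, or $\bGL$ (note that when $\bG=\bSO(W)$ with $\dim W$ odd, both $\bG^{\square}$ and $\bG^{\square,k}$ are already even orthogonal, as $W^{\square,k}$ has even dimension). For the symplectic and even orthogonal groups one checks that a $\bZ$-valued Weyl-invariant quadratic form on the standard cocharacter lattice $\bZ^{N}$ ($N\ge 3$) is a scalar multiple of $\sum_{j}y_j^{2}$; since $Q^{\square}$ is already of this shape, so is the orthogonal sum $Q^{\square,k}$, and it is therefore invariant under all of $W(\bG^{\square,k})$. In the unitary case, where over $F_s$ one has $\bGL$ and the space of $S_N$-invariant forms is two-dimensional, one invokes the decomposability hypothesis on $Q$ together with $\Gamma$-invariance to isolate the correct form; this is precisely the mechanism by which the excluded unitary cases fall out.

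It then remains to match the second and third invariants. By the classification there is a BD extension $\overline{\bG^{\square,k}}$ realizing $Q^{\square,k}$, and by the description of pullback in terms of BD data its pullback along the $i$-th factor has quadratic form $Q^{\square}$ and second invariant the pullback of $\sce^{\square,k}$ along $Y^{\square}_{(i)}\hookrightarrow Y^{\square,k}$; since the isomorphism class of a $\Gamma$-equivariant extension of $Y^{\square}$ by $F_s^{\times}$ with the prescribed commutator is completely determined by the quadratic form, this is automatically isomorphic to $\sce^{\square}$. For the third invariant there is nothing more to do in the symplectic case, where $\bG^{\square,k}$ is simply connected and the BD datum is just $Q^{\square,k}$. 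In the remaining cases one must correct within the fibre over $Q^{\square,k}$: the isomorphism classes of BD extensions completing a fixed quadratic form form a torsor under a Galois-cohomological group attached to the cocharacter lattices, and one shows that $(\sce^{\square},f^{\square})$ admits a preimage under pullback by exhibiting one explicitly, type by type; replacing $\overline{\bG^{\square,k}}$ by the corresponding twist then yields an extension whose pullback along every factor is isomorphic to $(Q^{\square},\sce^{\square},f^{\square})$. This establishes essential surjectivity onto $\CExt(\bG^{\square},\bK_2)^{\Delta}$.

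I expect the main obstacle to be the second step: verifying that the orthogonal-sum quadratic form is invariant under all of $W(\bG^{\square,k})$, and carrying out the attendant bookkeeping of the second and third invariants in the non-simply-connected cases. As throughout this section, the even orthogonal group $\bSO_{2m}$ is the delicate one -- its cocharacter lattice is strictly larger than its coroot lattice, which is what forces the divisibility $2\mid Q(\al^{\vee})$ into the picture -- while the failure of uniqueness of Weyl-invariant quadratic forms for $\bGL$ is what obliges us to impose decomposability and to exclude certain unitary groups. For these reasons the construction genuinely has to be carried out case by case.
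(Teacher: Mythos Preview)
Your overall plan matches the paper's approach: translate to Brylinski--Deligne data via the classification theorem and handle the three invariants case by case. However, there is a genuine gap in your treatment of the second invariant. You claim that ``the isomorphism class of a $\Gamma$-equivariant extension of $Y^{\square}$ by $F_s^{\times}$ with the prescribed commutator is completely determined by the quadratic form.'' This is not true: the statement recalled in Section~\ref{sec:BD extensions} concerns the isomorphism class as an \emph{abstract} extension, not as a $\Gamma$-equivariant one. Two $\Gamma$-equivariant extensions of $Y$ by $F_s^\times$ with the same commutator differ by a class in $H^1(\Gamma,\Hom(Y,F_s^\times))$, which need not vanish when $Y$ is not a permutation module (as happens, for instance, for non-split even orthogonal groups, where the Galois action on $Y$ involves sign changes). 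So you cannot conclude that pulling back an \emph{arbitrary} $\sce^{\square,k}$ recovers $\sce^{\square}$ on each factor; your later appeal to ``correcting within the fibre'' does not repair this, since you have only set it up to adjust the third invariant.

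The paper's approach (spelled out for Proposition~\ref{prop:surj I} and said to be ``similar and easier'' here) avoids this difficulty by constructing the second invariant directly rather than choosing one and twisting: set $\sce^{\square,k}:=\pr_\ast(\sce^{\square}\oplus\cdots\oplus\sce^{\square})$, the pushout of the $k$-fold direct sum along the product map $(F_s^\times)^k\to F_s^\times$. This is functorial, hence $\Gamma$-equivariant, and manifestly restricts to $\sce^{\square}$ on each block. The remaining work is then exactly where you located it: extending $\pr_\ast(f^{\square}\oplus\cdots\oplus f^{\square})$ to a $\Gamma$-equivariant $f^{\square,k}:\sce_{Q^{\square,k,sc}}\to\sce^{\square,k}$ by specifying values on the $k-1$ ``linking'' simple coroots not lying in $\bigoplus_i Y^{\square,sc}$, and verifying Galois equivariance as in Sections~10.5--10.7. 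The reason this is \emph{easier} than Proposition~\ref{prop:surj I} is one you did not mention: $\bG^{\square}$ is always of even orthogonal, symplectic, or general linear type over $F_s$, so the product of maximal tori really is maximal in $\bG^{\square,k}$, and the extra complication in the odd-orthogonal case of Proposition~\ref{prop:surj I} (where one must adjoin a further $\bZ$-factor to build $\sce^{\square}$) never arises.
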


With the above two propositions, we deduce the following fact.
\begin{Cor}\label{cor:surj III}
The functor induced by the doubling isomorphism
\[
\CExt(\bG^{\square,k},\bK_2)\to \CExt(\bG,\bK_2)\times \CExt(\bG,\bK_2)
\]
is essentially surjective on $\CExt(\bG\times \bG,\bK_2)^{\natural}$.
\end{Cor}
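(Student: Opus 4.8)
The plan is to deduce the corollary from the two propositions by factoring the doubling homomorphism through a product of single‑box doublings. First I would record the factorization
\[
\iota_k\ =\ j\circ\delta,\qquad \bG\times\bG \xrightarrow{\ \delta\ } (\bG^{\square})^{\times k} \xrightarrow{\ j\ } \bG^{\square,k},
\]
where $j$ is the block embedding attached to the orthogonal decomposition $W^{\square,k}=\bigoplus_{i=1}^{k}W_i^{\square}$ used in Proposition~\ref{prop:surj II}, and $\delta(g_1,g_2)=\bigl(\iota(g_1,g_2),\iota(g_1,g_1),\dots,\iota(g_1,g_1)\bigr)$, with $\iota(g_1,g_2)$ in the first slot and the diagonal embedding $\iota(g_1,g_1)$ in the remaining $k-1$ slots. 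This is an immediate unwinding of the formula for the action of $\bG\times\bG$ on $W^{\square,k}$: on the block $W_1^{\square}=W_{1,+}\oplus W_{1,-}$ the two copies of $\bG$ act on $W_{1,+}$ and $W_{1,-}$ respectively, whereas on each block $W_i^{\square}$ with $i\ge 2$ the first copy acts on both summands.

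Next, given $\obG\in\CExt(\bG,\bK_2)$, I would apply Proposition~\ref{prop:surj I} to the diagonal object $(\obG,\obG)$ of $\CExt(\bG,\bK_2)^{\Delta}$ to obtain $\overline{\bG^{\square}}\in\CExt(\bG^{\square},\bK_2)$ whose pullback along $\iota:\bG\times\bG\to\bG^{\square}$ restricts to $\obG$ on both $\iota(1\times\bG)$ and $\iota(\bG\times1)$, and then apply Proposition~\ref{prop:surj II} to the diagonal object $(\overline{\bG^{\square}},\dots,\overline{\bG^{\square}})$ to obtain $\overline{\bG^{\square,k}}\in\CExt(\bG^{\square,k},\bK_2)$ whose pullback along $j$ restricts to $\overline{\bG^{\square}}$ on each of the $k$ factors. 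By functoriality of pullback (Section~\ref{sec:pullback pushout baer sum}) the pullback of $\overline{\bG^{\square,k}}$ along $\iota_k$ is the pullback along $\delta$ of that $k$‑tuple; and since the $k$ factors of $(\bG^{\square})^{\times k}$ commute, pulling back along a map into the product is the Baer sum of the pullbacks along the coordinate projections. Restricting to $1\times\bG$ only the first coordinate of $\delta$ is non‑trivial, so $\obG_-$ is the pullback of $\overline{\bG^{\square}}$ along $g\mapsto\iota(1,g)$, i.e.\ $\obG_-\simeq\obG$. Restricting to $\bG\times1$, the first coordinate contributes the pullback of $\overline{\bG^{\square}}$ along $g\mapsto\iota(g,1)$, again $\simeq\obG$, while each of the remaining $k-1$ coordinates contributes the pullback of $\overline{\bG^{\square}}$ along the diagonal $g\mapsto\iota(g,g)$.

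It then remains to identify this diagonal pullback. Here I would use that $\iota(\bG\times1)$ and $\iota(1\times\bG)$ are commuting subgroups of $\bG^{\square}$, acting on the orthogonal summands $W_{+}$ and $W_{-}$, and that the torsor $\overline{\bG^{\square}}$ produced above can be arranged (via the explicit BD data of Section~\ref{sec:BD data explicit}, and the decomposability hypothesis in the unitary case) so that its quadratic form restricts to a decomposable form on $Y_{+}\oplus Y_{-}$; then these two copies commute inside $\overline{\bG^{\square}}$ by the argument of Lemma~\ref{lem:commute for G}, using Proposition~\ref{prop:commutator of tori}. Hence the diagonal pullback is the Baer sum of the restrictions of $\overline{\bG^{\square}}$ to $\iota(1\times\bG)$ and $\iota(\bG\times1)$, so it is $\simeq [2]_{\ast}\obG$. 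Forming the Baer sum over all $k$ coordinates and reading off the effect on BD data — Baer sums add quadratic forms, and $[m]_{\ast}$ scales the quadratic form by $m$ — gives
\[
\obG_{+}\ \simeq\ [1]_{\ast}\obG\ +\ (k-1)\,[2]_{\ast}\obG\ \simeq\ [\,2k-1\,]_{\ast}\obG .
\]
Thus $\overline{\bG^{\square,k}}$ maps under the functor of the corollary to an object isomorphic to $\bigl([2k-1]_{\ast}\obG,\ \obG\bigr)\in\CExt(\bG\times\bG,\bK_2)^{\natural}$; since $\obG$ was arbitrary this yields essential surjectivity, and the special case obtained by replacing $k$ by $kn_Q$ is Theorem~\ref{thm:existence of cover}.

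The step I expect to be the main obstacle is the identification of the diagonal pullback with $[2]_{\ast}\obG$: this requires knowing that the $\bK_2$‑torsor $\overline{\bG^{\square}}$ supplied by Proposition~\ref{prop:surj I} really does have decomposable quadratic form along $Y_+\oplus Y_-$ — which is exactly where the decomposability assumption is needed in the unitary case and must be checked against the explicit models of Section~\ref{sec:BD data explicit} — and, relatedly, that the Baer‑sum bookkeeping above is valid for the full BD datum $(Q,\sce,f)$ and not merely for the quadratic form, so that the asserted isomorphisms hold in $\CExt$ rather than only after passing to quadratic forms.
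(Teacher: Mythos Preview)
Your proof is correct and follows essentially the same route as the paper: factor $\iota_k$ through the block decomposition, invoke Propositions~\ref{prop:surj I} and~\ref{prop:surj II}, and identify the pullback to $\bG\times 1$ as a Baer sum using the commutativity supplied by Lemma~\ref{lem:commute for G}. The only cosmetic difference is that the paper factors one step further, through $2k$ copies of $\bG$ rather than $k$ copies of $\bG^{\square}$, which makes $\obG_+$ directly the Baer sum of $2k-1$ copies of $\obG$ and sidesteps the separate identification of the diagonal pullback with $[2]_\ast\obG$ that you flagged as the main obstacle; your concern there is resolved by exactly the same commutativity input.
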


\begin{proof}
We first consider
\[
\bG \times \bG \to \bG \times \cdots \times \bG,\qquad (g_1,g_2)\mapsto (g_1,g_2,g_1, g_1, \cdots, g_1, g_1).
\]
Here the target has $2k$ copies of $\bG$.
Then we can write $\iota:\bG\times \bG \to \bG^{\square,k}$ as the composition of
\begin{equation}\label{eq:decomposition of doubling}
\bG\times \bG \to \bG \times \cdots \times \bG \to \bG^{\square} \times \cdots \times \bG^{\square} \to \bG^{\square,k}
\end{equation}
By Propositions \ref{prop:surj I} and \ref{prop:surj II}, we can find $\obGsk$ so that its pullback to each $\bG$ under the map
\[
\bG\times \cdots \times \bG \to \bG^{\square,k}
\]
are all isomorphic to $\obG$. Thus, $\obGsk$ pulls back to $\obG$ for the second copy of $\bG$ in $\iota:\bG\times \bG \to \bG^{\square,k}$.

We now consider the pullback to the first copy. Then \eqref{eq:decomposition of doubling} restricts to
\[
\bG\times \{1\} \to \bG \times \{1\} \times \bG \times \bG \times \cdots \times \bG \to \bG^{\square} \times \bG^{\square} \times \cdots \times \bG^{\square} \to \bG^{\square,k}.
\]
We temporarily focus on the second map. By Lemma \ref{lem:commute for G}, the copies of $\obG$ commute in $\overline{\bG^{\square,k}}$, which gives a homomorphism
\[
\obG \times \{1\} \times \obG \times \obG \cdots \times \obG \to \obGsk.
\]
The multiplicative $\bK_2$-torsor $\overline{\bG \times \{1\} \times \bG \times \bG \times \cdots \times \bG}$ obtained by pulling pack along
\[
\bG \times \{1\} \times \bG \times \bG \times \cdots \times \bG \to \bG^{\square,k}
\]
is isomorphic to the pushout of $\obG \times \{1\} \times \obG \times \obG \times \cdots \times \obG $ via the product map $\bK_2 \times \cdots \times \bK_2 \to \bK_2$.

Finally, to obtain the pullback to the first copy of $\bG$ from $\overline{\bG^{\square,k}}$, we need to pullback $\overline{\bG \times \{1\} \times \bG \times \bG \times \cdots \times \bG}$ via the diagonal map
\[
\bG \to \bG \times \{1\} \times \bG \times \bG\times \cdots \times \bG,\qquad g_1\mapsto (g_1,1,g_1, g_1,\cdots, g_1).
\]
It follows from the definition that $\obG^{\natural}$ is the Baer sum of $2k-1$ copies of $\obG$.
\end{proof}

\begin{Prop}\label{prop:commutative}
In the case of unitary groups, we assume that the quadratic form $Q$ is decomposable. For the multiplicative $\bK_2$-torsor $\overline{\bG^{\square,k}}$ on $\bG^{\square,k}$ given by Corollary \ref{cor:surj III}, $\obG$ and $\obG^{\natural}$ commute in $\overline{\bG^{\square,k}}$.
\end{Prop}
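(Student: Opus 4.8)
The plan is to reduce the commutation statement to the vanishing of one ``off-diagonal'' entry of the quadratic form attached to $\overline{\bG^{\square,k}}$, and then to read that vanishing off from the explicit Brylinski--Deligne data produced in the rest of this section. The subgroups $\iota(\bG\times 1)$ and $\iota(1\times\bG)$ commute in $\bG^{\square,k}$ and their product is the internal direct product $\iota(\bG\times\bG)\cong\bG\times\bG$; pulling back $\overline{\bG^{\square,k}}$ along $\iota$ gives a multiplicative $\bK_2$-torsor $\overline{\bG\times\bG}$ on $\bG\times\bG$ whose sub-extensions over the two factors are $\obG^{\natural}$ and $\obG$, by Corollary \ref{cor:surj III}. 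By the argument of Lemma \ref{lem:commute for G}, applied to the direct product decomposition $\iota(\bG\times 1)\times\iota(1\times\bG)$, it then suffices to show that the quadratic form $Q^{\bG\times\bG}$ of $\overline{\bG\times\bG}$ satisfies $B_{Q^{\bG\times\bG}}(y_1,y_2)=0$ for every cocharacter $y_1$ of the first $\bG$ and $y_2$ of the second. Since the pullback functor sends the form $Q^{\square}$ of $\overline{\bG^{\square,k}}$ to its restriction along the induced map $\iota_{\ast}$ on cocharacter lattices, after fixing a maximal torus $\bT^{\square}$ of $\bG^{\square,k}$ containing $\iota(\bT\times\bT)$ this becomes the assertion $B_{Q^{\square}}(\iota_{\ast}y_1,\iota_{\ast}y_2)=0$.

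I would then make $\iota_{\ast}$ explicit. Over $F_s$ the space $W^{\square,k}$ is the orthogonal direct sum of the $2k$ copies $W_{j,+},W_{j,-}$ $(1\le j\le k)$, and choosing on each a basis diagonalizing a maximal torus of $\bG(W_{j,\pm})$ identifies $Y^{\square}$ with a lattice carrying standard coordinates indexed by these basis vectors (together, in the odd orthogonal case, with the coordinates of the anisotropic lines, on which $\iota(\bT\times\bT)$ acts trivially). From the definition of the doubling homomorphism the first copy of $\bG$ acts through $g_1$ on $W_{1,+},\dots,W_{k,+}$ and on $W_{2,-},\dots,W_{k,-}$, while the second acts through $g_2$ only on $W_{1,-}$; hence $\iota_{\ast}y_1$ is supported on the coordinates attached to the summands $W_{j,+}$ $(1\le j\le k)$ and $W_{j,-}$ $(2\le j\le k)$, whereas $\iota_{\ast}y_2$ is supported on the coordinates attached to $W_{1,-}$. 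In particular the two supports are disjoint; more precisely $\iota_{\ast}y_1$ lies in the span of the coordinate sublattices attached to the copies $\bG(W_{j,\pm})$ with $(j,\pm)\neq(1,-)$, and $\iota_{\ast}y_2$ lies in the coordinate sublattice attached to $\bG(W_{1,-})$.

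It then remains to use the shape of $Q^{\square}$. By construction (Propositions \ref{prop:surj I} and \ref{prop:surj II}) the torsor $\overline{\bG^{\square,k}}$ furnished by Corollary \ref{cor:surj III} restricts to $\obG$ on each of the $2k$ copies $\bG(W_{j,\pm})$, and the key input is that its form $Q^{\square}$ has vanishing cross terms between the coordinate sublattices of two distinct such copies. For $\bG^{\square,k}$ symplectic or special orthogonal this is automatic, since a Weyl-invariant quadratic form on the cocharacter lattice of a symplectic or orthogonal group of rank at least $3$ -- which covers all the relevant $\bG^{\square,k}$ once the finitely many small-rank groups, such as $\bG^{\square,1}$ for $\bG=\bSO_2$, are checked directly -- is a scalar multiple of $\sum_a \epsilon_a^2$ in the standard coordinates. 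For $\bG^{\square,k}$ unitary, Weyl-invariance only forces $Q^{\square}(\epsilon_a)$ and $B_{Q^{\square}}(\epsilon_a,\epsilon_b)$ to be constant, and here the decomposability hypothesis on $Q$ is what is used: a decomposable Weyl-invariant form on a $\bGL_{dm}$-type lattice is again a scalar multiple of $\sum_i e_i^2$, so restricting $Q^{\square}$ to the coordinate sublattice of any copy $\bG(W_{j,\pm})$ shows the common off-diagonal constant of $Q^{\square}$ vanishes. In either case $B_{Q^{\square}}$ pairs distinct standard coordinates trivially as soon as they belong to distinct copies, so $B_{Q^{\square}}(\iota_{\ast}y_1,\iota_{\ast}y_2)=0$ by the disjointness just established, and Lemma \ref{lem:commute for G} yields the proposition.

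The step I expect to be the main obstacle is the unitary case. There $Q^{\square}$ is not determined by Weyl-invariance, so one must track how decomposability of $Q$ survives the pullback, pushout and Baer-sum operations used in Corollary \ref{cor:surj III}; in practice this is inseparable from the explicit case-by-case determination of the Brylinski--Deligne data of the large classical group that occupies the rest of Section \ref{sec:BD data explicit}, the point being to exhibit one choice of $\overline{\bG^{\square,k}}$ realizing Corollary \ref{cor:surj III} for which all the relevant between-copy off-diagonal entries vanish at once. The symplectic and orthogonal cases, by contrast, only need the elementary fact that Weyl-invariant forms in types $B$, $C$ and $D$ are diagonal in the standard coordinates.
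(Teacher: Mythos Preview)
Your proposal is correct and follows essentially the same approach as the paper. The paper's proof is the one-line ``This follows from Lemma \ref{lem:commute for G} and a simple calculation on the quadratic form. See also the details in all the cases,'' and your argument is a faithful unpacking of exactly that: reduce via Lemma \ref{lem:commute for G} (applied to the pullback along $\iota$) to showing $B_{Q^{\square}}(\iota_*y_1,\iota_*y_2)=0$, observe that $\iota_*y_1$ and $\iota_*y_2$ are supported on disjoint sets of copies of $\bG$, and then read off the vanishing of the between-copy entries of $B_{Q^{\square}}$ from the case-by-case description of $Q^{\square}$ (diagonal in the standard basis for types $B$, $C$, $D$; diagonal in type $A$ once the decomposability hypothesis forces $q=0$).
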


\begin{proof}
This follows from Lemma \ref{lem:commute for G} and a simply calculation on the quadratic form. See also the details in all the cases.
\end{proof}

The proof of Proposition \ref{prop:surj II} is similar (and easier) than Proposition \ref{prop:surj I}. So we will only give the details in the latter case. The proof of Proposition \ref{prop:surj I} will be given case-by-case in the rest of this section.

\subsection{Strategy of the proofs}

In this section, we explain the strategy of the proof of Proposition \ref{prop:surj I} and set up some notations that are commonly used.

Let $\bT$ be a maximal $F$-torus of $\bG$. Then $\iota(\bT\times \bT)\subset \iota(\bG\times \bG)$ is an $F$-torus of $\bG^{\square}$. Let $\bT^{\square}\supset \iota(\bT\times \bT)$ be a maximal $F$-torus of $\bG^{\square}$. Note that $\bT^{\square}=\iota(\bT\times \bT)$ except in the case of odd $\bSO(W)$. Observe that this is not a maximal $F$-torus in $\bP(W^\Delta)$ but only up to a conjugation over $F_s$. We still use this non-split torus even if $\bG^{\square}$ might be a split group over $F$.

The torus $\bT$ splits over $F_s$. Let $Y$ be the cocharacter lattice of $\bT$ over $F_s$. Let $Y^{\square}$ be the cocharacter lattice of $\bT^{\square}$ over $F_s$. Then $Y^{\square}\supset Y\oplus Y$. We first fix a Chevalley system of pinning for $(\bG_{F_s}, \bT_{F_s})$, then choose one for $(\bG^{\square}_{F_s}, \bT^{\square}_{F_s})$ which is compatible with $\iota:\bG \times \bG \to \bG^{\square}$.

Notation: the BD data for $\obG$ is denoted $(Q,\sce,f)$; to distinguish the BD data for the two copies of $\obG$, we use $(Q_+,\sce_+,f_+)$ and $(Q_-, \sce_-, f_-)$ when needed. The BD data for $\obG^{\square}$ is denoted $(Q^{\square},\sce^{\square},f^{\square})$.

\begin{Def}
We define the subcategory
\[
\BD(\bG, \bT)^{\Delta}\subset \BD(\bG,\bT)\times \BD(\bG,\bT)
\]
as follows: the pair of triples $(Q,\sce,f)$ and $(Q',\sce',f')$ is in the subcategory if and only if $(Q,\sce,f)$ is isomorphic to $(Q',\sce',f')$.
\end{Def}

It is easy to see that Proposition \ref{prop:surj I} is equivalent to the following result.

\begin{Prop}\label{prop:surj III}
The functor induced by pulling back via $\bG\times \bG \to \bG^{\square}$:
\[
\BD(\bG^{\square},\bT^{\square}) \to \BD(\bG,\bT)  \times \BD(\bG,\bT)
\]
is essentially surjective on $\BD(\bG,\bT)^{\Delta}$.
\end{Prop}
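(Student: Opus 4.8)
\emph{Reduction.} By the classification theorem of Brylinski and Deligne recalled in Section~\ref{sec:BD extensions} and the description of pullback in terms of $\BD$-data in Section~\ref{sec:pullback pushout baer sum}, Proposition~\ref{prop:surj I} is equivalent to Proposition~\ref{prop:surj III}, so the plan is to prove the latter directly. An object of $\BD(\bG,\bT)^{\Delta}$ is a pair of triples which are isomorphic, hence isomorphic in the product category to a pair of the form $((Q,\sce,f),(Q,\sce,f))$; so it suffices to show that for every triple $(Q,\sce,f)\in\BD(\bG,\bT)$ there is a triple $(Q^{\square},\sce^{\square},f^{\square})\in\BD(\bG^{\square},\bT^{\square})$ whose pullback along each of the two inclusions $\iota(1\times\bG),\,\iota(\bG\times 1)\hookrightarrow\bG^{\square}$ is isomorphic to $(Q,\sce,f)$. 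On cocharacter lattices, $\iota$ induces a $\Gamma$-equivariant embedding $Y_{+}\oplus Y_{-}\hookrightarrow Y^{\square}$ with $Y_{\pm}\cong Y$, and I will arrange that $Y_{+}$ and $Y_{-}$ are orthogonal for the bilinear form $B_{Q^{\square}}$; this makes restriction to $Y_{+}$ and to $Y_{-}$ independent conditions, which nonetheless both have to be met.

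\emph{The quadratic form.} Over $F_{s}$ the group $\bG^{\square}$ is a classical group of the same type as $\bG$ but of doubled size, one of $\bSp$, $\bSO$ (even), $\bSO$ (odd, the one case in which $\bT^{\square}$ strictly contains $\iota(\bT\times\bT)$), or $\bGL$, and the doubling embedding carries short coroots to short coroots. I would construct $Q^{\square}$ case by case using the classification of $\Gamma$-invariant $W$-invariant integral quadratic forms on these root data: for $\bSp$ and $\bSO$ there is a single integer parameter, normalised by the value on a short coroot (the uniqueness statement recalled just after the simply-connected classification), and one takes the form with the prescribed restriction; for $\bGL$ there are two integer parameters $(p,q)$ and one takes the form restricting to the given $(p,q)$ on each block. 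In each case one then verifies $W(\bG^{\square})$- and $\Gamma$-invariance and the identity $Q^{\square}|_{Y_{\pm}}=Q$ by a short explicit computation. This is exactly the place where the divisibility input ($2\mid Q(\al^{\vee})$ for a Siegel coroot) and, in the unitary case, decomposability of $Q$, enter: they guarantee that the candidate restricts correctly and that $n_{Q}$ behaves as asserted. The orthogonality $B_{Q^{\square}}(Y_{+},Y_{-})=0$ is read off from the explicit form, which by Lemma~\ref{lem:commute for G} is also what gives Proposition~\ref{prop:commutative}.

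\emph{The extension and the lifting.} Having fixed $Q^{\square}$, the simply-connected datum $\sce_{Q^{\square,sc}}$ together with its rigidification by the elements $s_{Q^{\square}}(\al^{\vee})$ is canonical. One then extends this $\Gamma$-equivariant extension along $Y^{\square,sc}\hookrightarrow Y^{\square}$ by prescribing its values on a chosen set of coset representatives, the possible extensions forming a torsor under $\Hom_{\Gamma}(Y^{\square}/Y^{\square,sc},F_{s}^{\times})$, and one selects the choice whose restrictions to $Y_{+}$ and $Y_{-}$ recover $\sce$, together with the compatible $f^{\square}$. To check these restrictions one computes the images of the generators $s_{Q^{\square}}(\al^{\vee})$ under the Weyl-group action and the coroot arithmetic using Lemma~\ref{lem:weyl action on E I}, Lemma~\ref{lem:formula on BD lift} and the additivity Lemma~\ref{lem:additivity of sQ}; since $Y_{+}$ and $Y_{-}$ are $B_{Q^{\square}}$-orthogonal, $\sce^{\square}$ restricted to $Y_{+}\oplus Y_{-}$ is a pushout of the direct sum, so each of the two restrictions can be arranged to be $\sce$ independently, and $f^{\square}$ restricts to $f$ on each copy because the simply-connected covers of the two copies of $\bG$ map compatibly into $\bG^{\square,sc}$.

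\emph{Main obstacle.} The routine cases are $\bSp$ (and its inner forms) and $\bGL$ (the unitary case over $F_{s}$), where the category of $\bK_{2}$-torsors is essentially rigid; the odd orthogonal case carries only the minor extra step of first extending the datum from $\iota(\bT\times\bT)$ up to the genuinely larger torus $\bT^{\square}$. The real difficulty is $\bG=\bSO(W)$ with $\dim W$ even, which is at once non-simply-connected and possibly non-split: there $Y/Y^{sc}$ has rank two, the extension $\sce$ has the nontrivial automorphism group $\Hom_{\Gamma}(Y,F_{s}^{\times})$ and a genuine choice of $f$, and the $\Gamma$-action on $Y^{\square}$ (which can involve the outer symmetry exchanging the two end nodes of the $D$-diagram) interacts nontrivially with the doubling embedding. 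Producing a $Q^{\square}$ and an $\sce^{\square}$ that restrict correctly to both copies while staying $\Gamma$-equivariant is the delicate point, and this is the case that has to be treated most carefully by hand; it is carried out in detail in Section~\ref{sec:BD data explicit}.
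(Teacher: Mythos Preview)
Your outline follows the same case-by-case strategy as the paper, correctly identifying that the symplectic and unitary cases are rigid, the odd orthogonal case needs only a mild extra step for the larger torus, and the even orthogonal case is the delicate one. Where your description diverges from the paper is in the construction of $(\sce^{\square},f^{\square})$. You propose to start from the canonical $\sce_{Q^{\square,sc}}$, extend it along $Y^{\square,sc}\hookrightarrow Y^{\square}$, and then ``select the choice whose restrictions to $Y_{+}$ and $Y_{-}$ recover $\sce$.'' The paper instead builds $\sce^{\square}$ directly from $\sce$: it sets $\sce^{\square}:=\pr_{\ast}(\sce\oplus\sce)$, the pushout of the direct sum along multiplication $F_{s}^{\times}\times F_{s}^{\times}\to F_{s}^{\times}$ (with an additional $F_{s}^{\times}\times\bZ$ factor for the extra coordinate $e_{2n+1}^{\vee}$ in the odd orthogonal case). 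This has the virtue that $\sce^{\square}|_{Y_{\pm}}\cong\sce$ is then tautological; all the work is concentrated in specifying the one missing value of $f^{\square}$, namely on $s_{Q^{\square,sc}}(e_{1}^{\vee}-e_{n+1}^{\vee})$, and then verifying $\Gamma$-equivariance. In the even orthogonal case the paper does this by an explicit analysis of the Galois action through the Chevalley system (reducing to a Weyl-group conjugation, then applying Lemma~\ref{lem:weyl action on E I} and the ``$\mathrm{mul}$'' criterion of Lemma~\ref{lem:very useful lemma}); in the unitary case by a $1$-cocycle argument and Hilbert's Theorem~90. Your approach, by contrast, asserts without justification that among the torsor of extensions one has the correct restrictions and is $\Gamma$-equivariant; establishing that is equivalent in difficulty to the paper's hard step, so your sketch has not actually bypassed it.

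One small correction: for $\bSO_{2n}$ the quotient $Y/Y^{sc}$ has order two, not rank two; what has order four is $(Y_{+}\oplus Y_{-})/(Y_{+}^{sc}\oplus Y_{-}^{sc})$.
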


We will prove this result case-by-case.

\subsection{A useful lemma}

Given an exact sequence
\[
1\to F_s^\times \to \sce \to Y \to 1,
\]
we can push out the direct sum of two copies via the product map $\pr:F_s^\times \times F_s^\times \to F_s^\times$ to obtain
\[
1\to F_s^\times \to \pr_\ast(\sce \oplus \sce) \to Y\oplus Y \to 1.
\]

We now give a useful criterion to compare elements in $\pr_{\ast}(\sce\oplus \sce)$. Recall that
\[
\pr_\ast(\sce \oplus \sce) = F_s^\times \times (\sce \oplus \sce)  /\la (x_1x_2, x_1^{-1},x_2^{-1}): (x_1, x_2) \in F_s^\times \times F_s^\times \ra.
\]
Define
\[
\mathrm{mul}:F_s^\times \times (\sce\oplus \sce) \to \sce, \qquad (x,e_1,e_2)\mapsto xe_1e_2.
\]
This gives a well-defined map $\mathrm{mul}:\pr_{\ast}(\sce\oplus \sce) \to \sce$.

\begin{Lem}\label{lem:very useful lemma}
Let $(x,e_1,e_2)$ and $(x',e_1',e_2')$ be two elements in $F_s^\times \times (\sce\oplus \sce)$. They have the same image in $\pr_{\ast}(\sce\oplus \sce)$ if and only if $(e_1,e_2)$ and $(e_1',e_2')$ have the same image in $Y\oplus Y$ and $\mathrm{mul}(x, e_1,e_2)=\mathrm{mul}(x', e_1', e_2')$.
\end{Lem}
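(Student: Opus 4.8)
The plan is to package the two conditions in the statement into a single map out of $\pr_\ast(\sce\oplus\sce)$ and show it is injective. Concretely, combine the projection $\pr_\ast(\sce\oplus\sce)\to Y\oplus Y$ with $\mathrm{mul}$ to get a map $\Psi\colon \pr_\ast(\sce\oplus\sce)\to (Y\oplus Y)\times\sce$; the assertion of the lemma is precisely that $\Psi$ is well defined (the ``only if'' direction) and injective (the ``if'' direction).

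First I would check that both components descend to the quotient $\pr_\ast(\sce\oplus\sce)$. For the $Y\oplus Y$ component this is immediate, since in a generator $(x_1x_2,x_1^{-1},x_2^{-1})$ of the subgroup we quotient by, the last two entries lie in $F_s^\times\subset\sce$ and hence map to $0$ in $Y$. For $\mathrm{mul}$, evaluating on such a generator gives the product $x_1x_2\cdot x_1^{-1}\cdot x_2^{-1}$ computed inside the group $\sce$, and since $F_s^\times$ is central in $\sce$ this equals $1$; hence $\mathrm{mul}$ kills the defining relations and descends. This already yields the ``only if'' direction.

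For the converse, suppose $(e_1,e_2)$ and $(e_1',e_2')$ have the same image in $Y\oplus Y$ and $\mathrm{mul}(x,e_1,e_2)=\mathrm{mul}(x',e_1',e_2')$. Since the kernel of $\sce\to Y$ is $F_s^\times$, I would write $e_1'=a_1e_1$ and $e_2'=a_2e_2$ with uniquely determined $a_1,a_2\in F_s^\times$. Multiplying $(x',e_1',e_2')$ by the generator $(a_1a_2,a_1^{-1},a_2^{-1})$ and using centrality of $F_s^\times$ in $\sce$, one rewrites it as $(x'a_1a_2,e_1,e_2)$ inside $\pr_\ast(\sce\oplus\sce)$. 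Comparing $\mathrm{mul}$-values — again sliding the scalars $a_1,a_2$ past factors of $\sce$ freely — forces $x\,e_1e_2=x'a_1a_2\,e_1e_2$ in $\sce$; right-cancelling $e_1e_2$ (legitimate since $\sce$ is a group) gives $x=x'a_1a_2$ in $F_s^\times$. Hence $(x,e_1,e_2)=(x'a_1a_2,e_1,e_2)=(x',e_1',e_2')$ in $\pr_\ast(\sce\oplus\sce)$, as desired.

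I do not expect any serious obstacle: every step reduces to the centrality of $F_s^\times$ in $\sce$ together with the explicit presentation of the pushout. The only point requiring care is bookkeeping — consistently distinguishing products taken in $F_s^\times$ from products taken in the possibly nonabelian group $\sce$, and recording that the elements $a_i$, living in the central subgroup, may be commuted past any factor of $\sce$.
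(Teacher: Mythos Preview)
Your proposal is correct and follows essentially the same approach as the paper: the paper dismisses the ``only if'' direction as trivial and for the ``if'' direction writes $e_i'=e_iy_i$ with $y_i\in F_s^\times$, uses the $\mathrm{mul}$ condition to deduce $x=x'y_1y_2$, and concludes exactly as you do. Your packaging via the map $\Psi$ and explicit verification that $\mathrm{mul}$ descends are a bit more detailed than the paper's terse treatment, but the underlying argument is the same.
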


\begin{proof}
The `only if' part is trivial. We now prove the `if' part. If $(e_1,e_2)$ and $(e_1',e_2')$ have the same image in $Y\oplus Y$, then $(e_1',e_2')=(e_1y_1,e_2 y_2)$ for some $y_1,y_2\in F_s^\times$. The condition $\mathrm{mul}(x, e_1, e_2)=\mathrm{mul}(x', e_1',e_2')$ implies that $x=x'y_1y_2$. This shows that $(x',e_1', e_2')=(xy_1^{-1}y_2^{-1}, e_1y_1,e_2y_2)$. The proof is complete.
\end{proof}

\subsection{Symplectic groups}

We now discuss the case of symplectic groups or their inner forms. This is probably the easiest case since $\bG$ is simply connected. A multiplicative $\bK_2$-torsor $\obG$ on $\bG$ is determined by a Galois invariant Weyl group invariant quadratic form $Q$ on $Y$. In \cite{Weissman11} Proposition 3.15, it is shown that for every integer $a$, there is a unique such quadratic form on $Y$ such that its value on a short coroot is $a$. In other words, we have an equivalence of categories
\[
\CExt(\bG,\bK_2)\to \BD(\bG,\bT)\to \bZ,
\]
where the last functor sends a quadratic form to its value on a short coroot.

The torus $\bT\times \bT$ is a maximal $F$-torus of $\bG^{\square}$, so $\bT^{\square}=\bT\times \bT$. The cocharacter lattice of $\bT^{\square}$ over $F_s$ is $Y^{\square}=Y\oplus Y$. The functor
\[
\CExt(\bG^{\square},\bK_2)\to \CExt(\bG,\bK_2)\times \CExt(\bG,\bK_2)
\]
can be described in terms of
\[
\BD(\bG^{\square},\bT^{\square}) \to \BD(\bG,\bT) \times \BD(\bG,\bT).
\]
In terms of $\bZ\to \bZ\times \bZ$, it is simply $a\mapsto (a,a)$. Proposition \ref{prop:surj III} follows trivially.

\subsection{Special even orthogonal groups and inner forms}

We now consider the case of special even orthogonal groups. We start with same basic results. 

\subsubsection{Preparation}

Let $n=(d\cdot \dim_D W)/2$. (The results in this section only involve multiplicative $\bK_2$-torsors. This $n$ has no relation with the degree of the cover. It shall not cause any confusion). We choose a standard basis of $Y=\bZ^n=\mathrm{Span}\{e_1^{\vee},\cdots, e_n^{\vee}\}$ so that the root lattice is given by
\[
Y^{sc}=\mathrm{Span}\{\al_1^\vee,\cdots,\al_n^\vee\} =\mathrm{Span}\{e_1^{\vee}-e_2^{\vee},\cdots, e_{n-1}^{\vee}-e_n^{\vee},e_{n-1}^{\vee}+e_n^{\vee}\}.
\]
Note that $Y^{sc}$ is a sublattice of $Y$ with index $2$. A $W$-invariant quadratic form $Q$ is determined by its value on a coroot. Let $Q(\al_1^\vee)=a$.

\begin{Lem}\label{lem:divisible by 2 SO even}
We have $2\mid a$.
\end{Lem}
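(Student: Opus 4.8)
The plan is to use that the quadratic form $Q$ underlying $\obG$ is required to be $\bZ$-valued not merely on the coroot lattice $Y^{sc}$ but on the whole cocharacter lattice $Y=\bZ^n$ of $\bG_{F_s}\cong\bSO_{2n}$ --- which is precisely what distinguishes $\bSO$ from $\bSpin$ --- and then to exhibit an element of $Y$ on which $Q$ takes the value $a/2$. I will assume $n\geq 3$, so that the root system $D_n$ is irreducible; the degenerate cases $n\leq 2$ (i.e.\ $\bSO_2$ and $\bSO_4$) are handled separately in the explicit computations below. Two preliminary remarks set things up. First, for $n\geq 3$ the Weyl group $W$ acts transitively on the coroots of $D_n$, so $Q$ is constant on the coroots, with common value $a=Q(\al_1^\vee)$; in particular $Q(e_1^\vee-e_2^\vee)=Q(e_1^\vee+e_2^\vee)=a$. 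Second, $W$-invariance of $Q$ applied to the reflection $s_\be(y)=y-\la\be,y\ra\be^\vee$ gives the standard identity $B_Q(y,\be^\vee)=\la\be,y\ra\,Q(\be^\vee)$ for every root $\be$ and every $y\in Y$.

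With these in hand the argument reduces to a one-line computation inside $Y$. Using the decomposition $2e_1^\vee=(e_1^\vee-e_2^\vee)+(e_1^\vee+e_2^\vee)$ and expanding the quadratic form,
\[
4\,Q(e_1^\vee)=Q(2e_1^\vee)=Q(e_1^\vee-e_2^\vee)+Q(e_1^\vee+e_2^\vee)+B_Q\bigl(e_1^\vee-e_2^\vee,\ e_1^\vee+e_2^\vee\bigr).
\]
The cross term vanishes: by the identity above it equals $\la e_1+e_2,\ e_1^\vee-e_2^\vee\ra\cdot Q(e_1^\vee+e_2^\vee)$, and the coroots $e_1^\vee+e_2^\vee$ and $e_1^\vee-e_2^\vee$ are orthogonal in the standard realization of $D_n$, so $\la e_1+e_2,\ e_1^\vee-e_2^\vee\ra=0$. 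Hence $4\,Q(e_1^\vee)=2a$, i.e.\ $Q(e_1^\vee)=a/2$. Since $e_1^\vee\in Y$ and $Q$ is $\bZ$-valued on $Y$, this forces $a/2\in\bZ$, which is exactly the assertion $2\mid a$.

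The step I expect to be the only genuine source of trouble is the low-rank bookkeeping rather than the main computation: for $n=2$ one has $D_2=A_1\times A_1$, which is reducible, so $e_1^\vee-e_2^\vee$ and $e_1^\vee+e_2^\vee$ lie in different Weyl orbits and the first preliminary remark breaks down; this case, together with the trivial $\bSO_2$, has to be inspected by hand while the relevant multiplicative $\bK_2$-torsors are written out. Otherwise everything is a routine manipulation of the $D_n$ root datum, and essentially the same computation will recur, with cosmetic modifications, in the case-by-case proof of Proposition~\ref{prop:surj I} for the even orthogonal groups.
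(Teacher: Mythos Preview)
Your proof is correct and follows essentially the same route as the paper's: write $2e_i^\vee$ as the sum of two orthogonal coroots of $D_n$, expand $Q(2e_i^\vee)=4Q(e_i^\vee)$ accordingly, and conclude $Q(e_i^\vee)=a/2\in\bZ$. The paper does this with the two simple coroots $\al_{n-1}^\vee=e_{n-1}^\vee-e_n^\vee$ and $\al_n^\vee=e_{n-1}^\vee+e_n^\vee$ (so the equality $Q(\al_{n-1}^\vee)=Q(\al_n^\vee)=a$ is immediate from the simply-laced type and does not require the full transitivity statement), while you use $e_1^\vee\pm e_2^\vee$; otherwise the computations are the same, and your justification of the vanishing cross term via $B_Q(y,\be^\vee)=\la\be,y\ra\,Q(\be^\vee)$ is a bit more explicit than the paper's one-line version. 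Your concern about the degenerate ranks $n\le 2$ is legitimate but is not addressed in the paper either.
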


\begin{proof}
We know that $Q(\al_{n-1}^\vee)=Q(\al_n^\vee)=a$. Let $Q(e_n^{\vee})=b\in \bZ$. As $Q(2e_{n}^{\vee})=Q(\al_{n-1}^\vee)+Q(\al_n^\vee)$, we have $4b=2a$. This implies $2\mid a$.
\end{proof}

The torus $\bT^{\square}=\bT \times \bT$ is a maximal $F$-torus of $\bG^{\square}$. Its cocharacter lattice over $F_s$ is $Y^{\square}=Y_+\oplus Y_-$. We write
\[
\begin{aligned}
Y_+^{sc}=&\mathrm{Span}\{e_1^{\vee}-e_2^{\vee}, \cdots, e_{n-1}^{\vee}-e_n^{\vee}, e_{n-1}^{\vee}+e_n^{\vee}\}\\ Y_-^{sc}=&\mathrm{Span}\{e_{n+1}^{\vee}-e_{n+2}^{\vee}, \cdots, e_{2n-1}^{\vee}-e_{2n}^{\vee}, e_{2n-1}^{\vee}+e_{2n}^{\vee}\}\\
Y^{\square,sc}=&\mathrm{Span}\{e_1^{\vee}-e_2^{\vee},\cdots, e_{2n-1}^{\vee}-e_{2n}^{\vee},e_{2n-1}^{\vee}+e_{2n}^{\vee}\}.
\end{aligned}
\]
Thus $Y_+^{sc}\oplus Y_-^{sc}$ is a subgroup of $Y^{\square,sc}$ of index $2$ and $e_1^{\vee}-e_{n+1}^{\vee}\notin Y_+^{sc}\oplus Y_-^{sc}$.

Let $Q^{\square}$ be a $W$-invariant quadratic form on $Y^{\square}$.

\begin{Lem}
The restriction of $Q^{\square}$ to $Y_+\oplus Y_-$ is a direct sum of two quadratic forms $Q_+\oplus Q_-$ with $Q_+=Q_-$. In particular, we have
\[
B_{Q^{\square}}((y_+,0),(0,y_-))=0
\]
for $y_+\in Y_+$ and $y_-\in Y_-$.

Conversely, for a $W$-invariant quadratic form $Q$ on $Y$, there is a unique $W$-invariant quadratic form $Q^{\square}$ which restricts to $Q_+\oplus Q_-$ on $Y_+\oplus Y_-$.
\end{Lem}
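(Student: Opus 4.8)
The plan is to reduce the whole statement to one rigidity fact about type $D$: a $W$-invariant $\bZ$-valued quadratic form $\mathfrak q$ on $\bZ^r=\mathrm{Span}\{e_1^\vee,\dots,e_r^\vee\}$ carrying the standard root datum of type $D_r$ with $r\ge 3$ is automatically \emph{diagonal}, namely $\mathfrak q(\sum_i x_i e_i^\vee)=c\sum_i x_i^2$ with $c=\mathfrak q(e_1^\vee)$. First I would prove this: since $D_r$ (for $r\ge3$) is irreducible and simply-laced, $W$ acts transitively on its coroots $\{\pm e_a^\vee\pm e_b^\vee\}$, so the $W$-invariant form $\mathfrak q$ is constant on coroots, say equal to $2c$. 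Evaluating on the two coroots $e_i^\vee-e_j^\vee$ and $e_i^\vee+e_j^\vee$ (for $i\ne j$) and using $\mathfrak q(y_1\pm y_2)=\mathfrak q(y_1)+\mathfrak q(y_2)\pm B_{\mathfrak q}(y_1,y_2)$ gives, after subtracting, $B_{\mathfrak q}(e_i^\vee,e_j^\vee)=0$, and, after adding, $\mathfrak q(e_i^\vee)+\mathfrak q(e_j^\vee)=2c$; comparing across three distinct indices (here $r\ge3$ is used again) forces $\mathfrak q(e_i^\vee)=c$ for every $i$, so $\mathfrak q=c\sum x_i^2$. (Equivalently one can argue with the double sign change $e_i^\vee\mapsto-e_i^\vee$, $e_k^\vee\mapsto-e_k^\vee$, which lies in $W(D_r)$ and forces $B_{\mathfrak q}(e_i^\vee,e_j^\vee)=-B_{\mathfrak q}(e_i^\vee,e_j^\vee)$.)

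With this in hand the lemma is immediate. For the first assertion I would apply the fact to $Q^\square$ on $Y^\square=\bZ^{2n}$, whose root system is of type $D_{2n}$ with $2n\ge4$, obtaining $Q^\square=c\sum_{i=1}^{2n}x_i^2$ for $c=Q^\square(e_1^\vee)$. Restricting to $Y_+=\mathrm{Span}\{e_1^\vee,\dots,e_n^\vee\}$ and $Y_-=\mathrm{Span}\{e_{n+1}^\vee,\dots,e_{2n}^\vee\}$ the mixed terms vanish, so the restriction is $Q_+\oplus Q_-$; and since $Q^\square$ is invariant under the block transposition $i\leftrightarrow n+i$ (an element of $S_{2n}\subset W(D_{2n})$), which interchanges $Y_+$ and $Y_-$ compatibly with the two doubling identifications $Y\cong Y_+$, $Y\cong Y_-$, we get $Q_+=Q_-$; the equality $B_{Q^\square}((y_+,0),(0,y_-))=0$ is then a special case of diagonality. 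For the converse I would start from a $W$-invariant $Q$ on $Y=\bZ^n$, apply the rigidity fact in type $D_n$ to write $Q=c\sum_{i=1}^nx_i^2$ with $c=Q(e_1^\vee)\in\bZ$, and set $Q^\square:=c\sum_{i=1}^{2n}x_i^2$. This $Q^\square$ is $\bZ$-valued, $W(D_{2n})$-invariant, and automatically $\Gamma$-invariant (a multiple of $\sum x_i^2$ is preserved by every signed permutation of coordinates, hence by the image of $\Gamma$), and by the previous paragraph it restricts to $Q_+\oplus Q_-$. Uniqueness follows from the rigidity fact once more: any $W$-invariant $\widetilde Q^\square$ restricting to $Q_+\oplus Q_-$ equals $c'\sum x_i^2$, and evaluating on $e_1^\vee\in Y_+$ forces $c'=c$.

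I do not expect a genuine obstacle — once the type-$D$ rigidity is isolated, everything is a two-line computation. The point requiring care is the rank hypothesis: the transitivity of $W$ on coroots (equivalently, the sign-change trick) needs $r\ge3$, and $D_2=A_1\times A_1$ really does admit non-diagonal $W$-invariant forms, so the converse direction is only claimed in the range where $\bG$ has absolute rank $\ge3$; the lower-rank even orthogonal groups ($\bSO_2$, a torus, and $\bSO_4$) are degenerate and are excluded from (or treated separately in) this discussion, while for $Q^\square$ the hypothesis $2n\ge4$ always holds. The only other thing to be careful about is the bookkeeping of which identification $Y\cong Y_+$, $Y\cong Y_-$ is being used when asserting $Q_+=Q_-$; this is precisely why invoking invariance under the block transposition $i\leftrightarrow n+i$ is the cleanest route.
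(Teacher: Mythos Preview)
Your proof is correct and is an explicit version of what the paper records only as ``The proof is straightforward.'' The rigidity fact you isolate --- that a $W(D_r)$-invariant quadratic form on $\bZ^r$ for $r\ge 3$ is forced to be $c\sum x_i^2$ --- is exactly the content the paper has in mind, and your derivation of both directions from it is clean. Your observation that the converse direction genuinely fails for $n=2$ (since $W(D_2)$-invariant forms on $\bZ^2$ form a two-parameter family, while $W(D_4)$-invariant forms on $\bZ^4$ are rigid) is a point the paper does not flag; this is consistent with the paper's implicit standing assumption, visible in the sentence ``A $W$-invariant quadratic form $Q$ is determined by its value on a coroot'' just above the lemma, that $n\ge 3$.
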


\begin{proof}
The proof is straightforward.
\end{proof}

\subsubsection{Construction of BD data}

Let $\obG$ be a multiplicative $\bK_2$-torsor on $\bG$ with BD data $(Q,\sce,f)$. We use it to construct a Galois equivariant triple $(Q^\square,\sce^\square,f^\square)$. 

Let $Q$ be a Galois invariant  $W$-invariant quadratic form on $Y$. Define $Q^{\square}=Q\oplus Q$ to be a quadratic form on $Y^{\square}=Y\oplus Y$. This is a $W$-invariant quadratic form and thus Galois invariant from the proof of \cite{Weissman11} Proposition 3.15.

Second, from the exact sequence for $\sce$, we form the exact sequence
\[
1\to F_s^\times \oplus F_s^\times \to \sce \oplus \sce \to Y\oplus Y=Y^{\square} \to 1.
\]
We push it out via the product map $\pr: F_s^\times \oplus F_s^\times \to F_s^\times$ to obtain
\[
\begin{tikzpicture}[scale=1.5]
\node (A1) at (0,1) {$1$};
\node (B1) at (1.5,1) {$F_s^\times \oplus  F_s^\times$};
\node (C1) at (3,1) {$\sce\oplus \sce$};
\node (D1) at (4.5,1) {$Y\oplus Y$};
\node (E1) at (6,1) {$1$};
\node (A2) at (0,0) {$1$};
\node (B2) at (1.5,0) {$F_s^\times$};
\node (C2) at (3,0) {$\pr_{\ast}(\sce \oplus \sce)$};
\node (D2) at (4.5,0) {$Y^{\square}$};
\node (E2) at (6,0) {$1$};
\path[->,font=\scriptsize,>=angle 90]
(A1) edge  (B1)
(B1) edge  (C1)
(C1) edge  (D1)
(D1) edge  (E1)
(A2) edge  (B2)
(B2) edge  (C2)
(C2) edge  (D2)
(D2) edge  (E2)
(C1) edge (C2)
(D1) edge (D2)
(B1) edge (B2);
\end{tikzpicture}
\]
The commutator map of the bottom exact sequence is given by $[y_1,y_2]=(-1)^{B_{Q^{\square}}(y_1,y_2)}$. Set $\sce^{\square}:=\pr_{\ast}(\sce \oplus \sce)$. The bottom exact sequence will be the second BD invariant for $\bG^{\square}$.

We also have a natural map $\pr_{\ast}(f\oplus f): \pr_\ast(\sce_{Q^{sc}} \oplus \sce_{Q^{sc}}) \to \pr_\ast(\sce \oplus \sce) $ which fits into
\[
\begin{tikzpicture}[scale=1.5]
\node (A1) at (0,1) {$1$};
\node (B1) at (1.5,1) {$F_s^\times$};
\node (C1) at (3.5,1) {$\pr_{\ast}(\sce_{Q^{sc}}\oplus \sce_{Q^{sc}})$};
\node (D1) at (5.5,1) {$Y^{sc}\oplus Y^{sc}$};
\node (E1) at (7,1) {$1$};
\node (A2) at (0,0) {$1$};
\node (B2) at (1.5,0) {$F_s^\times$};
\node (C2) at (3.5,0) {$\pr_{\ast}(\sce \oplus \sce)$};
\node (D2) at (5.5,0) {$Y^{\square}$};
\node (E2) at (7,0) {$1$};
\path[->,font=\scriptsize,>=angle 90]
(A1) edge  (B1)
(B1) edge  (C1)
(C1) edge  (D1)
(D1) edge  (E1)
(A2) edge  (B2)
(B2) edge  (C2)
(C2) edge  (D2)
(D2) edge  (E2)
(C1) edge (C2)
(D1) edge (D2)
(B1) edge (B2);
\end{tikzpicture}
\]

Let $\bG^{sc}=\bSpin_{2n}\to \bG=\bSO_{2n}$. The multiplicative $\bK_2$-torsor of $\bSpin_{2n}$ is determined by a quadratic form $Y=Y^{sc}$, which is determined by its values on any simple coroots. From the map
\[
\bSpin_{2n} \times \bSpin_{2n} \to \bSpin_{4n},
\]
we obtain the following commutative diagram,
\[
\begin{tikzpicture}[scale=1.5]
\node (A1) at (0,1) {$1$};
\node (B1) at (1.5,1) {$F_s^\times \oplus  F_s^\times$};
\node (C1) at (3.5,1) {$\sce_{Q^{sc}} \oplus \sce_{Q^{sc}}$};
\node (D1) at (5.5,1) {$Y_+^{sc}\oplus Y_-^{sc}$};
\node (E1) at (7,1) {$1$};
\node (A2) at (0,0) {$1$};
\node (B2) at (1.5,0) {$F_s^\times$};
\node (C2) at (3.5,0) {$\sce_{Q^{\square,sc}}$};
\node (D2) at (5.5,0) {$Y^{\square,sc}$};
\node (E2) at (7,0) {$1$};
\path[->,font=\scriptsize,>=angle 90]
(A1) edge  (B1)
(B1) edge  (C1)
(C1) edge  (D1)
(D1) edge  (E1)
(A2) edge  (B2)
(B2) edge  (C2)
(C2) edge  (D2)
(D2) edge  (E2)
(C1) edge (C2)
(D1) edge (D2)
(B1) edge (B2);
\end{tikzpicture},
\]
where the first vertical map is given by multiplication. By our choice of Chevelley system of \'epinglage, under the second vertical map, the image of $s_{Q_{\pm}^{sc}}(\al^\vee)$ is $s_{Q^{\square,sc}}(\al^\vee)$ for a root $\al$ in $Y_{\pm}^{sc}$. This commutative diagram factors through pushing out by $\pr:F_s^\times \oplus F_s^\times \to F_s^\times$ and we have the following commutative diagram:
\[
\begin{tikzpicture}[scale=1.5]
\node (A1) at (0,1) {$1$};
\node (B1) at (1.5,1) {$F_s^\times$};
\node (C1) at (3.5,1) {$\pr_{\ast}(\sce_{Q^{sc}} \oplus \sce_{Q^{sc}})$};
\node (D1) at (5.5,1) {$Y_+^{sc}\oplus Y_-^{sc}$};
\node (E1) at (7,1) {$1$};
\node (A2) at (0,0) {$1$};
\node (B2) at (1.5,0) {$F_s^\times$};
\node (C2) at (3.5,0) {$\sce_{Q^{\square,sc}}$};
\node (D2) at (5.5,0) {$Y^{\square,sc}$};
\node (E2) at (7,0) {$1$};
\path[->,font=\scriptsize,>=angle 90]
(A1) edge  (B1)
(B1) edge  (C1)
(C1) edge  (D1)
(D1) edge  (E1)
(A2) edge  (B2)
(B2) edge  (C2)
(C2) edge  (D2)
(D2) edge  (E2)
(C1) edge (C2)
(D1) edge (D2)
(B1) edge (B2);
\end{tikzpicture}.
\]

We now want to construct the third BD invariant. That is, we need to construct a map
\[
\begin{tikzpicture}[scale=1.5]
\node (A1) at (0,1) {$1$};
\node (B1) at (1.5,1) {$F_s^\times$};
\node (C1) at (3,1) {$\sce_{Q^{\square,sc}}$};
\node (D1) at (4.5,1) {$Y^{\square,sc}$};
\node (E1) at (6,1) {$1$};
\node (A2) at (0,0) {$1$};
\node (B2) at (1.5,0) {$F_s^\times$};
\node (C2) at (3,0) {$\pr_{\ast}(\sce \oplus \sce)$};
\node (D2) at (4.5,0) {$Y^{\square}$};
\node (E2) at (6,0) {$1$};
\path[->,font=\scriptsize,>=angle 90]
(A1) edge  (B1)
(B1) edge  (C1)
(C1) edge  (D1)
(D1) edge  (E1)
(A2) edge  (B2)
(B2) edge  (C2)
(C2) edge  (D2)
(D2) edge  (E2)
(C1) edge (C2)
(D1) edge (D2)
(B1) edge (B2);
\end{tikzpicture}
\]
which the middle map $f^{\square}$ extends $\pr_{\ast}(\sce_{Q^{sc}}\oplus \sce_{Q^{sc}}) \to \pr_{\ast}(\sce \oplus \sce)$.
We also use the notation $f_+$ to denote the map
\[
\sce_{Q^{sc}} \xrightarrow{e\mapsto (e,1)} \sce_{Q^{sc}} \oplus \sce_{Q^{sc}} \to \pr_{\ast}(\sce_{Q^{sc}}\oplus \sce_{Q^{sc}}) \to \pr_{\ast}(\sce \oplus \sce).
\]
Similarly we define $f_-$.

The map $f^{\square}$ is determined by its images on $\{s_{Q^{\square,sc}}(\al^\vee)\mid \al^\vee \in \Delta_{\bG^{\square}}^{\vee}\}$. Since we require that
\[
f^{\square}(s_{Q^{\square,sc}}(\al^\vee)) =f_{\pm}(s_{Q_{\pm}^{sc}}(\al^\vee))
\]
for a root $\al$ in $Y_{\pm}^{sc}$, this trivially determines $f^{\square}$ by $f_+\oplus f_-$ except $f^{\square}(s_{Q^{\square,sc}}(e_n^{\vee}-e_{n+1}^{\vee}))$. We have to choose this value so that
\[
f^{\square}(s_{Q^{\square,sc}}(e_{n-1}^\vee + e_n^\vee))=f_+(s_{Q^{sc}}(e_{n-1}^\vee + e_n^\vee)).
\]
We find that it would be slightly more convenient to work with the following setup: we choose $f^{\square}(s_{Q^{\square,sc}}(e_1^{\vee}-e_{n+1}^{\vee}))$ so that
\[
f^{\square}(s_{Q^{\square,sc}}(e_{1}^\vee + e_2^\vee))=f_+(s_{Q^{sc}}(e_{1}^\vee + e_2^\vee)).
\]

\begin{Lem}
If we choose $f^{\square}(s_{Q^{\square,sc}}(e_1^{\vee}-e_{n+1}^{\vee}))$ is the unique element such that its projection to $Y\oplus Y$ is $e_1^{\vee}-e_{n+1}^{\vee}$ and $\mathrm{mul}(f^{\square}(s_{Q^{\square,sc}}(e_1^{\vee}-e_{n+1}^{\vee})))=1$, then $f^{\square}(s_{Q^{\square,sc}}(e_{1}^\vee + e_2^\vee))=f_+(s_{Q^{sc}}(e_{1}^\vee + e_2^\vee))$. In other words, $f^{\square}$ is an extension of $f\oplus f$.
\end{Lem}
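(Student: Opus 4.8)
The plan is to verify the displayed identity $f^{\square}(s_{Q^{\square,sc}}(e_{1}^\vee + e_2^\vee))=f_+(s_{Q^{sc}}(e_{1}^\vee + e_2^\vee))$ by a direct computation; the concluding assertion that $f^{\square}$ extends $f\oplus f$ then follows, because the simple coroots of $\bSpin_{4n}$ that lie in $Y_+^{sc}$ (the $A_{n-1}$-chain on the coordinates $1,\dots,n$) together with $e_1^\vee+e_2^\vee$ generate $Y_+^{sc}$ as a lattice, the simple coroots of $\bSpin_{4n}$ lying in $Y_-^{sc}$ already generate $Y_-^{sc}$, $f^{\square}$ agrees with $f_\pm$ on all of these by construction, and two homomorphisms of these central extensions that agree on $F_s^\times$ and on the sections $s(\,\cdot\,)$ of a lattice-generating set of coroots must coincide.

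For the identity I would use Lemma \ref{lem:very useful lemma}. Both sides project to $(e_1^\vee+e_2^\vee,0)\in Y\oplus Y$, so it is enough to compare $\mathrm{mul}$: since $\mathrm{mul}(f_+(s_{Q^{sc}}(e_1^\vee+e_2^\vee)))=f(s_{Q^{sc}}(e_1^\vee+e_2^\vee))$, I must show $\mathrm{mul}(f^{\square}(s_{Q^{\square,sc}}(e_1^\vee+e_2^\vee)))=f(s_{Q^{sc}}(e_1^\vee+e_2^\vee))$. Write $\Phi=\mathrm{mul}\circ f^{\square}\colon \sce_{Q^{\square,sc}}\to \sce$; this is a homomorphism, and the first step is to compute $\Phi$ on the simple coroots of $\bSpin_{4n}$. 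On the nodes lying in $Y_+^{sc}$ or $Y_-^{sc}$ one gets $f(s_{Q^{sc}}(\,\cdot\,))$ of the corresponding coroot of $Y$ (under the identifications $Y_\pm\cong Y$), straight from the definitions of $f_\pm$ and $\mathrm{mul}$; on the single remaining node $e_n^\vee-e_{n+1}^\vee$ one gets $f(s_{Q^{sc}}(e_n^\vee-e_1^\vee))$, which is forced by the normalization $\mathrm{mul}(f^{\square}(s_{Q^{\square,sc}}(e_1^\vee-e_{n+1}^\vee)))=1$ together with the factorization $s_{Q^{\square,sc}}(e_1^\vee-e_{n+1}^\vee)=s_{Q^{\square,sc}}(e_n^\vee-e_{n+1}^\vee)\,s_{Q^{\square,sc}}(e_1^\vee-e_n^\vee)$ coming from Lemma \ref{lem:additivity of sQ} and the relation $s_{Q^{sc}}(\gamma^\vee)s_{Q^{sc}}(-\gamma^\vee)=1$ of Lemma \ref{lem:formula on BD lift}.

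I would then expand $s_{Q^{\square,sc}}(e_1^\vee+e_2^\vee)$ as an explicit ordered product of the $s_{Q^{\square,sc}}(\al^\vee)$ over simple coroots $\al^\vee$ of $\bSpin_{4n}$, applying Lemma \ref{lem:additivity of sQ} to the standard chain with property $(\ast)$ realizing the highest coroot $e_1^\vee+e_2^\vee$ of $D_{2n}$. Here it is essential that every sign factor $\epsilon_{\al,\beta}^{Q(\beta^\vee)}$ is trivial, which holds because $Q$ takes even values on the coroots of $\bSpin_{4n}$ (Lemma \ref{lem:divisible by 2 SO even}); thus the chain carries no $\bK_2$-corrections. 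Applying $\Phi$ and then moving everything inside the homomorphism $f$, the claim reduces to an identity in $\sce_{Q^{sc}}$ asserting that a concrete product of sections $s_{Q^{sc}}(e_i^\vee-e_j^\vee)$ and $s_{Q^{sc}}(e_{n-1}^\vee+e_n^\vee)$ collapses to $s_{Q^{sc}}(e_1^\vee+e_2^\vee)$; this is a telescoping computation using Lemma \ref{lem:formula on BD lift}(1), again with all corrections trivial since $2\mid a$. The main difficulty is purely organizational: sorting the simple coroots of $\bSpin_{4n}$ into those in $Y_+^{sc}$, those in $Y_-^{sc}$, and the one in neither; respecting the orderings imposed by Lemma \ref{lem:additivity of sQ}; and carrying out the telescoping without sign slips. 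The only input specific to this case that genuinely needs checking is that all $\bK_2$-symbol corrections vanish because $Q$ is even on coroots, which is exactly Lemma \ref{lem:divisible by 2 SO even}.
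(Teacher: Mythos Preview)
Your approach is essentially the same as the paper's: reduce the identity to a comparison under $\mathrm{mul}$ via Lemma~\ref{lem:very useful lemma}, expand $s_{Q^{\square,sc}}(e_1^\vee+e_2^\vee)$ using Lemma~\ref{lem:additivity of sQ} (with the sign factors killed by Lemma~\ref{lem:divisible by 2 SO even}), and telescope inside $\sce$. The paper uses a shorter five-term decomposition
\[
e_{1}^\vee + e_2^\vee = (e_{2}^\vee - e_1^\vee) + (e_1^\vee - e_{n+1}^\vee) + (e_{n+1}^\vee - e_{n+2}^\vee)+(e_{n+1}^\vee + e_{n+2}^\vee)+(e_1^\vee - e_{n+1}^\vee)
\]
that features $e_1^\vee-e_{n+1}^\vee$ itself rather than first unwinding to the simple coroot $e_n^\vee-e_{n+1}^\vee$; this avoids the full simple-coroot expansion and the intermediate computation of $\Phi(s_{Q^{\square,sc}}(e_n^\vee-e_{n+1}^\vee))$, but the logic is identical.

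One point deserves explicit justification: you assert that $\Phi=\mathrm{mul}\circ f^{\square}$ is a homomorphism, and you use this multiplicativity freely when factoring. The map $\mathrm{mul}\colon \pr_\ast(\sce\oplus\sce)\to\sce$, $(x,e_1,e_2)\mapsto xe_1e_2$, is multiplicative only if $\sce$ is abelian, since $\mathrm{mul}(ab)$ and $\mathrm{mul}(a)\mathrm{mul}(b)$ differ by a commutator in $\sce$. Here $\sce$ \emph{is} abelian: by Lemma~\ref{lem:divisible by 2 SO even} one has $Q(e_i^\vee)=a/2$ and $B_Q(e_i^\vee,e_j^\vee)\in\{0,a\}$, so $B_Q$ is even-valued on all of $Y$ and the commutator $(-1)^{B_Q(\cdot,\cdot)}$ is trivial. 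You should state this; the paper's argument uses the same fact implicitly when it computes $\mathrm{mul}$ of a product factor by factor.
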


\begin{proof}
Recall that
\[
e_{1}^\vee + e_2^\vee =  (e_{2}^\vee - e_1^\vee) + (e_1^\vee - e_{n+1}^\vee) + (e_{n+1}^\vee - e_{n+2}^\vee)+(e_{n+1}^\vee + e_{n+2}^\vee)+(e_1^\vee - e_{n+1}^\vee).
\]
This decomposition satisfies the condition $(\ast)$ in \eqref{eq:condition star}. By Lemma \ref{lem:additivity of sQ} (note that $2\mid Q(\al^\vee)$), $s_{Q^{\square,sc}}(e_{1}^\vee + e_2^\vee) =$
\[
s_{Q^{\square,sc}}(e_1^\vee - e_{n+1}^\vee)
s_{Q^{\square,sc}}(e_{n+1}^\vee + e_{n+2}^\vee)
s_{Q^{\square,sc}}(e_{n+1}^\vee - e_{n+2}^\vee)
s_{Q^{\square,sc}}(e_1^\vee - e_{n+1}^\vee)
s_{Q^{\square,sc}}(e_{2}^\vee - e_1^\vee).
\]
This implies that $f^{\square}(s_{Q^{\square,sc}}(e_{1}^\vee + e_2^\vee))=$
\[
f^{\square}(s_{Q^{\square,sc}}(e_1^\vee - e_{n+1}^\vee))
f^{\square}(s_{Q^{\square,sc}}(e_{n+1}^\vee + e_{n+2}^\vee) )
f^{\square}(s_{Q^{\square,sc}}(e_{n+1}^\vee - e_{n+2}^\vee) )
f^{\square}(s_{Q^{\square,sc}}(e_1^\vee - e_{n+1}^\vee) )
f^{\square}(s_{Q^{\square,sc}}(e_{2}^\vee - e_1^\vee) )
\]
or
\[
f^{\square}(s_{Q^{\square,sc}}(e_1^\vee - e_{n+1}^\vee))
f_-(s_{Q^{sc}}(e_{n+1}^\vee + e_{n+2}^\vee) )
f_-(s_{Q^{sc}}(e_{n+1}^\vee - e_{n+2}^\vee) )
f^{\square}(s_{Q^{\square,sc}}(e_1^\vee - e_{n+1}^\vee) )
f_+(s_{Q^{sc}}(e_{2}^\vee - e_1^\vee) ) .
\]
We now calculate its image under $\mathrm{mul}$. We observe that
\[
\mathrm{mul}\left(\dfrac{f_-(s_{Q^{sc}}(e_{n+1}^\vee - e_{n+2}^\vee)) f_-(s_{Q^{sc}}(e_{n+1}^\vee +e_{n+2}^\vee))}{f_+(s_{Q^{sc}}(e_{1}^\vee-e_{2}^\vee)) f_+(s_{Q^{sc}}(e_{1}^\vee +e_{2}^\vee))}\right)=1.
\]
From our choice of $f^{\square}(s_{Q^{\square,sc}}(e_1^\vee - e_{n+1}^\vee))$,
\[
\mathrm{mul}(f^{\square}(s_{Q^{\square,sc}}(e_1^\vee - e_{n+1}^\vee)))=1.
\]
Using these two facts, together with Lemma \ref{lem:formula on BD lift} part (2), we deduce that
\[
\mathrm{mul}(f^{\square}(s_{Q^{\square,sc}}(e_{1}^\vee + e_2^\vee)))=\mathrm{mul}(f_+(s_{Q^{sc}}(e_{1}^\vee + e_{2}^\vee))).
\]
By Lemma \ref{lem:very useful lemma}, this shows that
\[
f^{\square}(s_{Q^{\square,sc}}(e_{1}^\vee + e_2^\vee))=f_+(s_{Q^{sc}}(e_{1}^\vee +e_{2}^\vee)).
\]
This completes the proof.
\end{proof}

\begin{Rem}
It seems that the choice of $f^{\square}(s_{Q^{\square,sc}}(e_1^{\vee}-e_{n+1}^{\vee}))$ is quite delicate but it is not hard to see that this is almost the only choice. In the next section, we will show that $f^{\square}$ is also Galois invariant by carefully analyzing the Galois action on $\sce$. 
\end{Rem}

\subsubsection{Digression on the Chevalley system}\label{sec:Digression on Chevalley}

To show that $f^{\square}$ is $\Gamma$-equivariant, we need to have some understanding of the Galois action on $\mathcal{E}_{Q^{\square,sc}}$. The Galois action on $\sce_{Q^{\square,sc}}$ comes from the (possibly non-split) maximal $F$-torus $\bT^{\square,sc}$. We still denote this action as $\sigma$. We now would like to understand $\sigma(s_{Q^{\square,sc}}(e_1^\vee-e_{n+1}^\vee))$ for $\sigma \in \Gamma$. 

We start with some general facts about reductive groups over $F_s$. Let $\mathrm{Aut}(\bG_{F_s})$ be the automorphism group of $\bG_{F_s}$. Let $\mathrm{Inn}(\bG_{F_s})$ denotes the subgroup of inner automorphisms. Let $\tau\in \mathrm{Aut}(\bG_{F_s})$. Let $(\mathbf{B},\bT)$ be a choice of Borel subgroup and maximal torus which gives a based root datum $(X,\Phi,\Delta; Y,\Phi^\vee,\Delta^\vee)$. Then there exists $g_\tau\in \bG(F_s)$ such that $\Int(g_\tau)(\tau \mathbf{B})=\mathbf{B}$ and $\Int(g_\tau)(\tau \bT_{F_s})=\bT_{F_s}$. This induces an automorphism of $\Delta$. There is a split exact sequence
\[
1 \to \mathrm{Inn}(\bG_{F_s}) \to \mathrm{Aut}(\bG_{F_s}) \to \mathrm{Aut}(\Delta) \to 1.
\]
A splitting of this exact sequence is determined by a choice of $x_\al: \bG_a\simeq \bU_\al$ for $\al \in \Delta$.

We first would like to understand the action of $\Gamma$ on the root subgroup $x_{e_1-e_{n+1}}: \bG_m \simeq \bU_{e_1-e_{n+1}} \to \bG^{\square,sc}$. Note that $\bG^{\square,sc} \to \bG^{\square}$ restricts to an isomorphism on the root subgroup $\bU_{e_1-e_{n+1}}$. We will use the same notation for both root subgroups. It is sufficient to understand the action of $\Gamma$ on $x_{e_1-e_{n+1}}: \bG_m \simeq \bU_{e_1-e_{n+1}} \to \bG^{\square}$.

To have a good control on this, We have to relate the Galois action on $\bG \times \bG$ and $\bG^{\square}$. (We can also argue directly for $\bG^{sc}\times \bG^{sc}$ and $\bG^{\square,sc}$.)

Recall that $\bG$ is a subgroup of $\bGL_{W;D}$. Fix an isomorphism $D\otimes_F F_s\simeq \M_2(F_s)$. Put $z=\begin{pmatrix} 1 & 0 \\ 0 & 0 \end{pmatrix}\in \M_2(F_s)$ and set $W_z:=z(W\otimes_F F_s)$. The restriction $g\mapsto g|_{W_z}$ gives an isomorphism of $\bGL_{W;D}(F_s)$ onto the group $\bGL_{W_z}(F_s)$.

Let $\la \ , \ \ra_z$ be the restriction of $\la \ , \ \ra$ on $W_z$. Then $\la \ , \ \ra_z$ is an $F_s$-bilinear mapping with value in the one-dimensional $F_s$-vector space $zDz^\rho$, and it is non-degenerate and has the opposite symmetry as $\la \ , \ \ra$ under interchange of the two variables. The restriction $g\mapsto g|_{W_z}$ gives an isomorphism of $\bG_{F_s}$ onto the group $\bG_z:=\bG(W_z,\la \ , \ \ra_z)$. Let $\mathbf{T}_z$ be the image of $\mathbf{T}$ in $\mathbf{G}_z$. 

To summarize, we can identify the commutative diagram
\[
\begin{tikzpicture}[scale=1.5]
\node (A1) at (0,1) {$\bG_{F_s} \times \bG_{F_s}$};
\node (B1) at (3,1) {$\bG^{\square}_{F_s}$};
\node (A2) at (0,0) {$\bGL_{W;D,F_s} \times \bGL_{W;D,F_s}$};
\node (B2) at (3,0) {$\bGL_{W^{\square};D,F_s}$};
\path[->,font=\scriptsize,>=angle 90]
(A1) edge  (B1)
(A2) edge  (B2)
(A1) edge (A2)
(B1) edge (B2);
\end{tikzpicture}
\]
with
\[
\begin{tikzpicture}[scale=1.5]
\node (A1) at (0,1) {$\bG_z \times \bG_z$};
\node (B1) at (3,1) {$\bG_z^{\square}$};
\node (A2) at (0,0) {$\bGL_{W_z;F_s} \times \bGL_{W_z;F_s} $};
\node (B2) at (3,0) {$\bGL_{W_z^{\square};F_s}$};
\path[->,font=\scriptsize,>=angle 90]
(A1) edge  (B1)
(A2) edge  (B2)
(A1) edge (A2)
(B1) edge (B2);
\end{tikzpicture}
\]

Let $\mathbb{G}$ be the $F$-split group with a maximal split torus $\mathbb{T}$ so that the root system of $(\mathbb{G},\mathbb{T})$ is the same as the root system of $(\bG_{F_s},\bT_{F_s})$. The group $\bGL_{W_z;F_s}$ is split and has an $F$-structure $\bGL_{W_z;F}$. We realize $\mathbb{G}$ as a subgroup of $\bGL_{W_z;F}$. Then there exists $h\in \bGL_{W_z}(F_s)$ such that
\[
h\cdot \bG_z(F_s) \cdot h^{-1}= \mathbb{G}(F_s), \qquad h \cdot \bT_z(F_s)\cdot h^{-1}= \mathbb{T}(F_s).
\]
From this we can transfer the based root datum for $(\bG,\bT)$ to a based root datum of $(\mathbb{G},\mathbb{T})$. Thus, we obtain a basis of $Y_{\mathbb{G}}$ from a basis of $Y$. We write it as $(\mathbbm{e}_1,\cdots, \mathbbm{e}_n)$.

The action of $\Gamma$ on $(\bG,\bT)$ gives an action of $\Gamma$ on $(\mathbb{G},\mathbb{T})$:
\[
\mathbb{G}(F_s) \to \mathbb{G}(F_s), \qquad g\mapsto \tilde \sigma(g):=\Int(h) \circ \sigma\circ \Int(h^{-1})(g).
\]
The map $\sigma^{\dagger}:=\sigma^{-1} \circ (\Int(h) \circ \sigma\circ \Int(h^{-1})) \in \mathrm{Aut}(\mathbb{G}_{F_s})$. It is easy to see that $\sigma^{\dagger}=\Int(\sigma(h)h^{-1}) $. The element $\sigma^{\dagger}$ induces an action on the root system of $(\mathbb{G}, \mathbb{T})$, which we again denote as $\sigma^{\dagger}$. Then there exists a lift $w_\sigma$ of a Weyl group element $\mathbf{w}_\sigma$ of $(\mathbb{G}, \mathbb{T})$ such that $\mathbf{w}_\sigma(\sigma^{\dagger}(\Delta))=\Delta$.

A Chevalley system for $(\bG_{z}, \bT_{z})$ can also bee translated to a Chevalley system for $(\mathbb{G},\mathbb{T})$. The action of $\sigma$ on the root system of $(\bG_{z}, \bT_{z})$ and the pinning can be read from the action of $\sigma^\dagger$ on $(\mathbb{G},\mathbb{T})$.

We write the action down more explicitly. Let $y:\bG_m \to \bT_{z}$ be a cocharacter of $\bT_{z}$. Then the action of $\sigma$ on $y$ is given by:
\[
\bG_m  \to \bG_m \to \bT_{z} \to \bT_{z},\qquad t\mapsto \sigma^{-1}(t) \mapsto y(\sigma^{-1}(t)) \mapsto \sigma(y(\sigma^{-1}(t))).
\]
Let $x_\al:\bG_a \to \bU$ be a root subgroup. Then the action of $\sigma$ on $\alpha$ is given byL
\[
\bG_a  \to \bG_a \to \bU_{F_s} \to \bU_{F_s},\qquad t\mapsto \sigma^{-1}(t) \mapsto x_\al(\sigma^{-1}(t)) \mapsto \sigma(x_\al(\sigma^{-1}(t))).
\]
When transferring this action to $(\mathbb{G},\mathbb{T})$, these actions are given by the following:
\begin{itemize}
\item $y:\bG_m \to \mathbb{T}$ is sent to the element $\sigma^{\dagger}(y)$, defined as 
\[
\bG_m \to \mathbb{T},\qquad t \mapsto \tilde \sigma(y(\sigma^{-1}(t))).
\]
\item $x_\al:\bG_a\to \mathbb{U}$ is sent to $x_{\sigma^{\dagger}(\al)}$, defined as
\[
\bG_a \to \mathbb{U},\qquad t\mapsto \tilde\sigma(x_\alpha(\sigma^{-1}(t))).
\]
\end{itemize}
In this way, a based root datum for $(\bG,\bT)$ determines one for $(\mathbb{G},\mathbb{T})$. 

The group $\mathbb{G}$ is the connected component of some orthogonal group $\mathbb{O}_{2n}$. It is not hard to check that $\sigma(h)h^{-1}\in \mathbb{O}_{2n}(F_s)$. As a consequence, the automorphism of the Dynkin diagram given by $w_\sigma \circ \sigma^\dagger$ is either the identity or the isomorphism permuting $\mathbbm{e}_{n-1}-\mathbbm{e}_n$ and $\mathbbm{e}_{n-1}+\mathbbm{e}_n$. (In other words, triality does not appear in the case of $D_4$.) In either case, this isomorphism can be realized by the conjugation given by a lift $\tau_\sigma$ of a Weyl group element in $\bGL_{W_z}$.

We now have two different pinnings for $(\mathbb{G}, \mathbb{T})$. The first is  $x_{\alpha}: \mathbf{G}_a \to \mathbb{U}$ which is translated from $(\mathbf{G}, \mathbf{T})$. The other is $\Int(\tau_\sigma w_\sigma) \circ x_{\sigma^{\dagger}(\al)}$. They might not be the same. But we can choose $t_\sigma \in \mathbb{T}(F_s)$ such that
\[
\Int(t_\sigma \tau_\sigma w_\sigma) \circ x_{\sigma^{\dagger}(\al)} = x_{\alpha}, \qquad \al \in \Delta.
\]
This implies that $\sigma^{\dagger}=\Int(w_\sigma^{-1} \tau_\sigma^{-1} t_\sigma^{-1})$.

\subsubsection{The action on $\mathbf{G}^{\square}$}

We now have that
\[
\iota(h,h) \cdot \bT^{\square}(F_s) \cdot \iota(h,h)^{-1} \subset \iota(h,h) \cdot \bG^{\square}(F_s) \cdot \iota(h,h)^{-1}
\]
is $\mathbb{T}^{\square}(F_s)\subset \mathbb{G}^{\square}(F_s)$ for a maximal split torus $\mathbb{T}^{\square}$ inside a split group $\mathbb{G}^{\square}$. As in the case of $\bG$, here we realize $\mathbb{G}^{\square}(F_s)\subset \bGL_{W_z^{\square}}(F_s)$.
We can read the Galois action on $(\bG^{\square},\bT^{\square})$ from
\[
\sigma^{\dagger}= \Int( \iota(\sigma(h)h^{-1},\sigma(h)h^{-1})) \in \mathrm{Aut}(\mathbb{G}^{\square}(F_s)).
\]
This map preserves $\mathbb{T}^{\square}(F_s)$. From our discussion above, we know that
\[
\Int( \iota(\sigma(h)h^{-1},\sigma(h)h^{-1}))  = \Int(\iota(w_\sigma^{-1} \tau_\sigma^{-1} t_\sigma^{-1}, w_\sigma^{-1} \tau_\sigma^{-1} t_\sigma^{-1})).
\]

\begin{Lem}
For any $\sigma\in \Gamma$,
\[
\sigma^\dagger \circ x_{\mathbbm{e}_1 - \mathbbm{e}_{n+1}} = \Int(\iota(w_\sigma^{-1}, w_\sigma^{-1}))\circ x_{\mathbbm{e}_1 - \mathbbm{e}_{n+1}}.
\]
\end{Lem}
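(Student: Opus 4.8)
The plan is to deduce the statement directly from the identity $\sigma^{\dagger}=\Int\bigl(\iota(w_\sigma^{-1}\tau_\sigma^{-1}t_\sigma^{-1},\,w_\sigma^{-1}\tau_\sigma^{-1}t_\sigma^{-1})\bigr)$ obtained at the end of the preceding subsection, by peeling off its three factors $w_\sigma^{-1}$, $\tau_\sigma^{-1}$, $t_\sigma^{-1}$ one at a time. Since $\iota$ is a homomorphism,
\[
\iota(w_\sigma^{-1}\tau_\sigma^{-1}t_\sigma^{-1},\,w_\sigma^{-1}\tau_\sigma^{-1}t_\sigma^{-1})=\iota(w_\sigma^{-1},w_\sigma^{-1})\cdot\iota(\tau_\sigma^{-1},\tau_\sigma^{-1})\cdot\iota(t_\sigma^{-1},t_\sigma^{-1}),
\]
and as $\Int(\iota(w_\sigma^{-1},w_\sigma^{-1}))$ is an automorphism, the asserted equality is equivalent to the assertion that conjugation by $\iota(\tau_\sigma^{-1},\tau_\sigma^{-1})\cdot\iota(t_\sigma^{-1},t_\sigma^{-1})$ fixes the one-parameter subgroup $x_{\mathbbm{e}_1-\mathbbm{e}_{n+1}}$ pointwise. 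So the whole argument reduces to analysing separately the contribution of the torus factor $\iota(t_\sigma,t_\sigma)$ and of the Weyl factor $\iota(\tau_\sigma,\tau_\sigma)$.

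For the torus factor I would argue as follows. The element $\iota(t_\sigma,t_\sigma)$ lies in $\mathbb{T}^{\square}(F_s)$, and under the identification $Y^{\square}=Y_+\oplus Y_-$ the character $\mathbbm{e}_1$ is the first character of the $Y_+$-factor while $\mathbbm{e}_{n+1}$ is the first character of the $Y_-$-factor; hence $\mathbbm{e}_1(\iota(t,t))=\mathbbm{e}_1(t)=\mathbbm{e}_{n+1}(\iota(t,t))$ for every $t\in\mathbb{T}(F_s)$, so that $(\mathbbm{e}_1-\mathbbm{e}_{n+1})(\iota(t_\sigma,t_\sigma))=1$. By the standard formula for the conjugation action of a torus element on a root subgroup, this shows that $\Int(\iota(t_\sigma^{-1},t_\sigma^{-1}))$ fixes $x_{\mathbbm{e}_1-\mathbbm{e}_{n+1}}$ pointwise, and we are left with the Weyl factor.

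For the Weyl factor, recall from the digression that $\tau_\sigma$ is either trivial or a lift of the sign change $\mathbbm{e}_n\mapsto-\mathbbm{e}_n$, which fixes $\mathbbm{e}_1,\dots,\mathbbm{e}_{n-1}$; consequently the Weyl element underlying $\iota(\tau_\sigma,\tau_\sigma)$ fixes both $\mathbbm{e}_1$ and $\mathbbm{e}_{n+1}$, so $\iota(\tau_\sigma,\tau_\sigma)$ at least normalizes $\bU_{\mathbbm{e}_1-\mathbbm{e}_{n+1}}$ and acts on $x_{\mathbbm{e}_1-\mathbbm{e}_{n+1}}$ by $c\mapsto\epsilon c$ for some $\epsilon\in\{\pm1\}$. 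To show that $\epsilon=1$ I would pass to the explicit matrix realization $\mathbb{G}^{\square}\subset\bGL_{W_z^{\square}}$ that is compatible with $\iota$: there $\iota(\tau_\sigma,\tau_\sigma)$ is a permutation matrix whose underlying permutation only interchanges the basis vectors attached to the coordinates $\mathbbm{e}_n$ of the first copy of $W_z$ and $\mathbbm{e}_{2n}$ of the second copy (together with their duals), while fixing every basis vector occurring in a root vector for the ``crossing'' root $\mathbbm{e}_1-\mathbbm{e}_{n+1}$; conjugating such a root vector by this permutation matrix leaves it unchanged, so $\epsilon=1$ (and when $\tau_\sigma$ is trivial there is nothing to prove). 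Combining the torus and Weyl analyses then gives the lemma. I expect this last sign computation to be the only genuinely delicate point: one must track carefully which basis vectors of $W_z^{\square}$ enter the root vector for $\mathbbm{e}_1-\mathbbm{e}_{n+1}$ and verify that the sign-change permutation avoids all of them, the rest of the argument being formal.
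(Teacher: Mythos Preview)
Your approach is correct and is essentially the same as the paper's: the paper's proof is the two-line sketch ``It is easy to check that $\Int(\iota(t_\sigma^{-1},t_\sigma^{-1}))$ acts trivially on $x_{\mathbbm{e}_1 - \mathbbm{e}_{n+1}}$. Moreover, $\Int(\iota(\tau_\sigma^{-1},\tau_\sigma^{-1}))$ acts trivially on $x_{\mathbbm{e}_1 - \mathbbm{e}_{n+1}}$ as well,'' and you have supplied exactly the details behind those two assertions. Your torus argument (that $(\mathbbm{e}_1-\mathbbm{e}_{n+1})$ vanishes on the diagonal $\iota(t,t)$) and your sign analysis for $\tau_\sigma$ (that the basis vectors moved by $\iota(\tau_\sigma,\tau_\sigma)$ are disjoint from those supporting the root vector for $\mathbbm{e}_1-\mathbbm{e}_{n+1}$, since $n\ge 2$) are precisely what the paper leaves implicit.
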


\begin{proof}
It is easy to check that $\Int(\iota(t_\sigma^{-1},t_\sigma^{-1}))$ acts trivially on $x_{\mathbbm{e}_1 - \mathbbm{e}_{n+1}}$. Moreover, $\Int(\iota(\tau_\sigma^{-1},\tau_\sigma^{-1}))$ acts trivially on $x_{\mathbbm{e}_1 - \mathbbm{e}_{n+1}}$ as well. This completes the proof. 
\end{proof}

We translate the above lemma back to the case of $(\mathbf{G}, \mathbf{T})$.  We deduce that $\sigma(x_{e_1-e_{n+1}})=\tilde w_\sigma\circ x_{e_1-e_{n+1}}$ for some lift $\tilde w_\sigma$ of a Weyl group element $\tilde{\mathbf{w}}_\sigma$ for $(\bG^{\square},\bT^{\square})$. In other words, the Galois action on $e_1-e_{n+1}$ is the same as the action by some Weyl group element. 

\begin{Lem}\label{lem:galois reduced to weyl}
For any $\sigma \in \Gamma$,
\[
\sigma(s_{Q^{\square,sc}}(e_1^\vee - e_{n+1}^\vee)) = \tilde w_\sigma \cdot s_{Q^{\square,sc}}(e_1^\vee - e_{n+1}^\vee) \cdot \tilde w_\sigma^{-1}=s_{Q^{\square,sc}}(\tilde{\mathbf{w}}_\sigma(e_1^\vee - e_{n+1}^\vee)).
\]
\end{Lem}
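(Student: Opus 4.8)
The plan is to compute the Galois action on $s_{Q^{\square,sc}}(e_1^\vee - e_{n+1}^\vee)$ by reducing it, via the preceding lemma, to a Weyl-group conjugation, and then evaluating that conjugation with Lemma \ref{lem:weyl action on E I}. First I would recall how the element is built: by the construction of $\sce_{Q^{\square,sc}}$ (pull back $\overline{\bT^{\square,sc}}(F_s(\!(\tau)\!))$ along $y\mapsto y(\tau)$, push out along $\mathrm{Res}$), the element $s_{Q^{\square,sc}}(e_1^\vee - e_{n+1}^\vee)$ is the image of
\[
s_{e_1-e_{n+1}}\big((e_1-e_{n+1})^\vee(\tau)\big)=\tilde n_{e_1-e_{n+1}}(\tau)\,\tilde n_{e_1-e_{n+1}}(-1)\in\overline{\bT^{\square,sc}}\big(F_s(\!(\tau)\!)\big),
\]
and $\tilde n_{e_1-e_{n+1}}(t)$ is a word in the canonical BD lifts $\tilde x_{\pm(e_1-e_{n+1})}$. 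Since the Galois action on $\sce_{Q^{\square,sc}}$ is induced from the ($\tau$-fixing) action of $\Gamma$ on $\overline{\bG^{\square,sc}}(F_s(\!(\tau)\!))$ restricted to $\overline{\bT^{\square,sc}}$, and every map in the recipe is $\Gamma$-equivariant, the problem reduces to understanding how $\sigma$ acts on $\tilde x_{\pm(e_1-e_{n+1})}$ inside the covering group.

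By the preceding lemma (whose argument, I expect, applies equally to $-(e_1-e_{n+1})$, since the torus element and the Weyl lift $\tau_\sigma$ appearing there act trivially on $\bU_{-(e_1-e_{n+1})}$ once they do so on $\bU_{e_1-e_{n+1}}$), the automorphism $\sigma$ agrees with $\Int(\tilde w_\sigma)$ on the root subgroups $\bU_{\pm(e_1-e_{n+1})}$ together with their chosen parametrizations. Because $\tilde x_\al$ is the \emph{unique} homomorphic lift of $x_\al$ over $\bU_\al$, and both $\sigma$ and $\Int(\tilde w_\sigma)$ carry homomorphic lifts to homomorphic lifts of the transported map, it follows that $\sigma(\tilde x_{\pm(e_1-e_{n+1})})=\Int(\tilde w_\sigma)(\tilde x_{\pm(e_1-e_{n+1})})$. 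Substituting this into the displayed formula yields the first asserted equality $\sigma(s_{Q^{\square,sc}}(e_1^\vee - e_{n+1}^\vee))=\tilde w_\sigma\cdot s_{Q^{\square,sc}}(e_1^\vee - e_{n+1}^\vee)\cdot\tilde w_\sigma^{-1}$ (with $\tilde w_\sigma$ the corresponding $\tilde n$-lift to the cover).

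For the second equality I would write $\tilde{\mathbf{w}}_\sigma$ as a product of simple reflections and apply Lemma \ref{lem:weyl action on E I} one simple reflection at a time. Each step contributes a factor $\{\epsilon_{\al,\beta},\tau\}^{Q^{\square}(\beta^\vee)}$; but $\bG^{\square,sc}\simeq\bSpin_{4n}$ is of type $D_{2n}$, so every coroot $\beta^\vee$ is $W$-conjugate to $\al_1^\vee$ and $Q^{\square}(\beta^\vee)=a$ with $2\mid a$ by Lemma \ref{lem:divisible by 2 SO even}; since $\epsilon_{\al,\beta}=\pm 1$, the symbol $\{\epsilon_{\al,\beta},\tau\}^{a}$ is trivial in $\bK_2$. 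Hence the conjugate equals $s_{Q^{\square,sc}}(\tilde{\mathbf{w}}_\sigma(e_1^\vee - e_{n+1}^\vee))$, and together with $\sigma(e_1-e_{n+1})=\tilde{\mathbf{w}}_\sigma(e_1-e_{n+1})$ (again from the preceding lemma) this completes the argument.

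The step I expect to be the main obstacle is the middle one: upgrading the equality ``$\sigma=\Int(\tilde w_\sigma)$ on the linear root subgroups'' to the same equality on the $\bK_2$-cover, i.e.\ on the sections $s_{Q^{\square,sc}}(\cdot)$. This rests on the canonicity of the BD lifts $\tilde x_\al$ and on the fact that the construction of $s_{Q^{\square,sc}}(e_1^\vee - e_{n+1}^\vee)$ involves only the rank-one datum attached to the root $e_1-e_{n+1}$; once this compatibility is pinned down, the remaining manipulations are of the same routine nature as those in the proof of Lemma \ref{lem:formula on BD lift}.
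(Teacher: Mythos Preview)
Your proposal is correct and follows essentially the same approach as the paper: both arguments use the preceding lemma to identify the Galois action on the relevant root subgroup with a Weyl-group conjugation, then apply Lemma~\ref{lem:weyl action on E I} together with Lemma~\ref{lem:divisible by 2 SO even} (so that $\epsilon_{\al,\beta}^{Q(\beta^\vee)}=1$ kills the sign factors) to obtain the result. The paper's proof compresses all of this into two sentences (``This follows from Lemma~\ref{lem:weyl action on E I} and the discussion above''), whereas you have unpacked the step of lifting the identity $\sigma=\Int(\tilde w_\sigma)$ from the linear root subgroups to the canonical BD sections $\tilde x_{\pm\al}$; your explicit justification via the uniqueness of the homomorphic lift is exactly what underlies the paper's phrase ``the discussion above.''
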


\begin{proof}
This follows from Lemma \ref{lem:weyl action on E I} and the discussion above. Note that from Lemma \ref{lem:divisible by 2 SO even}, we always have $\epsilon_{\alpha, \beta}^{Q(\beta^{\vee})}=1$. 
\end{proof}

\begin{Lem}
For $\sigma\in \Gamma$, there exists $i$ and a sign such that $\sigma(e_1-e_{n+1})$ is of the form $\pm(e_i-e_{n+i})$.
\end{Lem}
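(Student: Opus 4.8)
The plan is to leverage what has just been established: that $\sigma$ acts on the root subgroup $\bU_{e_1 - e_{n+1}}$ of $\bG^{\square}$ through conjugation by a representative $\tilde w_\sigma$ of a Weyl group element $\tilde{\mathbf w}_\sigma$ of $(\bG^{\square},\bT^{\square})$, and that this Weyl element is \emph{diagonal}, i.e.\ it is the image of $\iota(\mathbf w_\sigma,\mathbf w_\sigma)$ under the homomorphism $W(\bG,\bT)\times W(\bG,\bT)\to W(\bG^{\square},\bT^{\square})$ induced by $\iota$ (this map is defined because $\iota$, being an injective homomorphism, carries $N(\bT)\times N(\bT)$ into the normalizer of $\iota(\bT\times\bT)=\bT^{\square}$). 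Since the Galois action on the root datum of $\bG^{\square}$ is compatible with that on roots and coroots, it follows that $\sigma$ acts on the coroot $e_1^\vee - e_{n+1}^\vee$ — equivalently on the root $e_1-e_{n+1}$ — exactly as $\tilde{\mathbf w}_\sigma$ does.

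The key point is then the shape of a diagonal Weyl element. With respect to $Y^{\square}=Y_+\oplus Y_-$, the element $\iota(\mathbf w_\sigma,\mathbf w_\sigma)$ preserves each summand and acts on it by $\mathbf w_\sigma$, under the identifications $Y_+\cong Y\cong Y_-$ sending $e_j^\vee$ and $e_{n+j}^\vee$ to the $j$-th standard basis vector of $Y$. Now $\mathbf w_\sigma$ lies in $W(\bG,\bT)=W(D_n)$, which acts on $Y=\langle e_1^\vee,\dots,e_n^\vee\rangle$ by signed permutations of the basis, so there are $1\le i\le n$ and $\varepsilon\in\{\pm1\}$ with $\mathbf w_\sigma(e_1^\vee)=\varepsilon\, e_i^\vee$; by the block structure, \emph{the same} $i$ and $\varepsilon$ give $\mathbf w_\sigma(e_{n+1}^\vee)=\varepsilon\, e_{n+i}^\vee$. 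Consequently
\[
\tilde{\mathbf w}_\sigma(e_1^\vee-e_{n+1}^\vee)=\mathbf w_\sigma(e_1^\vee)-\mathbf w_\sigma(e_{n+1}^\vee)=\varepsilon\,(e_i^\vee-e_{n+i}^\vee),
\]
and combining with the first paragraph, $\sigma(e_1^\vee-e_{n+1}^\vee)=\varepsilon\,(e_i^\vee-e_{n+i}^\vee)$, which is the assertion.

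There is no real obstacle here beyond careful bookkeeping; the one point that must not be glossed over is that the Weyl element realizing the Galois action on $\bU_{e_1-e_{n+1}}$ is genuinely of diagonal type $\iota(\mathbf w_\sigma,\mathbf w_\sigma)$, since it is precisely this that forces the index $i$ and the sign $\varepsilon$ to coincide in the two blocks (a general element of $W(D_{2n})$ would not). Alternatively, one can reach the same conclusion without the Chevalley-system apparatus: the maximal $F$-torus $\bT^{\square}=\iota(\bT\times\bT)$ is the internal direct product of the $F$-subtori $\iota(\bT\times1)$ and $\iota(1\times\bT)$, so $Y^{\square}=Y_+\oplus Y_-$ is a direct sum of $\Gamma$-modules each $\Gamma$-equivariantly isomorphic to $Y$ via $\iota$, and since $\Gamma$ acts on $Y$ through signed permutations of $e_1^\vee,\dots,e_n^\vee$ (no triality, as noted above) the very same signed permutation is applied in both blocks, giving $\sigma(e_1^\vee-e_{n+1}^\vee)=\pm(e_i^\vee-e_{n+i}^\vee)$ at once.
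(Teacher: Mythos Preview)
Your argument is correct, but it is heavier than what the paper actually does. The paper's proof is three lines and uses only two facts: (i) the Galois action on $Y^{\square}=Y_+\oplus Y_-$ is block-diagonal with the \emph{same} action on each block (because $\bT^{\square}=\iota(\bT\times\bT)$ as an $F$-torus), so if $\sigma(e_1^\vee)=\sum_j a_j e_j^\vee$ then automatically $\sigma(e_{n+1}^\vee)=\sum_j a_j e_{n+j}^\vee$; and (ii) $\sigma$ permutes the roots of $(\bG^{\square},\bT^{\square})$, so $\sigma(e_1-e_{n+1})=\sum_j a_j(e_j-e_{n+j})$ must be a root of type $D_{2n}$, which forces exactly one $a_j$ to be nonzero and equal to $\pm1$. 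No Weyl-group description of $\sigma$, no signed-permutation statement for the action on $Y$, and none of the Chevalley-system analysis from the preceding digression is invoked.

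Your main argument routes through the identification $\tilde{\mathbf w}_\sigma=\iota(\mathbf w_\sigma,\mathbf w_\sigma)$ and the explicit description of $W(D_n)$ as signed permutations; this is valid (it is essentially what the digression sets up, modulo the harmless $\mathbf w_\sigma$ versus $\mathbf w_\sigma^{-1}$), but it imports more structure than the lemma needs. Your ``alternative'' paragraph is much closer in spirit to the paper, except that you still pass through the claim that $\Gamma$ acts on $Y$ by signed permutations; the paper sidesteps this entirely by letting the root constraint in $D_{2n}$ do the work. The upshot: both approaches are fine, but the paper's is shorter and avoids any case analysis or appeal to the structure of $W(D_n)$.
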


\begin{proof}
Suppose $\sigma(e_1)=\sum_j a_j e_j$ for some $a_j\in \bZ$. Since the action of $\sigma$ are the same for both copies of $\bG$, we have $\sigma(e_{n+1})=\sum_j a_j e_{n+j}$. We know
\[
\sigma(e_1-e_{n+1}) = \sum_j a_j(e_j-e_{n+j})
\]
must be a root of $\bG^{\square}$. Thus, it must be of the form $\pm(e_i-e_{n+i})$.
\end{proof}

\begin{Lem}\label{lem:mul are 1 under galois}
If $\sigma(e_1-e_{n+1})=e_i-e_{n+i}$ for some $i$, then 
\[
\sigma(s_{Q^{\square,sc}}(e_1^\vee - e_{n+1}^\vee)) / s_{Q^{\square,sc}}(e_1^\vee - e_{n+1}^\vee)\in \pr_{\ast}(\sce_{Q^{sc}}\oplus \sce_{Q^{sc}}),
\]
and 
\[
\mathrm{mul}(\sigma(s_{Q^{\square,sc}}(e_1^\vee - e_{n+1}^\vee)) / s_{Q^{\square,sc}}(e_1^\vee - e_{n+1}^\vee))=1.
\]
\end{Lem}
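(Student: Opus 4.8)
The plan is to reduce the statement, via Lemma \ref{lem:galois reduced to weyl}, to an explicit identity inside $\sce_{Q^{\square,sc}}$. Under the hypothesis $\sigma(e_1-e_{n+1})=e_i-e_{n+i}$ that lemma gives $\sigma\bigl(s_{Q^{\square,sc}}(e_1^\vee-e_{n+1}^\vee)\bigr)=s_{Q^{\square,sc}}(e_i^\vee-e_{n+i}^\vee)$, so the object to understand is $r:=s_{Q^{\square,sc}}(e_i^\vee-e_{n+i}^\vee)\cdot s_{Q^{\square,sc}}(e_1^\vee-e_{n+1}^\vee)^{-1}$ (the opposite order being symmetric). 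For the membership assertion I would note that the natural map $\pr_{\ast}(\sce_{Q^{sc}}\oplus\sce_{Q^{sc}})\to\sce_{Q^{\square,sc}}$ of the diagram above identifies its source with the full preimage of $Y_+^{sc}\oplus Y_-^{sc}$ in $\sce_{Q^{\square,sc}}$ (both are central extensions of $Y_+^{sc}\oplus Y_-^{sc}$ by $F_s^\times$, one sitting inside the other). Hence it is enough to check that the image of $r$ in $Y^{\square,sc}$, which equals $(e_i^\vee-e_1^\vee)+(e_{n+1}^\vee-e_{n+i}^\vee)$, lies in $Y_+^{sc}\oplus Y_-^{sc}$; this is clear.

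For $\mathrm{mul}(r)=1$ I would expand both $s$-values by additivity. Lemma \ref{lem:additivity of sQ} is available because Lemma \ref{lem:divisible by 2 SO even} forces $Q(\alpha^\vee)$, and hence every value of $B_{Q^{\square,sc}}$ and of $B_{Q^{sc}}$, to be even, so the signs $\epsilon^{Q(\beta^\vee)}$, the symbols $\{\epsilon,t\}^{Q(\beta^\vee)}$, and all commutators $(-1)^{B(\cdot,\cdot)}$ occurring below are trivial. Writing $\alpha_j:=e_j^\vee-e_{j+1}^\vee$, the key combinatorial point is that $\alpha_n=e_n^\vee-e_{n+1}^\vee$ is the one simple coroot of $\bG^{\square,sc}$ outside $Y_+^{sc}\oplus Y_-^{sc}$, while $\alpha_1,\dots,\alpha_{n-1}\in Y_+^{sc}$ and $\alpha_{n+1},\dots,\alpha_{2n-1}\in Y_-^{sc}$. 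The chains $e_1^\vee-e_{n+1}^\vee=\alpha_1+\dots+\alpha_n$ and $e_i^\vee-e_{n+i}^\vee=\alpha_i+\dots+\alpha_{n+i-1}$ satisfy $(\ast)$ of \eqref{eq:condition star}, so with $s:=s_{Q^{\square,sc}}$ one gets $s(e_1^\vee-e_{n+1}^\vee)=s(\alpha_n^\vee)s(\alpha_{n-1}^\vee)\cdots s(\alpha_1^\vee)$ and $s(e_i^\vee-e_{n+i}^\vee)=s(\alpha_{n+i-1}^\vee)\cdots s(\alpha_{n+1}^\vee)\,s(\alpha_n^\vee)\,s(\alpha_{n-1}^\vee)\cdots s(\alpha_i^\vee)$. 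Substituting, rewriting each $s(\alpha_j^\vee)^{-1}$ as $s(-\alpha_j^\vee)$ by Lemma \ref{lem:formula on BD lift}(2), and commuting the single surviving $s(\alpha_n^\vee)$ past the block $M:=s(\alpha_{n-1}^\vee)\cdots s(\alpha_i^\vee)\,s(-\alpha_1^\vee)\cdots s(-\alpha_{n-1}^\vee)$ so that it cancels the trailing $s(\alpha_n^\vee)^{-1}$, one arrives at $r=L\cdot M$, where $L:=s(\alpha_{n+i-1}^\vee)\cdots s(\alpha_{n+1}^\vee)$ lies in the image of the $-$ copy of $\sce_{Q^{sc}}$ and $M$ in the image of the $+$ copy (the degenerate cases $i=1$ and $i=n$, where $L$ or $M$ is an empty product, are included).

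Since the images of the two copies commute in $\pr_{\ast}(\sce\oplus\sce)$ and $B_{Q^{sc}}$ is even-valued, $\mathrm{mul}(r)=\mathrm{mul}(L)\cdot\mathrm{mul}(M)$, with both factors now living in $\sce_{Q^{sc}}$ under the identifications $Y^{sc}\cong Y_\pm^{sc}$. A second application of additivity together with Lemma \ref{lem:formula on BD lift} gives $\mathrm{mul}(M)=s_{Q^{sc}}(e_i^\vee-e_n^\vee)\,s_{Q^{sc}}(e_1^\vee-e_n^\vee)^{-1}$ and $\mathrm{mul}(L)=s_{Q^{sc}}(e_1^\vee-e_i^\vee)$; since $\langle e_i-e_n,e_1^\vee-e_i^\vee\rangle=-1$, Lemma \ref{lem:formula on BD lift}(1) yields $s_{Q^{sc}}(e_1^\vee-e_i^\vee)\,s_{Q^{sc}}(e_i^\vee-e_n^\vee)=s_{Q^{sc}}(e_1^\vee-e_n^\vee)$, and the product collapses to $\mathrm{mul}(r)=s_{Q^{sc}}(e_1^\vee-e_n^\vee)\,s_{Q^{sc}}(e_1^\vee-e_n^\vee)^{-1}=1$.

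The step I expect to be the real obstacle is the bookkeeping in the middle two paragraphs rather than any new idea: one has to keep precise track of which simple coroots of $\bG^{\square,sc}$ land in $Y_+^{sc}$, which in $Y_-^{sc}$, and how the single crossing coroot $\alpha_n$ enters, and to verify at each step that the commutators produced by the non-abelian structures of $\sce_{Q^{\square,sc}}$ and $\sce_{Q^{sc}}$ vanish --- which happens exactly because $2\mid Q(\alpha^\vee)$. A related subtlety is that $\mathrm{mul}$ is only multiplicative across the two commuting copies, so putting $r$ in the form $L\cdot M$ with $L$ from the $-$ copy and $M$ from the $+$ copy, before splitting $\mathrm{mul}$, is essential.
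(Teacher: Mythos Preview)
Your proof is correct and rests on the same ingredients as the paper's: Lemma~\ref{lem:galois reduced to weyl} to identify $\sigma(s_{Q^{\square,sc}}(e_1^\vee-e_{n+1}^\vee))$ with $s_{Q^{\square,sc}}(e_i^\vee-e_{n+i}^\vee)$, additivity of $s_Q$ (Lemma~\ref{lem:additivity of sQ}), and the fact that $2\mid Q(\alpha^\vee)$ (Lemma~\ref{lem:divisible by 2 SO even}) kills all the signs and commutators so that $\sce_{Q^{\square,sc}}$ is effectively abelian.

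The difference is only in economy. The paper avoids the expansion into simple coroots by writing the single identity
\[
(e_i^\vee-e_{n+i}^\vee)+(e_1^\vee-e_i^\vee)=(e_1^\vee-e_{n+1}^\vee)+(e_{n+1}^\vee-e_{n+i}^\vee),
\]
applying Lemma~\ref{lem:additivity of sQ} to both decompositions of $e_1^\vee-e_{n+i}^\vee$ to obtain
\[
s_{Q^{\square,sc}}(e_i^\vee-e_{n+i}^\vee)\cdot s_{Q^{\square,sc}}(e_1^\vee-e_i^\vee)=s_{Q^{\square,sc}}(e_1^\vee-e_{n+1}^\vee)\cdot s_{Q^{\square,sc}}(e_{n+1}^\vee-e_{n+i}^\vee),
\]
and then reading off $r=s_{Q^{\square,sc}}(e_{n+1}^\vee-e_{n+i}^\vee)\,/\,s_{Q^{\square,sc}}(e_1^\vee-e_i^\vee)$ directly. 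Since $e_{n+1}^\vee-e_{n+i}^\vee\in Y_-^{sc}$ corresponds to $e_1^\vee-e_i^\vee\in Y_+^{sc}$ under the obvious identification, $\mathrm{mul}(r)=1$ is immediate. Your decomposition into simple coroots, cancellation of the ``crossing'' factor $s(\alpha_n^\vee)$, and recombination into $L\cdot M$ reproduce exactly this identity after more bookkeeping; the final $\mathrm{mul}(L)$ and $\mathrm{mul}(M)$ you obtain are precisely $s_{Q^{sc}}(e_1^\vee-e_i^\vee)$ and its inverse.
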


\begin{proof}
The first statement is straightforward. Using
\[
(e_i^\vee - e_{n+i}^\vee) +(e_1^\vee - e_i^\vee) = (e_1^\vee - e_{n+1}^\vee) + (e_{n+1}^\vee-e_{n+i}^\vee).
\]
and Lemma \ref{lem:additivity of sQ}, we have
\[
s_{Q^{\square,sc}}(e_i^\vee - e_{n+i}^\vee) \cdot s_{Q^{\square,sc}}(e_1^\vee - e_i^\vee) = s_{Q^{\square,sc}}(e_1^\vee - e_{n+1}^\vee) \cdot s_{Q^{\square,sc}}(e_{n+1}^\vee-e_{n+i}^\vee).
\]
Now the result follows the fact that $\mathrm{mul}(s_{Q^{\square,sc}}(e_1^\vee - e_i^\vee)/s_{Q^{\square,sc}}(e_{n+1}^\vee-e_{n+i}^\vee))=1$.
\end{proof}

\subsubsection{Galois equivariance}\label{sec:galois equi so even}

We are now ready to prove that $f^{\square}$ is Galois equivariant. 
\begin{Prop}
We have that 
\begin{equation}\label{eq:galois equi in so even}
f^{\square}(\sigma(s_{Q^{\square,sc}}(\al^\vee))) =\sigma(f^{\square}(s_{Q^{\square,sc}}(\al^\vee)))
\end{equation}
for $\al^\vee\in \Delta_{\bG^{\square}}^{\vee}$ and $\sigma \in \Gamma$. 
\end{Prop}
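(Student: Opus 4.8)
The plan is to verify the identity \eqref{eq:galois equi in so even} one simple coroot $\al^\vee\in\Delta_{\bG^{\square}}^{\vee}$ at a time; this suffices because $f^{\square}$ is determined by its values on the elements $s_{Q^{\square,sc}}(\al^\vee)$, and both $f^{\square}$ and $\sigma$ are group homomorphisms. The simple coroots of $\bG^{\square}=\bSO_{4n}$ fall into two classes: those lying in $Y_+^{sc}$ or in $Y_-^{sc}$ (namely $e_i^\vee-e_{i+1}^\vee$ for $i\neq n$ together with $e_{2n-1}^\vee+e_{2n}^\vee$), and the single ``bridge'' coroot $e_n^\vee-e_{n+1}^\vee$. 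The bridge coroot reduces to the others: the decomposition $e_n^\vee-e_{n+1}^\vee=(e_n^\vee-e_1^\vee)+(e_1^\vee-e_{n+1}^\vee)$ satisfies the hypothesis of Lemma \ref{lem:formula on BD lift}(1), its first summand $e_n^\vee-e_1^\vee$ lies in $Y_+^{sc}$, and the sign $\epsilon^{Q(\beta^\vee)}$ appearing in that lemma equals $1$ by Lemma \ref{lem:divisible by 2 SO even}; applying $f^{\square}$ and $\sigma$ to this relation then shows that \eqref{eq:galois equi in so even} for $e_1^\vee-e_{n+1}^\vee$ and for the coroots lying in $Y_\pm^{sc}$ implies it for $e_n^\vee-e_{n+1}^\vee$. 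It therefore remains to treat (a) the coroots in $Y_+^{sc}\cup Y_-^{sc}$ and (b) the element $e_1^\vee-e_{n+1}^\vee$.

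For (a), I would exploit that, by construction, $f^{\square}\circ\phi=\pr_{\ast}(f\oplus f)$, i.e.\ $f^{\square}$ restricts to $\pr_{\ast}(f\oplus f)$ on the subextension $\pr_{\ast}(\sce_{Q^{sc}}\oplus\sce_{Q^{sc}})$; that the inclusion $\phi\colon\pr_{\ast}(\sce_{Q^{sc}}\oplus\sce_{Q^{sc}})\hookrightarrow\sce_{Q^{\square,sc}}$ is a morphism of $\Gamma$-equivariant extensions, since it is induced functorially from the $F$-rational homomorphism $\bSpin_{2n}\times\bSpin_{2n}\to\bSpin_{4n}$; and that $f$ is $\Gamma$-equivariant because it is the third member of the BD datum $(Q,\sce,f)$ of $\obG$. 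For $\al^\vee\in Y_+^{sc}$ one has $s_{Q^{\square,sc}}(\al^\vee)=\phi(s_{Q^{sc}}(\al^\vee))$ -- for $\al^\vee$ simple by the compatible choice of Chevalley systems, and in general after writing $\al^\vee$ as a sum of simple coroots satisfying $(\ast)$ and invoking Lemma \ref{lem:additivity of sQ}. Then $f^{\square}(\sigma\,s_{Q^{\square,sc}}(\al^\vee))=f^{\square}(\phi(\sigma\,s_{Q^{sc}}(\al^\vee)))=\pr_{\ast}(f\oplus f)(\sigma\,s_{Q^{sc}}(\al^\vee))=\sigma(f^{\square}(s_{Q^{\square,sc}}(\al^\vee)))$, the equalities using $\Gamma$-equivariance of $\phi$, then $f^{\square}\circ\phi=\pr_{\ast}(f\oplus f)$, then $\Gamma$-equivariance of $f$; the case $\al^\vee\in Y_-^{sc}$ is identical.

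For (b), which is the crux, $s_{Q^{\square,sc}}(e_1^\vee-e_{n+1}^\vee)$ does not lie in the image of $\phi$, and $f^{\square}(s_{Q^{\square,sc}}(e_1^\vee-e_{n+1}^\vee))$ was instead pinned down by hand as the unique lift of $e_1^\vee-e_{n+1}^\vee$ to $\sce^{\square}=\pr_{\ast}(\sce\oplus\sce)$ with $\mathrm{mul}$-image $1$. By Lemma \ref{lem:galois reduced to weyl} the Galois action carries $s_{Q^{\square,sc}}(e_1^\vee-e_{n+1}^\vee)$ to $s_{Q^{\square,sc}}(\sigma(e_1^\vee-e_{n+1}^\vee))$ with no residual $F_s^\times$-factor (here the proof of that lemma again uses $2\mid Q(\al^\vee)$ to kill the correction term in Lemma \ref{lem:weyl action on E I}), and $\sigma$ carries $e_1^\vee-e_{n+1}^\vee$ to $\pm(e_i^\vee-e_{n+i}^\vee)$ for some $i$ because it acts identically on the two copies. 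I would then compare the two sides of \eqref{eq:galois equi in so even} inside $\sce^{\square}$ via the criterion of Lemma \ref{lem:very useful lemma}: both sides have image $\sigma(e_1^\vee-e_{n+1}^\vee)$ in $Y\oplus Y$, and both have $\mathrm{mul}$ equal to $1$ -- the right-hand side because $\mathrm{mul}$ is $\Gamma$-equivariant (it is built from the group law on $\sce$ and the diagonal Galois action) and $\mathrm{mul}(f^{\square}(s_{Q^{\square,sc}}(e_1^\vee-e_{n+1}^\vee)))=1$ by construction, the left-hand side by Lemma \ref{lem:mul are 1 under galois} when $\sigma(e_1^\vee-e_{n+1}^\vee)=e_i^\vee-e_{n+i}^\vee$, or after passing to inverses with Lemma \ref{lem:formula on BD lift}(2) in the opposite-sign case. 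Lemma \ref{lem:very useful lemma} then forces the two sides to coincide.

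The main obstacle is step (b), and within it Lemma \ref{lem:galois reduced to weyl}: the Chevalley system fixed for $(\bG^{\square}_{F_s},\bT^{\square}_{F_s})$ is defined only over $F_s$, so $\sigma$ does not fix $s_{Q^{\square,sc}}(e_1^\vee-e_{n+1}^\vee)$ on the nose, and because this coroot is not covered by the block-wise map $\phi$ one cannot transport the problem to $\bG$ directly. This is what forces the digression of Section \ref{sec:Digression on Chevalley}: one transfers the Galois action to a split model, writes it as $\sigma^{\dagger}=\Int(w_\sigma^{-1}\tau_\sigma^{-1}t_\sigma^{-1})$, and checks that the diagram-automorphism part $\tau_\sigma$ and the torus part $t_\sigma$ act trivially on the root subgroup $x_{\mathbbm{e}_1-\mathbbm{e}_{n+1}}$, so that on this particular coroot vector the Galois action is realized by an honest Weyl element of $\bG^{\square}$, to which Lemma \ref{lem:weyl action on E I} then applies.
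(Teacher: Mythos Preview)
Your proposal is correct and follows essentially the same approach as the paper: you reduce to the single element $e_1^\vee-e_{n+1}^\vee$, compute the Galois action on $s_{Q^{\square,sc}}(e_1^\vee-e_{n+1}^\vee)$ via Lemma~\ref{lem:galois reduced to weyl}, and then compare the two sides of \eqref{eq:galois equi in so even} using the $\mathrm{mul}$-criterion of Lemma~\ref{lem:very useful lemma} together with Lemma~\ref{lem:mul are 1 under galois} and Lemma~\ref{lem:formula on BD lift}(2). The only differences are that you spell out case~(a) and the reduction from $e_n^\vee-e_{n+1}^\vee$ to $e_1^\vee-e_{n+1}^\vee$ explicitly (the paper dismisses these as ``the only non-trivial case is $e_1^\vee-e_{n+1}^\vee$''), and you merge the paper's three sign subcases into two; these are purely cosmetic.
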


\begin{proof}
The only non-trivial case is $\al^\vee=e_1^{\vee} - e_{n+1}^{\vee}$.

Both sides in \eqref{eq:galois equi in so even} project to $\sigma(\al^\vee)$. Thus to show \eqref{eq:galois equi in so even}, it suffices to show that they are the same under the map $\mathrm{mul}$. It is easy to show that for any $\sigma\in \Gamma$,
\[
\mathrm{mul}(\sigma(f^{\square}(s_{Q^{\square,sc}}(e_1^{\vee} - e_{n+1}^{\vee})))=1.
\]

We now calculate $\mathrm{mul}(f^{\square}(\sigma(s_{Q^{\square,sc}}(e_1^{\vee} - e_{n+1}^{\vee}))) )$. We have three cases to consider. First, if $\sigma(e_1^{\vee} - e_{n+1}^{\vee})=\tilde{\mathbf{w}}_\sigma(e_1^\vee - e_{n+1}^\vee)=-(e_1^{\vee} - e_{n+1}^{\vee})$, then by Lemma \ref{lem:formula on BD lift} and \ref{lem:galois reduced to weyl},
\[
\sigma(s_{Q^{\square,sc}}(e_1^{\vee} - e_{n+1}^{\vee}))= s_{Q^{\square,sc}}(\tilde{\mathbf{w}}_\sigma(e_1^\vee - e_{n+1}^\vee))=s_{Q^{\square,sc}}(-(e_1^\vee - e_{n+1}^\vee))=s_{Q^{\square,sc}}(e_1^\vee - e_{n+1}^\vee)^{-1},
\]
and this implies that $\mathrm{mul}(f^{\square}(\sigma(s_{Q^{\square,sc}}(e_1^{\vee} - e_{n+1}^{\vee}))))=1$. 

We now assume that $\sigma(e_1-e_{n+1})=e_i-e_{n+i}$ for some $i$. Then by Lemma \ref{lem:mul are 1 under galois} and our choice of $f^{\square}(s_{Q^{\square,sc}}(e_1^{\vee} - e_{n+1}^{\vee}))$,
\[
\mathrm{mul}(f^{\square}(\sigma(s_{Q^{\square,sc}}(e_1^{\vee} - e_{n+1}^{\vee}))) =\mathrm{mul}(f^{\square}(s_{Q^{\square,sc}}(e_1^{\vee} - e_{n+1}^{\vee})))=1.
\]

Finally, we have to consider the case $\sigma(e_1-e_{n+1})=-(e_i-e_{n+i})$ for some $i\neq 1$. This can be proved by combining arguments in the previous two cases. This completes the proof.
\end{proof}

\subsection{Unitary groups}

We now consider the unitary group case. Recall that $\bG_{F_s}=\bGL_{n,F_s}$. We choose a standard basis of $\bT_{F_s}$ so that $Y=\mathrm{Span}\{e_1^{\vee},\cdots,e_n^{\vee}\}$ with the following set of simple roots:
\[
\{e_1^\vee-e_2^\vee,\cdots, e_{n-1}^\vee - e_n^\vee\}.
\]

Let $Q$ be a Weyl invariant quadratic form on $Y$. Then $Q$ is determined by the following two integers $p$ and $q$:
\[
B_Q(e_i^\vee,e_i^\vee)=2p,\qquad B_Q(e_i^\vee,e_j^\vee)=q \text{ for }i\neq j.
\]
Then for any coroot $\al^\vee$, $Q(\al^\vee)=2p-q$. Since we assume that $Q$ is decomposable, we have $q=0$.

The group $\bG^{\square}$ has a maximal $F$-torus $\bT^{\square}:=\bT\times \bT$. The cocharacter lattice of $\bT^{\square}$ over $F_s$ is $Y^{\square}=Y\oplus Y$. We choose standard basis so that $Y_+=\mathrm{Span}\{e_1^\vee, \cdots, e_n^\vee\}$ and $Y_-=\mathrm{Span}\{e_{n+1}^\vee, \cdots, e_{2n}^\vee\}$. The choice of simple roots is given as above. We choose the following set of simple roots for $\bG^{\square}$:
\[
\{
e_1^\vee - e_2^\vee ,\cdots, e_{2n-1}^\vee -e_{2n}^\vee.
\}
\]
Since we assume that $q=0$, $Q^{\square}$ is a direct sum $Q\oplus Q$ on $Y\oplus Y$.

\subsubsection{Construction of BD data}
Let $(Q,\sce,f)$ be the Galois equivariant BD data for $\obG$. We now construct a Galois invariant BD data $(Q^{\square},\sce^{\square},f^{\square})$ for $\bG^{\square}$.

Given a quadratic form $Q$ on $Y$ which is determined by an integer $p$ as above, we define $Q^{\square}:=Q\oplus Q$. The quadratic form is Weyl invariant and Galois invariant. 

\begin{Rem}
If $q\neq 0$, the quadratic form $Q\oplus Q$ on $Y\oplus Y$ is not $W$-invariant. We have to choose a different one. The argument in the rest of section will require some modification in order to handle this case.
\end{Rem}

We can again take the direct sum of two copies of $\sce$ and push it out via the multiplication map to obtain
\[
1\to F_s^\times \to \pr_{\ast}(\sce \oplus \sce ) \to Y^{\square} \to 1.
\]
The commutator is $[y_1,y_2]=(-1)^{B_{Q^{\square}}(y_1,y_2)}$. We take $\sce^{\square}=\pr_{\ast}(\sce \oplus \sce )$ and this is the second BD invariant.

We can now proceed as in the case of even orthogonal groups. We will not repeat the definitions of these notations here. We now have to define $f^{\square}: \sce_{Q^{\square,sc}} \to \pr_{\ast}(\sce \oplus \sce)$ such that its composition with the map $\pr_{\ast}(\sce_{Q^{sc}}\oplus \sce_{Q^{sc}}) \to \sce_{Q^{\square,sc}}$ gives
\[
f_+\oplus f_-: \pr_{\ast}(\sce_{Q^{sc}} \oplus \sce_{Q^{sc}})\to \pr_{\ast}(\sce \oplus \sce).
\]
The map is already determined on the image of $\pr_{\ast}(\sce_{Q^{sc}}\oplus \sce_{Q^{sc}})$ in $\sce_{Q^{\square,sc}}$. One only has to determine $f^{\square}(s_{Q^{\square,sc}}(e_n^{\vee}-e_{n+1}^{\vee}))$ so that the map is Galois equivariant.

\subsubsection{Galois equivariance}

We can verify Galois equivariance using the argument in Section \ref{sec:galois equi so even}. Here we give another proof using the Hilbert's theorem 90. We would like to show that there exists $f^{\square}(s_{Q^{\square,sc}}(e_n^{\vee}-e_{n+1}^{\vee}))$ such that
\[
f^{\square}(\sigma(s_{Q^{\square,sc}}(e_n^{\vee}-e_{n+1}^{\vee}))) = \sigma(f^{\square}(s_{Q^{\square,sc}}(e_n^{\vee}-e_{n+1}^{\vee})))
\]
for all $\sigma \in \Gamma$.

For ease of notations, we write $a=s_{Q^{\square,sc}}(e_n^{\vee}-e_{n+1}^{\vee})$. We take an arbitrary $f^{\square}$ and define a function $c:\Gamma \to F_s^{\times}$ as follows:
\[
c(\sigma)=\dfrac{f^{\square}(\sigma(a))}{\sigma(f^{\square}(a))}.
\]
We show that $c$ is a $1$-cocycle. In other words, we prove the following result.
\begin{Lem}
For any $\sigma_1,\sigma_2 \in \Gamma$,
\[
c(\sigma_1 \sigma_2) = f(\sigma_1) \cdot \sigma_1(f(\sigma_2)).
\]
\end{Lem}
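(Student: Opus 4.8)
The plan is to verify directly that the function $c:\Gamma\to F_s^\times$ defined by $c(\sigma)=f^{\square}(\sigma(a))/\sigma(f^{\square}(a))$ is a $1$-cocycle, where $a=s_{Q^{\square,sc}}(e_n^\vee-e_{n+1}^\vee)$ and $f^{\square}$ is any (not necessarily Galois-equivariant) extension of $f_+\oplus f_-$. First I would observe that the quantity $c(\sigma)$ genuinely lies in $F_s^\times$: both $f^{\square}(\sigma(a))$ and $\sigma(f^{\square}(a))$ are elements of $\pr_{\ast}(\sce\oplus\sce)$ projecting to the same element $\sigma(e_n^\vee-e_{n+1}^\vee)$ of $Y^{\square}$, because $f^{\square}$ is already Galois-equivariant on the image of $\pr_{\ast}(\sce_{Q^{sc}}\oplus\sce_{Q^{sc}})$ and the Galois action permutes the simple coroots compatibly on the two copies; hence their ratio is a well-defined element of the kernel $F_s^\times$.

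Next I would compute $c(\sigma_1\sigma_2)$ by inserting $\sigma_1(f^{\square}(\sigma_2(a)))$ as an intermediate term:
\[
c(\sigma_1\sigma_2)=\frac{f^{\square}(\sigma_1\sigma_2(a))}{\sigma_1\sigma_2(f^{\square}(a))}
=\frac{f^{\square}(\sigma_1(\sigma_2(a)))}{\sigma_1(f^{\square}(\sigma_2(a)))}\cdot\frac{\sigma_1(f^{\square}(\sigma_2(a)))}{\sigma_1\sigma_2(f^{\square}(a))}.
\]
The second factor is $\sigma_1\bigl(f^{\square}(\sigma_2(a))/\sigma_2(f^{\square}(a))\bigr)=\sigma_1(c(\sigma_2))$, since $\sigma_1$ acts $F_s^\times$-linearly on the central kernel. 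For the first factor I want to recognize it as $c(\sigma_1)$, but the argument of $f^{\square}$ there is $\sigma_2(a)$ rather than $a$. The key point is that $\sigma_2(a)$ differs from $a$ by a product of lifts $s_{Q^{\square,sc}}$ of the \emph{other} simple coroots together with a scalar in $F_s^\times$ — this is exactly the content of the Weyl-reduction analysis (analogues of Lemma \ref{lem:galois reduced to weyl}, Lemma \ref{lem:mul are 1 under galois}, and Lemma \ref{lem:additivity of sQ}) applied to the unitary case, using that $f^{\square}$ is already equivariant on those coroots. Since $f^{\square}$ is a homomorphism and is equivariant on the subextension $\pr_{\ast}(\sce_{Q^{sc}}\oplus\sce_{Q^{sc}})$, the defect $f^{\square}(\sigma_1(x))/\sigma_1(f^{\square}(x))$ is unchanged if we multiply $x$ by an element of that subextension; thus $f^{\square}(\sigma_1(\sigma_2(a)))/\sigma_1(f^{\square}(\sigma_2(a)))=f^{\square}(\sigma_1(a))/\sigma_1(f^{\square}(a))=c(\sigma_1)$, giving the cocycle relation $c(\sigma_1\sigma_2)=c(\sigma_1)\cdot\sigma_1(c(\sigma_2))$.

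I expect the main obstacle to be the bookkeeping in that last step: one must show rigorously that modifying $\sigma_2(a)$ by elements on which $f^{\square}$ is already known to be equivariant does not change the defect ratio, which requires knowing that the "unknown" value $f^{\square}(a)$ enters multiplicatively and that commutators between $a$ and the rest are scalars controlled by $B_{Q^{\square}}$. Once the cocycle property is established, Hilbert's Theorem 90 (since $H^1(\Gamma,F_s^\times)=1$) produces $d\in F_s^\times$ with $c(\sigma)=d/\sigma(d)$, and replacing $f^{\square}(a)$ by $d^{-1}f^{\square}(a)$ makes the new $f^{\square}$ Galois-equivariant. Note this decomposability hypothesis ($q=0$) is used to guarantee $Q^{\square}=Q\oplus Q$ is Weyl-invariant, so that $\sce^{\square}=\pr_{\ast}(\sce\oplus\sce)$ has the correct commutator and all the $s_{Q^{\square,sc}}$-formulas apply verbatim.
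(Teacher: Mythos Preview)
Your proposal is correct and follows essentially the same route as the paper: both insert the intermediate term $\sigma_1(f^{\square}(\sigma_2(a)))$ to split $c(\sigma_1\sigma_2)$ into two factors, identify the second as $\sigma_1(c(\sigma_2))$, and reduce the first to $c(\sigma_1)$ by using that $\sigma_2(a)/a$ lies in the subextension (projecting to $Y^{sc}\oplus Y^{sc}$) on which $f^{\square}$ is already Galois-equivariant. Your invocation of the orthogonal-case Weyl-reduction lemmas is more than needed; the paper simply notes that $\sigma_2(a)/a$ projects to $Y^{sc}\oplus Y^{sc}$ and concludes $f^{\square}(\sigma_1(\sigma_2(a)/a))=\sigma_1(f^{\square}(\sigma_2(a)/a))$ directly, which is exactly your ``defect is unchanged under multiplication by equivariant elements'' observation.
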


\begin{proof}
We write
\[
c(\sigma_1 \sigma_2) = \dfrac{f^{\square}(\sigma_1\sigma_2(a))}{\sigma_1\sigma_2(f^{\square}(a))} = \dfrac{f^{\square}(\sigma_1\sigma_2(a))}{\sigma_1(f^{\square}(\sigma_2(a)))} \cdot \dfrac{\sigma_1(f^{\square}(\sigma_2(a)))}{\sigma_1\sigma_2(f^{\square}(a))}.
\]
It suffices to show that
\[
\dfrac{f^{\square}(\sigma_1\sigma_2(a))}{\sigma_1(f^{\square}(\sigma_2(a)))} = \dfrac{f^{\square}(\sigma_1(a))}{\sigma_1(f^{\square}(a))}
\]
or
\[
\dfrac{f^{\square}(\sigma_1\sigma_2(a))}{f^{\square}(\sigma_1(a))} = \dfrac{\sigma_1(f^{\square}(\sigma_2(a)))}{\sigma_1(f^{\square}(a))}
\]
Note that $\sigma_2(a)/a$ projects to $Y^{sc}\oplus Y^{sc}$. Thus, the left-hand side is
\[
f^{\square}(\sigma_1(\sigma_2(a)/a))=\sigma_1(f^{\square}(\sigma_2(a)/a)).
\]
This proves the result.
\end{proof}

The Hilbert's theorem 90 says that $H^1(\Gamma,F_s^\times)=1$. In other words, a $1$-cocycle must be a coboundary. This means that there exists $x\in F_s^\times$ such that $c(\sigma)=\sigma(x)/x$.

We now define
\[
\tilde f^{\square}(a)=f^{\square}(a)x.
\]
Then
\[
\tilde f^{\square}(\sigma(a))=f^{\square}(\sigma(a))x=\sigma(f^{\square}(a)) c(\sigma) \cdot x=\sigma(f^{\square}(a)) \sigma(x)=\sigma(\tilde f^{\square}(a)).
\]
This implies that $\tilde f^{\square}$ is $\Gamma$-equivariant.

\subsection{Special odd orthogonal groups}

We now treat the case of $\bG=\bSO_{2n+1}$. We first begin with some discussion of the BD data. We can write $Y=\mathrm{Span}\{e_1^{\vee},\cdots,e_n^{\vee}\}$ and let
\[
\{e_1^{\vee}-e_2^{\vee}, \cdots, e_{n-1}^{\vee}-e_n^{\vee},2e_n^{\vee}\}
\]
be the coroots of $\bSO_{2n+1}$. A $W$-invariant quadratic form $Q$ on $Y$ is determined by its value on a short coroot. Let $Q(\al_1^\vee)=a$. As in the even orthogonal case, we can similarly prove the following.

\begin{Lem}\label{lem:divisible by 2 SO odd}
We have that $2\mid a$.
\end{Lem}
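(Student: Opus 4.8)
The plan is to mirror the proof of Lemma~\ref{lem:divisible by 2 SO even}. The idea is to locate a long coroot $\beta^\vee$ of $\bSO_{2n+1}$ admitting two descriptions: on the one hand $\beta^\vee = 2y$ for some $y \in Y$, which forces $Q(\beta^\vee) = 4Q(y)$; on the other hand $\beta^\vee$ is a sum of two $W$-translates of the short coroot $\al_1^\vee$ that are $B_Q$-orthogonal, which forces $Q(\beta^\vee) = 2a$. Comparing the two expressions gives $2a = 4Q(y)$, hence $2 \mid a$.

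Concretely, I would take $\beta^\vee = 2e_n^\vee$, which is a long coroot of $\bSO_{2n+1}$, and use the decomposition
\[
2e_n^\vee = (e_n^\vee - e_{n-1}^\vee) + (e_n^\vee + e_{n-1}^\vee).
\]
Recall that the coroots of $\bSO_{2n+1}$ form a root system of type $C_n$, with short coroots $\pm e_i^\vee \pm e_j^\vee$ ($i \ne j$) and long coroots $\pm 2e_i^\vee$; in particular both summands above are short coroots lying in the $W$-orbit of $\al_1^\vee = e_1^\vee - e_2^\vee$ (apply a suitable permutation of coordinates, and for $e_n^\vee + e_{n-1}^\vee$ compose with a sign change). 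Hence $W$-invariance of $Q$ yields $Q(e_n^\vee - e_{n-1}^\vee) = Q(e_n^\vee + e_{n-1}^\vee) = a$. For the cross term, expanding bilinearly, using the symmetry of $B_Q$, and the equality $Q(e_n^\vee) = Q(e_{n-1}^\vee)$ (again $W$-invariance),
\[
B_Q(e_n^\vee - e_{n-1}^\vee,\ e_n^\vee + e_{n-1}^\vee) = 2Q(e_n^\vee) - 2Q(e_{n-1}^\vee) = 0.
\]
Therefore $Q(2e_n^\vee) = Q(e_n^\vee - e_{n-1}^\vee) + Q(e_n^\vee + e_{n-1}^\vee) = 2a$, while directly $Q(2e_n^\vee) = 4Q(e_n^\vee)$. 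Writing $b = Q(e_n^\vee) \in \bZ$, we get $2a = 4b$, so $a = 2b$ and $2 \mid a$.

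This is a short computation in the $B_n/C_n$ combinatorics, entirely parallel to the even case, so I do not expect a genuine obstacle. The one point needing care is the rank: the displayed decomposition requires $n \ge 2$, so the rank-one case $\bSO_3$ (whose only simple coroot is $2e_1^\vee$) should be recorded separately — but there $Q$ is determined by $Q(2e_1^\vee) = 4Q(e_1^\vee)$, which is manifestly even, so the conclusion holds \emph{a fortiori}. Beyond that, the only things to verify are the two elementary facts used above: that $e_n^\vee \pm e_{n-1}^\vee$ genuinely lie in the $W$-orbit of $\al_1^\vee$, and that $Q$ takes the same value on all $e_i^\vee$, both of which are immediate from the description of $W$ as the hyperoctahedral group acting on the $C_n$ coroot system.
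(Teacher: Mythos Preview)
Your proof is correct and takes essentially the same approach as the paper: the paper simply says the odd case is proved ``as in the even orthogonal case,'' and your computation with $2e_n^\vee = (e_n^\vee - e_{n-1}^\vee) + (e_n^\vee + e_{n-1}^\vee)$ is exactly the natural transcription of that argument to the $B_n/C_n$ setting. Your separate treatment of the rank-one case $\bSO_3$ is a sensible addition, since the decomposition you use requires $n \ge 2$.
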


Observe that $\bT\times \bT$ is an $F$-torus in $\bG^{\square}$ but not a maximal torus. Let $\bT^{\square}\supset \bT\times \bT$ be a maximal $F$-torus of $\bG^{\square}$.

We write  $Y_+=\mathrm{Span}\{e_1^{\vee},\cdots,e_n^{\vee}\}$, $Y_-=\mathrm{Span}\{e_{n+1}^{\vee},\cdots,e_{2n}^{\vee}\}$. We have
\[
\begin{aligned}
Y_+^{sc}=&\mathrm{Span}\{e_1^{\vee}-e_2^{\vee}, \cdots, e_{n-1}^{\vee}-e_n^{\vee},2e_n^{\vee}\}\\
Y_-^{sc}=&\mathrm{Span}\{e_{n+1}^{\vee}-e_{n+2}^{\vee}, \cdots, e_{2n-1}^{\vee}-e_{2n}^{\vee},2e_{2n}^{\vee}\}\\
Y^{\square,sc}=&\mathrm{Span}\{e_1^{\vee}-e_2^{\vee}, \cdots, e_{2n}^{\vee}-e_{2n+1}^{\vee},e_{2n}^{\vee}+e_{2n+1}^{\vee}\}.
\end{aligned}
\]

Let $Q^{\square}$ be $W$-invariant quadratic form on $Y^{\square}$.
\begin{Lem}
The restriction of $Q^{\square}$ to $Y_+\oplus Y_-$ is a direct sum of two quadratic forms $Q_+\oplus Q_-$. And we have $Q_+=Q_-$.

Conversely, given $Q=Q_+=Q_-$, then there is a unique $W$-invariant quadratic form $Q^{\square}$ which restricts to $Q_+\oplus Q_-$ on $Y_+\oplus Y_-$.
\end{Lem}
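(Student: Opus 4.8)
The plan is to repeat, in an even simpler form, the argument of the even orthogonal case. Over $F_s$ the group $\bG^{\square}$ is $\bSO_{4n+2}$, of type $D_{2n+1}$, and with respect to the coordinates $e_1^{\vee},\dots,e_{2n+1}^{\vee}$ of $Y^{\square}$ fixed above, $W(\bG^{\square})$ acts by those signed permutations of the coordinates that involve an even number of sign changes, while $W(\bG)=W(B_n)$ acts on $Y$ by all signed permutations of $e_1^{\vee},\dots,e_n^{\vee}$.

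First I would show that the $W(\bG^{\square})$-invariant form $Q^{\square}$ is diagonal. Invariance under coordinate permutations gives $Q^{\square}(e_i^{\vee})=c$ for a fixed $c\in\bZ$, and $B_{Q^{\square}}(e_i^{\vee},e_j^{\vee})=c'$ for a fixed $c'\in\bZ$ whenever $i\neq j$. Since $2n+1\geq 3$, pick three distinct indices $i,j,k$: the element of $W(\bG^{\square})$ negating $e_i^{\vee}$ and $e_j^{\vee}$ (two sign changes, hence admissible for $D_{2n+1}$) fixes $Q^{\square}$, yet carries $B_{Q^{\square}}(e_i^{\vee},e_k^{\vee})$ to $-B_{Q^{\square}}(e_i^{\vee},e_k^{\vee})$, so $c'=0$ and $Q^{\square}(y)=c\sum_i y_i^2$. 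Restricting to $Y_+\oplus Y_-=\mathrm{Span}\{e_1^{\vee},\dots,e_{2n}^{\vee}\}$, all cross terms between the blocks $\{e_1^{\vee},\dots,e_n^{\vee}\}$ and $\{e_{n+1}^{\vee},\dots,e_{2n}^{\vee}\}$ vanish, so the restriction is the orthogonal direct sum $Q_+\oplus Q_-$ of the constant-$c$ diagonal forms on the two blocks; in particular $Q_+=Q_-$, which is the first assertion.

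For the converse, the same sign-change argument applied to $W(\bG)=W(B_n)$, which contains the negation of a single coordinate, shows that the given $W(\bG)$-invariant $Q$ on $Y$ is diagonal, $Q(y)=b\sum_i y_i^2$ for a unique $b\in\bZ$ (cf.\ Lemma~\ref{lem:divisible by 2 SO odd}). I would then set $Q^{\square}(y):=b\sum_{i=1}^{2n+1}y_i^2$; this is invariant under all signed permutations, a fortiori under $W(\bG^{\square})$, and restricts to $Q_+\oplus Q_-$ by construction. Uniqueness is immediate: by the previous paragraph any $W(\bG^{\square})$-invariant extension is diagonal with a single constant, and that constant is forced to equal $Q^{\square}(e_1^{\vee})=Q_+(e_1^{\vee})=b$.

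There should be no real obstacle. The two points needing care are that the sign changes used for $\bG^{\square}$ genuinely lie in $W(D_{2n+1})$ — hence one negates two coordinates at a time and uses an auxiliary third index, available since $\dim W\geq 3$ — and the bookkeeping of which coordinate of $Y^{\square}$ (namely $e_{2n+1}^{\vee}$) is dropped in the restriction. The genuinely delicate part of the odd orthogonal case is not this quadratic form but the construction of $\sce^{\square}$ and of the $\Gamma$-equivariant map $f^{\square}$ on $\sce_{Q^{\square,sc}}$, which is carried out as in Section~\ref{sec:galois equi so even} with the coroot $e_{2n}^{\vee}+e_{2n+1}^{\vee}$ (not lying in $Y_+^{sc}\oplus Y_-^{sc}$) playing the role of $e_n^{\vee}-e_{n+1}^{\vee}$ there.
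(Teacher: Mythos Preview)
Your argument is correct and is exactly the kind of computation the paper is suppressing when it writes ``This is straightforward'': one checks, using the signed-permutation description of $W(D_{2n+1})$ and $W(B_n)$, that any $W$-invariant form on either $Y^{\square}$ or $Y$ is diagonal with a single parameter, from which both the direct-sum decomposition and the existence/uniqueness of the extension follow at once. One small correction in your closing commentary (which is outside the lemma proper): the extra simple coroot not lying in $Y_+^{sc}\oplus Y_-^{sc}$ whose image under $f^{\square}$ must be specified is $e_{2n}^{\vee}-e_{2n+1}^{\vee}$ (together with $e_1^{\vee}-e_{n+1}^{\vee}$), not $e_{2n}^{\vee}+e_{2n+1}^{\vee}$.
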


\begin{proof}
This is straightforward. 
\end{proof}

Let $\obG$ be a multiplicative $\bK_2$-torsor on $\bG$ with BD data $(Q,\sce,f)$. We now construct a BD data $(Q^{\square},\sce^{\square},f^{\square})$ for $\bG^{\square}$. We only explain the difference in this case but will not repeat all the details.

The construction of $Q^{\square}$ is straightforward since such a quadratic form is determined by its values on its short coroot.

We now define the second BD invariant. Note that $Y^{\square}/Y\oplus Y=\bZ\cdot e_{2n+1}^\vee$. Consider
\[
\sce\oplus \sce\oplus (F_s^\times \times \bZ)
\]
with the following multiplication:
\[
(e_1,e_2,(x,a))\cdot (e'_1,e'_2,(x',a')): =(e_1e_1',e_2e_2',(xx'(-1)^{B_{Q^{\square}}((y_1,y_2),a' \cdot e_{2n+1}^\vee)},a+a'))
\]
Here, $(y_1,y_2)$ is the image of $(e_1,e_2)$ under $\mathcal{E} \to Y$. This defines an exact sequence
\[
1\to F_s^\times \oplus F_s^\times \oplus F_s^\times \to \sce\oplus \sce\oplus (F_s^\times \times \bZ) \to Y^{\square}=Y\oplus Y\oplus \bZ\cdot e_{2n+1}^\vee \to 1.
\]
Pushing out by the product map $\pr: F_s^\times \oplus F_s^\times \oplus F_s^\times \to F_s^\times$ gives
\[
1\to F_s^\times \to \sce^{\square} \to Y^{\square} \to 1.
\]
One can verify that the commutator map is given by $(-1)^{B_{Q^{\square}}(y_1,y_2)}$. We also have a commutative diagram \[
\begin{tikzpicture}[scale=1.5]
\node (A1) at (0,1) {$1$};
\node (B1) at (1.5,1) {$F_s^\times$};
\node (C1) at (3.5,1) {$\pr_{\ast}(\sce_{Q^{sc}}\oplus \sce_{Q^{sc}})$};
\node (D1) at (5.5,1) {$Y^{sc}\oplus Y^{sc}$};
\node (E1) at (7,1) {$1$};
\node (A2) at (0,0) {$1$};
\node (B2) at (1.5,0) {$F_s^\times$};
\node (C2) at (3.5,0) {$\sce^{\square}$};
\node (D2) at (5.5,0) {$Y^{\square}$};
\node (E2) at (7,0) {$1$};
\path[->,font=\scriptsize,>=angle 90]
(A1) edge  (B1)
(B1) edge  (C1)
(C1) edge  (D1)
(D1) edge  (E1)
(A2) edge  (B2)
(B2) edge  (C2)
(C2) edge  (D2)
(D2) edge  (E2)
(C1) edge (C2)
(D1) edge (D2)
(B1) edge (B2);
\end{tikzpicture}
\]

To construct the third BD invariant, one has to construct $f^{\square}: \sce_{Q^{\square,sc}} \to \sce^{\square}$ which extends the map $\pr_{\ast}(\sce_{Q^{sc}} \oplus \sce_{Q^{sc}}) \to \sce^{\square}$. We only have to specify the values
\[
f^{\square}(s_{Q^{\square,sc}}(e_1^{\vee} - e_{n+1}^{\vee}))  \text{ and }
f^{\square}(s_{Q^{\square,sc}}(e_{2n}^{\vee} - e_{2n+1}^{\vee}))
\]
so that $f^{\square}$ is Galois equivariant. To choose the first value, we use the argument presented in the $\bSO_{2n}$ case. The choice for the second value is identical to the unitary case. This completes the proof. 

\section{$L$-functions}\label{sec:construction of Speh}

So far we only give a global zeta integral which represents an Euler product, but have not said anything regarding the $L$-functions obtained from the twisted doubling integrals. The construction relies on the construction of representations of type $(k,m)_D$. In the linear case, a good source of such representations are the generalized Speh representations. Here we present a conjectural picture.  Further investigations are necessary in order to gain a complete understanding of the local and global theory.

We make a couple of assumptions to simplify the situation in our discussion. We assume that $D=F$, so that only the group $\bGL_m$ will be involved. We also assume that the quadratic form $Q$ appearing in the BD data is decomposable. So we will not have any problems regarding parabolic induction. As we point out earlier, to treat the non-decomposable case, we need to have a suitable version of `metaplectic tensor product'.

Fix an integer $p$. This determines a $W$-invariant decomposable quadratic form on the cocharacter lattice for every $\bGL_m$. Let $n_Q=n/\gcd(n,Q(\al^\vee))$ for any $\al^\vee \in \Delta^\vee$ if $m\geq 2$; and $n_Q=n/\gcd(n,2a)$ for $a=Q(e_1^\vee)$ and $m=1$. The twisted doubling integrals relies on the following construction of the inducing data in the Eisenstein series. To be more precise, we would like to have
\begin{equation}\label{eq:global speh}
\tau \in \Irr_{cusp}^{u}(\GL_k(\ba)) \to \theta^{(n)}(\tau,m)\in \Irr^u(\oGL^{(n)}_{k m n_Q}(\ba)).
\end{equation}
For every local place $v$, the local analogue is given by
\[
\tau_v \in \Irr_{gen}^{u}(\GL_{k,v}) \to \theta^{(n)}(\tau_v,m)\in \Irr^u(\oGL^{(n)}_{k m n_Q,v}).
\]
Here, the superscript $u$ means that only unitary representations are considered and the subscript $gen$ means generic representations.

We expect the following list of properties:
\begin{itemize}
\item The construction is local-to-global compatible: $\theta^{(n)}(\tau,m)=\otimes_v' \theta^{(n)}(\tau_v,m)$ if $\tau=\otimes_v' \tau_v$.
\item The representation $\theta^{(n)}(\tau,m)$ is of type $(kn_Q,m)$.
\item For all $k, n$ and the multiplicative $\bK_2$-torsor on $\bGL_k$ determined by the integer $p$ as above, assume that there exists a `Shimura-type lift' from $\Irr(\oGL_k^{(n)}(\ba)) \to \Irr(\GL_k(\ba))$ which is also local to global compatible. If $\tau$ does not lie in the image of the Shimura lift from $\oGL^{(n')}_k(\ba)$ to $\GL_k(\ba)$ for any $n'\mid n$, then the lift is cuspidal. If it does, then this can be constructed using residues of Eisenstein series.
\end{itemize}

This is also discussed in \cite{Suzuki98} and \cite{Ginzburg}.

\begin{Rem}
We also expect more properties of this construction to be valid. Such properties are motivated by the local and global theory of the twisted doubling integrals. For instance, this construction should satisfies a multiplicativity with respect to the parabolic induction. This will be used in order to establish the multiplicativity of $\gamma$-factors. 
\end{Rem}

\begin{Rem}
Instead of \eqref{eq:global speh}, one might consider
\[
\tau \in \Irr_{cusp}^{u}(\oGL_k(\ba)) \to \theta^{(n)}(\tau,m)\in \Irr^u(\oGL^{(n)}_{k m n_Q}(\ba)),
\]
given by residues of Eisenstein series. However, the orbit of $\theta^{(n)}(\tau,m)$ might not be of $(kn_Q,m)$ due to the existence of cuspidal theta representations. We refer the reader to \cite{FG17} Section 3.2 for some discussion on this matter.
\end{Rem}

As a consequence, we expect that the twisted doubling integrals give an integral representation for the tensor product $L$-function for $\obG \times \bGL_k$. (In the unitary case, it would be $\obG \times \Res_{E/F}(\bGL_k)$.) Our formulation is slightly different from \cite{Kaplan}. Also note that in \cite{Gao18} the Langlands-Shahidi type $L$-functions appear in the constant terms of Eisenstein series, in which the tensor product $L$-function for $\obG\times \overline{\bGL}_k$ is obtained for split classical groups $\bG$, among many other interesting $L$-functions. It would be interesting to relate the $L$-functions obtained from these constructions. 

\bibliographystyle{alpha}


\bibliography{BD}

\end{document}